\documentclass[11pt, letter]{amsart}
\usepackage{style/package} 
\usepackage{style/command} 
\usepackage{style/bibstyle} 

\begin{document}


\title[Thermodynamic formalism for subsystems and large deviations]
{
    Thermodynamic formalism for subsystems of expanding Thurston maps and large deviations asymptotics
}

\author{Zhiqiang~Li \and Xianghui~Shi \and Yiwei~Zhang}
    
\address{Zhiqiang~Li, School of Mathematical Sciences \& Beijing International Center for Mathematical Research, Peking University, Beijing 100871, China}
\email{zli@math.pku.edu.cn}

\address{Xianghui~Shi, Beijing International Center for Mathematical Research, Peking University, Beijing 100871, China}
\email{xhshi@pku.edu.cn}

\address{Yiwei~Zhang, School of Mathematical Sciences and Big Data, Anhui University of Science and Technology, Huainan, Anhui 232001,China} 
\email{yiweizhang831129@gmail.com}

\subjclass[2020]{Primary: 37F10; Secondary: 37D35, 37C30, 37F20, 37F15, 37B99, 57M12}

\keywords{expanding Thurston map, postcritically-finite map, rational map, subsystems, thermodynamic formalism, Ruelle operator, transfer operator, equilibrium state, large deviation.} 

\begin{abstract}
Expanding Thurston maps were introduced by M.~Bonk and D.~Meyer with motivation from complex dynamics and Cannon's conjecture from geometric group theory via Sullivan's dictionary. In this paper, we introduce subsystems of expanding Thurston maps motivated via Sullivan's dictionary as analogs of some subgroups of Kleinian groups.
We use thermodynamic formalism to prove the Variational Principle and the existence of equilibrium states for strongly irreducible subsystems and real-valued H\"older continuous potentials. 
Here, the sphere $S^{2}$ is equipped with a natural metric, called a visual metric, introduced by M.~Bonk and D.~Meyer.
As an application, we establish large deviation asymptotics for expanding Thurston maps.
\end{abstract} 

\maketitle
\tableofcontents

\section{Introduction}
\label{sec:Introduction}

A Thurston map is a (non-homeomorphic) branched covering map on a topological $2$-sphere $S^{2}$ that is postcritically-finite, meaning that each of its critical points has a finite orbit under iteration.
The most important examples are given by postcritically-finite rational maps on the Riemann sphere $\ccx$.
While Thurston maps are purely topological objects, a deep theorem due to W.P.~Thurston characterizes Thurston maps that are, in a suitable sense, described in the language of topology and combinatorics, equivalent to postcritically-finite rational maps (see \cite{douady1993proof}). 
This suggests that for the relevant rational maps, an explicit analytic expression is not so important but rather a geometric-combinatorial description. 
This viewpoint is both natural and fruitful for considering more general dynamical systems that are not necessarily conformal.

In the early 1980s, D.P.~Sullivan introduced a “dictionary" that is now known as \emph{Sullivan's dictionary}, which connects two branches of conformal dynamics: iterations of rational maps, and actions of Kleinian groups.
Under Sullivan’s dictionary, the counterpart to Thurston’s theorem in geometric group theory is Cannon’s Conjecture \cite{cannon1994combinatorial}. 
An equivalent formulation of Cannon's conjecture, viewed from a quasisymmetric uniformization perspective (\cite[Conjecture~5.2]{bonk2006quasiconformal}), predicts that if the boundary at infinity $\partial_{\infty} G$ of a Gromov hyperbolic group $G$ is homeomorphic to $S^2$, then $\partial_{\infty} G$ equipped with a visual metric is quasisymmetrically equivalent to the Riemann sphere $\ccx$ equipped with the chordal metric. 

Inspired by Sullivan’s dictionary and their interest in Cannon’s conjecture, M.~Bonk and D.~Meyer \cite{bonk2010expanding,bonk2017expanding}, as well as P.~Ha{\"i}ssinsky and K.M.~Pilgrim \cite{haissinsky2009coarse}, studied a subclass of Thurston maps, called \emph{expanding Thurston maps}, by imposing some additional condition of expansion.
These maps are characterized by a contraction property for inverse images (see Subsection~\ref{sub:Thurston_maps} for the precise definition). 
In particular, a postcritically-finite rational map on $\ccx$ is expanding if and only if its Julia set is equal to $\ccx$.
For an expanding Thurston map on $S^{2}$, we can equip $S^2$ with a natural class of metrics $d$, called \emph{visual metrics} (see Subsection~\ref{sub:Thurston_maps} for details), that are snowflake equivalent to each other and are constructed in a similar way as the visual metrics on the boundary $\partial_{\infty} G$ of a Gromov hyperbolic group $G$ (see \cite[Chapter~8]{bonk2017expanding} for details, and see \cite{haissinsky2009coarse} for a related construction). 
In the language above, the following theorem was obtained in \cite{bonk2010expanding,bonk2017expanding,haissinsky2009coarse}, which can be seen as an analog of Cannon’s conjecture for expanding Thurston maps.

\begin{theorem*}[M.~Bonk \& D.~Meyer \cite{bonk2010expanding,bonk2017expanding}; P.~Ha{\"i}ssinky \& K.M.~Pilgrim \cite{haissinsky2009coarse}]
    Let $f \colon S^2 \mapping S^2$ be an expanding Thurston map with no periodic critical points and $d$ be a visual metric for $f$. 
    Then $f$ is topologically conjugate to a rational map if and only if $(S^2, d)$ is quasisymmetrically equivalent to $\ccx$. 
\end{theorem*}

The dynamical systems that we study in this paper are called \emph{subsystems} of expanding Thurston maps, inspired by a translation of the notion of subgroups from geometric group theory via Sullivan’s dictionary.
To clarify this concept, we consider an expanding Thurston map $f \colon S^{2} \mapping S^{2}$ and a Jordan curve $\mathcal{C} \subseteq S^{2}$ that contains the postcritical set $\post{f}$.
The condition $\post{f} \subseteq \mathcal{C}$ ensures that the closure of each connected component of $S^{2} \setminus f^{-n}(\mathcal{C})$ is a closed Jordan region.
We call each such set an \emph{$n$-tile}.
Consider some $1$-tiles and denote their union by $U$.
The restriction $F \define f|_{U}$ is called a \emph{subsystem of $f$ with respect to $\mathcal{C}$}, and the dynamics of $F \colon U \mapping S^2$ generates the \emph{tile maximal invariant set} $\limitset \subseteq S^{2}$, which is the intersection of unions of $n$-tiles contained in $F^{-n}(S^2)$ for $n \in \n$ (see Subsection~\ref{sub:Definition of subsystems} for a detailed discussion).

For expanding Thurston maps, roughly speaking, $1$-tiles together with the maps restricted to those tiles play a role similar to that of generators in the context of Gromov hyperbolic groups. 
For example, one can recover $S^2$ and the original map $f$ from all its $1$-tiles and the dynamics on those tiles.
If we consider all $n$-tiles for some $n \in \n$, we obtain an iterate $f^{n}$ of $f$, which corresponds to a finite-index subgroup of the original group in the group setting.
Given such similarity, it is natural to investigate more general cases, such as dynamics generated by certain $1$-tiles, which leads to our study of subsystems. 
We remark that although the concept of a subsystem shares certain similarities with the notion of a repeller (see \cite{pesin1997dimension,przytycki2010conformal}), the latter typically requires smooth and uniformly expanding assumptions, neither of which are satisfied by a subsystem.

According to Sullivan's dictionary, an expanding Thurston map is associated with a Gromov hyperbolic group whose boundary at infinity is $S^2$.
In this context, a subsystem corresponds to a Gromov hyperbolic group whose boundary at infinity is a subset of $S^{2}$.
In particular, for Gromov hyperbolic groups whose boundary at infinity is a \sierpinski carpet, there is an analog of Cannon's conjecture---the Kapovich--Kleiner conjecture. 
It predicts that these groups arise from some standard situation in hyperbolic geometry.
Similar to Cannon's conjecture, one can reformulate the Kapovich--Kleiner conjecture in an equivalent way as a question related to quasisymmetric uniformization.
For subsystems, it is easy to find examples where the tile maximal invariant set is homeomorphic to the standard \sierpinski carpet (see Subsection~\ref{sub:Definition of subsystems} for examples of subsystems).
In this case, an analog of the Kapovich--Kleiner conjecture for subsystems is under investigation \cite{bonk2024dynamical}. 

In this paper, we delve into the dynamics of subsystems of expanding Thurston maps from the perspective of ergodic theory.
Ergodic theory has played a crucial role in the study of dynamical systems.
The investigation of invariant measures has been a central part of ergodic theory. 
However, a dynamical system may possess a large class of invariant measures, some of which may be more interesting than others. 
It is, therefore, crucial to examine the relevant invariant measures.

The \emph{thermodynamic formalism} serves as a viable mechanism for generating invariant measures endowed with desirable properties.
More precisely, for a continuous transformation on a compact metric space, we can consider the \emph{topological pressure} as a weighted version of the \emph{topological entropy}, with the weight induced by a real-valued continuous function, called \emph{potential}. 
The Variational Principle identifies the topological pressure with the supremum of its measure-theoretic counterpart, the \emph{measure-theoretic pressure}, over all invariant Borel probability measures \cite{bowen1975equilibrium, walters1982introduction}. Under additional regularity assumptions on the transformation and the potential, one gets the existence and uniqueness of an invariant Borel probability measure maximizing the measure-theoretic pressure, called the \emph{equilibrium state} for the given transformation and the potential. 
The study of the existence and uniqueness of the equilibrium states and their various other properties, such as ergodic properties, equidistribution, fractal dimensions, etc., has been the primary motivation for much research in the area.

The ergodic theory for expanding Thurston maps has been investigated in \cite{li2017ergodic} by the first-named author of the current paper.
In \cite{li2018equilibrium}, the first-named author developed the thermodynamic formalism and investigated the existence, uniqueness, and other properties of equilibrium states for expanding Thurston maps.
In particular, for expanding Thurston maps without periodic critical points, using a general framework devised by Y.~Kifer \cite{kifer1990large}, the first-named author established level-$2$ large deviation principles for iterated preimages and periodic points in \cite{li2015weak}.

The current paper is the first in a series of two papers (along with \cite{shi2024uniqueness}) investigating the ergodic theory of subsystems of expanding Thurston maps.
In this paper, we develop the thermodynamic formalism for strongly irreducible subsystems of expanding Thurston maps. 
For these subsystems, we establish the Variational Principle and the existence of equilibrium states. 
Consequently, we derive large deviation asymptotics for (original) expanding Thurston maps.

In the second paper \cite{shi2024uniqueness} in this series, we establish the uniqueness and ergodic properties of equilibrium states for strongly primitive subsystems of expanding Thurston maps.
Building on the results regarding existence and uniqueness of equilibrium states, for strongly primitive subsystems of expanding Thurston maps without periodic critical points, we obtain level-$2$ large deviation principles for the distributions of Birkhoff averages and iterated preimages.

Our investigation of subsystems in this series also has applications in the ergodic properties and large deviation theories of expanding Thurston maps.
In \cite{shi2024entropy}, for any expanding Thurston map, even in the presence of periodic critical points, we prove, by using subsystems, the entropy density of ergodic measures and establish level-$2$ large deviation principles for the distributions of Birkhoff averages, periodic points, and iterated preimages, which generalizes the corresponding results in \cite{li2015weak}. 
Additionally, by constructing suitable subsystems, we show that the entropy map of an expanding Thurston map $f$ is upper semi-continuous if and only if $f$ has no periodic critical points. 
This finding provides a negative answer to the question posed in \cite{li2015weak} and indicates that the method used there to prove large deviation principles is not applicable to expanding Thurston maps with periodic critical points.
In this context, subsystems serve as practical tools for studying expanding Thurston maps.
We expect more applications of subsystems in the future.

\subsection{Main results}%
\label{sub:Main results}

Our main results consist of two parts.
We first establish the Variational Principle and the existence of equilibrium states for subsystems of expanding Thurston maps.
Then as an application, we obtain large deviation asymptotics for expanding Thurston maps.
In particular, these results apply to postcritically-finite rational maps without periodic critical points.

\subsubsection*{Variational Principle and the existence of equilibrium states}%
\label{ssub:Variational Principle and the existence of equilibrium states}

In order to present our results more precisely, we briefly review some key concepts.
We refer the reader to Section~\ref{sec:Preliminaries} for a detailed discussion.

Let $f \colon S^2 \mapping S^2$ be an expanding Thurston map with a Jordan curve $\mathcal{C}\subseteq S^2$ satisfying $\post{f} \subseteq \mathcal{C}$. 
Here $\post{f} \define \bigcup_{n \in \n} \set{f^{n}(c) \describe c \in S^2 \text{ is a critical point of } f}$.
For each $n \in \n_0$, the set of $n$-tiles is 
\[
    \Tile{n} \define \{ X^n \describe X^n \text{ is the closure of a connected component of } S^2 \setminus f^{-n}(\mathcal{C}) \}.    
\]

We say that a map $F \colon \domF \mapping S^2$ is a \emph{subsystem of $f$ with respect to $\mathcal{C}$} if $\domF = \bigcup \mathfrak{X}$ for some non-empty subset $\mathfrak{X} \subseteq \Tile{1}$ and $F = f|_{\domF}$.
We denote by $\subsystem$ the set of subsystems of $f$ with respect to $\mathcal{C}$.

Consider a subsystem $F \in \subsystem$. For each $n \in \n_0$, we define the \emph{set of $n$-tiles of $F$} to be 
\[
    \Domain{n} \define \{ X^n \in \Tile{n} \describe X^n \subseteq F^{-n}(F(\domF)) \},
\]
where we set $F^0 \define \id{S^{2}}$ when $n = 0$. We call each $X^n \in \Domain{n}$ an \emph{$n$-tile} of $F$. 
We define the \emph{tile maximal invariant set} associated with $F$ with respect to $\mathcal{C}$ to be
\[
    \limitset = \limitset(F, \mathcal{C}) \define \bigcap_{n \in \n} \parentheses[\Big]{ \bigcup \Domain{n} }.
\]

We note that $\limitset \subseteq S^2$ is compact and is forward invariant under $F$, i.e., $F(\limitset) \subseteq \limitset$ (see Proposition~\ref{prop:subsystem:properties}~\ref{item:subsystem:properties:limitset forward invariant}).
Hence, we can consider the restriction $F|_{\limitset} \colon \limitset \to \limitset$ and its iterates.
In this context, we denote by $\mathcal{M}(\limitset, F|_{\limitset})$ the set of $F|_{\limitset}$-invariant Borel probability measures on $\limitset$, and by $h_{\mu}(F|_{\limitset})$ the measure-theoretic entropy of $F|_{\limitset}$ for $\mu \in \mathcal{M}(\limitset, F|_{\limitset})$ (see Subsection~\ref{sub:thermodynamic formalism} for precise definitions).

We define the \emph{topological pressure of a subsystem} $F$ with respect to a \emph{potential} $\varphi \in C(S^2)$ by
\[
    \pressure[\varphi] \define \liminf_{n \mapping +\infty} \frac{1}{n} \log \parentheses{ Z_{n}(F, \varphi) },
\]
where \[
    Z_{n}(F, \varphi) \define \sum_{X^n \in \Domain{n}} \myexp[\bigg]{ \sup \set[\bigg]{ \sum_{i = 0}^{n - 1} \varphi(F^{i}(x)) \describe x \in X^n } }.
\]
Observe the difference between $P(F, \varphi)$ and the classical notion of topological pressure $P(F|_{\limitset}, \varphi|_{\limitset})$ for the map $F|_{\limitset} \colon \limitset \mapping \limitset$ (see the remark after Definition~\ref{def:pressure for subsystem} for details).

In the following theorem, under the additional assumption that the Jordan curve $\mathcal{C}$ is forward invariant, we establish the Variational Principle and the existence of the equilibrium state for a strongly irreducible subsystem (see Definition~\ref{def:irreducibility of subsystem}) and a \holder continuous potential with respect to a visual metric.

\begin{theorem}        \label{thm:main:thermodynamic formalism for subsystems}
    Let $f \colon S^2 \mapping S^2$ be an expanding Thurston map and $\mathcal{C} \subseteq S^2$ be a Jordan curve containing $\post{f}$ with the property that $f(\mathcal{C}) \subseteq \mathcal{C}$. 
    Let $d$ be a visual metric on $S^2$ for $f$ and $\phi$ be a real-valued \holder continuous function on $S^2$ with respect to the metric $d$.
    Consider a strongly irreducible subsystem $F \in \subsystem$ and denote its tile maximal invariant set by $\limitset$.
    Then
    \begin{equation}    \label{eq:thm:main:thermodynamic formalism for subsystems:subsystem Variational Principle}
        \pressure = \sup \biggl\{ h_{\mu}(F|_\limitset) + \int_{\limitset} \! \phi \,\mathrm{d}\mu \describe \mu \in \mathcal{M}(\limitset, F|_{\limitset}) \biggr\}.
    \end{equation}
    Moreover, there exists an equilibrium state $\mu_{F, \potential}$ for $F|_{\limitset}$ and $\phi|_{\limitset}$ attaining the supremum in \eqref{eq:thm:main:thermodynamic formalism for subsystems:subsystem Variational Principle}.
\end{theorem}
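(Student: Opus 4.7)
The plan is to establish both directions of the variational equality \eqref{eq:thm:main:thermodynamic formalism for subsystems:subsystem Variational Principle} and extract an equilibrium state as a weak-$*$ accumulation point of tile-weighted empirical measures, adapting the tile-based Bowen-type machinery developed for the full map in \cite{li2018equilibrium} to the subsystem $F|_{\limitset}$, with strong irreducibility playing the role that topological mixing plays in the full-map case.

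For the upper bound $h_{\mu}(F|_{\limitset}) + \int \phi \, \mathrm{d}\mu \le \pressure$, I fix $\mu \in \mathcal{M}(\limitset, F|_{\limitset})$ and construct measurable partitions $\mathcal{P}_n$ of $\limitset$ from the covers $\bigl\{ X \cap \limitset : X \in \Domain{n} \bigr\}$. Since $d$ is a visual metric for $f$, $n$-tile diameters decay exponentially; combined with \holder regularity of $\phi$ this gives a uniform distortion bound $\sup_{X^n} S^F_n \phi - \inf_{X^n} S^F_n \phi \le C_0$ with $C_0$ independent of $n$ and $X^n \in \Domain{n}$. The elementary inequality
\[
    H_{\mu}(\mathcal{P}_n) + \int S^F_n \phi \, \mathrm{d}\mu \le \log \sum_{A \in \mathcal{P}_n} \exp \Bigl( \sup_{x \in A} S^F_n \phi(x) \Bigr),
\]
which is a direct consequence of Jensen's inequality, combined with the distortion estimate yields $H_{\mu}(\mathcal{P}_n) + \int S^F_n \phi \, \mathrm{d}\mu \le \log Z_n(F, \phi) + C_0$. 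Dividing by $n$, taking the liminf, and noting that the tile partitions are generating for $F|_{\limitset}$ (again by exponential decay of diameters) yields the claimed upper bound.

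For the lower bound and the existence of an equilibrium state, I define reference measures
\[
    \mu_n \define \frac{1}{Z_n(F, \phi)} \sum_{X^n \in \Domain{n}} \exp \bigl( \sup \bigl\{ S^F_n \phi(y) : y \in X^n \bigr\} \bigr) \, \delta_{x_{X^n}},
\]
where $x_{X^n} \in X^n$ (nearly) attains the supremum; average them into $\nu_n \define \frac{1}{n} \sum_{k=0}^{n-1} F^k_* \mu_n$; and pass to a weak-$*$ subsequential limit $\nu$ along a sequence on which $\frac{1}{n} \log Z_n(F, \phi) \to \pressure$. Three items must then be verified: (i) $F$-invariance of $\nu$, by the usual Krylov--Bogolyubov telescoping of $\nu_n - F_* \nu_n$; (ii) $\mathrm{supp}(\nu) \subseteq \limitset$, from $\mathrm{supp}(\mu_n) \subseteq \bigcup \Domain{n}$ and $\limitset = \bigcap_{n \in \n} \bigcup \Domain{n}$; (iii) a standard Misiurewicz-type argument combining the explicit lower bound $H_{\mu_n}(\mathcal{P}_n) + \int S^F_n \phi \, \mathrm{d}\mu_n \ge \log Z_n(F, \phi) - C_0$ (from the form of $\mu_n$ and the distortion estimate), the concavity of $\mu \mapsto H_\mu(\mathcal{Q})$, and the upper semi-continuity of $\mu \mapsto H_\mu(\mathcal{Q})$ for partitions $\mathcal{Q}$ with $\nu$-null boundary. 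The outcome is $h_{\nu}(F|_{\limitset}) + \int \phi \, \mathrm{d}\nu \ge \pressure$, so that $\mu_{F, \phi} \define \nu$ is an equilibrium state.

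The main obstacle is the strong irreducibility hypothesis. Without it the sequence $n \mapsto \frac{1}{n} \log Z_n(F, \phi)$ need not converge, and the tile counts may concentrate on a proper forward-invariant sub-collection onto which the weak-$*$ limit $\nu$ retreats, breaking the generation property of the partitions $\mathcal{P}_n$ and with it the lower bound. I expect strong irreducibility to supply a quasi-supermultiplicativity estimate of the form $Z_{m+n}(F, \phi) \ge C Z_m(F, \phi) Z_n(F, \phi)$ uniformly in $m, n \in \n$, which upgrades the defining liminf to an honest limit via Fekete's lemma and guarantees that the reference measures $\mu_n$ distribute comparable mass across tiles at every level. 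A secondary technical point is arranging that the partitions used to compute the entropy of $\nu$ have $\nu$-null boundaries; this is handled by a standard perturbation argument that slightly refines the tile partition via $\nu$-null modifications.
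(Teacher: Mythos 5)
Your route is genuinely different from the paper's. The paper builds the equilibrium state explicitly as $\eigfun\, m_\phi$, where $m_\phi$ is a fixed point of the adjoint of a ``split'' Ruelle operator (acting on $C\bigl(X^0_{\black}\bigr)\times C\bigl(X^0_{\white}\bigr)$) and $\eigfun$ is a corresponding eigenfunction; the upshot is a Gibbs measure with $P_{\equstate} = P(F,\phi)$, and the Variational Principle is obtained by combining a separated-sets estimate ($P(F,\phi)\ge P\bigl(F|_{\limitset},\phi\bigr)$, Proposition~\ref{prop:characterization of pressures of subsystems in separated sets}) with Shannon--McMillan--Breiman applied to the Gibbs measure (Proposition~\ref{prop:subsystem:Gibbs pressure less than topological pressure}). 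You instead follow the Misiurewicz route: tile-weighted reference measures, Krylov--Bogolyubov averaging, weak-$*$ accumulation, and a dyadic entropy decomposition. Your upper bound via the Jensen/Gibbs inequality is fine (and somewhat slicker than the separated-sets argument), once one is careful that the atoms of $\mathcal{P}_n$ agree with $\bigvee_{j<n}(F|_{\limitset})^{-j}\mathcal{P}_1$, which holds by the invariance of $\mathcal{C}$.

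The genuine gap is in the lower bound, at the boundary-null condition. The Misiurewicz decomposition requires that the partition at the fixed scale $q$ have $\nu$-null boundary for the weak-$*$ limit $\nu$, and the partition you are forced to use is the tile partition, whose boundary is $f^{-q}(\mathcal{C})\cap\limitset$ --- a rigid geometric object, not a ball whose radius you can wiggle. You cannot shrink or perturb the tile boundaries because the entropy lower bound $H_{\mu_n}(\mathcal{P}_n)\ge\log Z_n(F,\phi)-C_0$ is tied to the exact tile cells carrying the mass of $\mu_n$, and any replacement auxiliary partition with $\nu$-null boundary requires a counting estimate relating $n$-tiles to its atoms that ultimately reduces to the same question. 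In the paper, showing that $\mathcal{C}$ and its preimages are null is precisely where the real work happens (Theorem~\ref{thm:subsystem:eigenmeasure existence and basic properties}~\ref{item:thm:subsystem:eigenmeasure existence and basic properties:strongly irreducible vertex zero measure} and Proposition~\ref{prop:subsystem edge Jordan curve has measure zero}), and the proof uses the Jacobian/Gibbs structure supplied by the split Ruelle operator together with the local degree matrix machinery --- tools that a weak-$*$ limit does not hand you. This is where your sketch would need to be completed, and it is not a ``standard perturbation''.

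Your diagnosis of the role of strong irreducibility is also off target. The convergence of $\frac{1}{n}\log Z_n(F,\phi)$ is established by \emph{sub}multiplicativity and Fekete (Lemmas~\ref{lem:partition function is submultiplicative} and~\ref{lem:subadditive sequence}), which only uses the $f$-invariance of $\mathcal{C}$, not irreducibility, so no quasi-supermultiplicativity is needed for the limit to exist. Strong irreducibility instead enters (a) in ensuring that $\limitset$ penetrates the interiors of the $0$-tiles, so that the $1$-vertices, edges, and ultimately $\mathcal{C}$ are $\eigmea$-null; (b) in the uniform distortion bound across colors (Lemma~\ref{lem:distortion lemma for subsystem}~\ref{item:lem:distortion lemma for subsystem:uniform bound}) that makes the eigenfunction bounded above and below. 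These are exactly the ingredients your proposal leans on implicitly in the ``perturbation'' step and would need to make precise; you would also want to pick the tile representatives $x_{X^n}$ inside $X^n\cap\limitset$ (possible by irreducibility, via Proposition~\ref{prop:no degenerate no die}) so that $\mu_n$ is actually supported on $\limitset$. Finally, note that the paper's more elaborate route is not just a stylistic choice: it yields a Gibbs equilibrium state, which is what the large-deviation application in Section~\ref{sec:Large deviation asymptotics for expanding Thurston maps} needs; your approach, if completed, would give existence but not the Gibbs property.
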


\begin{rmk}
    When the subsystem $F$ is strongly primitive (see Definition~\ref{def:primitivity of subsystem}), we prove the uniqueness and various ergodic properties of the equilibrium state $\mu_{F, \potential}$, including mixing. 
    We also show that the preimages are equidistributed with respect to $\mu_{F, \potential}$.
    As these proofs employ new techniques beyond the scope of this paper, we present these results in the companion paper \cite{shi2024uniqueness} to enhance accessibility of the current paper for a broader audience who are, for independent reasons, potentially interested in the basic notions of subsystems introduced in this paper.
\end{rmk}

\subsubsection*{Large deviation asymptotics}%
\label{ssub:Large deviation asymptotics}

Our second result is about the large deviations asymptotics for expanding Thurston maps. 
Applying Theorem~\ref{thm:main:thermodynamic formalism for subsystems}, we provide exponential upper bounds for deviations. 

Large deviation theory first arose in probability and usually studies the asymptotic behavior of the probability $\mathbb{P} \bigl\{ \frac{1}{n} \sum_{k = 1}^{n} X_{n} \in I \bigr\}$ as $n \to +\infty$ for a sequence $\sequen{X_{n}}$ of real-valued random valuables and an interval $I \subseteq \real$. 
If a law of large numbers holds, such that $\frac{1}{n} \sum_{k = 1}^{n} X_{k} \to \overline{X}$ as $n \to +\infty$ and $\overline{X} \notin I$, then the above probabilities tend to zero. 
In some cases, it is possible to demonstrate that the convergence is exponentially fast, which brings large deviation theory into play, with its primary goal being to describe the corresponding exponent.  

In order to state the result more precisely, we briefly review some key concepts.

For an expanding Thurston map $f \colon S^2 \mapping S^2$ and a \holder continuous function $\phi \colon S^2 \mapping \real$ (with respect to a given visual metric for $f$ on $S^2$), there exists a unique equilibrium state $\mu_{\phi}$ for $f$ and $\phi$ (see Theorem~\ref{thm:properties of equilibrium state}~\ref{item:thm:properties of equilibrium state:existence and uniqueness}). 
Two real-valued functions $\juxtapose{\varphi}{\psi} \in C(S^2)$ are called co-homologous in $C(S^2)$ (with respect to $f$) if there exists a function $u \in C(S^2)$ such that $\varphi - \psi = u\circ f - u$. 
In the following, we assume that $\phi \colon S^2 \mapping \real$ is \holder continuous (with respect to a given visual metric for $f$ on $S^2$) and is not co-homologous to a constant in $C(S^2)$. 

We write $\mathcal{I}_{\phi} \define [\minenergy, \maxenergy]$, where 
\begin{equation}    \label{eq:def:maxenergy and minenergy}
    \minenergy \define \min_{\mu\in \mathcal{M}(S^2,f)} \int \! \phi \,\mathrm{d}\mu
    \quad \text{ and }\quad 
    \maxenergy \define \max_{\mu\in \mathcal{M}(S^2,f)} \int \! \phi \,\mathrm{d}\mu. 
\end{equation}
Here $\mathcal{M}(S^2,f)$ is the set of $f$-invariant Borel probability measures on $S^2$, which is convex and compact with respect to the weak* topology. 
In particular, we have $\interior{\mathcal{I}_{\phi}} \ne \emptyset$ and $\int \phi \,\mathrm{d}\mu_{\phi} \in (\minenergy, \maxenergy)$ (see Proposition~\ref{prop:properties of rate function}~\ref{item:prop:properties of rate function:non-degenerate}).

We define a \emph{rate function} $I = I_{f, \phi} \colon [\minenergy,\maxenergy] \mapping [0, +\infty)$ for $f$ and $\phi$ by
\begin{equation}    \label{eq:definition of rate function}
    I(\alpha) = I_{f, \phi}(\alpha) \define \inf \biggl\{P(f,\phi) - P_{\mu}(f,\phi)  \describe \int \! \phi \,\mathrm{d}\mu = \alpha,\ \mu\in \mathcal{M}(S^2,f) \biggr\},
\end{equation}
where $P(f, \phi)$ is the topological pressure of $f$ with respect to $\phi$ and $P_{\mu}(f, \phi)$ is the measure-theoretic pressure of $f$ for $\mu$ and $\phi$ (see Subsection~\ref{sub:thermodynamic formalism} for definitions).

Now we are able to state our second result.

\begin{theorem}[Large deviations asymptotics]    \label{thm:large deviation asymptotics}
    Let $f \colon S^2 \mapping S^2$ be an expanding Thurston map and $d$ be a visual metric on $S^2$ for $f$. 
    Let $\phi$ be a real-valued \holder continuous function on $S^2$ (with respect to the metric $d$) that is not co-homologous to a constant in $C(S^2)$. 
    Let $\mu_{\phi}$ be the unique equilibrium state for the map $f$ and the potential $\phi$. 
    Denote $\gamma_{\phi} \define \int \! \phi \,\mathrm{d}\mu_{\phi}$.
    Then for each $\alpha \in (\minenergy, \maxenergy)$, there exists an integer $N \in \n$ and a real number $C_{\alpha} > 0$ such that for each integer $n \geqslant N$,
    \begin{equation*}    \label{eq:large deviation asymptotics}
        \mu_{\phi}\Bigl(\Bigl\{ x \in S^2 \describe \sgn{\alpha - \gamma_{\phi}} \frac{1}{n} S_n\phi(x) \geqslant \sgn{\alpha - \gamma_{\phi}} \alpha \Bigr\} \Bigr)
        \leqslant C_{\alpha} e^{-I(\alpha)n},
    \end{equation*}
    where $S_{n}\phi(x) \define \sum_{i = 0}^{n - 1} \phi(f^{i}(x))$, $\minenergy$ and $\maxenergy$ are defined by \eqref{eq:def:maxenergy and minenergy}, and $I(\alpha)$ is defined by \eqref{eq:definition of rate function}.
\end{theorem}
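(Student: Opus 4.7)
The plan is to combine an exponential Chebyshev-type inequality with the Gibbs-type upper bound for the equilibrium state $\mu_{\phi}$, to control the resulting exponential sum over $n$-tiles via the topological pressure of $f$ applied to a perturbed potential $(1+t)\phi$, and then to identify the optimized exponent with the rate function $I(\alpha)$ by Legendre duality. By replacing $\phi$ with $-\phi$ and $\alpha$ with $-\alpha$ if necessary, I would reduce to the case $\alpha > \gamma_{\phi}$, where the event becomes $A_{n,\alpha} \define \{x \in S^{2} \describe S_{n}\phi(x) \geqslant n\alpha\}$.

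First I would cover $A_{n,\alpha}$ (up to a $\mu_{\phi}$-null boundary) by the family $\mathfrak{X}_{n} \define \{X^{n} \in \Tile{n} \describe \sup_{X^{n}} S_{n}\phi \geqslant n\alpha\}$ and invoke the uniform Gibbs upper bound
\[
\mu_{\phi}(X^{n}) \leqslant C_{1} \exp\bigl(-nP(f,\phi) + \sup_{X^{n}} S_{n}\phi \bigr), \qquad X^{n} \in \Tile{n},\ n \in \n,
\]
coming from the regularity theory in \cite{li2018equilibrium} (and reobtainable from Theorem~\ref{thm:main:thermodynamic formalism for subsystems} applied to the trivial subsystem $F = f$). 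Summing yields $\mu_{\phi}(A_{n,\alpha}) \leqslant C_{1} e^{-n P(f,\phi)} \sum_{X^{n} \in \mathfrak{X}_{n}} e^{\sup_{X^{n}} S_{n}\phi}$. For any $t \geqslant 0$, the inequality $\sup_{X^{n}} S_{n}\phi \geqslant n\alpha$ on $\mathfrak{X}_{n}$ gives
\[
\sum_{X^{n} \in \mathfrak{X}_{n}} e^{\sup_{X^{n}} S_{n}\phi} \leqslant e^{-tn\alpha} \sum_{X^{n} \in \Tile{n}} e^{(1+t)\sup_{X^{n}} S_{n}\phi} = e^{-tn\alpha} Z_{n}(f, (1+t)\phi),
\]
and applying Theorem~\ref{thm:main:thermodynamic formalism for subsystems} to $F = f$, together with a standard distortion argument, gives $Z_{n}(f, (1+t)\phi) \leqslant C_{2}(t)\,e^{n P(f, (1+t)\phi)}$. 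Combining these estimates yields
\[
\mu_{\phi}(A_{n,\alpha}) \leqslant C_{1} C_{2}(t) \exp\bigl(-n[t\alpha - Q(t)]\bigr), \qquad Q(t) \define P(f,(1+t)\phi) - P(f,\phi).
\]

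Finally I would optimize over $t \geqslant 0$. The function $Q$ is convex with $Q(0) = 0$, and differentiability of the pressure at $t = 0$ (a consequence of the uniqueness of $\mu_{\phi}$) gives $Q'(0^{+}) = \gamma_{\phi}$. For $\alpha > \gamma_{\phi}$, the Legendre transform $Q^{*}(\alpha) \define \sup_{t \geqslant 0}\{t\alpha - Q(t)\}$ is therefore strictly positive and attained at some $t^{*} = t^{*}(\alpha) > 0$. The claim $Q^{*}(\alpha) = I(\alpha)$ splits into two directions. The inequality $Q^{*}(\alpha) \leqslant I(\alpha)$ follows from the Variational Principle applied to $(1+t)\phi$: for every $\mu \in \mathcal{M}(S^{2}, f)$ with $\int \phi\,\mathrm{d}\mu = \alpha$,
\[
t\alpha - Q(t) \leqslant t\alpha - \bigl(h_{\mu}(f) + (1+t)\alpha\bigr) + P(f,\phi) = P(f,\phi) - P_{\mu}(f,\phi).
\]
The reverse inequality would be obtained by choosing $\mu$ to be any equilibrium state for the (H\"older continuous) potential $(1+t^{*})\phi$; its $\phi$-average equals $\alpha$ by the supporting-line property of the convex function $Q$ at $t^{*}$, and a short computation shows $P(f,\phi) - P_{\mu}(f,\phi) = Q^{*}(\alpha)$. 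Setting $t = t^{*}(\alpha)$ and $C_{\alpha} \define C_{1} C_{2}(t^{*}(\alpha))$ then yields the stated bound.

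The main technical obstacle will be securing the uniform Gibbs estimate together with the sharp pressure asymptotic $Z_{n}(f, (1+t)\phi) \leqslant C_{2}(t)\,e^{n P(f,(1+t)\phi)}$; this is precisely where Theorem~\ref{thm:main:thermodynamic formalism for subsystems} (and the underlying Ruelle-operator analysis for subsystems developed in the paper) is decisive, since $f$ itself qualifies as a strongly irreducible subsystem of $f$. Once those ingredients are in hand, the convex-duality identification $Q^{*}(\alpha) = I(\alpha)$ inside the open interval $(\minenergy, \maxenergy)$ is routine.
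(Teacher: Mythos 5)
Your proposal is correct in substance but follows a genuinely different route from the paper. The paper proves Theorem~\ref{thm:large deviation asymptotics} by passing to an iterate $f^{K}$ with an invariant Jordan curve, building the pair sets $P^{n}(\alpha)$, showing the associated subsystems $f^{n}|_{P^{n}(\alpha)}$ are strongly primitive (Lemma~\ref{lem:subsystem is strongly primitive for high level pair}), and then invoking the Variational Principle and existence of equilibrium states for subsystems (Theorems~\ref{thm:subsystem characterization of pressure} and~\ref{thm:existence of equilibrium state for subsystem}) to produce, for each large $n$, an $f$-invariant measure $\mu$ with $\int\phi\,\mathrm{d}\mu$ near $\alpha$ and $\mu_{\phi}(P^{n}(\alpha))\lesssim e^{n(P_{\mu}(f,\phi)-P(f,\phi))}$; the rate function then enters through Proposition~\ref{prop:properties of rate function}. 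You instead run the classical Chernoff--Legendre upper bound: cover the deviation set by $n$-tiles, use the Gibbs property of $\mu_{\phi}$ from \cite{li2018equilibrium}, tilt by $(1+t)\phi$, bound $Z_{n}(f,(1+t)\phi)\leqslant C_{2}(t)e^{nP(f,(1+t)\phi)}$ via the Gibbs property of $\mu_{(1+t)\phi}$ (plus the fact that $\mu_{(1+t)\phi}$ gives zero mass to $\bigcup_{j}f^{-j}(\mathcal{C})$, so the tile measures sum to $1$), and identify $\sup_{t}\{t\alpha-Q(t)\}$ with $I(\alpha)$ exactly as in Proposition~\ref{prop:properties of rate function}~(ii), since the optimum is at $t^{*}=\xi(\alpha)-1$. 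Your route bypasses subsystems and pair structures entirely, needs no invariant curve (an eventually invariant one suffices for the tile/Gibbs machinery), and yields the bound for all $n\in\n$ rather than $n\geqslant N$; what it buys is economy, at the cost of relying on the full strength of the uniqueness, Gibbs property, and differentiability of $t\mapsto P(f,t\phi)$ from \cite{li2018equilibrium}, whereas the paper's detour through subsystems is designed to showcase machinery that generalizes (as the authors indicate) beyond situations where those spectral inputs are available.

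Two small corrections. First, the reduction ``replace $\phi$ by $-\phi$ and $\alpha$ by $-\alpha$'' does not literally work: the reference measure $\mu_{\phi}$ and the rate function $I_{f,\phi}$ are attached to $\phi$, and $\mu_{-\phi}\neq\mu_{\phi}$ in general. The correct fix is immediate: for $\alpha<\gamma_{\phi}$ run the same argument on $\{S_{n}\phi\leqslant n\alpha\}$ with a nonpositive tilting parameter $t\leqslant 0$ (equivalently $s=1+t$ ranging over $(-\infty,1]$, which is harmless since $P(f,s\phi)$ and the Gibbs property of $\mu_{s\phi}$ make sense for all real $s$), and the optimizer is again $s^{*}=\xi(\alpha)<1$. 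Second, the parenthetical claim that the Gibbs estimate is ``reobtainable from Theorem~\ref{thm:main:thermodynamic formalism for subsystems} applied to $F=f$'' is not accurate as stated: that theorem requires $f(\mathcal{C})\subseteq\mathcal{C}$ and gives existence of an equilibrium state, not the Gibbs inequality for the (unique) equilibrium state $\mu_{\phi}$; the step really does rest on Proposition~\ref{prop:equilibrium state is gibbs measure} (and Theorem~\ref{thm:properties of equilibrium state}) from \cite{li2018equilibrium}, which is what you cite primarily, so the argument stands once the parenthetical is dropped.
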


For the rate function, $I(\alpha) = 0$ holds if and only if $\alpha = \gamma_{\phi}$ (see Proposition~\ref{prop:properties of rate function}~\ref{item:prop:properties of rate function:second order differentiable strictly convex boundary behavior}).
Hence, $I$ gives control on exponential rates of the convergence.
Moreover, it follows from Proposition~\ref{prop:properties of rate function}~\ref{item:prop:properties of rate function:expression of rate function} and \cite[Theorems~1.1~(5) and~1.2]{das2021thermodynamic} that the exponents $I(\alpha)$ in Theorem~\ref{thm:large deviation asymptotics} are the best possible.

\subsubsection*{Applications to rational maps}%
\label{ssub:Applications to rational maps}

Recall that a postcritically-finite rational map is expanding if and only if it has no periodic critical points (see \cite[Proposition~2.3]{bonk2017expanding}).
Therefore, when we restrict our attention to postcritically-finite rational maps, we obtain the following corollaries of Theorems~\ref{thm:main:thermodynamic formalism for subsystems}, \ref{thm:large deviation asymptotics}, and Remark~\ref{rem:chordal metric visual metric qs equiv}.

\begin{corollary}    \label{coro:pcf:thermodynamic formalism for subsystems}
    Let $f \colon \ccx \mapping \ccx$ be a postcritically-finite rational map without periodic critical points on the Riemann sphere $\ccx$ and $\mathcal{C} \subseteq \ccx$ be a Jordan curve containing $\post{f}$ with the property that $f(\mathcal{C}) \subseteq \mathcal{C}$.
    Let $\potential$ be a real-valued \holder continuous function on $\ccx$ with respect to the chordal metric.
    Consider a strongly irreducible subsystem $F \in \subsystem$ and denote its tile maximal invariant set by $\limitset$.
    Then 
    \begin{equation}    \label{eq:coro:pcf:thermodynamic formalism for subsystems:subsystem Variational Principle}
        \pressure = \sup \biggl\{ h_{\mu}(F|_\limitset) + \int_{\limitset} \! \phi \,\mathrm{d}\mu \describe \mu \in \mathcal{M}(\limitset, F|_{\limitset}) \biggr\}.
    \end{equation}
    Moreover, there exists an equilibrium state $\mu_{F, \potential}$ for $F|_{\limitset}$ and $\phi|_{\limitset}$ attaining the supremum in \eqref{eq:coro:pcf:thermodynamic formalism for subsystems:subsystem Variational Principle}.
\end{corollary}

\begin{corollary}[Large deviations asymptotics]    \label{coro:pcf:large deviation asymptotics}
    Let $f \colon \ccx \mapping \ccx$ be a postcritically-finite rational map without periodic critical points on the Riemann sphere $\ccx$.
    Let $\potential$ be a real-valued \holder continuous function on $\ccx$ (with respect to the chordal metric) that is not co-homologous to a constant in $C(\ccx)$.
    Let $\mu_{\phi}$ be the unique equilibrium state for the map $f$ and the potential $\phi$. 
    Denote $\gamma_{\phi} \define \int \! \phi \,\mathrm{d}\mu_{\phi}$.
    Then for each $\alpha \in (\minenergy, \maxenergy)$, there exists an integer $N \in \n$ and a real number $C_{\alpha} > 0$ such that for each integer $n \geqslant N$,
    \begin{equation*}    \label{eq:coro:pcf:large deviation asymptotics:large deviation asymptotics}
        \mu_{\phi}\Bigl(\Bigl\{ x \in S^2 \describe \sgn{\alpha - \gamma_{\phi}} \frac{1}{n}S_n\phi(x) \geqslant \sgn{\alpha - \gamma_{\phi}} \alpha \Bigr\} \Bigr)
        \leqslant C_{\alpha} e^{-I(\alpha)n},
    \end{equation*}
    where $S_{n}\phi(x) \define \sum_{i = 0}^{n - 1} \phi(f^{i}(x))$, $\minenergy$ and $\maxenergy$ are defined by \eqref{eq:def:maxenergy and minenergy}, and $I(\alpha)$ is defined by \eqref{eq:definition of rate function}.
\end{corollary}

\subsection{Strategy and organization of the paper}%
\label{sub:Strategy and organization of the paper}

We now discuss the strategy of the proofs of our main results and describe the organization of the paper.

To prove Theorem~\ref{thm:main:thermodynamic formalism for subsystems}, we define appropriate variants of the Ruelle operator called the \emph{split Ruelle operator}.
A similar technique was first used in \cite{baladi2002dynamical}, where the authors employed a version of the split Ruelle operator technique to reduce the problem to a uniformly expanding situation.
In the context of expanding Thurston maps, the concepts of splitting the Ruelle operators and split Ruelle operators were first introduced in \cite{li2018prime,li2024prime:split}.
In this paper, we generalize these concepts to subsystems.

By definition, a subsystem $F$ may not be a branched covering map on $S^2$.
Consequently, the local degree of $F$ at $x \in \domF$ may not make sense.
This leads to inadequate combinatorial structures when studying the dynamics. 
As a result, the Ruelle operator may not be well-defined and continuous. 
To address this, instead of a single number, we use four numbers arranged in the form of a $2 \times 2$ matrix, called the \emph{local degree matrix}, to describe the local degree (see Subsection~\ref{sub:Local degree}). 
Moreover, we show that such a local degree matrix is well-behaved under iteration.
We can then define the split Ruelle operator in our context (see Subsection~\ref{sub:Split Ruelle operators}). 
The idea is to ``split'' the Ruelle operator into two ``pieces'' so that the continuity is preserved under iteration in each piece, and to piece them together to obtain the split Ruelle operator on the product space.
Next, we prove the existence of an eigenfunction of this operator and the existence of an eigenmeasure of the adjoint operator.
With some additional work, we establish the existence of an equilibrium state and the Variational Principle.

To prove Theorem~\ref{thm:large deviation asymptotics}, we construct suitable subsystems and apply them within the thermodynamic formalism to bound deviations using the topological pressures of these subsystems.
This strategy is based on the approximation by subsystems in Takahasi's work~\cite{Takahasi2020} within the context of continued fractions.
However, constructing appropriate subsystems for approximation is not straightforward in our setting, where the dynamical systems are branched covering maps on $S^2$.
Indeed, we need to utilize the geometric properties of visual metrics and their interplay with the associated combinatorial structures (see Subsection~\ref{sub:Pair structures}).

\smallskip

We now describe the structure of the paper.

After fixing some notation in Section~\ref{sec:Notation}, we review some notions from ergodic theory and go over some key concepts and results on Thurston maps in Section~\ref{sec:Preliminaries}.
In Section~\ref{sec:The Assumptions}, we state the assumptions regarding some of the objects in this paper, which we will repeatedly refer to later as the \emph{Assumptions} in Section~\ref{sec:The Assumptions}.

In Section~\ref{sec:Subsystems}, we define subsystems of expanding Thurston maps, introduce relevant concepts, and prove preliminary results about them.

Section~\ref{sec:Thermodynamic formalism for subsystems} focuses on thermodynamic formalism for subsystems, with the main results being Theorems~\ref{thm:subsystem characterization of pressure} and \ref{thm:existence of equilibrium state for subsystem}.
We introduce the split Ruelle operators in Subsection~\ref{sub:Split Ruelle operators}, and then, in the remainder of this section, we use these operators to establish the Variational Principle and the existence of equilibrium states for strongly irreducible subsystem.

Section~\ref{sec:Large deviation asymptotics for expanding Thurston maps} aims to prove Theorem~\ref{thm:large deviation asymptotics}.
We first study the properties of the rate function (Subsection~\ref{sub:The rate function}), and then introduce the pair structures (Subsection~\ref{sub:Pair structures}).
Subsection~\ref{sub:Key bounds} is devoted to the proof of key bounds in Proposition~\ref{prop:LDA:key bounds}.
Finally, in Subsection~\ref{sub:Proof of the large deviation asymptotics}, we use the key bounds to establish Theorem~\ref{thm:large deviation asymptotics}.


\section{Notation}
\label{sec:Notation}

Let $\cx$ be the complex plane and $\ccx$ be the Riemann sphere. 
Let $S^2$ denote an oriented topological $2$-sphere.
We use $\n$ to denote the set of integers greater than or equal to $1$ and write $\n_0 \define \{0\} \cup \n$. 
For $x \in \real$, we define $\lfloor x \rfloor$ as the greatest integer $\leqslant x$, and $\lceil x \rceil$ the smallest integer $\geqslant x$.
For $x \in \real$, we denote the sign function by $\sgn{x}$, which takes the value $-1$, $1$, or $0$ depending on whether $x$ is negative, positive, or zero.
The cardinality of a set $A$ is denoted by $\card{A}$.

Let $g \colon X \mapping Y$ be a map between two sets $X$ and $Y$. We denote the restriction of $g$ to a subset $Z$ of $X$ by $g|_{Z}$.

Consider a map $f \colon X \mapping X$ on a set $X$. 
The inverse map of $f$ is denoted by $f^{-1}$. 
We write $f^n$ for the $n$-th iterate of $f$, and $f^{-n}\define (f^n)^{-1}$, for each $n \in \n$. 
We set $f^0 \define \id{X}$, the identity map on $X$. 
For a real-valued function $\varphi \colon X \mapping \real$, we write
\begin{equation}    \label{eq:def:Birkhoff average}
    S_n \varphi(x) = S^f_n \varphi(x) \define \sum_{j=0}^{n-1} \varphi \bigl( f^j(x) \bigr)
\end{equation}
for $x \in X$ and $n \in \n_0$. 
We omit the superscript $f$ when the map $f$ is clear from the context. Note that when $n = 0$, by definition we always have $S_0 \varphi = 0$.

Let $(X,d)$ be a metric space. For each subset $Y \subseteq X$, we denote the diameter of $Y$ by $\diam{d}{Y} \define \sup\{d(x, y) \describe \juxtapose{x}{y} \in Y\}$, the interior of $Y$ by $\interior{Y}$, and the characteristic function of $Y$ by $\indicator{Y}$, which maps each $x \in Y$ to $1 \in \real$ and vanishes otherwise. 
For each $r > 0$ and each $x \in X$, we denote the open (\resp closed) ball of radius $r$ centered at $x$ by $B_{d}(x,r)$ (\resp $\overline{B_{d}}(x,r)$). 
We often omit the metric $d$ in the subscript when it is clear from the context.

For a compact metric space $(X, d)$, we denote by $C(X)$ (\resp $B(X)$) the space of continuous (\resp bounded Borel) functions from $X$ to $\real$, by $\mathcal{M}(X)$ the set of finite signed Borel measures, and $\mathcal{P}(X)$ the set of Borel probability measures on $X$.
For $\mu \in \mathcal{M}(X)$, we denote by $\norm{\mu}$ the total variation norm of $\mu$, and by $\supp{\mu}$ the support of $\mu$ (the smallest closed set $A \subseteq X$ such that $|\mu|(X \setminus A) = 0$). 
Additionally, for $u \in C(X)$, we denote
\[
    \functional{\mu}{u} \define \int \! u \,\mathrm{d}\mu,
\]
and define the finite signed Borel measure $u\mu \in \mathcal{M}(X)$ as
\[
    (u \mu)(A) \define \int_{A} \! u \,\mathrm{d} \mu \qquad \text{for Borel set } A \subseteq X.
\]
For a point $x \in X$, we define $\delta_x$ as the Dirac measure supported on $\{x\}$.
For a continuous map $g \colon X \mapping X$, we set $\mathcal{M}(X, g)$ to be the set of $g$-invariant Borel probability measures on $X$.
If we do not specify otherwise, we equip $C(X)$ with the uniform norm $\normcontinuous{\cdot}{X} \define \uniformnorm{\cdot}$, and equip $\mathcal{M}(X)$, $\mathcal{P}(X)$, and $\mathcal{M}(X, g)$ with the weak$^*$ topology.
According to the Riesz representation theorem (see for example, \cite[Theorems~7.17 and 7.8]{folland2013real}), we identify the dual of $C(X)$ with the space $\mathcal{M}(X)$.

The space of real-valued \holder continuous functions with an exponent $\holderexp \in (0,1]$ on a compact metric space $(X, d)$ is denoted as $\holderspace$. For each $\phi \in \holderspace$, \[
    \holderseminorm{\phi} \define \sup\left\{ \frac{|\phi(x) - \phi(y)|}{d(x, y)^{\holderexp}} \describe \juxtapose{x}{y} \in X, \, x \ne y \right\},
\]
and the \holder norm is defined as $\holdernorm{\phi} \define \holderseminorm{\phi} + \normcontinuous{\phi}{X}$.

\section{Preliminaries}
\label{sec:Preliminaries}

\subsection{Thermodynamic formalism}  
\label{sub:thermodynamic formalism}

We first review some basic concepts from ergodic theory and dynamical systems. 
We refer the reader to \cite[Chapter~20]{katok1995introduction} and \cite[Chapter~9]{walters1982introduction} for more detailed studies of these concepts.

\smallskip

Let $(X,d)$ be a compact metric space and $g \colon X \mapping X$ a continuous map. Given $n \in \n$, 
\[
    d^n_g(x, y) \define \operatorname{max} \bigl\{  d \bigl(g^k(x), g^k(y) \bigr) \describe k \in \{0, \, 1, \, \dots, \, n-1 \} \!\bigr\}, \quad \text{ for } \juxtapose{x}{y} \in X,
\]
defines a metric on $X$. A set $F \subseteq X$ is \emph{$(n, \epsilon)$-separated} (with respect to $g$), for some $n \in \n$ and $\epsilon > 0$, if for each pair of distinct points $\juxtapose{x}{y} \in F$, we have $d^n_g(x, y) \geqslant \epsilon$.

For each real-valued continuous function $\psi \in C(X)$, the following two limits exist and are equal (see for example, \cite[Subsection~20.2]{katok1995introduction}), which we denote by $P(g, \psi)$:
\begin{equation}  \label{eq:def:topological pressure}
    P(g, \psi) \define \lim \limits_{\epsilon \to 0^{+}} \limsup\limits_{n \to +\infty} \frac{1}{n} \log  N_{d}(g, \psi, \varepsilon, n) 
    = \lim \limits_{\epsilon \to 0^{+}} \liminf\limits_{n \to +\infty} \frac{1}{n} \log  N_{d}(g, \psi, \varepsilon, n),
\end{equation}
where $N_{d}(g, \psi, \varepsilon, n) \define \sup \set[\big]{\sum_{x \in E} \myexp[\big]{S_n\psi(x)} \describe E \subseteq X \text{ is } (n, \varepsilon)\text{-separated with respect to } g}$.
We call $P(g, \psi)$ the \emph{topological pressure} of $g$ with respect to the \emph{potential} $\psi$. 
In particular, when $\psi = 0$, the quantity $h_{\operatorname{top}}(g) \define P(g, 0)$ is called the \emph{topological entropy} of $g$. 
Note that $P(g, \psi)$ is independent of $d$ as long as the topology on $X$ defined by $d$ remains the same (see for example, \cite[Subsection~20.2]{katok1995introduction}).
Moreover, the topological pressure is well-behaved under iteration. 
Indeed, if $n \in \n$, then $P(g^{n}, S_{n}\psi) = n P(g, \psi)$ (see for example, \cite[Theorem~9.8~(i)]{walters1982introduction}).

We denote by $\mathcal{B}$ the $\sigma$-algebra of all Borel sets on $X$. 
A \emph{measurable partition} $\xi$ of $X$ is a collection $\xi = \{ A_{i} \describe i \in J \}$ consisting of countably many (i.e., either finite or countably infinite) mutually disjoint sets in $\mathcal{B}$, where $J$ is the index set. 
The measurable partition $\xi$ is finite if the index set $J$ is a finite set.

Let $\xi = \{A_j \describe j \in J\}$ and $\eta = \{B_k \describe k \in K\}$ be measurable partitions of $X$, where $J$ and $K$ are the corresponding index sets. 
We say that $\xi$ is a \emph{refinement} of $\eta$ if for each $A_j \in \xi$, there exists $B_k \in \eta$ such that $A_j \subseteq B_k$. 
The \emph{common refinement} (or \emph{join}) $\xi \vee \eta$ of $\xi$ and $\eta$ defined as\[
    \xi \vee \eta \define \{A_j \cap B_k \describe j \in J, \, k \in K\}
\]
is also a measurable partition. 
Put $g^{-1}(\xi) \define \bigl\{ g^{-1}(A_j) \describe j \in J \bigr\}$, and for $n \in \n$ define \[
    \xi^n_g \define \bigvee \limits_{j = 0}^{n - 1} g^{-j}(\xi) = \xi \vee g^{-1}(\xi) \vee \cdots \vee g^{-(n-1)}(\xi).
\]

Let $\xi = \{A_j \describe j \in J \}$ be a measurable partition of $X$ and $\mu \in \mathcal{M}(X, g)$ be a $g$-invariant Borel probability measure on $X$. For $x \in X$, we denote by $\xi(x)$ the unique element of $\xi$ that contains $x$. 
The \emph{information function} $I_{\mu}$ maps a measurable partition $\xi$ of $X$ to a $\mu$-a.e.\ defined real-valued function on $X$ in the following way:
\begin{equation}   \label{eq:def:information function}
    I_{\mu}(\xi)(x) \define -\log (\mu(\xi(x))), \qquad \text{for } x \in X.
\end{equation} 

The \emph{entropy} of $\xi$ is $H_{\mu}(\xi) \define -\sum_{j\in J} \mu(A_j) \log\left(\mu (A_j)\right) \in [0, +\infty]$, where $0 \log 0$ is defined to be zero. 
One can show that (see for example, \cite[Chapter~4]{walters1982introduction}) if $H_{\mu}(\xi) < +\infty$, then the following limit exists:
\begin{equation}    \label{eq:def:measure-theoretic entropy with respect to partition}
    h_{\mu}(g, \xi) \define \lim\limits_{n \to +\infty} \frac{1}{n} H_{\mu}(\xi^n_g) \in [0, +\infty).
\end{equation}
The quantity $h_{\mu}(g, \xi)$ is called the \emph{measure-theoretic entropy of $g$ relative to $\xi$}.
The \emph{measure-theoretic entropy} of $g$ for $\mu$ is defined as
\begin{equation}   \label{eq:def:measure-theoretic entropy}
h_{\mu}(g) \define \sup\{h_{\mu}(g,\xi) \describe \xi \text{ is a measurable partition of } X  \text{ with } H_{\mu}(\xi) < +\infty\}.   
\end{equation}
If $\mu \in \mathcal{M}(X, g)$ and $n \in \n$, then (see for example, \cite[Proposition~4.3.16~(4)]{katok1995introduction})
\begin{equation}    \label{eq:measure-theoretic entropy well-behaved under iteration}
    h_{\mu}(g^{n}) = n h_{\mu}(g).
\end{equation}
If $t \in [0, 1]$ and $\nu \in \mathcal{M}(X, g)$ is another measure, then (see for example, \cite[Theorem~8.1]{walters1982introduction})
\begin{equation}    \label{eq:measure-theoretic entropy is affine}
    h_{t \mu + (1 - t)\nu}(g) = t h_{\mu}(g) + (1 - t) h_{\nu}(g).
\end{equation}

For each real-valued continuous function $\psi \in C(X)$, the \emph{measure-theoretic pressure} $P_\mu(g, \psi)$ of $g$ for the measure $\mu \in \mathcal{M}(X, g)$ and the potential $\psi$ is
\begin{equation}  \label{eq:def:measure-theoretic pressure}
    P_\mu(g, \psi) \define h_\mu (g) + \int \! \psi \,\mathrm{d}\mu.
\end{equation}

The topological pressure is related to the measure-theoretic pressure by the so-called \emph{Variational Principle}.
It states that (see for example, \cite[Theorem~20.2.4]{katok1995introduction})
\begin{equation}  \label{eq:Variational Principle for pressure}
    P(g, \psi) = \sup \{P_\mu(g, \psi) \describe \mu \in \mathcal{M}(X, g)\}
\end{equation}
for $\psi \in C(X)$.
In particular, when $\psi$ is the constant function $0$,
\begin{equation}  \label{eq:Variational Principle for entropy}
    h_{\operatorname{top}}(g) = \sup\{h_{\mu}(g) \describe\mu \in \mathcal{M}(X, g)\}.
\end{equation}
A measure $\mu$ that attains the supremum in \eqref{eq:Variational Principle for pressure} is called an \emph{equilibrium state} for the map $g$ and the potential $\psi$. A measure $\mu$ that attains the supremum in \eqref{eq:Variational Principle for entropy} is called a \emph{measure of maximal entropy} of $g$.

\subsection{Thurston maps}%
\label{sub:Thurston_maps}
In this subsection, we go over some key concepts and results on Thurston maps, and expanding Thurston maps in particular. 
For a more thorough treatment of the subject, we refer to \cite{bonk2017expanding}.

\smallskip

Let $S^2$ denote an oriented topological $2$-sphere. A continuous map $f \colon S^2 \mapping S^2$ is called a \emph{branched covering map} on $S^2$ if for each point $x\in S^2$, there exists a positive integer $d\in \n$, open neighborhoods $U$ of $x$ and $V$ of $y \define f(x)$, open neighborhoods $U'$ and $V'$ of $0$ in $\ccx$, and orientation-preserving homeomorphisms $\varphi \colon U \mapping U'$ and $\eta \colon V \mapping V'$ such that $\varphi(x) = 0, \eta(y) = 0$, and $\big(\eta \circ f\circ \varphi^{-1}\bigr)(z) = z^d$ for each $z \in U'$. 
The positive integer $d$ above is called the \emph{local degree} of $f$ at $x$ and is denoted by $\deg_f(x)$ or $\deg(f, x)$.

The \emph{degree} of $f$ is\[
    \deg{f} = \sum_{x\in f^{-1}(y)} \deg_{f}(x)
\]
for $y\in S^2$ and is independent of $y$. If $f \colon S^2 \mapping S^2$ and $g \colon S^2 \mapping S^2$ are two branched covering maps on $S^2$, then so is $f\circ g$, and
\begin{equation}    \label{eq:composed local degree for branched covering map}
    \deg(f\circ g, x) = \deg(g, x) \deg(f, g(x))
\end{equation}
for $x \in S^2$, and moreover, $\deg(f\circ g) = (\deg{f})(\deg{g})$.

A point  $x\in S^2$ is a \emph{critical point} of $f$ if $\deg_f(x) \geqslant 2$. The set of critical points of $f$ is denoted by $\crit{f}$. A point $y\in S^2$ is a \emph{postcritical point} of $f$ if $y = f^n(x)$ for some $x \in \crit{f}$ and $n\in \n$. The set of postcritical points of $f$ is denoted by $\post{f}$. Note that $\post{f} = \post{f^n}$ for all $n\in \n$.

\begin{definition}[Thurston maps]
    A Thurston map is a branched covering map $f \colon S^2 \mapping S^2$ on $S^2$ with $\deg f \geqslant 2$ and $\card{\post{f}}< +\infty$.
\end{definition}

We now recall the notation for cell decompositions of $S^2$ used in \cite{bonk2017expanding} and \cite{li2017ergodic}. A \emph{cell of dimension $n$} in $S^2$, $n \in \{1, \, 2\}$, is a subset $c \subseteq S^2$ that is homeomorphic to the closed unit ball $\overline{\mathbb{B}^n}$ in $\real^n$, where $\mathbb{B}^{n}$ is the open unit ball in $\real^{n}$. 
We define the \emph{boundary of $c$}, denoted by $\partial c$, to be the set of points corresponding to $\partial \mathbb{B}^n$ under such a homeomorphism between $c$ and $\overline{\mathbb{B}^n}$. The \emph{interior of $c$} is defined to be $\inte{c} = c \setminus \partial c$. 
For each point $x\in S^2$, the set $\{x\}$ is considered as a \emph{cell of dimension $0$} in $S^2$. For a cell $c$ of dimension $0$, we adopt the convention that $\partial c = \emptyset$ and $\inte{c} = c$. 

We record the following definition of cell decompositions from \cite[Definition~3.2]{bonk2017expanding}.

\begin{definition}[Cell decompositions]    \label{def:cell decomposition}
    Let $\mathbf{D}$ be a collection of cells in $S^2$. 
    We say that $\mathbf{D}$ is a \emph{cell decomposition of $S^2$} if the following conditions are satisfied:
    \begin{enumerate}[label= (\roman*)]
        \smallskip
        
        \item the union of all cells in $\mathbf{D}$ is equal to $S^2$,
        
        \smallskip
        
        \item if $c \in \mathbf{D}$, then $\partial c$ is a union of cells in $\mathbf{D}$,
        
        \smallskip
        
        \item for $\juxtapose{c_1}{c_2} \in \mathbf{D}$ with $c_1 \ne c_2$, we have $\inte{c_1} \cap \inte{c_2} = \emptyset$,
        
        \smallskip
        
        \item every point in $S^2$ has a neighborhood that meets only finitely many cells in $\mathbf{D}$.
    \end{enumerate}
\end{definition}

We record \cite[Lemma~5.3]{bonk2017expanding} here to review some facts about cell decompositions.

\begin{lemma}    \label{lem:intersection of cells}
    Let $\mathbf{D}$ be a cell decomposition of $S^2$.
    \begin{enumerate}[label=\rm{(\roman*)}]
        \smallskip
        
        \item     \label{item:lem:intersection of cells:intersection of two tiles} 
            If $\sigma$ and $\tau$ are two distinct cells in $\mathbf{D}$ with $\sigma \cap \tau \ne \emptyset$, then one of the following statements hold: $\sigma \subseteq \partial \tau$, $\tau \subseteq \partial \sigma$, or $\sigma \cap \tau = \partial\sigma \cap \partial\tau$ and this intersection consists of cells in $\mathbf{D}$ of dimension strictly less than $\min\{\dim\sigma, \dim\tau\}$.

        \smallskip
        
        \item     \label{item:lem:intersection of cells:intersection with union of tiles}
            If $\sigma, \, \tau_1, \, \dots, \, \tau_n$ are cells in $\mathbf{D}$ and $\inte{\sigma}\, \cap\, (\tau_1 \, \cup \cdots \cup\, \tau_n) \ne \emptyset$, then $\sigma \subseteq \tau_i$ for some $i \in \oneton$.
    \end{enumerate}
\end{lemma}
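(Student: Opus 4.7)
The plan is to prove both parts by a simultaneous strong induction, establishing part~(i) first by induction on $\dim\sigma + \dim\tau$, and then using it directly to derive part~(ii). The cornerstone observation is that condition~(iii) of Definition~\ref{def:cell decomposition} gives $\inte{\sigma} \cap \inte{\tau} = \emptyset$ for distinct cells, so we may decompose
\begin{equation*}
    \sigma \cap \tau = (\inte{\sigma} \cap \partial \tau) \cup (\partial \sigma \cap \inte{\tau}) \cup (\partial \sigma \cap \partial \tau),
\end{equation*}
which naturally suggests a three-case analysis.

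For part~(i), suppose first that $\inte{\sigma} \cap \partial \tau \ne \emptyset$, and pick $x$ in this set. By condition~(ii) of Definition~\ref{def:cell decomposition}, $x$ lies in some cell $c \in \mathbf{D}$ with $c \subseteq \partial \tau$ and $\dim c < \dim \tau$. Applying the inductive hypothesis of (i) to the pair $(\sigma, c)$, whose total dimension is strictly smaller, the options $c \subseteq \partial \sigma$ and $\sigma \cap c = \partial \sigma \cap \partial c$ are both incompatible with $x \in \inte{\sigma} \cap c$; hence either $\sigma = c$ or $\sigma \subseteq \partial c$, and in both sub-cases $\sigma \subseteq \partial \tau$. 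The symmetric case $\partial \sigma \cap \inte{\tau} \ne \emptyset$ yields $\tau \subseteq \partial \sigma$. In the remaining case $\sigma \cap \tau = \partial \sigma \cap \partial \tau$, I would write each of $\partial \sigma$ and $\partial \tau$ as a union of cells in $\mathbf{D}$ of strictly smaller dimension, invoke local finiteness (condition~(iv)) to reduce locally to finitely many pairs, and apply the inductive hypothesis of (i) to each pair of cells to conclude that the intersection is again a union of cells in $\mathbf{D}$, whose dimensions are strictly less than both $\dim \sigma$ and $\dim \tau$.

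Part~(ii) then follows cleanly from part~(i). Given $x \in \inte{\sigma} \cap \tau_i$ for some $i$, if $\sigma = \tau_i$ we are done; otherwise part~(i) applies to the distinct cells $\sigma$ and $\tau_i$. The possibility $\sigma \cap \tau_i = \partial \sigma \cap \partial \tau_i$ is ruled out by $x \in \inte{\sigma} \cap \tau_i$, and $\tau_i \subseteq \partial \sigma$ is ruled out by $x \in \inte{\sigma}$; hence $\sigma \subseteq \partial \tau_i \subseteq \tau_i$, as required.

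The main obstacle will be the bookkeeping in the third case of (i), namely verifying that $\partial \sigma \cap \partial \tau$ decomposes as a union of cells in $\mathbf{D}$ with the correct dimension bound, since one must patch together the inductive conclusions across all pairs of boundary cells. Local finiteness is essential here, and the dimension bound follows from the fact that any cell of $\mathbf{D}$ contained in the boundary of a cell $c$ has dimension strictly less than $\dim c$. Apart from this, the argument is essentially a careful case analysis driven by condition~(iii) and the induction.
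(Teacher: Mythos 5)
The paper does not prove this lemma; it records the statement as a citation of \cite[Lemma~5.3]{bonk2017expanding}, so there is no in-paper proof to compare against. Your argument is correct. The inductive proof of (i) on $\dim\sigma + \dim\tau$, organized around the decomposition of $\sigma \cap \tau$ forced by condition~(iii) of Definition~\ref{def:cell decomposition}, works as you describe. Two points are worth spelling out when polishing Case~1: the cell $c$ with $x \in c \subseteq \partial\tau$ automatically has $\dim c \geq 1$ whenever $\dim\sigma \geq 1$, since a $0$-cell $c$ would have $\inte{c} = c = \{x\}$ meeting $\inte{\sigma}$, and condition~(iii) would then force $\sigma = c$, contradicting the dimensions; this keeps $\partial c \neq \emptyset$ so the surviving alternative $\sigma \subseteq \partial c$ is non-vacuous. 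The case $\dim\sigma = 0$ should be handled separately and directly, where $\partial\sigma = \emptyset$ and $\sigma = \{x\} \subseteq \partial\tau$ follows immediately from $x \notin \inte{\tau}$. In Case~3, the finiteness you invoke is best justified by compactness of $\partial\sigma$ and $\partial\tau$ together with condition~(iv), so that each boundary is a finite union of cells of strictly smaller dimension; the inductive hypothesis then applies to finitely many pairs $(c_1, c_2)$, and chasing the three alternatives through the dimension bookkeeping (every cell of $\mathbf{D}$ lying in $\partial c$ has dimension $< \dim c$) yields the required strict bound $< \min\{\dim\sigma, \dim\tau\}$ in each sub-case. Your derivation of (ii) from (i) is clean and correct.
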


\begin{definition}[Refinements]
    Let $\mathbf{D}'$ and $\mathbf{D}$ be two cell decompositions of $S^2$. We say that $\mathbf{D}'$ is a \emph{refinement} of $\mathbf{D}$ if the following conditions are satisfied:
    \begin{enumerate}[label = (\roman*)]
        \smallskip

        \item every cell $c \in \mathbf{D}$ is the union of all cells $c' \in \mathbf{D}'$ with $c' \subseteq c$.

        \smallskip

        \item for every cell $c' \in \mathbf{D}'$ there exists a cell $c \in \mathbf{D}$ with $c' \subseteq c$.
    \end{enumerate}
\end{definition}

\begin{definition}[Cellular maps and cellular Markov partitions]
    Let $\mathbf{D}'$ and $\mathbf{D}$ be two cell decompositions of $S^2$. We say that a continuous map $f \colon S^2 \mapping S^2$ is \emph{cellular} for $(\mathbf{D}', \mathbf{D})$ if for every cell $c \in \mathbf{D}'$, the restriction $f|_c$ of $f$ to $c$ is a homeomorphism of $c$ onto a cell in $\mathbf{D}$. 
    We say that $(\mathbf{D}',\mathbf{D})$ is a \emph{cellular Markov partition} for $f$ if $f$ is cellular for $(\mathbf{D}', \mathbf{D})$ and $\mathbf{D}'$ is a refinement of $\mathbf{D}$.
\end{definition}

Let $f \colon S^2 \mapping S^2$ be a Thurston map, and $\mathcal{C}\subseteq S^2$ be a Jordan curve containing $\post{f}$. 
Then the pair $f$ and $\mathcal{C}$ induces natural cell decompositions $\mathbf{D}^n(f,\mathcal{C})$ of $S^2$, for each $n \in \n_0$, in the following way.

By the Jordan curve theorem, the set $S^2 \setminus \mathcal{C}$ has two connected components. We call the closure of one of them the \emph{white $0$-tile} for $(f,\mathcal{C})$, denoted by $X^0_{\white}$, and the closure of the other one the \emph{black $0$-tile} for $(f,\mathcal{C})$, denoted be $X^0_{\black}$. 
The set of $0$-\emph{tiles} is $\mathbf{X}^0(f, \mathcal{C}) \define \bigl\{X^0_{\black}, \, X^0_{\white} \bigr\}$. 
The set of $0$-\emph{vertices} is $\mathbf{V}^0(f, \mathcal{C}) \define \post{f}$. 
We set $\overline{\mathbf{V}}^0(f, \mathcal{C}) \define \bigl\{ \{x\} \describe x\in \mathbf{V}^0(f,\mathcal{C}) \bigr\}$. 
The set of $0$-\emph{edges} $\mathbf{E}^0(f,\mathcal{C})$ is the set of the closures of the connected components of $\mathcal{C} \setminus \post{f}$. 
Then we get a cell decomposition\[
    \mathbf{D}^0(f,\mathcal{C}) \define \mathbf{X}^0(f, \mathcal{C}) \cup \mathbf{E}^0(f,\mathcal{C}) \cup \overline{\mathbf{V}}^0(f,\mathcal{C})
\]
of $S^2$ consisting of \emph{cells of level }$0$, or $0$-\emph{cells}.

We can recursively define the unique cell decomposition $\mathbf{D}^n(f,\mathcal{C})$, $n\in \n$, consisting of $n$-\emph{cells} such that $f$ is cellular for $(\mathbf{D}^{n+1}(f,\mathcal{C}), \mathbf{D}^n(f,\mathcal{C}))$. We refer to \cite[Lemma~5.12]{bonk2017expanding} for more details. 
We denote by $\mathbf{X}^n(f,\mathcal{C})$ the set of $n$-cells of dimension 2, called $n$-\emph{tiles}; by $\mathbf{E}^n(f,\mathcal{C})$ the set of $n$-cells of dimension $1$, called $n$-\emph{edges}; by $\overline{\mathbf{V}}^n(f,\mathcal{C})$ the set of $n$-cells of dimension $0$; and by $\mathbf{V}^n(f,\mathcal{C})$ the set $\{x \describe \{x\} \in \overline{\mathbf{V}}^n(f,\mathcal{C})\}$, called the set of $n$-\emph{vertices}. 
The $k$-\emph{skeleton}, for $k\in \{0, \, 1 \}$, of $\mathbf{D}^n(f,\mathcal{C})$ is the union of all $n$-cells of dimension $k$ in this cell decomposition.

We record \cite[Proposition~5.16]{bonk2017expanding} here in order to summarize properties of the cell decompositions $\mathbf{D}^n(f,\mathcal{C})$ defined above.

\begin{proposition}[M.~Bonk \& D.~Meyer \cite{bonk2017expanding}]     \label{prop:properties cell decompositions}
    Let $\juxtapose{k}{n} \in \n_0$, $f \colon S^2 \mapping S^2$ be a Thurston map, $\mathcal{C} \subseteq S^2$ be a Jordan curve with $\post{f} \subseteq \mathcal{C}$, and $m \define \card{\post{f}}$.
    \begin{enumerate}[label=\rm{(\roman*)}]
        \smallskip

        \item     \label{item:prop:properties cell decompositions:cellular} 
            The map $f^k$ is cellular for $(\mathbf{D}^{n+k}(f,\mathcal{C}), \mathbf{D}^n(f,\mathcal{C}))$. 
            In particular, if $c$ is any $(n+k)$-cell, then $f^k(c)$ is an $n$-cell, and $f^k|_c$ is a homeomorphism of $c$ onto $f^k(c)$.
        
        \smallskip

        \item     \label{item:prop:properties cell decompositions:union of cells}
        Let $c$ be an $n$-cell. Then $f^{-k}(c)$ is equal to the union of all $(n+k)$-cell $c'$ with $f^k(c') = c$.
        
        \smallskip

        \item     \label{item:prop:properties cell decompositions:skeletons}
            The $1$-skeleton of $\mathbf{D}^n(f,\mathcal{C})$ is equal to $f^{-n}(\mathcal{C})$. The $0$-skeleton of $\mathbf{D}^n(f,\mathcal{C})$ is the set $\mathbf{V}^n(f,\mathcal{C}) = f^{-n}(\post{f})$, and we have $\mathbf{V}^n(f,\mathcal{C}) \subseteq \mathbf{V}^{n+k}(f,\mathcal{C})$.

        \smallskip

        \item     \label{item:prop:properties cell decompositions:cardinality}
            $\card{\mathbf{X}^n(f,\mathcal{C})} = 2(\deg f)^n$, $\card{\mathbf{E}^n(f,\mathcal{C})} = m(\deg f)^n$, and $\card{\mathbf{V}^n(f,\mathcal{C})} \leqslant m (\deg f)^n$.

        \smallskip

        \item     \label{item:prop:properties cell decompositions:edge is boundary of tile}
            The $n$-edges are precisely the closures of the connected components of $f^{-n}(\mathcal{C}) \setminus f^{-n}(\post{f})$. The $n$-tiles are precisely the closures of the connected components of $S^2\setminus f^{-n}(\mathcal{C})$.

        \smallskip

        \item     \label{item:prop:properties cell decompositions:tile is gon}
        Every $n$-tile is an $m$-gon, i.e., the number of $n$-edges and the number of $n$-vertices contained in its boundary are equal to $m$.

        \smallskip

        \item     \label{item:prop:properties cell decompositions:iterate of cell decomposition}
            Let $F \define f^k$ be an iterate of $f$ with $k\in \n$. Then $\mathbf{D}^n(F,\mathcal{C}) = \mathbf{D}^{nk}(f,\mathcal{C})$.
    \end{enumerate}
\end{proposition}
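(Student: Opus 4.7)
The plan is to prove part~\ref{item:prop:properties cell decompositions:cellular} first by induction on $k \in \n_0$, since most of the remaining claims cascade from it. The base case $k = 0$ is trivial (the identity is cellular for $(\mathbf{D}, \mathbf{D})$), and $k = 1$ is the very definition of $\mathbf{D}^{n+1}(f, \mathcal{C})$ provided by \cite[Lemma~5.12]{bonk2017expanding}. For the inductive step I would factor $f^k = f \circ f^{k-1}$, apply the inductive hypothesis that $f^{k-1}$ is cellular for $(\mathbf{D}^{n+k-1}(f, \mathcal{C}), \mathbf{D}^{n}(f, \mathcal{C}))$ together with the base case applied at level $n+k-1$, and then invoke the elementary observation that a composition of cellular maps is again cellular. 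The ``in particular'' clause follows immediately, since cellular maps by definition restrict to homeomorphisms onto the image cells.

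With~\ref{item:prop:properties cell decompositions:cellular} in hand, parts~\ref{item:prop:properties cell decompositions:union of cells}, \ref{item:prop:properties cell decompositions:skeletons}, and~\ref{item:prop:properties cell decompositions:edge is boundary of tile} are essentially bookkeeping. For~\ref{item:prop:properties cell decompositions:union of cells}, the inclusion $\bigcup\{c' \describe f^k(c') = c\} \subseteq f^{-k}(c)$ is immediate from~\ref{item:prop:properties cell decompositions:cellular}; for the reverse, given $x \in f^{-k}(c)$ I would pick the unique $(n+k)$-cell $c''$ whose interior contains $x$, observe $\inte{f^k(c'')} \cap c \neq \emptyset$, and use Lemma~\ref{lem:intersection of cells}~\ref{item:lem:intersection of cells:intersection with union of tiles} to conclude $f^k(c'') \subseteq c$, then promote $c''$ to an $(n+k)$-cell of dimension equal to $\dim c$ that maps onto $c$. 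For~\ref{item:prop:properties cell decompositions:skeletons}, applying~\ref{item:prop:properties cell decompositions:union of cells} to each $0$-edge and $0$-vertex identifies the $1$- and $0$-skeletons of $\mathbf{D}^n(f, \mathcal{C})$ as $f^{-n}(\mathcal{C})$ and $f^{-n}(\post{f})$; the nesting $\mathbf{V}^n \subseteq \mathbf{V}^{n+k}$ then follows from $\post{f} \subseteq f^{-1}(\post{f})$. Part~\ref{item:prop:properties cell decompositions:edge is boundary of tile} is a consequence: the complement of the $1$-skeleton $f^{-n}(\mathcal{C})$ is a disjoint union of open topological disks whose closures are precisely the $n$-tiles, and similarly for $n$-edges.

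For the cardinalities in~\ref{item:prop:properties cell decompositions:cardinality}, I would fix an $n$-tile $X^n$ and count $(n+k)$-tiles over it using~\eqref{eq:composed local degree for branched covering map}: because all critical values of $f^k$ lie in $\post{f} \subseteq \mathcal{C}$, any interior point $y$ of $X^n$ is a regular value of $f^k$ with exactly $(\deg f)^k$ preimages, and each preimage lies in the interior of exactly one $(n+k)$-tile mapping homeomorphically onto $X^n$. Combined with $\card{\mathbf{X}^0(f, \mathcal{C})} = 2$ this yields $\card{\mathbf{X}^n(f, \mathcal{C})} = 2(\deg f)^n$ by induction; an analogous argument for $0$-edges (whose interiors also avoid $\post{f}$) gives the edge count, while only an inequality holds for vertices because local degrees of $f$ at preimages of a $0$-vertex may exceed $1$. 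For~\ref{item:prop:properties cell decompositions:tile is gon}, the base case uses that $\mathcal{C}$ is a Jordan curve meeting $\post{f}$ in exactly $m$ points, so $\mathcal{C}$ is subdivided into $m$ arcs and each $0$-tile carries $m$ $0$-edges and $m$ $0$-vertices on its boundary; the homeomorphism $f^n|_{X^n}$ supplied by~\ref{item:prop:properties cell decompositions:cellular} transports this structure to every $n$-tile.

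Finally,~\ref{item:prop:properties cell decompositions:iterate of cell decomposition} follows from the uniqueness of the inductive construction: by~\ref{item:prop:properties cell decompositions:cellular}, $F = f^k$ is cellular for $(\mathbf{D}^{(n+1)k}(f, \mathcal{C}), \mathbf{D}^{nk}(f, \mathcal{C}))$ for every $n \in \n_0$, while $\mathbf{D}^0(F, \mathcal{C}) = \mathbf{D}^0(f, \mathcal{C})$ because both depend only on $\mathcal{C}$ and $\post{F} = \post{f}$; the uniqueness of the level-$n$ cell decomposition for $F$ then forces $\mathbf{D}^n(F, \mathcal{C}) = \mathbf{D}^{nk}(f, \mathcal{C})$ by induction on $n$. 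The main technical obstacle throughout is the careful cell-tracking required in part~\ref{item:prop:properties cell decompositions:union of cells}, in particular the step that lifts a lower-dimensional cell $c''$ mapping into $c$ to a top-dimensional cell $c'$ mapping onto $c$; this is handled cleanly using Lemma~\ref{lem:intersection of cells} together with the fact that the cell decompositions at each level are genuine CW structures on $S^2$, and beyond this no new ideas are needed.
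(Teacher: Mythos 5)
The paper does not prove this proposition: it is recorded verbatim from Bonk--Meyer (\cite[Proposition~5.16]{bonk2017expanding}), immediately preceded by the sentence ``We record \cite[Proposition~5.16]{bonk2017expanding} here in order to summarize properties of the cell decompositions \ldots''. There is therefore no proof in the present paper to compare against; what you have written is a reconstruction of the Bonk--Meyer argument.

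As such a reconstruction, the overall architecture is sound (induction for~\ref{item:prop:properties cell decompositions:cellular}, regular-value counting for~\ref{item:prop:properties cell decompositions:cardinality}, uniqueness of the inductive construction for~\ref{item:prop:properties cell decompositions:iterate of cell decomposition}), but the step you flag as the ``main technical obstacle'' in~\ref{item:prop:properties cell decompositions:union of cells} is genuinely underjustified, and the justification you offer does not suffice. Given $x \in f^{-k}(c)$ lying in the interior of a lower-dimensional $(n+k)$-cell $c''$ with $f^k(c'') \subseteq c$, you want an $(n+k)$-cell $c'$ of dimension $\dim c$ with $c'' \subseteq c'$ and $f^k(c') = c$. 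The CW/cell-decomposition axioms and Lemma~\ref{lem:intersection of cells} tell you which top-dimensional cells \emph{abut} $c''$ and that each of them maps onto \emph{some} $n$-cell containing $f^k(c'')$; they do not tell you that $c$ itself is hit. What settles this is the local model for $f^k$ near critical points --- the flower structure of Bonk--Meyer, under which $f^k$ restricted to the $2\deg_{f^k}(v)$ tiles around a vertex $v$ behaves like $z \mapsto z^{\deg_{f^k}(v)}$ and is therefore surjective onto the cycle of tiles around the image vertex. (Lemma~\ref{lem:cell mapping properties of Thurston map}~\ref{item:lem:cell mapping properties of Thurston map:ii} handles the case $f^k(x) \in \inte{c}$, but when $f^k(x)$ lies on $\partial c$ it says nothing.) This is exactly the content of \cite[Lemma~5.15]{bonk2017expanding}, and replacing that argument with an appeal to generic CW-complex facts leaves a hole. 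If your intention is merely to cite the result rather than prove it --- which is what the paper itself does --- then no proof is needed at all; but as a proof sketch, the promotion step needs the branched-cover local model made explicit.
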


We record \cite[Lemma~5.17]{bonk2017expanding}.

\begin{lemma}[M.~Bonk \& D.~Meyer \cite{bonk2017expanding}]    \label{lem:cell mapping properties of Thurston map}
    Let $\juxtapose{k}{n} \in \n_0$, $f \colon S^2 \mapping S^2$ be a Thurston map, and $\mathcal{C} \subseteq S^2$ be a Jordan curve with $\post{f} \subseteq \mathcal{C}$.
    \begin{enumerate}[label=\rm{(\roman*)}]
        \smallskip

        \item     \label{item:lem:cell mapping properties of Thurston map:i}
        If $c \subseteq S^2$ is a topological cell such that $f^{k}|_{c}$ is a homeomorphism onto its image and $f^{k}(c)$ is an $n$-cell, then $c$ is an $(n + k)$-cell.

        \smallskip

        \item \label{item:lem:cell mapping properties of Thurston map:ii}
        If $X$ is an $n$-tile and $p \in S^2$ is a point with $f^{k}(p) \in \inte{X}$, then there exists a unique $(n + k)$-tile $X'$ with $p \in X'$ and $f^{k}(X') = X$.
    \end{enumerate}
\end{lemma}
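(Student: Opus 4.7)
The plan is to prove both parts by working with the preimage decomposition provided by Proposition~\ref{prop:properties cell decompositions}~\ref{item:prop:properties cell decompositions:union of cells} and exploiting the disjointness of interiors of cells guaranteed by Definition~\ref{def:cell decomposition}~(iii). The key observation, used throughout, is that a homeomorphism between topological cells of the same dimension sends interior to interior and boundary to boundary, since these are topologically intrinsic.

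For part~\ref{item:lem:cell mapping properties of Thurston map:i}, I would set $\sigma \define f^{k}(c)$, so $\sigma$ is an $n$-cell, and apply Proposition~\ref{prop:properties cell decompositions}~\ref{item:prop:properties cell decompositions:union of cells} to write $f^{-k}(\sigma) = \bigcup_{j} c_j$, where each $c_j$ is an $(n+k)$-cell of the same dimension $d \define \dim \sigma$ as $c$, mapped homeomorphically onto $\sigma$ by $f^{k}$ (Proposition~\ref{prop:properties cell decompositions}~\ref{item:prop:properties cell decompositions:cellular}). If $d = 0$ then $c = \{x\}$ with $f^{k}(x) \in \post{f}$, so $x \in f^{-k}(\post{f}) = \mathbf{V}^{n+k}(f,\mathcal{C})$ and $c$ is an $(n+k)$-cell. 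If $d \geqslant 1$, I would show $\inte{c} \subseteq \bigcup_{j} \inte{c_j}$: for any $x \in \inte{c}$, we have $f^{k}(x) \in \inte{\sigma}$, and since $x \in c_{j}$ for some $j$ and the homeomorphism $f^{k}|_{c_{j}}$ carries $\partial c_j$ onto $\partial \sigma$, the preimage of $\inte{\sigma}$ under $f^{k}|_{c_j}$ lies in $\inte{c_j}$, so $x \in \inte{c_j}$. Since $\inte{c}$ is connected while the $\inte{c_j}$ are pairwise disjoint, $\inte{c} \subseteq \inte{c_{j_0}}$ for a single $j_{0}$. Injectivity of $f^{k}$ on both $c$ and $c_{j_{0}}$ together with surjectivity onto $\sigma$ then forces $\inte{c} = \inte{c_{j_{0}}}$, and taking closures yields $c = c_{j_{0}}$, which is an $(n+k)$-cell.

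For part~\ref{item:lem:cell mapping properties of Thurston map:ii}, I would similarly write $f^{-k}(X) = \bigcup_{i} X'_{i}$ as a union of $(n+k)$-tiles each mapping homeomorphically onto $X$. Existence is immediate: since $p \in f^{-k}(X)$, we have $p \in X'_{i}$ for some $i$, and this $X'_{i}$ works. For uniqueness, suppose $X'_{1}$ and $X'_{2}$ both contain $p$ and satisfy $f^{k}(X'_{j}) = X$. As $f^{k}(p) \in \inte{X}$ and each $f^{k}|_{X'_{j}}$ is a homeomorphism sending boundary to boundary, we deduce $p \in \inte{X'_{1}} \cap \inte{X'_{2}}$. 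The disjointness of interiors of distinct $(n+k)$-cells (Definition~\ref{def:cell decomposition}~(iii)) then gives $X'_{1} = X'_{2}$.

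The only mildly delicate point is justifying, in part~\ref{item:lem:cell mapping properties of Thurston map:i}, that $\inte{c}$ really is sent into $\bigcup_{j} \inte{c_j}$ and that the equality $\inte{c} = \inte{c_{j_0}}$ upgrades to $c = c_{j_0}$; both rely on the fact that a topological $d$-cell ($d \geqslant 1$) equals the closure of its interior and that homeomorphisms between $d$-cells preserve the interior/boundary decomposition. With that in hand the argument is essentially a bookkeeping exercise in the combinatorics of $\mathbf{D}^{n+k}(f,\mathcal{C})$.
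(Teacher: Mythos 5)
The paper does not actually prove this lemma; it records it as \cite[Lemma~5.17]{bonk2017expanding}, so there is no internal proof to compare against. Your argument is, however, the natural one and is essentially correct. Two small points deserve attention.

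First, in the $d=0$ case you write that $c=\{x\}$ satisfies $f^k(x)\in\post{f}$ and hence $x\in f^{-k}(\post{f})=\mathbf{V}^{n+k}(f,\mathcal{C})$. This is off by $n$: the $0$-cell $\sigma=f^k(c)$ is an $n$-vertex, so $\sigma=\{y\}$ with $y\in\mathbf{V}^{n}(f,\mathcal{C})=f^{-n}(\post{f})$, and therefore $f^{n+k}(x)=f^{n}(y)\in\post{f}$, giving $x\in f^{-(n+k)}(\post{f})=\mathbf{V}^{n+k}(f,\mathcal{C})$. The conclusion is right, but the chain should pass through $f^{n+k}$ rather than $f^{k}$.

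Second, the step ``since $\inte{c}$ is connected while the $\inte{c_j}$ are pairwise disjoint, $\inte{c}\subseteq\inte{c_{j_0}}$ for a single $j_0$'' is not quite justified as stated: disjointness of the pieces alone does not force a connected set to lie in one of them. You need to observe that the sets $A_j\define\inte{c}\cap\inte{c_j}$ are relatively closed (equivalently, clopen) in $\inte{c}$. Indeed $\inte{c}\cap\partial c_j=\emptyset$ for every $j$, because $\partial c_j$ is a union of interiors of lower-dimensional cells, each of which is disjoint from every $\inte{c_{j'}}\supseteq$ the piece containing the given point of $\inte{c}$; hence $A_j=\inte{c}\cap c_j$ is relatively closed, and a finite partition of a connected set into disjoint relatively closed sets has only one non-empty piece. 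For $d=2$ the shortcut via openness of $\inte{c_j}$ in $S^{2}$ works, but for $d=1$ (edges) the closed-set argument is the one that applies. With those two repairs your upgrade $\inte{c}=\inte{c_{j_0}}\Rightarrow c=c_{j_0}$ and your proof of part~(ii) go through exactly as written.
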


\begin{remark}\label{rem:intersection of two tiles}
    Note that for each $n$-edge $e^{n} \in \mathbf{E}^{n}(f,\mathcal{C})$, $n \in \n_0$, there exist exactly two $n$-tiles in $\mathbf{X}^n(f,\mathcal{C})$ containing $e^{n}$.
\end{remark}

For $n\in \n_0$, we define the \emph{set of black $n$-tiles} as\[
    \textbf{X}^n_{\black}(f,\mathcal{C}) \define \left\{ X \in \mathbf{X}^n (f,\mathcal{C}) \describe f^n(X) = X^0_{\black} \right\},
\]
and the \emph{set of white $n$-tiles} as\[
    \mathbf{X}^n_{\white}(f,\mathcal{C}) \define \left\{X\in \mathbf{X}^n(f,\mathcal{C}) \describe f^n(X) = X^0_{\white}\right\}.
\]

From now on, if the map $f$ and the Jordan curve $\mathcal{C}$ are clear from the context, we will sometimes omit $(f,\mathcal{C})$ in the notation above.

We denote, for each $x \in S^2$ and each $n \in \z$, the \emph{$n$-bouquet of $x$}
\begin{equation}    \label{eq:Un bouquet of point}
    U^n(x) \define \bigcup \bigl\{ Y^n \in \mathbf{X}^n \describe \text{there exists } X^n \in \mathbf{X}^n \text{ with } x\in X^n, \, X^n \cap Y^n \ne \emptyset \bigr\}
\end{equation}
if $n \geqslant 0$, and set $U^n(x) \define S^2$ otherwise. 

For each $n \in \n_0$, we define the \emph{$n$-partition} $O_{n}$ of $S^{2}$ induced by $(f, \mathcal{C})$ as
\begin{equation}    \label{eq:partition induced by cell decomposition}
    O_{n} \define \{ \inte{X^n} \describe X^n \in \mathbf{X}^{n}\} \cup \{ \inte{e^{n}} \describe e^{n} \in \mathbf{E}^{n} \} \cup \overline{\mathbf{V}}^{n}. 
\end{equation}

We can now give a definition of expanding Thurston maps.

\begin{definition}[Expansion]     \label{def:expanding_Thurston_maps}
    A Thurston map $f \colon S^2 \mapping S^2$ is called \emph{expanding} if there exists a metric $d$ on $S^2$ that induces the standard topology on $S^2$ and a Jordan curve $\mathcal{C} \subseteq S^2$ containing $\post{f}$ such that
    \begin{equation}    \label{eq:definition of expansion}
        \lim_{n \to +\infty} \max\{ \diam{d}{X} \describe X \in \mathbf{X}^n(f,\mathcal{C}) \} = 0.
    \end{equation}
\end{definition}

\begin{remark}\label{rem:Expansion_is_independent}
    It is clear from Proposition~\ref{prop:properties cell decompositions}~\ref{item:prop:properties cell decompositions:iterate of cell decomposition} and Definition~\ref{def:expanding_Thurston_maps} that if $f$ is an expanding Thurston map, so is $f^n$ for each $n\in \n$. We observe that being expanding is a topological property of a Thurston map and independent of the choice of the metric $d$ that generates the standard topology on $S^2$. By Lemma~6.2 in \cite{bonk2017expanding}, it is also independent of the choice of the Jordan curve $\mathcal{C}$ containing $\post{f}$. More precisely, if $f$ is an expanding Thurston map, then\[
        \lim_{n \to +\infty} \max \{ \diam{\widetilde{d}}{X} \describe X\in \mathbf{X}^n(f,\widetilde{\mathcal{C}}) \} = 0,        
    \]
    for each metric $\widetilde{d}$ that generates the standard topology on $S^2$ and each Jordan curve $\widetilde{\mathcal{C}} \subseteq S^2$ that contains $\post{f}$.
\end{remark}

For an expanding Thurston map $f$, we can fix a particular metric $d$ on $S^2$ called a \emph{visual metric for $f$}. 
For the existence and properties of such metrics, see \cite[Chapter~8]{bonk2017expanding}. 
For a visual metric $d$ for $f$, there exists a unique constant $\Lambda > 1$ called the \emph{expansion factor} of $d$ (see \cite[Chapter~8]{bonk2017expanding} for more details). 
One major advantage of a visual metric $d$ is that in $(S^2,d)$ we have good quantitative control over the sizes of the cells in the cell decompositions discussed above. 
We summarize several results of this type (\cite[Proposition~8.4, Lemmas~8.10, and~8.11]{bonk2017expanding}) in Lemma~\ref{lem:visual_metric}.

\begin{lemma}[M.~Bonk \& D.~Meyer \cite{bonk2017expanding}]    \label{lem:visual_metric}
    Let $f \colon S^2 \mapping S^2$ be an expanding Thurston map, and $\mathcal{C} \subseteq S^2$ be a Jordan curve containing $\post{f}$. 
    Let $d$ be a visual metric on $S^2$ for $f$ with expansion factor $\Lambda > 1$. 
    Then there exist constants $C \geqslant 1$, $K \geqslant 1$, and $n_0 \in \n_0$ with the following properties:
    \begin{enumerate}[label=\rm{(\roman*)}]

        \smallskip
        
        \item     \label{item:lem:visual_metric:distinct cell separated} 
            $d(\sigma,\tau) \geqslant C^{-1}\Lambda^{-n}$ whenever $\sigma$ and $\tau$ are disjoint $n$-cells for some $n \in \n_0$.

        \smallskip

        \item     \label{item:lem:visual_metric:diameter of cell}
            $C^{-1}\Lambda^{-n} \leqslant \diam{d}{\tau} \leqslant C\Lambda^{-n}$ for all $n$-edges and all $n$-tiles $\tau$ and for all $n \in \n_0$.
        
        \smallskip
        
        \item     \label{item:lem:visual_metric:bouquet bounded by ball}
            $B_{d}(x, K^{-1}\Lambda^{-n}) \subseteq U^n(x) \subseteq B_{d}(x, K\Lambda^{-n})$ for each $x \in S^2$ and each $n \in \n_0$.

        \smallskip
        
        \item     \label{item:lem:visual_metric:ball bounded by bouquet}
            $U^{n+n_0}(x) \subseteq B_{d}(x,r) \subseteq U^{n-n_0}(x)$ where $n \define \lceil -\log{r}/\log{\Lambda} \rceil$ for all $r > 0$ and $x \in S^2$.
        
        \smallskip
                
        \item     \label{item:lem:visual_metric:tile contain ball and bounded by ball}
            For every $n$-tile $X^n \in \mathbf{X}^n(f,\mathcal{C})$, $n\in \n_0$, there exists a point $p\in X^n$ such that $B_{d}(p, C^{-1}\Lambda^{-n}) \subseteq X^n \subseteq B_{d}(p, C\Lambda^{-n})$.
    \end{enumerate}

    Conversely, if $\widetilde{d}$ is a metric on $S^2$ satisfying conditions $\textnormal{(i)}$ and $\textnormal{(ii)}$ for some constant $C \geqslant 1$, then $\widetilde{d}$ is a visual metric with expansion factor $\Lambda > 1$.
\end{lemma}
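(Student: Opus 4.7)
The plan is to reduce everything to the combinatorial characterization of visual metrics from \cite[Chapter~8]{bonk2017expanding}: a metric $d$ generating the standard topology of $S^{2}$ is visual for $f$ with expansion factor $\Lambda > 1$ if and only if $d(x, y) \asymp \Lambda^{-m(x, y)}$ for all distinct $x, y \in S^{2}$, where $m(x, y)$ is the largest $n \in \n_{0}$ for which some pair of $n$-tiles containing $x$ and $y$ respectively has nonempty intersection (with the convention $m(x, y) = -1$ if no such tiles exist). Once this is in hand, properties~\ref{item:lem:visual_metric:distinct cell separated} and~\ref{item:lem:visual_metric:diameter of cell} will drop out of the combinatorial structure recorded in Proposition~\ref{prop:properties cell decompositions} and Lemma~\ref{lem:intersection of cells}, while~\ref{item:lem:visual_metric:bouquet bounded by ball}--\ref{item:lem:visual_metric:tile contain ball and bounded by ball} will follow by transferring those bounds to balls.

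First I would establish~\ref{item:lem:visual_metric:distinct cell separated} and~\ref{item:lem:visual_metric:diameter of cell}. For~\ref{item:lem:visual_metric:distinct cell separated}, if $\sigma$ and $\tau$ are disjoint $n$-cells with $x \in \sigma$ and $y \in \tau$, then any $m$-tile containing $x$ sits in a uniformly bounded combinatorial neighborhood of $\sigma$ (analogously for $y$), and two such tiles cannot meet once $m$ exceeds $n$ by a universal constant. This forces $m(x, y) \leqslant n + N_{0}$ for some $N_{0}$ independent of $n$, and the characterization yields $d(x, y) \geqslant C^{-1} \Lambda^{-n}$. For~\ref{item:lem:visual_metric:diameter of cell}, the upper bound is immediate: two points in a common $n$-tile have $m(x, y) \geqslant n$. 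For the lower bound, I would pick two $n$-vertices of an $n$-tile (or endpoints of an $n$-edge) whose smallest common cell is the tile itself, and then bound $m$ of that pair from above using the argument just sketched.

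Assertions~\ref{item:lem:visual_metric:bouquet bounded by ball}--\ref{item:lem:visual_metric:tile contain ball and bounded by ball} then follow from~\ref{item:lem:visual_metric:distinct cell separated} and~\ref{item:lem:visual_metric:diameter of cell}. The outer inclusion $U^{n}(x) \subseteq B_{d}(x, K\Lambda^{-n})$ uses that $U^{n}(x)$ is a union of boundedly many $n$-tiles (the bound depending only on $\deg f$ and $\card{\post f}$), each of diameter at most $C\Lambda^{-n}$; the inner inclusion uses that $y \notin U^{n}(x)$ forces every $n$-tile through $x$ to be disjoint from every $n$-tile through $y$, whence~\ref{item:lem:visual_metric:distinct cell separated} at level $n$ gives $d(x, y) \geqslant K^{-1}\Lambda^{-n}$. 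Property~\ref{item:lem:visual_metric:ball bounded by bouquet} is the logarithmic reformulation of~\ref{item:lem:visual_metric:bouquet bounded by ball}. For~\ref{item:lem:visual_metric:tile contain ball and bounded by ball}, I would locate inside each $n$-tile $X^{n}$ an $(n + n_{1})$-tile (for a uniform $n_{1}$) whose closure is disjoint from $\partial X^{n}$, using the combinatorial self-similarity of the decompositions $\mathbf{D}^{n}(f, \mathcal{C})$; any point $p$ of this inner tile verifies the lower ball inclusion by~\ref{item:lem:visual_metric:distinct cell separated} applied at level $n + n_{1}$, while the upper one follows from~\ref{item:lem:visual_metric:diameter of cell} at level $n$.

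The hard part will be the converse. Given a metric $\widetilde d$ satisfying~\ref{item:lem:visual_metric:distinct cell separated} and~\ref{item:lem:visual_metric:diameter of cell} with constants $C$ and $\Lambda$, I would verify the two-sided comparison $\widetilde d(x, y) \asymp \Lambda^{-m(x, y)}$, which identifies $\widetilde d$ as a visual metric via the characterization. The upper bound is direct: setting $n \define m(x, y)$, we find two intersecting $n$-tiles containing $x$ and $y$ respectively, and~\ref{item:lem:visual_metric:diameter of cell} gives $\widetilde d(x, y) \leqslant 2C \Lambda^{-n}$. For the lower bound, the maximality of $n = m(x, y)$ means that every $(n + 1)$-tile through $x$ is disjoint from every $(n + 1)$-tile through $y$, and~\ref{item:lem:visual_metric:distinct cell separated} applied to such a pair of tiles forces $\widetilde d(x, y) \geqslant C^{-1} \Lambda^{-(n + 1)}$. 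The subtleties to watch are verifying that $\widetilde d$ generates the standard topology (which falls out of~\ref{item:lem:visual_metric:diameter of cell} since mesh sizes tend to zero as $n \to +\infty$) and handling the degenerate case $m(x, y) = -1$ by a direct compactness argument.
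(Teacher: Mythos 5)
The paper does not prove this lemma: as the sentence immediately preceding it says, Lemma~\ref{lem:visual_metric} is a \emph{summary} of \cite[Proposition~8.4, Lemmas~8.10, and~8.11]{bonk2017expanding}, cited from M.~Bonk and D.~Meyer's book without proof. So there is no ``paper's own proof'' to compare against.

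That said, your sketch follows the route actually taken in the cited reference: start from the definitional comparison $d(x,y)\asymp\Lambda^{-m(x,y)}$ (this is precisely \cite[Definition~8.2]{bonk2017expanding}), derive~\ref{item:lem:visual_metric:distinct cell separated} and~\ref{item:lem:visual_metric:diameter of cell} from it, then get~\ref{item:lem:visual_metric:bouquet bounded by ball}--\ref{item:lem:visual_metric:tile contain ball and bounded by ball} by transferring combinatorial information to metric balls, and finally establish the converse characterization. Two remarks. First, the real technical content of part~\ref{item:lem:visual_metric:distinct cell separated} — showing that for disjoint $n$-cells $\sigma,\tau$ one has $m(x,y)\leqslant n+N_0$ uniformly in $n$ for all $x\in\sigma$, $y\in\tau$ — is asserted in your sketch (``two such tiles cannot meet once $m$ exceeds $n$ by a universal constant'') but not actually argued; in Bonk--Meyer this requires the analysis of chains of tiles and is the hardest step, so flagging it as done is the main gap. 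Second, for~\ref{item:lem:visual_metric:tile contain ball and bounded by ball} you invoke ``combinatorial self-similarity of the decompositions $\mathbf{D}^n(f,\mathcal{C})$,'' but in this lemma $\mathcal{C}$ is an arbitrary Jordan curve containing $\post{f}$, not assumed $f$-invariant, so $\mathbf{D}^{n+k}$ need not refine $\mathbf{D}^n$; the correct argument instead pulls back a deep tile in the interior of a $0$-tile via $(f^n|_{X^n})^{-1}$ using the cellular property (Proposition~\ref{prop:properties cell decompositions}~\ref{item:prop:properties cell decompositions:cellular}), which works without invariance of $\mathcal{C}$.
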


Recall $U^n(x)$ is defined in \eqref{eq:Un bouquet of point}.

\begin{remark}\label{rem:chordal metric visual metric qs equiv}
    If $f \colon \ccx \mapping \ccx$ is a rational expanding Thurston map, then a visual metric is quasisymmetrically equivalent to the chordal metric on the Riemann sphere $\ccx$ (see \cite[Theorem~18.1~(ii)]{bonk2017expanding}). 
    Here the chordal metric $\sigma$ on $\ccx$ is given by $\sigma (z, w) \define \frac{2\abs{z - w}}{\sqrt{1 + \abs{z}^2} \sqrt{1 + \abs{w}^2}}$ for all $\juxtapose{z}{w} \in \cx$, and $\sigma(\infty, z) = \sigma(z, \infty) \define \frac{2}{\sqrt{1 + \abs{z}^2}}$ for all $z \in \cx$. 
    We also note that quasisymmetric embeddings of bounded connected metric spaces are \holder continuous (see \cite[Section~11.1 and Corollary~11.5]{heinonen2001lectures}). 
    Accordingly, the classes of \holder continuous functions on $\ccx$ equipped with the chordal metric and on $S^2 = \ccx$ equipped with any visual metric for $f$ are the same (up to a change of the \holder exponent).
\end{remark}

A Jordan curve $\mathcal{C} \subseteq S^2$ is \emph{$f$-invariant} if $f(\mathcal{C}) \subseteq \mathcal{C}$. If $\mathcal{C}$ is $f$-invariant with $\post{f} \subseteq \mathcal{C}$, then the cell decompositions $\mathbf{D}^{n}(f, \mathcal{C})$ have nice compatibility properties. In particular, $\mathbf{D}^{n+k}(f, \mathcal{C})$ is a refinement of $\mathbf{D}^{n}(f, \mathcal{C})$, whenever $\juxtapose{n}{k} \in \n_0$. Intuitively, this means that each cell $\mathbf{D}^{n}(f, \mathcal{C})$ is ``subdivided'' by the cells in $\mathbf{D}^{n+k}(f, \mathcal{C})$. A cell $c\in \mathbf{D}^{n}(f, \mathcal{C})$ is actually subdivided by the cells in $\mathbf{D}^{n+k}(f, \mathcal{C})$ ``in the same way'' as the cell $f^n(c) \in \mathbf{D}^{0}(f, \mathcal{C})$ by the cells in $\mathbf{D}^{k}(f, \mathcal{C})$. 

For convenience, we record \cite[Proposition~12.5~(ii)]{bonk2017expanding} here.

\begin{proposition}[M.~Bonk \& D.~Meyer \cite{bonk2017expanding}]    \label{prop:cell decomposition: invariant Jordan curve}
    Let $\juxtapose{k}{n} \in \n_0$, $f \colon S^2 \mapping S^2$ be a Thurston map, and $\mathcal{C} \subseteq S^2$ be an $f$-invariant Jordan curve with $\post{f} \subseteq \mathcal{C}$. Then every $(n+k)$-tile $X^{n+k}$ is contained in a unique $k$-tile $X^k$.
\end{proposition}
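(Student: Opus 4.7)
The plan is to reduce the statement to the monotonicity of $1$-skeletons under $f$-invariance of $\mathcal{C}$, then conclude by a connectedness argument.

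First, I will show that $f^{-k}(\mathcal{C}) \subseteq f^{-(n+k)}(\mathcal{C})$. Since $f(\mathcal{C}) \subseteq \mathcal{C}$, induction gives $f^{j}(\mathcal{C}) \subseteq \mathcal{C}$, or equivalently $\mathcal{C} \subseteq f^{-j}(\mathcal{C})$, for every $j \in \n_{0}$. Applying $f^{-k}$ to the case $j = n$ yields the desired inclusion. By Proposition~\ref{prop:properties cell decompositions}~\ref{item:prop:properties cell decompositions:skeletons}, this reads: the $1$-skeleton of $\mathbf{D}^{k}(f,\mathcal{C})$ is contained in that of $\mathbf{D}^{n+k}(f,\mathcal{C})$.

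Next, fix an $(n+k)$-tile $X^{n+k}$. Its interior $\inte{X^{n+k}}$ is homeomorphic to the open $2$-disk, hence a nonempty, open, and connected subset of $S^{2}$. Since interiors of distinct cells in the single cell decomposition $\mathbf{D}^{n+k}(f,\mathcal{C})$ are pairwise disjoint and $\partial X^{n+k}$ is a union of lower-dimensional cells, $\inte{X^{n+k}}$ is disjoint from the $1$-skeleton of $\mathbf{D}^{n+k}(f,\mathcal{C})$, and therefore from the smaller set $f^{-k}(\mathcal{C})$ by the first step. By Proposition~\ref{prop:properties cell decompositions}~\ref{item:prop:properties cell decompositions:edge is boundary of tile}, $S^{2} \setminus f^{-k}(\mathcal{C})$ decomposes as the disjoint union of open connected sets, each of which is the interior of a unique $k$-tile. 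Connectedness of $\inte{X^{n+k}}$ then forces it to lie entirely inside $\inte{X^{k}}$ for a unique $k$-tile $X^{k}$. Taking closures and using that each tile is the closure of its interior yields $X^{n+k} \subseteq X^{k}$, and uniqueness carries over because distinct $k$-tiles have disjoint interiors.

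The argument is essentially a chase through definitions, and I do not anticipate any serious obstacle. The only nontrivial ingredient is the elementary topological fact that the interior of a $2$-cell is connected, which is precisely what allows the conclusion that $\inte{X^{n+k}}$ sits inside a single connected component of $S^{2}\setminus f^{-k}(\mathcal{C})$.
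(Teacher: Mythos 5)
Your argument is correct: the inclusion $f^{-k}(\mathcal{C}) \subseteq f^{-(n+k)}(\mathcal{C})$ from $f$-invariance, together with Proposition~\ref{prop:properties cell decompositions}~\ref{item:prop:properties cell decompositions:edge is boundary of tile} and connectedness of $\inte{X^{n+k}}$, is exactly the standard route. The paper itself gives no proof (it simply records Proposition~12.5~(ii) of Bonk--Meyer), and your argument is essentially the one found there, so there is nothing further to add.
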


We are interested in $f$-invariant Jordan curves that contain $\post{f}$, since for such a Jordan curve $\mathcal{C}$, we get a cellular Markov partition $\big( \mathbf{D}^{1}(f,\mathcal{C}), \mathbf{D}^{0}(f,\mathcal{C}) \bigr)$ for $f$. 
According to Example~15.11 in \cite{bonk2017expanding}, such $f$-invariant Jordan curves containing $\post{f}$ need not exist. 
However, M.~Bonk and D.~Meyer \cite[Theorem~15.1]{bonk2017expanding} proved that there exists an $f^n$-invariant Jordan curve $\mathcal{C}$ containing $\post{f}$ for each sufficiently large $n$ depending on $f$. We record it below for the convenience of the reader.

\begin{lemma}[M.~Bonk \& D.~Meyer \cite{bonk2017expanding}]    \label{lem:invariant_Jordan_curve}
    Let $f \colon S^2 \mapping S^2$ be an expanding Thurston map, and $\widetilde{\mathcal{C}} \subseteq S^2$ be a Jordan curve with $\post{f} \subseteq \widetilde{\mathcal{C}}$. Then there exists an integer $N(f, \widetilde{\mathcal{C}}) \in \n$ such that for each $n \geqslant N(f,\widetilde{\mathcal{C}})$ there exists an $f^n$-invariant Jordan curve $\mathcal{C}$ isotopic to $\widetilde{\mathcal{C}}$ rel. $\post{f}$.
\end{lemma}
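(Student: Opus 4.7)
The plan is to construct $\mathcal{C}$ by an iterative approximation procedure that exploits the expansion property to replace arcs of $\widetilde{\mathcal{C}}$ with edge paths in progressively finer cell decompositions. The target is to find a Jordan curve $\mathcal{C}$ isotopic to $\widetilde{\mathcal{C}}$ rel.\ $\post{f}$ with $\mathcal{C}\subseteq f^{-n}(\mathcal{C})$, which is equivalent to $f^n(\mathcal{C})\subseteq \mathcal{C}$; notice that $\mathcal{C}\subseteq f^{-n}(\mathcal{C})$ says precisely that $\mathcal{C}$ lies in the $1$-skeleton of $\mathbf{D}^n(f,\mathcal{C})$, by Proposition~\ref{prop:properties cell decompositions}~\ref{item:prop:properties cell decompositions:skeletons}.

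First, I would pick a visual metric $d$ for $f$ and, using Remark~\ref{rem:Expansion_is_independent} together with Lemma~\ref{lem:visual_metric}~\ref{item:lem:visual_metric:diameter of cell}, choose $N=N(f,\widetilde{\mathcal{C}})$ so large that for every $n\geqslant N$, all $n$-tiles of $\mathbf{D}^n(f,\widetilde{\mathcal{C}})$ have diameter less than some fixed $\delta>0$ chosen small enough that the $\delta$-neighborhoods of distinct arcs of $\widetilde{\mathcal{C}}\setminus \post{f}$ only meet near their common endpoints in $\post{f}$. Fix such $n\geqslant N$. Decompose $\widetilde{\mathcal{C}}$ into arcs $\widetilde{e}_1,\dots,\widetilde{e}_m$ between consecutive postcritical points. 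For each $\widetilde{e}_i$, I would construct an edge path $e_i^{(0)}$ in $f^{-n}(\widetilde{\mathcal{C}})$ joining the same two postcritical endpoints, lying in the $\delta$-neighborhood of $\widetilde{e}_i$; such a path exists because $f^{-n}(\widetilde{\mathcal{C}})$ is connected and traverses the $\delta$-neighborhood via small $n$-edges. By the separation choice of $\delta$, these edge paths can be chosen pairwise disjoint except at the common endpoints, so $\mathcal{C}_0:=e_1^{(0)}\cup\cdots\cup e_m^{(0)}$ is a Jordan curve contained in $f^{-n}(\widetilde{\mathcal{C}})$ that is isotopic to $\widetilde{\mathcal{C}}$ rel.\ $\post{f}$.

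To upgrade from $f^n(\mathcal{C}_0)\subseteq \widetilde{\mathcal{C}}$ to the desired invariance $f^n(\mathcal{C})\subseteq \mathcal{C}$, I would iterate. Having defined a Jordan curve $\mathcal{C}_k$ containing $\post{f}$ and isotopic to $\widetilde{\mathcal{C}}$, consider the cell decomposition $\mathbf{D}^n(f,\mathcal{C}_k)$ and apply the same construction to obtain a Jordan curve $\mathcal{C}_{k+1}\subseteq f^{-n}(\mathcal{C}_k)$ that is isotopic to $\mathcal{C}_k$ rel.\ $\post{f}$ and lies in a uniformly small neighborhood of $\mathcal{C}_k$. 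Using Lemma~\ref{lem:visual_metric}~\ref{item:lem:visual_metric:diameter of cell}, the Hausdorff distance between $\mathcal{C}_k$ and $\mathcal{C}_{k+1}$ is bounded by $C\Lambda^{-(k+1)n}$, so the sequence is Cauchy in the Hausdorff metric and converges to a compact set $\mathcal{C}$ containing $\post{f}$. Passing to the limit in $f^n(\mathcal{C}_{k+1})\subseteq \mathcal{C}_k$ yields $f^n(\mathcal{C})\subseteq \mathcal{C}$.

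The main obstacle is verifying that the Hausdorff limit $\mathcal{C}$ is genuinely a Jordan curve isotopic to $\widetilde{\mathcal{C}}$ rel.\ $\post{f}$, rather than merely a compact continuum. To this end, I would require that each edge-path replacement from $\mathcal{C}_k$ to $\mathcal{C}_{k+1}$ be carried by an ambient isotopy $H_t^{(k)}$ supported in a neighborhood of diameter $O(\Lambda^{-kn})$ and fixing $\post{f}$; the composition $H_t:= \cdots \circ H_t^{(1)}\circ H_t^{(0)}$ converges uniformly (since the supports shrink geometrically) to a homeomorphism $H\colon S^2\to S^2$ with $H(\widetilde{\mathcal{C}})=\mathcal{C}$, proving both that $\mathcal{C}$ is a Jordan curve and that it lies in the correct isotopy class. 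Making the edge-path replacement step be carried by such an ambient isotopy—rather than just being a set-theoretic replacement—requires the linear local connectivity of $(S^2,d)$ and care in choosing non-crossing approximants at each step, and this is where the bulk of the technical work of the proof lies.
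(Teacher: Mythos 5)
The paper does not prove this lemma; it cites it as Theorem~15.1 of \cite{bonk2017expanding} and records it for the reader's convenience, so the relevant comparison is with Bonk and Meyer's argument in that reference. Your overall architecture is the same as theirs: approximate $\widetilde{\mathcal{C}}$ by an edge-path Jordan curve $\mathcal{C}_1 \subseteq f^{-n}(\widetilde{\mathcal{C}})$ through $\post{f}$ isotopic to $\widetilde{\mathcal{C}}$ rel.\ $\post{f}$, iterate to obtain $\mathcal{C}_{k+1} \subseteq f^{-n}(\mathcal{C}_k)$, and show convergence to an $f^n$-invariant Jordan curve by a uniformly convergent composition of ambient isotopies rel.\ $\post{f}$.

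The gap is in the decay estimate. Your claim that the Hausdorff distance between $\mathcal{C}_k$ and $\mathcal{C}_{k+1}$ is at most $C\Lambda^{-(k+1)n}$ does not follow from Lemma~\ref{lem:visual_metric}~\ref{item:lem:visual_metric:diameter of cell}: re-constructing $\mathcal{C}_{k+1}$ as an edge path in $f^{-n}(\mathcal{C}_k)$ snaps $\mathcal{C}_k$ onto a graph whose mesh (the $n$-edges of $\mathbf{D}^n(f,\mathcal{C}_k)$) is of order $\Lambda^{-n}$, not $\Lambda^{-(k+1)n}$; a bound of order $\Lambda^{-(k+1)n}$ would require knowing in advance that $\mathcal{C}_k$ already lies that close to $f^{-n}(\mathcal{C}_k)$, which is essentially what the iteration is trying to establish. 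You flag the need for ambient isotopies with support diameter $O(\Lambda^{-kn})$ but do not supply the mechanism that produces them. Bonk and Meyer's mechanism is to lift a single isotopy rather than re-construct an edge path at each stage: one builds one isotopy $H^0$ rel.\ $\post{f}$ with $H^0_0 = \id{S^2}$ and $H^0_1(\widetilde{\mathcal{C}}) = \mathcal{C}_1 \subseteq f^{-n}(\widetilde{\mathcal{C}})$, controlled so that each point's track stays inside a bounded union of $0$-cells, then inductively lifts under $f^n$ to isotopies $H^k$ satisfying $f^n \circ H^{k}_t = H^{k-1}_t \circ f^n$ and $H^k_0 = \id{S^2}$, and sets $\mathcal{C}_{k+1} := H^k_1(\mathcal{C}_k)$. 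Each point's track under $H^k$ then stays in a bounded union of $kn$-cells, so its displacement is at most a constant times $\Lambda^{-kn}$ by Lemma~\ref{lem:visual_metric}~\ref{item:lem:visual_metric:diameter of cell}, and the composition of the $H^k_1$ converges uniformly to a homeomorphism carrying $\widetilde{\mathcal{C}}$ to the invariant curve. (One could alternatively salvage the re-construction route using the contraction estimate in Lemma~\ref{lem:basic_distortion} to show that the distance from $\mathcal{C}_k$ to $f^{-n}(\mathcal{C}_k)$ is a factor of roughly $\Lambda^{-n}$ smaller than the distance from $\mathcal{C}_{k-1}$ to $\mathcal{C}_k$; either way the cell-diameter estimate alone is the wrong tool, and supplying the lifting or contraction step is what is missing.)
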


We summarize the existence, uniqueness, and some basic properties of equilibrium states for expanding Thurston maps in the following theorem.

\begin{theorem}[\cite{li2018equilibrium}]     \label{thm:properties of equilibrium state}
    Let $f \colon S^2 \mapping S^2$ be an expanding Thurston map and $d$ a visual metric on $S^2$ for $f$. 
    Let $\juxtapose{\phi}{\gamma} \in C^{0,\holderexp}(S^2,d)$ be real-valued \holder continuous functions with an exponent $\holderexp \in (0,1]$. 
    Then the following statements are satisfied:
    \begin{enumerate}[label=\rm{(\roman*)}]
        \smallskip
        
        \item     \label{item:thm:properties of equilibrium state:existence and uniqueness}
        There exists a unique equilibrium state $\mu_{\phi}$ for the map $f$ and the potential $\phi$.

        \smallskip
        
        \item     \label{item:thm:properties of equilibrium state:derivative}
        For each $t \in \real$, we have $\frac{\mathrm{\mathrm{d}}}{\mathrm{d}t}P(f,\phi + t \gamma) = \int \! \gamma \,\mathrm{d}\mu_{\phi + t\gamma}$.

        \smallskip
        
        \item     \label{item:thm:properties of equilibrium state:edge measure zero}
        If $\mathcal{C} \subseteq S^2$ is a Jordan curve containing $\post{f}$ with the property that $f^{n_{\mathcal{C}}}(\mathcal{C}) \subseteq \mathcal{C}$ for some $n_{\mathcal{C}} \in \n$, then $\mu_{\phi} \bigl( \bigcup_{i=0}^{+\infty} f^{-i}(\mathcal{C}) \bigr) = 0$.

        \smallskip

        \item     \label{item:thm:properties of equilibrium state:co-homologous}
        Let $\mu_{\gamma}$ be the unique equilibrium state for the map $f$ and the potential $\gamma$.
        Then $\mu_{\potential} = \mu_{\gamma}$ if and only if there exist a constant $K \in \real$ and a continuous function $u \in C(S^2)$ such that $\potential - \gamma = K + u \circ f - u$. 

    \end{enumerate}
\end{theorem}

Theorem~\ref{thm:properties of equilibrium state}~\ref{item:thm:properties of equilibrium state:existence and uniqueness} is part of \cite[Theorem~1.1]{li2018equilibrium}. 
Theorem~\ref{thm:properties of equilibrium state}~\ref{item:thm:properties of equilibrium state:derivative} follows immediately from \cite[Theorem~6.13]{li2018equilibrium} and the uniqueness of equilibrium states in Theorem~\ref{thm:properties of equilibrium state}~\ref{item:thm:properties of equilibrium state:existence and uniqueness}. 
Theorem~\ref{thm:properties of equilibrium state}~\ref{item:thm:properties of equilibrium state:edge measure zero} was established in \cite[Proposition~7.1]{li2018equilibrium}.
Theorem~\ref{thm:properties of equilibrium state}~\ref{item:thm:properties of equilibrium state:co-homologous} was established in \cite[Theorem~8.2]{li2018equilibrium}. 
\section{The Assumptions}
\label{sec:The Assumptions}

We state below the hypotheses under which we will develop our theory in most parts of this paper. 
We will selectively use some of the following assumptions in the remaining part of this paper.

\begin{assumptions}
\quad
    \begin{enumerate}[label=\textrm{(\arabic*)}]
        \smallskip

        \item \label{assumption:expanding Thurston map}
            $f \colon S^2 \mapping S^2$ is an expanding Thurston map.

        \smallskip

        \item \label{assumption:Jordan curve}
            $\mathcal{C} \subseteq S^2$ is a Jordan curve containing $\post{f}$ with the property that there exists an integer $n_{\mathcal{C}} \in \n$ such that $f^{n_{\mathcal{C}}}(\mathcal{C}) \subseteq \mathcal{C}$ and $f^m(\mathcal{C}) \not\subseteq \mathcal{C}$ for each $m \in \oneton[n_{\mathcal{C}} - 1]$.
        
        \smallskip

        \item \label{assumption:subsystem}
            $F \in \subsystem$ is a subsystem of $f$ with respect to $\mathcal{C}$.
        
        \smallskip

        \item \label{assumption:visual metric and expansion factor}
            $d$ is a visual metric on $S^2$ for $f$ with expansion factor $\Lambda > 1$. 

        \smallskip

        \item \label{assumption:holder exponent}
        $\holderexp \in (0, 1]$.

        \smallskip

        \item \label{assumption:holder potential}
        $\potential \in C^{0,\holderexp}(S^2, d)$ is a real-valued H\"{o}lder continuous function with an exponent $\holderexp$.
        Denote $\minenergy \define \min\limits_{\mu\in \mathcal{M}(S^2,f)} \int \! \potential \,\mathrm{d}\mu$, $\maxenergy \define \max\limits_{\mu\in \mathcal{M}(S^2,f)} \int \! \potential \,\mathrm{d}\mu$, and $\mathcal{I}_{\potential} \define [\minenergy, \maxenergy]$.

        \smallskip

        \item \label{assumption:equilibrium state}
            $\mu_{\potential}$ is the unique equilibrium state for the map $f$ and the potential $\potential$. Denote $\gamma_{\potential} \define \int \! \potential \,\mathrm{d}\mu_{\potential}$. 

        \smallskip

        \item \label{assumption:0-edge}
            $e^0 \in \mathbf{E}^0(f,\mathcal{C})$ is a $0$-edge.
    \end{enumerate}
\end{assumptions}

Note that the notion of subsystems in \ref{assumption:subsystem} will be introduced in Definition~\ref{def:subsystems}.  
    
Observe that by Lemma~\ref{lem:invariant_Jordan_curve}, for each $f$ in \ref{assumption:expanding Thurston map}, there exists at least one Jordan curve $\mathcal{C}$ that satisfies \ref{assumption:Jordan curve}. 
Since for a fixed $f$, the number $n_{\mathcal{C}}$ is uniquely determined by $\mathcal{C}$ in \ref{assumption:Jordan curve}, in the remaining part of the paper, we will say that a quantity depends on $\mathcal{C}$ even if it also depends on $n_{\mathcal{C}}$.

Recall that the expansion factor $\Lambda$ of a visual metric $d$ on $S^2$ for $f$ is uniquely determined by $d$ and $f$. 
We will say that a quantity depends on $f$ and $d$ if it depends on $\Lambda$.

In the discussion below, depending on the conditions we will need, we will sometimes say ``Let $f$, $\mathcal{C}$, $d$, $\potential$ satisfy the Assumptions in Section~\ref{sec:The Assumptions}.", and sometimes say ``Let $f$ and $\mathcal{C}$ satisfy the Assumptions in Section~\ref{sec:The Assumptions}.'', etc. 
\section{Subsystems}
\label{sec:Subsystems}

In this section, we set the stage for subsequent discussions. 
We start with the definition of subsystems of expanding Thurston maps and the associated cell decompositions.

In Subsection~\ref{sub:Basic properties of subsystems}, we prove a few preliminary results for subsystems, which will be used frequently later. 
We categorize these results according to their assumptions.

In Subsection~\ref{sub:Local degree}, we introduce the notion of the local degree for a subsystem. 
Although a subsystem may not be a branched covering map or possess structures as good as those found in Thurston maps, such as the structures of flowers (see \cite[Section~5.6]{bonk2017expanding}), we propose a solution to these challenges, namely, rather than relying on a single number, we use a $2 \times 2$ matrix to represent the local degree, consisting of $4$ numbers. Our findings suggest that this approach is a natural way to capture the behavior of the local degrees under iteration similar to \eqref{eq:composed local degree for branched covering map} (see Lemma~\ref{lem:iteration of local degree matrix}).

In Subsection~\ref{sub:Tile matrix}, we introduce tile matrices of subsystems, which are useful tools to describe the combinatorial information of subsystems.

In Subsection~\ref{sub:Irreducible and primitive subsystems}, we define irreducible (\resp strongly irreducible) subsystems, which have nice dynamical properties. 
We will use these concepts frequently in Section~\ref{sec:Thermodynamic formalism for subsystems}. 
These requirements can be weakened, but for the brevity of the presentation, we will require the subsystem to be irreducible or strongly irreducible.

In Subsection~\ref{sub:Distortion lemmas}, we investigate some distortion estimates that serve as the cornerstones for the analysis of thermodynamic formalism for subsystems.

\subsection{Definition of subsystems}%
\label{sub:Definition of subsystems}

We first introduce the definition of subsystems along with relevant concepts and notations that will be used frequently throughout this section.
Additionally, we provide examples to illustrate these notions.

\begin{definition}    \label{def:subsystems}
	Let $f \colon S^2 \mapping S^2$ be an expanding Thurston map with a Jordan curve $\mathcal{C}\subseteq S^2$ satisfying $\post{f} \subseteq \mathcal{C}$. 
	We say that a map $F \colon \domF \mapping S^2$ is a \emph{subsystem of $f$ with respect to $\mathcal{C}$} if $\domF = \bigcup \mathfrak{X}$ for some non-empty subset $\mathfrak{X} \subseteq \Tile{1}$ and $F = f|_{\domF}$.
	We denote by $\subsystem$ the set of all subsystems of $f$ with respect to $\mathcal{C}$.
	Define \[
		\operatorname{Sub}_{*}(f, \mathcal{C}) \define \{ F \in \subsystem \describe \domF \subseteq F(\domF) \}.
	\]
\end{definition}

Consider a subsystem $F \in \subsystem$. 
For each $n \in \n_0$, we define the \emph{set of $n$-tiles of $F$} to be
\begin{equation}    \label{eq:definition of tile of subsystem}
	\Domain{n} \define \{ X^n \in \Tile{n} \describe X^n \subseteq F^{-n}(F(\domF)) \},
\end{equation}
where we set $F^0 \define \id{S^{2}}$ when $n = 0$. We call each $X^n \in \Domain{n}$ an \emph{$n$-tile} of $F$. 
We define the \emph{tile maximal invariant set} associated with $F$ with respect to $\mathcal{C}$ to be
\begin{equation}    \label{eq:def:limitset}
	\limitset(F, \mathcal{C}) \define \bigcap_{n \in \n} \Bigl( \bigcup \Domain{n} \Bigr), 
\end{equation}
which is a compact subset of $S^{2}$. 
Indeed, $\limitset(F, \mathcal{C})$ is forward invariant with respect to $F$, namely, $F(\limitset(F, \mathcal{C})) \subseteq \limitset(F, \mathcal{C})$ (see Proposition~\ref{prop:subsystem:properties}~\ref{item:subsystem:properties:limitset forward invariant}). 
We denote by $\limitmap$ the map $F|_{\limitset(F, \mathcal{C})} \colon \limitset(F, \mathcal{C}) \mapping \limitset(F, \mathcal{C})$.

Let $\juxtapose{X^0_{\black}}{X^0_{\white}} \in \mathbf{X}^0(f, \mathcal{C})$ be the black $0$-tile and the white $0$-tile, respectively. 
We define the \emph{color set of $F$} as \[
	\colourset \define \bigl\{ \colour \in \colours \describe X^0_{\colour} \in \Domain{0} \bigr\}.
\]
For each $n \in \n_0$, we define the \emph{set of black $n$-tiles of $F$} as\[
	\bFTile{n} \define \bigl\{ X \in \Domain{n} \describe F^{n}(X) = X^0_{\black} \bigr\},
\] 
and the \emph{set of white $n$-tiles of $F$} as\[
	\wFTile{n} \define \bigl\{ X \in \Domain{n} \describe F^{n}(X) = X^0_{\white} \bigr\}. 
\]
Moreover, for each $n \in \n_0$ and each pair of colors $\juxtapose{\colour}{\ccolour} \in \colours$ we define
\[
	\ccFTile{n}{\colour}{\ccolour} \define \bigl\{ X \in \cFTile{n} \describe X \subseteq X^0_{\ccolour} \bigr\}.
\]
In other words, for example, a tile $X \in \ccFTile{n}{\black}{\white}$ is a \emph{black $n$-tile of $F$ contained in $\whitetile$}, i.e., an $n$-tile of $F$ that is contained in the white $0$-tile $X^0_{\white}$ as a set, and is mapped by $F^{n}$ onto the black $0$-tile $\blacktile$.

By abuse of notation, we often omit $(F, \mathcal{C})$ in the notations above when it is clear from the context.

\begin{remark}\label{rem:not branch covering map}
	Note that an expanding Thurston map $f \colon S^2 \mapping S^2$ itself is a subsystem of $f$ with respect to every Jordan curve $\mathcal{C} \subseteq S^2$ satisfying $\post{f} \subseteq \mathcal{C}$, and we have $\Dom{n}(f, \mathcal{C}) = \Tile{n}$ for each $n \in \n_0$ and $\Omega(f, \mathcal{C}) = S^2$ in this case. 
	Generally, the map $F$ defined in Definition~\ref{def:subsystems} is not a branched covering map and $\domF$ may not be connected. 
	Therefore, the results in \cite{das2021thermodynamic} cannot be applied directly to our context.
\end{remark}

We discuss the following examples separately.

\begin{example}    \label{exam:subsystems}
Let $f$, $\mathcal{C}$, $F$ satisfy the Assumptions in Section~\ref{sec:The Assumptions}.
	\begin{enumerate}[label=(\roman*)]
		\smallskip

		\item     \label{item:exam:subsystems:emptyset}
			The map $F$ satisfies $\domF = X^1_{\black} \subseteq X^0_{\white}$ for some $X^1_{\black} \in \mathbf{X}^1_{\black}(f, \mathcal{C})$. 
			In this case, $F$ is not surjective, and $\limitset$ is an empty set.
		
		\smallskip

		\item     \label{item:exam:subsystems:singleton}
			The map $F$ satisfies $\domF = X^1_{\colour} \subseteq X^0_{\colour}$ for some $\colour \in \colours$ and $X^1_{\colour} \in \mathbf{X}^1_{\colour}(f, \mathcal{C})$. 
			In this case, $F$ is not surjective, and one can check that $\limitset$ is non-empty and $\card{\limitset} = 1$.

		\smallskip

		\item     \label{item:exam:subsystems:cantor set}
			The map $F$ satisfies $\domF = X^1_{\colour} \cup \widetilde{X}^1_{\colour} \subseteq X^0_{\colour}$ for some $\colour \in \colours$ and disjoint $\juxtapose{X^1_{\colour}}{\widetilde{X}^1_{\colour}} \in \mathbf{X}^1_{\colour}(f, \mathcal{C})$. 
			In this case, $F$ is not surjective, and one can check that $\limitset$ is non-empty and uncountable. 

		\smallskip

		\item     \label{item:exam:subsystems:limitmap not surjective}
			The map $F$ satisfies $\domF = X^1_{\black} \cup \widetilde{X}^1_{\black}$ for some $\juxtapose{X^1_{\black}}{\widetilde{X}^1_{\black}} \in \mathbf{X}^1_{\black}(f, \mathcal{C})$ satisfying $X^1_{\black} \subseteq \inte[\big]{X^0_{\black}}$ and $\widetilde{X}^1_{\black} \subseteq \inte[\big]{X^0_{\white}}$. 
			In this case, $F$ is not surjective and $\limitset = \{p,\, q\}$ for some $p \in X^1_{\black}$ and $q \in \widetilde{X}^1_{\black}$. 
			One sees that $F(\limitset) = \{p\} \ne \limitset$ since $F(p) = p$ and $F(q) = p$. 

		\smallskip

		\item     \label{item:exam:subsystems:strongly irreducible but not primitive} 
			The map $F$ satisfies $\domF = X^1_{\black} \cup X^1_{\white}$ for some $X^1_{\black} \in \mathbf{X}^1_{\black}(f, \mathcal{C})$ and $X^1_{\white} \in \cTile{1}{\white}$ satisfying $X^1_{\black} \subseteq \inte[\big]{X^0_{\white}}$ and $X^1_{\white} \subseteq \inte[\big]{X^0_{\black}}$. 
			In this case, $F$ is surjective and $\limitset = \{p,\, q\}$ for some $p \in X^1_{\black}$ and $q \in X^1_{\white}$. 
			One sees that $F(\limitset) = \limitset$ since $F(p) = q$ and $F(q) = p$.

		\smallskip

		\item     \label{item:exam:subsystems:Sierpinski carpet}
			The map $F \colon \domF \mapping S^2$ is represented by Figure~\ref{fig:subsystem:example:carpet}.
			Here $S^{2}$ is identified with a pillow that is obtained by gluing two squares together along their boundaries.
			Moreover, each square is subdivided into $3\times 3$ subsquares, and $\dom{F}$ is obtained from $S^2$ by removing the interior of the middle subsquare $X^{1}_{\white} \in \cTile{1}{\white}$ and $X^{1}_{\black} \in \cTile{1}{\black}$ of the respective squares. 
			In this case, $\limitset$ is a \sierpinski carpet. 
			It consists of two copies of the standard square \sierpinski carpet glued together along the boundaries of the squares.
			\begin{figure}[H]
				\centering
				\begin{overpic}[width=12cm, tics=20]{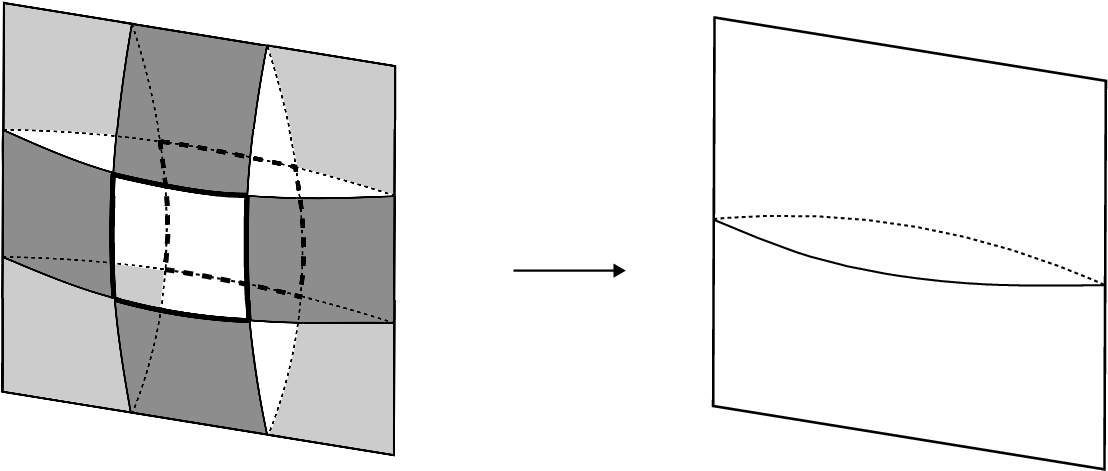}
					\put(50,20){$F$}
					\put(16,41){$\domF$}
					\put(83,40){$S^2$}
				\end{overpic}
				\caption{A \sierpinski carpet subsystem.} 
				\label{fig:subsystem:example:carpet}
			\end{figure}

		\smallskip

		\item     \label{item:exam:subsystems:Sierpinski gasket}
			The map $F \colon \domF \mapping S^2$ is represented by Figure~\ref{fig:subsystem:example:gasket}.
			Here $S^{2}$ is identified with a pillow that is obtained by gluing two equilateral triangles together along their boundaries.
			Moreover, each triangle is subdivided into $4$ small equilateral triangles, and $\dom{F}$ is obtained from $S^2$ by removing the interior of the middle small triangle $X^{1}_{\black} \in \cTile{1}{\black}$ and $X^{1}_{\white} \in \cTile{1}{\white}$ of the respective triangle. 
			In this case, $\limitset$ is a \sierpinski gasket. 
			It consists of two copies of the standard \sierpinski gasket glued together along the boundaries of the triangles.
			\begin{figure}[H]
				\centering
				\begin{overpic}[width=12cm, tics=20]{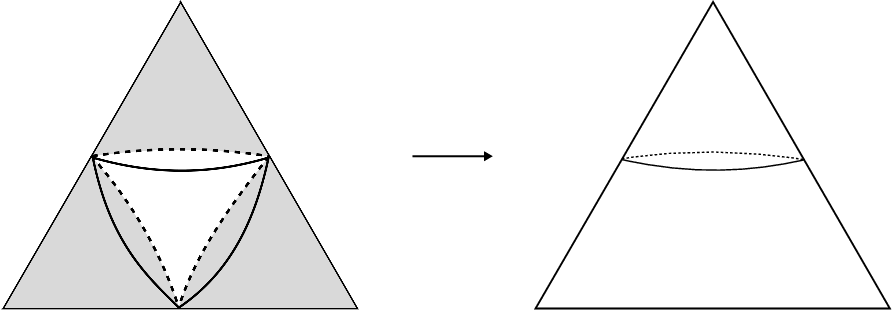}
					\put(50,19){$F$}
					\put(0,23){$\domF$}
					\put(90,24){$S^2$}
				\end{overpic}
				\caption{A \sierpinski gasket subsystem.} 
				\label{fig:subsystem:example:gasket}
			\end{figure}
	\end{enumerate}
\end{example}

\subsection{Basic properties of subsystems}%
\label{sub:Basic properties of subsystems}

In this subsection, we collect and prove a few preliminary results for subsystems.

\begin{proposition}    \label{prop:subsystem:properties}
	Let $f$, $\mathcal{C}$, $F$ satisfy the Assumptions in Section~\ref{sec:The Assumptions}.
	Consider arbitrary $\juxtapose{n}{k} \in \n_0$.
	Then the following statements hold:
	\begin{enumerate}[label=\rm{(\roman*)}]
		\smallskip
		\item    \label{item:subsystem:properties:homeo} 
			If $X \in \Domain{n + k}$ is any $(n + k)$-tile of $F$, then $F^k(X)$ is an $n$-tile of $F$, and $F^{k}|_{X}$ is a homeomorphism of $X$ onto $F^{k}(X)$. 
			As a consequence, we have $\bigl\{ F^{k}(X) \describe X \in \Domain{n + k} \bigr\} \subseteq \Domain{n}$.
		
		\smallskip
		
		\item    \label{item:subsystem:properties:preimage} 
			For each $X^n \in \Domain{n}$, the set $\bigcup \bigl\{ Y \in \Domain{n + k} \describe Y \subseteq F^{-k}(X^n) \bigr\}$ is equal to the union of all $(n + k)$-tiles (of $F$) $X \in \Domain{n + k}$ with $F^{k}(X) = X^n$.
		
		\smallskip

		\item     \label{item:subsystem:properties:limitset forward invariant}
			The tile maximal invariant set $\limitset$ is forward invariant with respect to $F$, i.e., $F(\limitset) \subseteq \limitset$.
	\end{enumerate}
\end{proposition}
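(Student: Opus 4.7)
The plan is to handle the three parts in order, with part \ref{item:subsystem:properties:homeo} doing essentially all the work.

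For \ref{item:subsystem:properties:homeo}, I would first unpack the definition: $X \in \Domain{n+k}$ means $X \in \Tile{n+k}$ and $X \subseteq F^{-(n+k)}(F(\dom F))$, so every iterate $F^j(x)$ for $j \in \{0, 1, \ldots, n+k\}$ is defined for each $x \in X$ and therefore coincides with $f^j(x)$. Proposition~\ref{prop:properties cell decompositions}\ref{item:prop:properties cell decompositions:cellular} applied to $f^k$ then says that $f^k(X) = F^k(X)$ is an $n$-tile of $f$ and that $f^k|_X = F^k|_X$ is a homeomorphism onto this image. It remains to verify the defining condition for $F^k(X) \in \Domain{n}$: for any $y = F^k(x)$ with $x \in X$, the point $F^n(y) = F^{n+k}(x)$ is defined and lies in $F(\dom F)$ by hypothesis, so $F^k(X) \subseteq F^{-n}(F(\dom F))$, which gives $F^k(X) \in \Domain{n}$. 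The stated consequence $\{F^k(X) \describe X \in \Domain{n+k}\} \subseteq \Domain{n}$ is then immediate.

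For \ref{item:subsystem:properties:preimage}, the inclusion $\supseteq$ is clear since $F^k(X) = X^n$ forces $X \subseteq F^{-k}(X^n)$. For the reverse, take $Y \in \Domain{n+k}$ with $Y \subseteq F^{-k}(X^n)$; then $F^k(Y) \subseteq X^n$, and by \ref{item:subsystem:properties:homeo} the set $F^k(Y)$ is itself an $n$-tile of $f$. Two distinct $n$-cells have disjoint interiors by Definition~\ref{def:cell decomposition}, so an $n$-tile cannot be strictly contained in another $n$-tile, forcing $F^k(Y) = X^n$.

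For \ref{item:subsystem:properties:limitset forward invariant}, fix $x \in \limitset$. For each $n \in \n$ there exists $X \in \Domain{n+1}$ with $x \in X$; in particular $X \subseteq \dom F$, so $F(x)$ is defined, and by \ref{item:subsystem:properties:homeo} applied with $k = 1$ we have $F(X) \in \Domain{n}$, whence $F(x) \in F(X) \subseteq \bigcup \Domain{n}$. Intersecting over $n \in \n$ gives $F(x) \in \limitset$. The main point requiring care is the interaction between $f^k$ and $F^k$ on tiles in part \ref{item:subsystem:properties:homeo}: one has to use the defining inclusion $X \subseteq F^{-(n+k)}(F(\dom F))$ twice, first to ensure $F^k$ is genuinely defined on all of $X$ (so we may invoke the Thurston-map theory for $f^k$), and second to propagate the ``landing in $F(\dom F)$ after $n$ further iterates'' condition from $X$ down to $F^k(X)$. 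Once that bookkeeping is in place, \ref{item:subsystem:properties:preimage} and \ref{item:subsystem:properties:limitset forward invariant} fall out essentially for free.
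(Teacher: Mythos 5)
Your proposal is correct and follows essentially the same route as the paper's proof: part (i) via the definition of $\Dom{n+k}(F,\mathcal{C})$ combined with Proposition~\ref{prop:properties cell decompositions}~\ref{item:prop:properties cell decompositions:cellular}, part (ii) by noting that two $n$-tiles containing one another must coincide, and part (iii) by pushing the nested sets $\bigcup\Dom{n}(F,\mathcal{C})$ forward one level. The only cosmetic differences are that you argue pointwise where the paper writes the same containments set-theoretically, and you spell out the disjoint-interiors reason in (ii) which the paper leaves implicit.
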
   
\begin{proof} 
	\ref{item:subsystem:properties:homeo} 
	If $X \in \Domain{n + k}$, then $X \in \Tile{n + k}$ and $X \subseteq F^{-(n + k)}(F(\domF)) $ (recall \eqref{eq:definition of tile of subsystem}).
	Thus $F^k(X) \subseteq F^k\bigl(F^{-(n + k)}(F(\domF))\bigr) \subseteq F^{-n}(F(\domF))$. Since $F^{k}|_{X} = f^{k}|_{X}$, it follows from Proposition~\ref{prop:properties cell decompositions}~\ref{item:prop:properties cell decompositions:cellular} that $F^k(X)$ is an $n$-tile of $F$ and $F^{k}|_{X}$ is a homeomorphism of $X$ onto $F^{k}(X)$.
	
	\smallskip

	\ref{item:subsystem:properties:preimage}  
	It suffices to show that if $X^n \in \Domain{n}$, then\[
		\bigl\{ X \in \Domain{n + k} \describe F^{k}(X) = X^n \bigr\} = \bigl\{ X \in \Domain{n + k} \describe X \subseteq F^{-k}(X^n) \bigr\}.	
	\]
	If $X \in \Domain{n + k}$ and $F^k(X) = X^n$, then $X \subseteq F^{-k}\bigl( F^{k}(X) \bigr) \subseteq F^{-k}(X^n)$. 
	For the converse direction, suppose that $X \in \Domain{n + k}$ and $X \subseteq F^{-k}(X^n)$. Then $F^{k}(X) \subseteq F^{k}(F^{-k}(X^n)) \subseteq X^n$. Thus $F^{k}(X) = X^n$ since both $X^n$ and $F^{k}(X)$ are $n$-tiles by statement~(i).

	\smallskip

	\ref{item:subsystem:properties:limitset forward invariant} 
	We write $\limitset^n \define \domain{n}$ for each $n \in \n_0$. Then it follows from statement~\ref{item:subsystem:properties:homeo} that $F(\limitset^{n+1}) \subseteq \limitset^{n}$ for each $n \in \n_{0}$. Thus $F(\limitset) = F\Bigl( \bigcap\limits_{n = 1}^{+\infty} \limitset^{n} \Bigr) \subseteq \bigcap\limits_{n = 1}^{+\infty} F(\limitset^{n}) \subseteq \bigcap\limits_{n = 0}^{+\infty} \limitset^{n} \subseteq \limitset$ by \eqref{eq:def:limitset}.
\end{proof}

\begin{proposition}    \label{prop:subsystem:properties invariant Jordan curve}
	Let $f$, $\mathcal{C}$, $F$ satisfy the Assumptions in Section~\ref{sec:The Assumptions}. 
	We assume in addition that $f(\mathcal{C}) \subseteq \mathcal{C}$.
	Then the following statements hold:
	\begin{enumerate}[label=\rm{(\roman*)}]
		\smallskip
		\item    \label{item:subsystem:properties invariant Jordan curve:decreasing relation of domains} 
			$\domain{n + k} \subseteq \domain{n} \subseteq \domain{1} = \domF$ for all $\juxtapose{n}{k} \in \n$.

		\smallskip
		
		\item    \label{item:subsystem:properties invariant Jordan curve:relation between color and location of tile} 
			$\cFTile{m} = \ccFTile{m}{\colour}{\black} \cup \ccFTile{m}{\colour}{\white}$ for each $m \in \n_{0}$ and each $\colour \in \colours$.
				
		\smallskip

		\item     \label{item:subsystem:properties invariant Jordan curve:backward invariant limitset outside invariant Jordan curve}
			$F^{-1}(\limitset \setminus \mathcal{C}) \subseteq \limitset \setminus \mathcal{C}$.
			Moreover, if $\mathcal{C} \subseteq \interior{\domF}$, then $F^{-1}(\mathcal{C}) \subseteq \limitset$, and in particular, $F^{-1}(\limitset) = \limitset$.
		
		\smallskip

		\item     \label{item:prop:subsystem:properties invariant Jordan curve:limitset no isolated points}
			If $\limitset$ satisfies $\card[\big]{\limitset \cap \inte[\big]{X^0_{\colour}}} \geqslant 2$ for each $\colour \in \colourset$, then $\limitset$ has no isolated points, i.e., $\limitset$ is perfect.
	\end{enumerate}
\end{proposition}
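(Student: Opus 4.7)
The plan is to treat the four parts in order, with (i) providing a refinement structure that feeds (ii) and (iv).

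For (i), the key input is Proposition~\ref{prop:cell decomposition: invariant Jordan curve}, which under $f(\mathcal{C}) \subseteq \mathcal{C}$ states that every $(n+k)$-tile sits in a unique $n$-tile. Given $X \in \domain{n+k}$, let $X^n$ be its containing $n$-tile; it suffices to verify $X^n \in \domain{n}$, which will give the set-level inclusion $\bigcup \domain{n+k} \subseteq \bigcup \domain{n}$. The core observation is that since $f^j(X) \subseteq \domF$ is two-dimensional, the unique $1$-tile containing it must lie in $\mathfrak{X}$, and by uniqueness this $1$-tile also contains the $(n-j)$-tile $f^j(X^n)$ for $j \leqslant n-1$; hence $f^j(X^n) \subseteq \domF$, and for $j = n-1$ we additionally get $f^{n-1}(X^n) \in \mathfrak{X}$, so $f^n(X^n) \in f(\mathfrak{X}) \subseteq F(\domF)$. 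The identity $\bigcup \domain{1} = \domF$ is immediate from $F^{-1}(F(\domF)) = \domF$. Statement~(ii) follows at once from the same refinement result: each $m$-tile with $m \geqslant 1$ lies in a unique $0$-tile, and $0$-tiles are exactly $\juxtapose{X^0_{\black}}{X^0_{\white}}$.

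For (iii), the invariance $f(\mathcal{C}) \subseteq \mathcal{C}$ gives $f^{-1}(S^2 \setminus \mathcal{C}) \subseteq S^2 \setminus \mathcal{C}$, so $F(x) \notin \mathcal{C}$ forces $x \notin \mathcal{C}$. Combined with Proposition~\ref{prop:subsystem:properties}~\ref{item:subsystem:properties:preimage}, which implies that the $F$-preimage of any $(n-1)$-tile of $F$ containing $F(x)$ is a union of $n$-tiles of $F$ one of which contains $x$, this yields $x \in \bigcup \domain{n}$ for every $n \geqslant 1$, hence $x \in \limitset$. For the moreover part, the hypothesis $\mathcal{C} \subseteq \interior{\domF}$ forces every $1$-tile meeting $\mathcal{C}$ to lie in $\mathfrak{X}$ (otherwise a point of $\mathcal{C}$ on this tile's boundary would fail to have a $\domF$-neighborhood); considering the two $1$-tiles sharing a $1$-edge of $\mathcal{C}$, both $X^0_{\black}$ and $X^0_{\white}$ appear as $f$-images of elements of $\mathfrak{X}$, so $F(\domF) = S^2$. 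Given $x \in F^{-1}(\mathcal{C})$, invariance yields $F^j(x) \in \mathcal{C}$ for all $j \geqslant 1$; choosing an $n$-tile $X^n \ni x$ with $X^n \subseteq X^1 \in \mathfrak{X}$, each $F^j(X^n)$ is an $(n-j)$-tile whose containing $1$-tile meets $\mathcal{C}$ and is therefore in $\mathfrak{X}$, so $X^n \in \domain{n}$ and $x \in \limitset$. Combining with forward invariance from Proposition~\ref{prop:subsystem:properties}~\ref{item:subsystem:properties:limitset forward invariant} then gives $F^{-1}(\limitset) = \limitset$.

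For (iv), I argue by contradiction. Suppose $p \in \limitset$ is isolated, with $B_d(p, \epsilon) \cap \limitset = \{p\}$ for some $\epsilon > 0$. By Lemma~\ref{lem:visual_metric}~\ref{item:lem:visual_metric:diameter of cell}, for $n$ large enough every $n$-tile containing $p$ has diameter less than $\epsilon$; picking $X^n_p \in \domain{n}$ with $p \in X^n_p$ (which exists because $p \in \limitset$) gives $X^n_p \subseteq B_d(p, \epsilon)$. Now $F^n|_{X^n_p}$ is a homeomorphism onto some $X^0_{\colour}$ with $\colour \in \colourset$, and $\limitset \cap X^0_{\colour}$ has at least two points by the hypothesis applied to $\inte{X^0_{\colour}}$. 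Pulling two such points $\juxtapose{y}{y'}$ back through this homeomorphism yields distinct points $\juxtapose{z}{z'} \in X^n_p$, and I claim $\juxtapose{z}{z'} \in \limitset$, which contradicts $X^n_p \cap \limitset \subseteq \{p\}$. The hardest step is proving this claim: for each $k \geqslant 1$, given the $k$-tile $X^k_y \in \domain{k}$ of $F$ containing $y$, I use Proposition~\ref{prop:subsystem:properties}~\ref{item:subsystem:properties:homeo} together with the cellularity of $F^n|_{X^n_p}$ to obtain a unique $(n+k)$-tile $Y \subseteq X^n_p$ with $F^n(Y) = X^k_y$ and $z \in Y$; a direct check that $F^j(Y) \subseteq \domF$ for $j = 0, \ldots, n+k-1$ and $F^{n+k}(Y) \subseteq F(\domF)$, reducing via $j \leqslant n-1$ to $X^n_p \in \domain{n}$ and via $j \geqslant n$ to $X^k_y \in \domain{k}$, confirms $Y \in \domain{n+k}$. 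For $m \leqslant n$, membership $z \in \bigcup \domain{m}$ follows from part~(i), completing the argument.
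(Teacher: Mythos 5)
Your parts (i), (ii), and (iv) are correct, and in (i) and (iv) you take a genuinely different route from the paper: you verify $X^n \in \Domain{n}$ (\resp\ $Y \in \Domain{n+k}$) directly, checking $f^j(X^n) \subseteq \domF$ level by level via the unique containing $1$-tile and Lemma~\ref{lem:intersection of cells}~\ref{item:lem:intersection of cells:intersection with union of tiles}, whereas the paper proves (i) by induction, repeatedly pulling a lower-level tile back through $F|_{X^1}$, and for (iv) it invokes the already-established (iii) to pull $\limitset \setminus \mathcal{C}$ back into $X^n$ rather than re-verifying tile membership. Both are valid; your direct checks are self-contained but repeat the ``containing $1$-tile lies in $\mathfrak{X}$'' argument, while the paper's route reuses (iii) and has less bookkeeping in (iv).

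In the first inclusion of (iii), $F^{-1}(\limitset \setminus \mathcal{C}) \subseteq \limitset \setminus \mathcal{C}$, there is a gap. You claim Proposition~\ref{prop:subsystem:properties}~\ref{item:subsystem:properties:preimage} implies that $F^{-1}(X^{n-1})$ ``is a union of $n$-tiles of $F$ one of which contains $x$.'' That proposition only identifies which $n$-tiles of $F$ lie inside $F^{-1}(X^{n-1})$ (those mapping onto $X^{n-1}$); it says nothing about whether they cover the preimage, and in particular does not give you an $n$-tile of $F$ through a given $x \in F^{-1}(X^{n-1})$. That second claim genuinely needs $F(x) \notin \mathcal{C}$: you must take the $1$-tile $Z^1 \in \Domain{1}$ with $x \in Z^1$, note that $F(x) \in \inte{F(Z^1)}$ so that $X^{n-1} \subseteq F(Z^1)$ by refinement, and only then is $(F|_{Z^1})^{-1}(X^{n-1})$ an $n$-tile of $F$ containing $x$. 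You derive $F(x) \notin \mathcal{C}$ in your first sentence but never feed it into this step; the paper supplies it through Lemma~\ref{lem:cell mapping properties of Thurston map}~\ref{item:lem:cell mapping properties of Thurston map:ii} and a nested sequence of $F$-tiles around $F(x)$ inside $\inte{X^0_{\black}}$. The ``moreover'' part of (iii) is correct, but your observation $F(\domF) = S^2$ is never used: $f^n(X^n) \in \Domain{0}$ already follows from $f^{n-1}(X^n) \in \mathfrak{X}$, exactly as in your proof of (i).
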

\begin{proof}
	\ref{item:subsystem:properties invariant Jordan curve:decreasing relation of domains} 
	By \eqref{eq:definition of tile of subsystem}, it is clear that $\domain{1} = \domF$ and $\domain{n} \subseteq \domF$ for each $n \in \n$. 
	Hence, it suffices to show that
	\begin{equation}    \label{eq:temp:domain relation}
		\domain{n + 1} \subseteq \domain{n}
	\end{equation}
	for each $n \in \n$. We prove \eqref{eq:temp:domain relation} by induction on $n \in \n$. 

	For $n = 1$, \eqref{eq:temp:domain relation} holds trivially. 

	We now assume that \eqref{eq:temp:domain relation} holds for $n = \ell$ for some $\ell \in \n$. 
	Let $X^{\ell + 2} \in \Domain{\ell + 2}$ be arbitrary. 
	It suffices to show that $X^{\ell + 2} \subseteq X^{\ell + 1}$ for some $X^{\ell + 1} \in \Domain{\ell + 1}$.
	Since $X^{\ell + 2} \subseteq \domain{1}$ and $f(\mathcal{C}) \subseteq \mathcal{C}$, by Proposition~\ref{prop:cell decomposition: invariant Jordan curve} and Lemma~\ref{lem:intersection of cells}~\ref{item:lem:intersection of cells:intersection with union of tiles}, there exists a unique $X^1 \in \Domain{1}$ containing $X^{\ell + 2}$. 
	Denote $Y^{\ell + 1} \define F\bigl(X^{\ell + 2}\bigr)$.
	Note that $Y^{\ell + 1} \in \Domain{\ell + 1}$ by Proposition~\ref{prop:subsystem:properties}~\ref{item:subsystem:properties:homeo}.
	Then by the induction hypothesis, we have $Y^{\ell + 1} \subseteq \domain{\ell}$. 
	Similarly, there exists a unique $Y^{\ell} \in \Domain{\ell}$ containing $Y^{\ell + 1}$. 
	We set $X^{\ell + 1} \define (F|_{X^{1}})^{-1}\bigl(Y^{\ell}\bigr)$. 
	It is clear that $X^{\ell + 2} \subseteq X^{\ell + 1}$ since $Y^{\ell + 1} \subseteq Y^{\ell}$. 
	By Lemma~\ref{lem:cell mapping properties of Thurston map}~\ref{item:lem:cell mapping properties of Thurston map:i}, we have $X^{\ell + 1} \in \Tile{\ell + 1}$. 
	Then it follows from \eqref{eq:definition of tile of subsystem} and \[
		X^{\ell + 1} \subseteq 
		F^{-1}\bigl(Y^{\ell}\bigr) \subseteq 
		F^{-1}\bigl( F^{-\ell}(F(\domF)) \bigr) = F^{-(\ell + 1)}(F(\domF))
	\]
	that $X^{\ell + 1} \in \Domain{\ell + 1}$.
	The induction step is now complete.

	\smallskip

	\ref{item:subsystem:properties invariant Jordan curve:relation between color and location of tile} 
	Let $m \in \n_{0}$ and $\colour \in \colours$ be arbitrary. 
	Since $f(\mathcal{C}) \subseteq \mathcal{C}$, it follows immediately from Proposition~\ref{prop:cell decomposition: invariant Jordan curve} that each $m$-tile $X^{m}_{\colour} \in \cTile{m}{\colour}$ is contained in exactly one of $\blacktile$ and $\whitetile$. Thus \ref{item:subsystem:properties invariant Jordan curve:relation between color and location of tile} holds.

	\smallskip

	\ref{item:subsystem:properties invariant Jordan curve:backward invariant limitset outside invariant Jordan curve} 	
	Let $y \in \limitset \setminus \mathcal{C}$ be arbitrary. 
	Since $f(\mathcal{C}) \subseteq \mathcal{C}$ and $y \notin \mathcal{C}$, it suffices to show that $x \in \limitset$ for each $x \in F^{-1}(y)$. 
	Let $x \in F^{-1}(y)$ be arbitrary. 
	Without loss of generality we may assume that $y \in \inte[\big]{X^0_{\black}}$.
	Then by Lemma~\ref{lem:cell mapping properties of Thurston map}~\ref{item:lem:cell mapping properties of Thurston map:ii} there exists a unique $1$-tile $X^{1} \in \Domain{1}$ with $x \in X^{1}$ and $F(X^{1}) = X^{0}_{\black}$.
	Since $y \in \limitset \cap \inte[\big]{X^0_{\black}}$, by \eqref{eq:def:limitset} and Lemma~\ref{lem:visual_metric}~\ref{item:lem:visual_metric:diameter of cell}, for a sufficiently large integer $n_{0} \in \n$ there exists a sequence of tiles $\{Y^{n_{0} + k}\}_{k \in \n}$ satisfying $Y^{n_{0} + k} \in \Domain{n_0 + k}$ and $y \in Y^{n_0 + k + 1} \subseteq Y^{n_0 + k} \subseteq \inte[\big]{X^0_{\black}}$.
	Then it follows from Proposition~\ref{prop:subsystem:properties}~\ref{item:subsystem:properties:homeo} and Lemma~\ref{lem:cell mapping properties of Thurston map}~\ref{item:lem:cell mapping properties of Thurston map:i} that 
	$x \in X^{n_0 + k + 1} \define (F|_{X^{1}})^{-1} \bigl( Y^{n_0 + k} \bigr) \in \Domain{n_0 + k + 1}$ for each $k \in \n$. 
	Hence $x \in \bigcap_{k \in \n} X^{n_0 + k + 1} \subseteq \limitset$ by \eqref{eq:def:limitset} and statement~\ref{item:subsystem:properties invariant Jordan curve:decreasing relation of domains}.
	
	We now assume that $\mathcal{C} \subseteq \interior{\domF}$. We first show that $F^{-1}(\mathcal{C}) \subseteq \limitset$.

	Let $y \in \mathcal{C}$ and $x \in F^{-1}(y)$ be arbitrary. It suffices to show that $x \in \limitset$. 
	Since $x \in \domF = \bigcup \Domain{1}$, there exists a $X^{1} \in \Domain{1}$ such that $x \in X^{1}$.
	Then we have $F(X^{1}) = X^{0}_{\colour}$ for some $\colour \in \colours$.
	Since $y \in \mathcal{C} \subseteq X^{0}_{\colour}$, by Proposition~\ref{prop:cell decomposition: invariant Jordan curve}, there exists $Y^{1} \in \Tile{1}$ such that $y \in Y^{1} \subseteq X^{0}_{\colour}$.
	We claim that $Y^{1} \in \Domain{1}$. Indeed, since $y \in \mathcal{C} \subseteq \interior{\domF}$, we have $Y^{1} \cap \interior{\domF} \ne \emptyset$. 
	Thus $\inte{Y^{1}} \cap \interior{\domF} \ne \emptyset$. Then $\inte{Y^{1}} \cap \domF \ne \emptyset$ and it follows from Lemma~\ref{lem:intersection of cells}~\ref{item:lem:intersection of cells:intersection with union of tiles}  and Definition~\ref{def:cell decomposition}~(iii) that $Y^{1} \in \Domain{1}$. 
	Applying Proposition~\ref{prop:subsystem:properties}~\ref{item:subsystem:properties:homeo} and Lemma~\ref{lem:cell mapping properties of Thurston map}~\ref{item:lem:cell mapping properties of Thurston map:i}, we have $X^{2} \define (F|_{X^{1}})^{-1}(Y^{1}) \in \Domain{2}$ and $x \in X^{2} \subseteq X^{1}$ since $F(x) = y \in Y^{1}$. 
	Then we have that $F^{2}(X^{2}) = X^{0}_{\ccolour}$ for some $\ccolour \in \colours$.
	Similarly, since $F(y) \in F(\mathcal{C}) \subseteq \mathcal{C}$, there exists a $1$-tile $Z^{1} \in \Tile{1}$ such that $F(y) \in Z^{1} \subseteq X^{0}_{\ccolour}$.
	By the same argument as before, with $y$ and $Y^{1}$ replaced by $F(y)$ and $Z^{1}$, respectively, we deduce that $Z^{1} \in \Domain{1}$.
	Then we have $X^{3} \define (F^{2}|_{X^{2}})^{-1}(Z^{1}) \in \Domain{3}$ and $x \in X^{3} \subseteq X^{2}$ since $F^2(x) = F(y) \in Z^{1}$.
	Thus by induction, there exists a sequence of tiles $\{X^{n}\}_{n \in \n}$ such that $X^{n} \in \Domain{n}$ and $x \in X^{n + 1} \subseteq X^{n}$ for each $n \in \n$. 
	Hence $x \in \bigcap_{n \in \n} X^{n} \subseteq \limitset$ by \eqref{eq:def:limitset}. 

	To verify that $F^{-1}(\limitset) = \limitset$, by Proposition~\ref{prop:subsystem:properties}~\ref{item:subsystem:properties:limitset forward invariant}, it suffices to show that $F^{-1}(\limitset) \subseteq \limitset$.
	Since $F^{-1}(\limitset \setminus \mathcal{C}) \subseteq \limitset \setminus \mathcal{C} \subseteq \limitset$ and $F^{-1}(\mathcal{C}) \subseteq \limitset$, the proof is complete.

	\smallskip

	\ref{item:prop:subsystem:properties invariant Jordan curve:limitset no isolated points} 
	We fix a visual metric $d$ on $S^2$ for $f$. 
	It suffices to show that for each $p \in \limitset$ and each $r > 0$, the set $(B_{d}(p, r) \setminus \{p\}) \cap \limitset$ is non-empty. Since $p \in \limitset$, for each $n \in \n_0$ there exists $X^{n} \in \Domain{n}$ which contains $p$. Thus by Lemma~\ref{lem:visual_metric}~\ref{item:lem:visual_metric:diameter of cell}, for each sufficiently large integer $n$ there exists $X^{n} \in \Domain{n}$ such that $p \in X^{n}$ and $X^{n} \subseteq B_{d}(p, r)$. We fix such an integer $n \in \n_0$ and an $n$-tile $X^{n} \in \Domain{n}$. 
	Then $F^{n}(X^n) = X^0_{\colour}$ for some $\colour \in \colourset$, and $F^{n}|_{X^n}$ is a homeomorphism of $X^n$ onto $X^0_{\colour}$. 
	Then it follows from statement~\ref{item:subsystem:properties invariant Jordan curve:backward invariant limitset outside invariant Jordan curve} that $(F^{n}|_{X^n})^{-1}(\limitset \setminus \mathcal{C}) \subseteq F^{-n}(\limitset \setminus \mathcal{C}) \subseteq \limitset \setminus \mathcal{C} \subseteq \limitset$. 
	Thus, by the assumption in statement~\ref{item:prop:subsystem:properties invariant Jordan curve:limitset no isolated points}, we have $\card{X^{n} \cap \limitset} \geqslant \card[\big]{\limitset \cap \inte[\big]{X^0_{\colour}}} \geqslant 2$, which completes the proof.
\end{proof}

\begin{proposition}    \label{prop:sursubsystem properties}
	Let $f$ and $\mathcal{C}$ satisfy the Assumptions in Section~\ref{sec:The Assumptions}. 
	We assume in addition that $f(\mathcal{C}) \subseteq \mathcal{C}$. 
	Consider $F \in \sursubsystem$. 
	Then the following statements hold:
	\begin{enumerate}[label=\rm{(\roman*)}]
		\smallskip
		\item    \label{item:prop:sursubsystem properties:F maps n+1 tile to n tile} 
			$\bigl\{ F(X) \describe X \in \Domain{n + 1} \bigr\} = \Domain{n}$ for each $n \in \n_0$.
			In particular, if $F(\domF) = S^2$, then $\cFTile{n} \ne \emptyset$ for each $\colour \in \colours$ and each $n \in \n_{0}$.

		\smallskip
		
		\item    \label{item:prop:sursubsystem properties:property of domain and limitset} 
			$F(\limitset) = \limitset \ne \emptyset$.
	\end{enumerate}
\end{proposition}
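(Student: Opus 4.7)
The plan is to establish (i) by a direct preimage construction and then derive (ii) by combining (i) with a compactness argument. For (i), the inclusion $\bigl\{ F(X) \describe X \in \Domain{n+1} \bigr\} \subseteq \Domain{n}$ is immediate from Proposition~\ref{prop:subsystem:properties}~\ref{item:subsystem:properties:homeo}. For the reverse inclusion, I would fix $X^n \in \Domain{n}$ and let $\colour \in \colours$ denote its color, so that $f^n(X^n) = X^0_\colour$; by definition of $\Domain{n}$, this gives $X^0_\colour = F^n(X^n) \subseteq F(\domF)$. The crucial observation is that $F(\domF) = \bigcup_{X^1 \in \mathfrak{X}} f(X^1)$ is a union of $0$-tiles, so $X^0_\colour \subseteq F(\domF)$ forces $X^0_\colour = f(X^1)$ for some $X^1 \in \mathfrak{X}$. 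Since $f|_{X^1}$ is a homeomorphism of $X^1$ onto $X^0_\colour$ by Proposition~\ref{prop:properties cell decompositions}~\ref{item:prop:properties cell decompositions:cellular}, setting $X^{n+1} \define (f|_{X^1})^{-1}(X^n)$ produces an $(n+1)$-cell by Lemma~\ref{lem:cell mapping properties of Thurston map}~\ref{item:lem:cell mapping properties of Thurston map:i}, in fact an $(n+1)$-tile contained in $X^1 \subseteq \domF$. It satisfies $F(X^{n+1}) = X^n$ and hence $F^{n+1}(X^{n+1}) = F^n(X^n) \subseteq F(\domF)$, which places $X^{n+1}$ in $\Domain{n+1}$, as required.

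The ``In particular'' clause follows because $F(\domF) = S^2$ makes both $0$-tiles lie in $\Domain{0}$, and iterating the equality just proved gives $\bigl\{F^n(X) \describe X \in \Domain{n}\bigr\} = \Domain{0} = \bigl\{X^0_\black, X^0_\white\bigr\}$, so each color is realized at every level. For (ii), $\Domain{0} \ne \emptyset$ since $F(\domF)$ is a non-empty union of $0$-tiles; (i) then propagates non-emptyness to every level, and Proposition~\ref{prop:subsystem:properties invariant Jordan curve}~\ref{item:subsystem:properties invariant Jordan curve:decreasing relation of domains} makes $\bigl\{\bigcup \Domain{n}\bigr\}_{n \in \n}$ a decreasing sequence of non-empty compact subsets of $S^2$, so $\limitset \ne \emptyset$.

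The inclusion $F(\limitset) \subseteq \limitset$ is Proposition~\ref{prop:subsystem:properties}~\ref{item:subsystem:properties:limitset forward invariant}, so only $\limitset \subseteq F(\limitset)$ remains. Given $y \in \limitset$, I would set $A_n \define F^{-1}(y) \cap \bigcup \Domain{n}$; each $A_n$ is closed and $A_{n+1} \subseteq A_n$. To verify $A_n \ne \emptyset$, pick any $X^{n-1} \in \Domain{n-1}$ containing $y$, apply (i) to obtain $X^n \in \Domain{n}$ with $F(X^n) = X^{n-1}$, and then the homeomorphism clause of Proposition~\ref{prop:subsystem:properties}~\ref{item:subsystem:properties:homeo} supplies a preimage of $y$ inside $X^n$. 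Compactness then yields $x \in \bigcap_n A_n \subseteq \limitset$ with $F(x) = y$. The only non-routine step in the whole argument is the preimage construction in (i): it is precisely there that the hypothesis $\domF \subseteq F(\domF)$ enters, through the observation that $F(\domF)$ is a union of $0$-tiles, which forces any $0$-tile it contains to be realized as $f(X^1)$ for some $X^1 \in \mathfrak{X}$.
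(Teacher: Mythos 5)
Your reverse-inclusion step in (i) has a concrete gap: you set $X^0_{\colour} \define f^n(X^n)$ (the $0$-tile $X^n$ maps onto) and then form $X^{n+1} \define (f|_{X^1})^{-1}(X^n)$ where $f(X^1) = X^0_{\colour}$. But that pullback is only defined if $X^n \subseteq f(X^1) = X^0_{\colour}$, and this can fail. The color of $X^n$ records where $f^n$ sends it, not where it sits; since $\mathcal{C}$ is $f$-invariant, $X^n$ is contained in a unique $0$-tile (Proposition~\ref{prop:cell decomposition: invariant Jordan curve}), and that $0$-tile need not equal $f^n(X^n)$. For instance, take $\mathfrak{X} = \{X^1_{\black}, X^1_{\white}\}$ with both $1$-tiles contained in $X^0_{\black}$; then $F(\domF) = S^2 \supseteq \domF$, and $X^1_{\white} \in \Domain{1}$ satisfies $f(X^1_{\white}) = X^0_{\white}$ but $X^1_{\white} \subseteq X^0_{\black}$, so $(f|_{X^1_{\white}})^{-1}(X^1_{\white})$ is nonsense.

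There is a second tell: your argument never actually uses the hypothesis $\domF \subseteq F(\domF)$. The containment $F^n(X^n) \subseteq F(\domF)$ that you invoke is exactly the defining condition for $X^n \in \Domain{n}$ and holds for every subsystem; it cannot be the place where surjectivity of the image enters, because the equality $\{F(X) \describe X \in \Domain{n+1}\} = \Domain{n}$ is false in general without $\domF \subseteq F(\domF)$. The fix is to work with the $0$-tile $X^0$ that \emph{contains} $X^n$ rather than the one $X^n$ maps onto: by Proposition~\ref{prop:subsystem:properties invariant Jordan curve}~\ref{item:subsystem:properties invariant Jordan curve:decreasing relation of domains}, $X^n \subseteq \domF \subseteq F(\domF)$, so $\inte{X^0}$ meets the union of $0$-tiles $F(\domF)$; by Lemma~\ref{lem:intersection of cells}~\ref{item:lem:intersection of cells:intersection with union of tiles} this forces $X^0 = F(X^1)$ for some $X^1 \in \Domain{1}$, and now $(F|_{X^1})^{-1}(X^n)$ is well-defined and lands in $\Domain{n+1}$ exactly as you argued. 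Once (i) is repaired, your derivation of the ``In particular'' clause and your nested-compact-set argument for (ii) (using $A_n = F^{-1}(y) \cap \bigcup\Domain{n}$) are both correct and are essentially a reorganization of the paper's own proof of (ii).
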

\begin{proof}
	\ref{item:prop:sursubsystem properties:F maps n+1 tile to n tile} 
	Let $n \in \n_0$ be arbitrary. We first establish
	\begin{equation}    \label{eq:F maps n+1 tile to n tile}
		\bigl\{ F(X) \describe X \in \Domain{n + 1} \bigr\} = \Domain{n}.
	\end{equation}
	It follows from Proposition~\ref{prop:subsystem:properties}~\ref{item:subsystem:properties:homeo} that $\bigl\{ F(X) \describe X \in \Domain{n + 1} \bigr\} \subseteq \Domain{n}$. 
	Thus, it suffices to show that for each $X^n \in \Domain{n}$, there exists $X \in \Domain{n + 1}$ such that $F(X) = X^n$. 

	Fix arbitrary $X^n \in \Domain{n}$. 
	Since $\domF \subseteq F(\domF)$, we have $X^n \subseteq F(\domF)$ by Proposition~\ref{prop:subsystem:properties invariant Jordan curve}~\ref{item:subsystem:properties invariant Jordan curve:decreasing relation of domains}. 
	Then by Proposition~\ref{prop:cell decomposition: invariant Jordan curve}, there exists a unique $X^0 \in \Tile{0}$ containing $X^n$. Thus $\inte{X^0} \cap F(\domF)$ is non-empty as it contains $\inte{X^n}$. Noting that $F(\domF)$ is a union of $0$-tiles in $\Tile{0}$, by Lemma~\ref{lem:intersection of cells}, we conclude that there exists $X^1 \in \Domain{1}$ such that $F(X^1) = X^0$. 

	We denote by $X$ the set $(F|_{X^1})^{-1}(X^n)$. Since $F(X) = X^n$, it suffices to show that $X \in \Domain{n + 1}$. 
	Note that $X \in \Tile{n + 1}$ by Proposition~\ref{prop:subsystem:properties}~\ref{item:subsystem:properties:homeo} and Lemma~\ref{lem:cell mapping properties of Thurston map}~\ref{item:lem:cell mapping properties of Thurston map:i}. 
	Since\[
		X = (F|_{X^1})^{-1}(X^n) \subseteq F^{-1}(X^n) \subseteq F^{-(n + 1)}(F(\domF)),
	\]
	it follows from \eqref{eq:definition of tile of subsystem} that $X \in \Domain{n + 1}$, which establishes \eqref{eq:F maps n+1 tile to n tile}.

	If $F(\domF) = S^2$, then $\Domain{0} = \Tile{0}$. Then it follows from \eqref{eq:F maps n+1 tile to n tile}, Proposition~\ref{prop:subsystem:properties}~\ref{item:subsystem:properties:homeo}, and induction that $\cFTile{n} \ne \emptyset$ for each $\colour \in \colours$.

	\smallskip

	\ref{item:prop:sursubsystem properties:property of domain and limitset}
	We write $\limitset^n \define \domain{n}$ for each $n \in \n_0$. 
	Then it follows from statement~\ref{item:prop:sursubsystem properties:F maps n+1 tile to n tile} and induction that $\limitset^{n} \ne \emptyset$ and $F(\limitset^{n+1}) = \limitset^{n}$ for each $n \in \n_{0}$. 
	Thus, by Proposition~\ref{prop:subsystem:properties invariant Jordan curve}~\ref{item:subsystem:properties invariant Jordan curve:decreasing relation of domains} and \eqref{eq:def:limitset} we have $\limitset = \bigcap_{n=1}^{+\infty} \limitset^{n} \ne \emptyset$. 

	By Proposition~\ref{prop:subsystem:properties}~\ref{item:subsystem:properties:limitset forward invariant} we have $F(\limitset) \subseteq \limitset$. 
	For the reverse inclusion $\limitset \subseteq F(\limitset)$, let $x \in \limitset$ be arbitrary. 
	For each $n \in \n$, since $x \in \limitset^{n - 1} = F(\limitset^{n})$, we can find $y_n \in \limitset^{n}$ with $F(y_n) = x$. 
	By compactness of $S^2$ and the nested property of $\sequen{\limitset^n}$, the sequence $\sequen{y_n}$ has a limit point $y \in \limitset$. The continuity of $F$ then implies $x = F(y) \in F(\limitset)$.
\end{proof}

\subsection{Local degree}%
\label{sub:Local degree}

In this subsection, we define the degree and local degrees for a subsystem. We show that local degrees are well-behaved under iteration.

\begin{definition}    \label{def:subsystem local degree}
	Let $f$, $\mathcal{C}$, $F$ satisfy the Assumptions in Section~\ref{sec:The Assumptions}.
	The \emph{degree} of $F$ is defined as \[
		\deg{(F)} \define \sup \bigl\{ \card[\big]{F^{-1}(\{y\})} \describe y \in S^2 \bigr\}.
	\]

	Fix arbitrary $x \in S^2$ and $n \in \n$. We define the \emph{black degree} of $F^n$ at $x$ as\[ 
		\ccndegF{\black}{}{n}{x} \define \card{ \neighbortile{n}{\black}{}{x} },
	\]
	where $\neighbortile{n}{\black}{}{x} \define \{X \in \bFTile{n} \describe x \in X \}$ is the \emph{set of black $n$-tiles} of $F$ at $x$.
	Similarly, we define the \emph{white degree} of $F^n$ at $x$ as\[
		\ccndegF{\white}{}{n}{x} \define \card{ \neighbortile{n}{\white}{}{x} },
	\]
	where $\neighbortile{n}{\white}{}{x} \define \{X \in \wFTile{n} \describe x \in X \}$ is the \emph{set of white $n$-tiles} of $F$ at $x$. 
	Moreover, the \emph{local degree} of $F^{n}$ at $x$ is defined as
	\begin{equation}    \label{eq:subsystem local degree greater than color degree}
		\ccndegF{}{}{n}{x} \define \max\{\ccndegF{\black}{}{n}{x}, \ccndegF{\white}{}{n}{x}\},
	\end{equation}
	and the \emph{set of $n$-tiles} of $F$ at $x$ is $\neighbortile{n}{}{}{x} \define \{X \in \Domain{n} \describe x \in X \}$.
	Furthermore, for each pair of colors $\juxtapose{\colour}{\ccolour} \in \colours$ we define
	\begin{align*}
		\neighbortile{n}{\colour}{\ccolour}{x} &\define \{X \in \ccFTile{n}{\colour}{\ccolour} \describe x \in X \},  \\
		\ccndegF{\colour}{\ccolour}{n}{x} &\define \card{ \neighbortile{n}{\colour}{\ccolour}{x} }, 
	\end{align*}
	and the \emph{local degree matrix} of $F^{n}$ at $x$ is
	\[
		\qquad \quad \Deg{n}{x} \define \begin{bmatrix}
	 		\ccndegF{\black}{\black}{n}{x} & \ccndegF{\white}{\black}{n}{x} \\
	 		\ccndegF{\black}{\white}{n}{x} & \ccndegF{\white}{\white}{n}{x}
	 	\end{bmatrix}.
	\]
\end{definition}

One sees that $\locdegF{x} \geqslant 1$ if and only if $x \in \domF$, and that if $\locdegF{x} > 1$ then $x \in \crit{f}$.

\begin{remark}\label{rem:number of tile in neighborhood for F}
	If we assume that $f(\mathcal{C}) \subseteq \mathcal{C}$, then for each $n \in \n$, by Proposition~\ref{prop:cell decomposition: invariant Jordan curve}, each $n$-tile $X^n \in \Domain{n}$ is contained in exactly one of $X^0_{\black}$ and $X^0_{\white}$ so that $\Domain{n} = \bigcup_{\juxtapose{\colour}{\ccolour} \in \colours} \ccFTile{n}{\colour}{\ccolour}$. 
	Thus it follows immediately from Definition~\ref{def:subsystem local degree} that $\neighbortile{n}{}{}{x} = \bigcup_{\juxtapose{\colour}{\ccolour} \in \colours} \neighbortile{n}{\colour}{\ccolour}{x}$ and $\sumnorm{\Deg{n}{x}} = \card{\neighbortile{n}{}{}{x}}$ for each $n \in \n$ and each $x \in S^{2}$, where\[
		\sumnorm{\Deg{n}{x}} \define \sum_{\juxtapose{\colour}{\ccolour} \in \colours} \ccndegF{\colour}{\ccolour}{n}{x}.
	\]
	Moreover, for all $n \in \n$, $x \in S^{2}$, and $\colour \in \colours$, by Proposition~\ref{prop:subsystem:properties invariant Jordan curve}~\ref{item:subsystem:properties invariant Jordan curve:relation between color and location of tile}, we have
	\begin{equation}    \label{eq:color degree is the sum of colour-position degree}
		\ccndegF{\colour}{}{n}{x} = \sum_{\ccolour \in \colours} \ccndegF{\colour}{\ccolour}{n}{x}.
	\end{equation}
\end{remark}

To describe the local degree for a subsystem, instead of a single number, we use $4$ numbers written in the form of a $2 \times 2$ matrix. Indeed, we prove that the local degree matrix is well-behaved under iteration.

\begin{lemma}    \label{lem:iteration of local degree matrix}
	Let $f$, $\mathcal{C}$, $F$ satisfy the Assumptions in Section~\ref{sec:The Assumptions}. 
	We assume in addition that $f(\mathcal{C}) \subseteq \mathcal{C}$. 
	Then for all $x \in S^2$ and $\juxtapose{n}{m} \in \n$, we have
	\begin{equation}    \label{eq:composed local degree matrix well-defined}
		\Deg{n + m}{x} = \Deg{n}{x} \Deg{m}{f^{n}(x)}.
	\end{equation}
\end{lemma}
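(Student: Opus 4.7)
The plan is to read off the $(i,k)$-entry of the claimed product identity and exhibit a natural bijection between the sets whose cardinalities are being equated. Writing $A \define \Deg{n}{x}$ and $B \define \Deg{m}{f^n(x)}$, the $(\colour',\colour)$-entry of $AB$ is
\begin{equation*}
    (AB)_{\colour',\colour} = \sum_{\colour'' \in \colours} \ccndegF{\colour''}{\colour'}{n}{x} \cdot \ccndegF{\colour}{\colour''}{m}{f^n(x)},
\end{equation*}
so it suffices to construct, for each pair $\juxtapose{\colour'}{\colour} \in \colours$, a bijection between $\neighbortile{n+m}{\colour}{\colour'}{x}$ and the disjoint union over $\colour'' \in \colours$ of $\neighbortile{n}{\colour''}{\colour'}{x} \times \neighbortile{m}{\colour}{\colour''}{f^n(x)}$.

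To produce the forward map, I would start with $X \in \neighbortile{n+m}{\colour}{\colour'}{x}$. Since $X \in \Domain{n+m} \subseteq \Domain{n}$ by Proposition~\ref{prop:subsystem:properties invariant Jordan curve}~\ref{item:subsystem:properties invariant Jordan curve:decreasing relation of domains}, and since $f(\mathcal{C}) \subseteq \mathcal{C}$, Proposition~\ref{prop:cell decomposition: invariant Jordan curve} yields a unique $n$-tile $Y \in \Tile{n}$ containing $X$; comparing $Y$ against the (disjoint-interior) $n$-tiles of $F$ whose union contains $X$ forces $Y \in \Domain{n}$. Applying Proposition~\ref{prop:cell decomposition: invariant Jordan curve} once more, $Y$ sits inside a unique $0$-tile, which must be $X^0_{\colour'}$ because $X \subseteq X^0_{\colour'}$. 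Setting $\colour'' \in \colours$ so that $F^n(Y) = X^0_{\colour''}$, we have $Y \in \neighbortile{n}{\colour''}{\colour'}{x}$, and then $Z \define F^n(X) \in \Domain{m}$ by Proposition~\ref{prop:subsystem:properties}~\ref{item:subsystem:properties:homeo}, with $Z \subseteq F^n(Y) = X^0_{\colour''}$, $F^m(Z) = F^{n+m}(X) = X^0_{\colour}$, and $f^n(x) = F^n(x) \in Z$; thus $Z \in \neighbortile{m}{\colour}{\colour''}{f^n(x)}$. The map $X \mapsto (Y,Z)$ is the desired forward map.

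For the inverse, given $(Y,Z) \in \neighbortile{n}{\colour''}{\colour'}{x} \times \neighbortile{m}{\colour}{\colour''}{f^n(x)}$, I would set $X \define (F^n|_Y)^{-1}(Z)$. Proposition~\ref{prop:subsystem:properties}~\ref{item:subsystem:properties:homeo} says $F^n|_Y$ is a homeomorphism onto $F^n(Y) = X^0_{\colour''} \supseteq Z$, so $X$ is well-defined and contains $x$ (since $f^n(x) \in Z$). Lemma~\ref{lem:cell mapping properties of Thurston map}~\ref{item:lem:cell mapping properties of Thurston map:i} gives $X \in \Tile{n+m}$, and $X \subseteq Y \subseteq X^0_{\colour'}$. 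Since $F^{n+m}(X) = F^m(Z) = X^0_{\colour} \subseteq F(\domF)$ (the last inclusion because $Z \in \Domain{m}$), the defining condition \eqref{eq:definition of tile of subsystem} gives $X \in \Domain{n+m}$, hence $X \in \neighbortile{n+m}{\colour}{\colour'}{x}$. The two constructions are manifestly mutual inverses, yielding the bijection and therefore the identity.

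The only delicate point, and really the only step that could go wrong, is verifying that the intermediate $n$-tile $Y$ (in the forward direction) and the resulting $(n+m)$-tile $X$ (in the inverse direction) genuinely lie in $\Domain{n}$ and $\Domain{n+m}$ respectively, rather than merely in $\Tile{n}$ and $\Tile{n+m}$. Both checks rely on the $f$-invariance of $\mathcal{C}$ through Proposition~\ref{prop:cell decomposition: invariant Jordan curve} together with the tile-inclusion chain of Proposition~\ref{prop:subsystem:properties invariant Jordan curve}~\ref{item:subsystem:properties invariant Jordan curve:decreasing relation of domains}, and would fail without the hypothesis $f(\mathcal{C}) \subseteq \mathcal{C}$.
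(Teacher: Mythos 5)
Your proposal is correct and matches the paper's argument in substance: both proofs decompose each $(n+m)$-tile at $x$ through the unique intermediate $n$-tile of $F$ at $x$ that contains it (using Proposition~\ref{prop:cell decomposition: invariant Jordan curve} together with Proposition~\ref{prop:subsystem:properties invariant Jordan curve}~\ref{item:subsystem:properties invariant Jordan curve:decreasing relation of domains} to stay inside $\Domain{n}$), then use that $F^n$ restricted to that $n$-tile is a homeomorphism to pass to $m$-tiles at $f^n(x)$. The only cosmetic difference is that you package the count as a single global bijection onto a disjoint union of products, whereas the paper establishes a bijection $\mathcal{X} \leftrightarrow \neighbortile{m}{\colour}{\colour''}{f^n(x)}$ for each fixed intermediate tile and then sums the constant cardinalities; your phrasing ``$X \in \Domain{n+m} \subseteq \Domain{n}$'' is a slight abuse (these are sets of tiles at different levels), but the surrounding sentence makes the intended meaning clear.
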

\begin{proof}
	Let $x \in S^2$, $\juxtapose{n}{m} \in \n$, and $\juxtapose{\colour}{\ccolour} \in \colours$ be arbitrary. It suffices to show that\[
		\ccndegF{\colour}{\ccolour}{n + m}{x} = \sum_{\cccolour \in \colours} \ccndegF{\cccolour}{\ccolour}{n}{x} \ccndegF{\colour}{\cccolour}{m}{f^{n}(x)}.
	\]

	By Propositions~\ref{prop:cell decomposition: invariant Jordan curve} and \ref{prop:subsystem:properties invariant Jordan curve}~\ref{item:subsystem:properties invariant Jordan curve:decreasing relation of domains}, every $X^{n + m} \in \neighbortile{n + m}{\colour}{\ccolour}{x}$ is contained in a unique $X^n \in \Domain{n}$. 
	Moreover, $X^n$ is contained in $X^0_{\ccolour}$ since $X^{n + m}$ is contained in $X^0_{\ccolour}$ by definition. 
	Thus $X^n$ is contained in $\bigcup_{\cccolour \in \colours} \neighbortile{n}{\cccolour}{\ccolour}{x}$.

	Now let $\cccolour \in \colours$ and $X^n \in \neighbortile{n}{\cccolour}{\ccolour}{x}$ be arbitrary, and define\[
		\mathcal{X} \define \{ X^{n + m} \in \neighbortile{n + m}{\colour}{\ccolour}{x} \describe X^{n + m} \subseteq X^n \}.
	\]
	By Proposition~\ref{prop:subsystem:properties}~\ref{item:subsystem:properties:homeo}, each $X^{n + m} \in \mathcal{X}$ is mapped by $F^n$ homeomorphically to $Y^m \define F^{n}(X^{n + m})$, and $Y^m \in \neighbortile{m}{\colour}{\cccolour}{f^n(x)}$ since $Y^m \subseteq F^{n}(X^n) = X^0_{\cccolour}$ and $Y^m$ has the same color as $X^{n + m}$. 
	Moreover, it follows from Lemma~\ref{lem:cell mapping properties of Thurston map}~\ref{item:lem:cell mapping properties of Thurston map:i} and Proposition~\ref{prop:subsystem:properties}~\ref{item:subsystem:properties:homeo} that the map $F^{n}|_{X^{n}}$ induces a bijection $X^{n + m} \mapsto F^{n}(X^{n + m})$ between $\mathcal{X}$ and $\neighbortile{m}{\colour}{\cccolour}{f^n(x)}$. Thus $\card{\mathcal{X}} = \card{\neighbortile{m}{\colour}{\cccolour}{f^n(x)}} = \ccndegF{\colour}{\cccolour}{m}{f^{n}(x)}$, which is independent of $X^n$.

	Combining the two arguments above, we get\[
		\card{\neighbortile{n + m}{\colour}{\ccolour}{x}} 
		= \sum_{\cccolour \in \colours} \card{\neighbortile{n}{\cccolour}{\ccolour}{x}} \ccndegF{\colour}{\cccolour}{m}{f^{n}(x)}.
	\]
	This completes the proof.
\end{proof}

\subsection{Tile matrix}%
\label{sub:Tile matrix}

In this subsection, we introduce a $2 \times 2$ matrix called the tile matrix to describe tiles of a subsystem according to their colors and locations.
We show that the tile matrix is well-behaved under iteration.

\begin{definition}[Tile matrices]     \label{def:tile matrix} 
	Let $f$, $\mathcal{C}$, $F$ satisfy the Assumptions in Section~\ref{sec:The Assumptions}. 
	We define the \emph{tile matrix} of $F$ with respect to $\mathcal{C}$ as
	\begin{equation}    \label{eq:definition of tile matrix}
			A = A(F, \mathcal{C}) \define \begin{bmatrix}
			N_{\white \white} & N_{\black \white} \\
			N_{\white \black} & N_{\black \black}
		\end{bmatrix},
	\end{equation}
	where \[
		N_{\colour \ccolour} = N_{\colour\ccolour}(A) \define \operatorname{card}{\! \bigl\{ X \in \cFTile{1} \describe X \subseteq X^0_{\ccolour} \bigr\}} = \card[\big]{\ccFTile{1}{\colour}{\ccolour}}
	\]
	for each pair of colors $\juxtapose{\colour}{\ccolour} \in \colours$. For example, $N_{\black \white}$ is the number of black tiles in $\Domain{1}$ which are contained in the white $0$-tile $X^0_{\white}$.
\end{definition}

\begin{remark}    \label{rem:tile matrix associated with set of tiles}
	Note that the tile matrix $A(F, \mathcal{C})$ of $F$ with respect to $\mathcal{C}$ is completely determined by the set $\Domain{1}$. 
	Thus for each integer $n \in \n_0$ and each set of $n$-tiles $\mathbf{T} \subseteq \Tile{n}$, similarly, we can define the tile matrix of $\mathbf{T}$ and denote it by $A(\mathbf{T})$. 
	For example, when $\mathbf{T} = \Domain{n}$ for some $n \in \n_0$, we define\[
		A(\Domain{n}) \define 
		\begin{bmatrix}
			N_{\white \white}(\Domain{n}) & N_{\black \white}(\Domain{n}) \\
			N_{\white \black}(\Domain{n}) & N_{\black \black}(\Domain{n})
		\end{bmatrix},
	\]
	where $N_{\colour \ccolour}(\Domain{n}) \define \card[\big]{\set[\big]{ X \in \Domain{n} \describe X \in \cTile{n}{\colour}, \, X \subseteq X^0_{\ccolour} }} = \card{\ccFTile{n}{\colour}{\ccolour}}$ for each pair of colors $\juxtapose{\colour}{\ccolour} \in \colours$.
\end{remark}

\begin{proposition}    \label{prop:power of tile matrix}
	Let $f$, $\mathcal{C}$, $F$ satisfy the Assumptions in Section~\ref{sec:The Assumptions}. 
	We assume in addition that $f(\mathcal{C}) \subseteq \mathcal{C}$.
	Then for each integer $n \in \n$, we have
	\begin{equation}    \label{eq:power of tile matrix}
		 A(\Domain{n}) = \bigl(A\bigl(\Domain{1} \bigr)\bigr)^n,
	\end{equation}
	i.e., the tile matrix of $\Domain{n}$ equals the $n$-th power of the tile matrix of $\Domain{1}$.
\end{proposition}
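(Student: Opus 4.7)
The plan is to prove the inductive identity
\begin{equation*}
A(\Domain{n + 1}) = A(\Domain{1}) \cdot A(\Domain{n}), \qquad n \in \n,
\end{equation*}
from which \eqref{eq:power of tile matrix} follows immediately by induction on $n$ (the base case $n = 1$ being tautological). At the level of entries, what must be shown is that for each ordered pair of colors $(\colour, \colour') \in \colours \times \colours$,
\begin{equation*}
N_{\colour \colour'}(\Domain{n + 1}) = \sum_{\colour'' \in \colours} N_{\colour'' \colour'}(\Domain{1}) \cdot N_{\colour \colour''}(\Domain{n}),
\end{equation*}
which translates to the displayed matrix identity under the row/column convention of Definition~\ref{def:tile matrix}. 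I will obtain this count by constructing a natural bijection
\begin{equation*}
\ccFTile{n + 1}{\colour}{\colour'} \;\longleftrightarrow\; \bigsqcup_{\colour'' \in \colours} \ccFTile{1}{\colour''}{\colour'} \times \ccFTile{n}{\colour}{\colour''}.
\end{equation*}

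The forward map assigns to $X^{n + 1} \in \ccFTile{n + 1}{\colour}{\colour'}$ the pair $\bigl( X^{1}, F(X^{n + 1}) \bigr)$, where $X^1 \in \Tile{1}$ is the unique $1$-tile containing $X^{n + 1}$ supplied by Proposition~\ref{prop:cell decomposition: invariant Jordan curve} (applicable since $f(\mathcal{C}) \subseteq \mathcal{C}$). By Proposition~\ref{prop:subsystem:properties invariant Jordan curve}~\ref{item:subsystem:properties invariant Jordan curve:decreasing relation of domains} the tile $X^1$ lies in $\Domain{1}$, and its unique containing $0$-tile must be $X^0_{\colour'}$; letting $\colour''$ denote its color yields $X^1 \in \ccFTile{1}{\colour''}{\colour'}$. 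Proposition~\ref{prop:subsystem:properties}~\ref{item:subsystem:properties:homeo} then gives $F(X^{n+1}) \in \Domain{n}$, while the identities $F^{n}\bigl(F(X^{n+1})\bigr) = F^{n+1}(X^{n+1}) = X^0_{\colour}$ and $F(X^{n+1}) \subseteq F(X^1) = X^0_{\colour''}$ place it in $\ccFTile{n}{\colour}{\colour''}$.

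The inverse map sends $(X^1, Y^n) \in \ccFTile{1}{\colour''}{\colour'} \times \ccFTile{n}{\colour}{\colour''}$ to $X^{n+1} \define (F|_{X^1})^{-1}(Y^n)$, which is well-defined because $F|_{X^1}$ is a homeomorphism onto $X^0_{\colour''}$ (Proposition~\ref{prop:subsystem:properties}~\ref{item:subsystem:properties:homeo}) and $Y^n \subseteq X^0_{\colour''}$. Lemma~\ref{lem:cell mapping properties of Thurston map}~\ref{item:lem:cell mapping properties of Thurston map:i} promotes $X^{n+1}$ to an element of $\Tile{n+1}$, and the relation $F^{n+1}(X^{n+1}) = F^{n}(Y^n) \subseteq F(\domF)$ together with $X^{n+1} \subseteq X^1 \subseteq \domF$ promotes it further to an element of $\Domain{n+1}$ via \eqref{eq:definition of tile of subsystem}. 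Matching colors and containing $0$-tiles shows $X^{n+1} \in \ccFTile{n+1}{\colour}{\colour'}$, and checking that the two constructions are mutually inverse is immediate from the uniqueness of the containing $1$-tile and the injectivity of $F|_{X^1}$.

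I expect the main obstacle to be the careful bookkeeping of the four color/position indices through both directions of the bijection, together with confirming that the lift $(F|_{X^1})^{-1}(Y^n)$ is genuinely a tile of the subsystem---that is, that it belongs to $\Domain{n+1}$ and not merely to $\Tile{n+1}$, which must be extracted from the defining inclusion in \eqref{eq:definition of tile of subsystem}. Once these points are in hand, translating the bijection into the matrix identity is a purely formal step dictated by the indexing convention of Definition~\ref{def:tile matrix}, and induction closes out the proof.
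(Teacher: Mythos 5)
Your proof is correct and is essentially the paper's argument read in the opposite order: you establish $A(\Domain{n+1}) = A(\Domain{1})\,A(\Domain{n})$ by decomposing each $(n+1)$-tile of $F$ according to its unique containing $1$-tile and lifting an $n$-tile through one branch of $F$, whereas the paper establishes $A(\Domain{k+1}) = A(\Domain{k})\,A(\Domain{1})$ by decomposing according to the containing $k$-tile; both rest on the same bijection lemma coming from Proposition~\ref{prop:subsystem:properties}~\ref{item:subsystem:properties:homeo}, Lemma~\ref{lem:cell mapping properties of Thurston map}~\ref{item:lem:cell mapping properties of Thurston map:i}, and Proposition~\ref{prop:cell decomposition: invariant Jordan curve}, so the two factorizations of $A^{n+1}$ are dual and equivalent. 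One small tightening: to show $(F|_{X^1})^{-1}(Y^n) \in \Domain{n+1}$ from \eqref{eq:definition of tile of subsystem}, the inclusions you cite ($X^{n+1} \subseteq \domF$ and $F^{n+1}(X^{n+1}) \subseteq F(\domF)$) do not by themselves control the intermediate iterates $F^i(X^{n+1})$ for $1 \leqslant i \leqslant n$; the clean deduction is $(F|_{X^1})^{-1}(Y^n) \subseteq F^{-1}(Y^n) \subseteq F^{-1}\bigl(F^{-n}(F(\domF))\bigr) = F^{-(n+1)}(F(\domF))$, using directly that $Y^n \in \Domain{n}$.
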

\begin{remark}\label{rem:power of tile matrix}
	Note that if the map $F(\domF) = S^2$, then $A(\Dom{0})$ is a $2 \times 2$ identity matrix and \eqref{eq:power of tile matrix} holds for $n = 0$.
\end{remark}
\begin{proof}
	For convenience, we write for each $n \in \n_0$, 
	\[
		\begin{bmatrix}
			\white_{n} & \black_{n} \\
			\white_{n}' & \black_{n}'
		\end{bmatrix}
		\define A(\Domain{n}) =
		\begin{bmatrix}
			N_{\white \white}(A(\Domain{n})) & N_{\black \white}(A(\Domain{n})) \\
			N_{\white \black}(A(\Domain{n})) & N_{\black \black}(A(\Domain{n}))
		\end{bmatrix}.
	\]

	Let $\triojuxtapose{k}{\ell}{m} \in \n_0$ with $m \geqslant \ell \geqslant k$ be arbitrary. 
	By Proposition~\ref{prop:subsystem:properties}~\ref{item:subsystem:properties:homeo}, the map $F^{k}$ preserves colors of tiles of $F$, i.e., if $X^{m}$ is an $m$-tile of $F$, then $F^{k}(X^{m})$ is an $(m - k)$-tile of $F$ with the same color as $X^{m}$. 
	Moreover, if $Y^{\ell}$ is an $\ell$-tile of $F$, then it follows from Lemma~\ref{lem:cell mapping properties of Thurston map}~\ref{item:lem:cell mapping properties of Thurston map:i} and Proposition~\ref{prop:subsystem:properties}~\ref{item:subsystem:properties:homeo} that the map $F^{k}|_{Y^{\ell}}$ induces a bijection $X^{m} \mapsto F^{k}(X^{m})$ between the $m$-tiles of $F$ contained in $Y^{\ell}$ and the $(m - k)$-tiles of $F$ contained in the $(\ell - k)$-tile $Y^{\ell - k} \define F^{k}(Y^{\ell})$.

	If we use this for $m = k + 1$ and $\ell = k$, then we see that a white $k$-tile of $F$ contains $\white_{1}$ white and $\black_{1}$ black $(k + 1)$-tiles of $F$, and similarly each black $k$-tile of $F$ contains $\white_{1}'$ white and $\black_{1}'$ black $(k + 1)$-tiles of $F$. 
	This leads to the identity\[
		\begin{bmatrix}
		\white_{k + 1} & \black_{k + 1} \\
		\white_{k + 1}' & \black_{k + 1}'
		\end{bmatrix}
		= 
		\begin{bmatrix}
		\white_{k} & \black_{k} \\
		\white_{k}' & \black_{k}'
		\end{bmatrix}
		\begin{bmatrix}
		\white_{1} & \black_{1} \\
		\white_{1}' & \black_{1}'
		\end{bmatrix}
	\]
	for each $k \in \n_0$. 
	This implies \eqref{eq:power of tile matrix}.
\end{proof}

The following proposition is not used in this paper but should be of independent interest.

\begin{proposition}    \label{prop:no degenerate no die}
	Let $f$, $\mathcal{C}$, $F$ satisfy the Assumptions in Section~\ref{sec:The Assumptions}.
	We assume in addition that $f(\mathcal{C}) \subseteq \mathcal{C}$. 
	If the tile matrix $A$ of $F$ with respect to $\mathcal{C}$ is not degenerate, then for each $n \in \n_0$ and each $X^n \in \Domain{n}$ we have $X^n \cap \limitset \ne \emptyset$.
\end{proposition}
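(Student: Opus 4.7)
The plan is to construct, for each $n \in \n_0$ and each $X^n \in \Domain{n}$, an infinite nested sequence of $F$-tiles $\{X^{n+k}\}_{k \in \n_0}$ with $X^{n+k} \in \Domain{n+k}$ and $X^{n+k+1} \subseteq X^{n+k}$. By \eqref{eq:def:limitset} and compactness of $S^2$, the intersection $\bigcap_{k \in \n_0} X^{n+k}$ is then a non-empty compact subset of $X^n \cap \limitset$.

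First I would reduce to the case of building such a sequence starting from a $0$-tile. By Proposition~\ref{prop:subsystem:properties}~\ref{item:subsystem:properties:homeo}, $F^n|_{X^n}$ is a homeomorphism of $X^n$ onto $F^n(X^n) = X^0_{\colour_0}$ for some $\colour_0 \in \colourset$. If $Y^k \in \Domain{k}$ satisfies $Y^k \subseteq X^0_{\colour_0}$, then Lemma~\ref{lem:cell mapping properties of Thurston map}~\ref{item:lem:cell mapping properties of Thurston map:i} shows that $(F^n|_{X^n})^{-1}(Y^k)$ is an $(n+k)$-tile of $f$; since $F^{n+k}$ maps it onto $F^k(Y^k) \in \Domain{0} \subseteq F(\domF)$ by Proposition~\ref{prop:subsystem:properties}~\ref{item:subsystem:properties:homeo}, definition~\eqref{eq:definition of tile of subsystem} places it in $\Domain{n+k}$. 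Hence it suffices to exhibit a nested sequence $X^0_{\colour_0} \supseteq Y^1 \supseteq Y^2 \supseteq \cdots$ with $Y^k \in \Domain{k}$ for every $k \in \n_0$.

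Next I would build this sequence by a recursive color-tracking argument exploiting non-degeneracy of $A$. The hypothesis that $A$ is not degenerate should imply, in particular, that no column of $A$ is identically zero; equivalently, for every $\colour \in \colours$, the $0$-tile $X^0_{\colour}$ contains at least one element of $\Domain{1}$. Setting $Y^0 \define X^0_{\colour_0}$ and assuming inductively that $Y^k \in \Domain{k}$ has been produced with $F^k(Y^k) = X^0_{\colour_k}$, I would pick $Z^1 \in \Domain{1}$ with $Z^1 \subseteq X^0_{\colour_k}$, let $\colour_{k+1} \in \colours$ be the color of $Z^1$ (so that $F(Z^1) = X^0_{\colour_{k+1}}$), and define $Y^{k+1} \define (F^k|_{Y^k})^{-1}(Z^1)$. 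The same reduction used in the first step, applied with $X^n$, $Y^k$, and the level $n$ replaced by $Y^k$, $Z^1$, and the level $k$, shows $Y^{k+1} \in \Domain{k+1}$, $Y^{k+1} \subseteq Y^k$, and $F^{k+1}(Y^{k+1}) = X^0_{\colour_{k+1}}$, allowing the induction to continue indefinitely.

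The main obstacle will be pinning down the precise reading of ``not degenerate'' for the tile matrix and confirming that it yields the ``no zero column'' condition used above. Under the standard interpretation $\det A \ne 0$, or under the alternative reading that $A$ has no zero row and no zero column, this implication is immediate for a $2 \times 2$ nonnegative integer matrix. Once this translation is granted, the remainder of the argument is essentially a finite-color path-extension in the $2$-vertex directed graph encoding $A$, wrapped around the homeomorphism and refinement properties already furnished by Propositions~\ref{prop:subsystem:properties} and~\ref{prop:subsystem:properties invariant Jordan curve} and Lemma~\ref{lem:cell mapping properties of Thurston map}.
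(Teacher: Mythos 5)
Your argument is correct and is essentially the paper's own proof: repeatedly choose a $1$-tile of $F$ inside the current image $0$-tile and pull it back through the homeomorphisms supplied by Proposition~\ref{prop:subsystem:properties}~\ref{item:subsystem:properties:homeo} and Lemma~\ref{lem:cell mapping properties of Thurston map}~\ref{item:lem:cell mapping properties of Thurston map:i}, producing a nested sequence of tiles in $\Domain{n+k}$ inside $X^n$ whose intersection lies in $X^n \cap \limitset$ by \eqref{eq:def:limitset} and Proposition~\ref{prop:subsystem:properties invariant Jordan curve}~\ref{item:subsystem:properties invariant Jordan curve:decreasing relation of domains}. Concerning the point you flagged: the paper defines $A$ to be degenerate exactly when one of its rows vanishes, and since a row of $A$ lists the $1$-tiles of $F$ contained in a fixed $0$-tile (the columns record the colors of those tiles), non-degeneracy is precisely your working hypothesis that each $X^0_{\colour}$ contains an element of $\Domain{1}$; no stronger reading such as $\det A \ne 0$ is needed, though your ``no zero column'' phrasing is transposed relative to the paper's indexing.
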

We say that the tile matrix $A$ of $F$ with respect to $\mathcal{C}$ is \emph{degenerate} if $A$ has one of the following forms:
\[
	\begin{bmatrix}
		a & b \\ 0 & 0
	\end{bmatrix},
\begin{bmatrix}
		0 & 0 \\ a & b
	\end{bmatrix}
\]
where $\juxtapose{a}{b} \in \n_0$.
\begin{proof}
	Fix arbitrary integer $n \in \n_0$ and tile $X^n \in \Domain{n}$. 
	Recall that we set $F^0 = \id{S^2}$. 
	By Proposition~\ref{prop:subsystem:properties}~\ref{item:subsystem:properties:homeo}, we have $F^{n}(X^n) = X^0_{\colour} \in \Tile{0}$ for some $\colour \in \colours$ and $F^{n}|_{X^n}$ is a homeomorphism of $X^n$ onto $X^0_{\colour}$. 
	Since the tile matrix $A$ is not degenerate, there exists a tile $Y^{1}_{\colour} \in \Domain{1}$ such that $Y^{1}_{\colour} \subseteq X^0_{\colour}$. 
	Hence by Lemma~\ref{lem:cell mapping properties of Thurston map}~\ref{item:lem:cell mapping properties of Thurston map:i} and Proposition~\ref{prop:subsystem:properties}~\ref{item:subsystem:properties:homeo}, there exists a tile $X^{n + 1} \in \Domain{n + 1}$ such that $X^{n + 1} \subseteq X^{n}$ and $F^{n}(X^{n + 1}) = Y^{1}_{\colour}$. 
	Since $F^{n + 1}(X^{n + 1}) \in \Domain{0}$, similarly, there exists a tile $X^{n + 2} \in \Domain{n + 2}$ such that $X^{n + 2} \subseteq X^{n + 1}$. 
	Thus by induction, there exists a sequence of tiles $\{X^{n + k} \}_{k \in \n_0}$ that satisfies $X^{n + k} \in \Domain{n + k}$ and $X^{n + k + 1} \subseteq X^{n + k}$ for each $k \in \n_0$. 
	By Lemma~\ref{lem:visual_metric}~\ref{item:lem:visual_metric:diameter of cell}, the set $\bigcap_{k \in \n_0} X^{n + k}$ is the intersection of a nested sequence of closed sets with diameters convergent to zero. 
	Thus, it contains exactly one point in $S^2$. 
	Since $\bigcap_{k \in \n_0} X^{n + k} \subseteq X^{n} \cap \limitset$ by \eqref{eq:def:limitset} and Proposition~\ref{prop:subsystem:properties invariant Jordan curve}~\ref{item:subsystem:properties invariant Jordan curve:decreasing relation of domains}, the proof is complete.
\end{proof}

\subsection{Irreducible and primitive subsystems}%
\label{sub:Irreducible and primitive subsystems}

In this subsection, we specialize in irreducible (\resp strongly irreducible) subsystems and primitive (\resp strongly primitive) subsystems, which have additional properties.

Let $f$, $\mathcal{C}$, $F$ satisfy the Assumptions in Section~\ref{sec:The Assumptions}.

\begin{definition}[Irreducibility]    \label{def:irreducibility of subsystem}
	We say that $F$ is an \emph{irreducible} (\resp a \emph{strongly irreducible}) subsystem (of $f$ with respect to $\mathcal{C}$) if for each pair of colors $\juxtapose{\colour}{\ccolour} \in \colours$, there exists an integer $n = n(\colour, \ccolour) \in \n$ and $X^{n} \in \cFTile{n}$ satisfying $X^{n} \subseteq X^0_{\ccolour}$ (\resp $X^{n} \subseteq \inte[\big]{X^0_{\ccolour}}$).
	We denote by $n_{F}$ the constant $\max_{\juxtapose{\colour}{\ccolour} \in \colours} n(\colour, \ccolour)$, which depends only $F$ and $\mathcal{C}$.
\end{definition}

Obviously, if $F$ is irreducible then $\colourset = \colours$ and $F(\domF) = S^{2}$.

Note that the subsystem $F$ defined in Example~\ref{exam:subsystems}~\ref{item:exam:subsystems:Sierpinski gasket} is strongly irreducible if the front side of the left pillow shown in Figure~\ref{fig:subsystem:example:gasket} is black.

\begin{definition}[Primitivity]    \label{def:primitivity of subsystem}
	We say that $F$ is a \emph{primitive} (\resp \emph{strongly primitive}) subsystem (of $f$ with respect to $\mathcal{C}$) if there exists an integer $n_{F} \in \n$ such that for each pair of colors $\juxtapose{\colour}{\ccolour} \in \colours$ and each integer $n \geqslant n_{F}$, there exists $X^n \in \cFTile{n}$ satisfying $X^n \subseteq X^0_{\ccolour}$ (\resp $X^n \subseteq \inte[\big]{X^0_{\ccolour}}$). 
\end{definition}

Obviously, if $F$ is primitive (\resp strongly primitive), then $F$ is irreducible (\resp strongly irreducible).

Note that the subsystem $F$ defined in Example~\ref{exam:subsystems}~\ref{item:exam:subsystems:Sierpinski carpet} is strongly primitive.

\begin{remark}\label{rem:expanding Thurston map is strongly primitive subsystem of itself}
	By \cite[Lemma~5.10]{li2018equilibrium}, an expanding Thurston map $f$ is a strongly primitive subsystem of itself with respect to every Jordan curve $\mathcal{C} \subseteq S^2$ satisfying $\post{f} \subseteq \mathcal{C}$.
\end{remark}

\begin{definition}	\label{def:irreducible and primitive matrix}
	A matrix all of whose entries are positive (\resp non-negative) is called \emph{positive} (\resp \emph{non-negative}).
	Let $A$ be a square non-negative matrix.
	If for any $i, \, j$ there is $n \in \n$ such that $(A^{n})_{ij} > 0$, then $A$ is called \emph{irreducible}; otherwise $A$ is called \emph{reducible}.
	If some power of $A$ is positive, $A$ is called \emph{primitive}.
\end{definition}

\begin{remark}\label{rem:equivalence between irreducibility and primitivity for subsystem and tile matrix}
	If we assume that $f(\mathcal{C}) \subseteq \mathcal{C}$, then it follows immediately from Definitions~\ref{def:tile matrix} and Proposition~\ref{prop:power of tile matrix} that $F$ is irreducible (\resp primitive) if and only if the tile matrix of $F$ is irreducible (\resp primitive).   
\end{remark}

\begin{proposition}    \label{prop:irreducible subsystem properties}
	Let $f$, $\mathcal{C}$, $F$ satisfy the Assumptions in Section~\ref{sec:The Assumptions}.  
	We assume in addition that $f(\mathcal{C}) \subseteq \mathcal{C}$.
	Then the following statements hold:
	\begin{enumerate}[label=\rm{(\roman*)}]
		\smallskip
		\item    \label{item:prop:irreducible subsystem properties:preimages is dense in limitset} 
			If $F$ is irreducible, then $\bigcup_{i \in \n} F^{-i}(x)$ is dense in $\limitset$ for each $x \in S^2$.
		\smallskip
		
		\item    \label{item:prop:irreducible subsystem properties:limitset non degenerate to Jordan curve} 
			If $F$ is strongly irreducible, then $\limitset \cap \inte[\big]{X^0_{\colour}} \ne \emptyset$ for each $\colour \in \colours$.
	\end{enumerate}
\end{proposition}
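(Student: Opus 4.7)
My plan is to prove both parts by constructing nested sequences of tiles of $F$ inside $X^0_{\colour}$, using Proposition~\ref{prop:subsystem:properties}~\ref{item:subsystem:properties:homeo} (which implies that $F^{m}|_{X^{m}}$ is a homeomorphism of each $m$-tile $X^{m}$ of $F$ onto the $0$-tile $F^{m}(X^{m})$) to pull tiles found inside $X^0_{\colour}$ by (strong) irreducibility back inside a tile of $F$ of higher level. For \ref{item:prop:irreducible subsystem properties:preimages is dense in limitset}, fix $x \in S^2$, $q \in \limitset$, and $\epsilon > 0$. By \eqref{eq:def:limitset} and Lemma~\ref{lem:visual_metric}~\ref{item:lem:visual_metric:diameter of cell}, I first pick an integer $n \in \n$ and a tile $X^n_q \in \Domain{n}$ with $q \in X^n_q$ and $\diam{d}{X^n_q} < \epsilon$, so that $X^n_q \subseteq B_d(q, \epsilon)$. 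Writing $F^n(X^n_q) = X^0_{\colour}$ and choosing $\colour'' \in \colours$ with $x \in X^0_{\colour''}$, the irreducibility hypothesis applied to the pair $(\colour'', \colour)$ produces $k \in \n$ and $X^k_{*} \in \cFTile{k}{\colour''}$ with $X^k_{*} \subseteq X^0_{\colour}$. Since $F^k|_{X^k_{*}}$ is a homeomorphism onto $X^0_{\colour''}$ and $x \in X^0_{\colour''}$, there is a unique $z \in X^k_{*}$ with $F^k(z) = x$; set $y \define \bigl(F^n|_{X^n_q}\bigr)^{-1}(z) \in X^n_q \subseteq B_d(q, \epsilon)$. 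Using Proposition~\ref{prop:subsystem:properties}~\ref{item:subsystem:properties:homeo} together with Proposition~\ref{prop:subsystem:properties invariant Jordan curve}~\ref{item:subsystem:properties invariant Jordan curve:decreasing relation of domains}, each intermediate iterate $F^{j}(y)$, $0 \leqslant j < n+k$, lies in a tile of $F$ of positive level, hence in $\domF$, so $y \in F^{-(n+k)}(x)$. This gives the desired density.

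For \ref{item:prop:irreducible subsystem properties:limitset non degenerate to Jordan curve}, fix $\colour \in \colours$. I will inductively build a decreasing sequence of tiles $X_1 \supseteq X_2 \supseteq \cdots$ with $X_k \in \Domain{m_k}$, all contained in $\inte[\big]{X^0_{\colour}}$, and $m_k \to +\infty$. Strong irreducibility applied to some pair $(\colour_1, \colour)$ directly supplies $X_1 \in \Domain{m_1}$ with $X_1 \subseteq \inte[\big]{X^0_{\colour}}$ and $F^{m_1}(X_1) = X^0_{\colour_1}$. Given $X_k$ with $F^{m_k}(X_k) = X^0_{\colour_k}$, strong irreducibility applied to some pair $(\colour_{k+1}, \colour_k)$ yields $X'_{k+1} \in \Domain{n_{k+1}}$ with $X'_{k+1} \subseteq \inte[\big]{X^0_{\colour_k}}$; put $X_{k+1} \define \bigl(F^{m_k}|_{X_k}\bigr)^{-1}(X'_{k+1})$ and $m_{k+1} \define m_k + n_{k+1}$. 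Lemma~\ref{lem:cell mapping properties of Thurston map}~\ref{item:lem:cell mapping properties of Thurston map:i} shows that $X_{k+1}$ is an $m_{k+1}$-tile, and the identity $F^{m_{k+1}}(X_{k+1}) = F^{n_{k+1}}(X'_{k+1}) \subseteq F(\domF)$ (valid since $X'_{k+1} \in \Domain{n_{k+1}}$) places $X_{k+1}$ in $\Domain{m_{k+1}}$. Because $F^{m_k}|_{X_k}$ is a homeomorphism of closed topological $2$-disks, it sends $\partial X_k$ onto $\partial X^0_{\colour_k}$ and hence $\inte{X_k}$ onto $\inte[\big]{X^0_{\colour_k}}$, so $X_{k+1} \subseteq \inte{X_k} \subseteq \inte[\big]{X^0_{\colour}}$. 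Lemma~\ref{lem:visual_metric}~\ref{item:lem:visual_metric:diameter of cell} and $m_k \to +\infty$ then force $\diam{d}{X_k} \to 0$, so $\bigcap_{k \in \n} X_k = \{p\}$ for some $p \in \inte[\big]{X^0_{\colour}}$. Since $p \in X_k \subseteq \bigcup \Domain{m_k}$ for every $k$ and $m_k \to +\infty$, Proposition~\ref{prop:subsystem:properties invariant Jordan curve}~\ref{item:subsystem:properties invariant Jordan curve:decreasing relation of domains} gives $p \in \bigcup \Domain{m}$ for every $m \in \n$, i.e., $p \in \limitset$.

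The main subtlety I anticipate is ensuring at each inductive step in \ref{item:prop:irreducible subsystem properties:limitset non degenerate to Jordan curve} that the pulled-back cell really lies in $\Domain{m_{k+1}}$ (not merely in $\mathbf{X}^{m_{k+1}}(f,\mathcal{C})$) and that its containment in $X_k$ tightens to the interior $\inte{X_k}$ rather than just to $X_k$. Both points rely on the boundary-to-boundary behavior of the homeomorphism $F^{m_k}|_{X_k}$, the cellular-lifting lemma (Lemma~\ref{lem:cell mapping properties of Thurston map}~\ref{item:lem:cell mapping properties of Thurston map:i}), and the definition of $\Domain{m_{k+1}}$ via $F^{-m_{k+1}}(F(\domF))$.
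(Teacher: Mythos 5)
Your proposal is correct and takes essentially the same route as the paper: in part (i) you use irreducibility to find a tile of the appropriate color inside the image $0$-tile and pull the preimage of $x$ back through the homeomorphism $F^{n}|_{X^{n}}$ into an arbitrarily small $n$-tile of $F$ near a point of $\limitset$, and in part (ii) you build a nested sequence of tiles of $F$ inside $\inte{X^0_{\colour}}$ via Lemma~\ref{lem:cell mapping properties of Thurston map}~\ref{item:lem:cell mapping properties of Thurston map:i} and Proposition~\ref{prop:subsystem:properties}~\ref{item:subsystem:properties:homeo}, whose intersection point lies in $\limitset$ by the decreasing-domains property, exactly as in the paper. The only cosmetic difference is that the paper reuses one fixed tile $X^{n}$ with $F^{n}(X^{n}) = X^0_{\colour}$ to generate the nest (constant step $n$), whereas you invoke strong irreducibility afresh at each step with varying colors and step sizes; both versions are valid.
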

\begin{proof}
	\ref{item:prop:irreducible subsystem properties:preimages is dense in limitset}
	Fix an arbitrary point $x \in S^2$. 
	It suffices to show that the closure of $\bigcup_{i \in \n} F^{-i}(x)$ in $S^2$ contains $\limitset$. 
	By \eqref{eq:definition of tile of subsystem}, Proposition~\ref{prop:subsystem:properties invariant Jordan curve}~\ref{item:subsystem:properties invariant Jordan curve:decreasing relation of domains}, \eqref{eq:def:limitset}, and \eqref{eq:definition of expansion}, it suffices to show that for each $n \in \n$ and each $X^n \in \Domain{n}$, $X^n \cap \bigcup_{i \in \n} F^{-i}(x) \ne \emptyset$. 

	Fix arbitrary $n \in \n$ and $X^n \in \Domain{n}$. Since $x \in S^{2} = X^0_{\black} \cup X^{0}_{\white}$, there exists $\colour \in \colours$ such that $x \in X^0_{\colour}$. By Proposition~\ref{prop:subsystem:properties}~\ref{item:subsystem:properties:homeo}, $X^n$ is mapped by $F^n$ homeomorphically to a $0$-tile $X^0_{\ccolour}$ for some $\ccolour \in \colours$. 
	Since $F$ is irreducible, by Definition~\ref{def:irreducibility of subsystem}, there exist $k \in \n$ and $Y^{k} \in \cFTile{k}$ such that $Y^{k} \subseteq X^0_{\ccolour}$ and $F^{k}\bigl(Y^{k}\bigr) = X^0_{\colour}$. 
	Then it follows from Lemma~\ref{lem:cell mapping properties of Thurston map}~\ref{item:lem:cell mapping properties of Thurston map:i} and Proposition~\ref{prop:subsystem:properties}~\ref{item:subsystem:properties:homeo} that $X^{k + n} \define (F^{n}|_{X^n})^{-1}\bigl(Y^{k}\bigr) \in \Domain{k + n}$. 
	Since $x \in X^0_{\colour} = F^{k + n}(X^{k + n})$ and $X^{k + n} \subseteq X^{n}$, we conclude that $X^n \cap \bigcup_{i \in \n} F^{-i}(x) \ne \emptyset$.

	\smallskip

	\ref{item:prop:irreducible subsystem properties:limitset non degenerate to Jordan curve}
	Fix arbitrary $\colour \in \colours$. 
	Assume that $F \in \subsystem$ is strongly irreducible.
	Then by Definition~\ref{def:irreducibility of subsystem}, there exist $n \in \n$ and $X^{n} \in \Domain{n}$ such that $X^{n} \subseteq \inte[\big]{X^0_{\colour}}$ and $F^{n}(X^{n}) = X^0_{\colour}$. 
	Thus, by Lemma~\ref{lem:cell mapping properties of Thurston map}~\ref{item:lem:cell mapping properties of Thurston map:i} and Proposition~\ref{prop:subsystem:properties}~\ref{item:subsystem:properties:homeo}, there exists $X^{2 n} \in \Domain{2 n}$ such that $X^{2 n} \subseteq X^{n}$ and $F^{n}(X^{2 n}) = X^{n}$. Since $X^{n} \subseteq \inte[\big]{X^0_{\colour}}$ and $F^{2 n}(X^{2 n}) = X^{0}_{\colour}$, similarly, there exists $X^{3 n} \in \Domain{3 n}$ such that $X^{3 n} \subseteq X^{2 n}$ and $F^{2 n}(X^{3 n}) = X^{n}$. Thus by induction, there exists a sequence of tiles $\bigl\{ X^{k n} \bigr\}_{k \in \n}$ such that $X^{k n} \in \Domain{k n}$ and $X^{(k + 1) n} \subseteq X^{k n} \subseteq \inte[\big]{X^0_{\colour}}$ for each $k \in \n$. Hence the set $\bigcap_{k \in \n} X^{k n} \subseteq \inte[\big]{X^0_{\colour}}$ is the intersection of a nested sequence of closed sets so that it is non-empty. 
	Since $\bigcap_{k \in \n} X^{k n} \subseteq \limitset$ by \eqref{eq:def:limitset} and Proposition~\ref{prop:subsystem:properties invariant Jordan curve}~\ref{item:subsystem:properties invariant Jordan curve:decreasing relation of domains}, we deduce that $\limitset \cap \inte[\big]{X^0_{\colour}} \ne \emptyset$.
\end{proof}

\begin{lemma}    \label{lem:strongly irreducible:tile in interior tile}
	Let $f$, $\mathcal{C}$, $F$ satisfy the Assumptions in Section~\ref{sec:The Assumptions}. 
	Suppose that $F \in \subsystem$ is irreducible (\resp strongly irreducible) and let $n_F \in \n$ be the constant in Definition~\ref{def:irreducibility of subsystem}, which depends only on $F$ and $\mathcal{C}$. 
	Then for each $k \in \n_{0}$, each $\colour \in \colours$, and each $k$-tile $X^k \in \Domain{k}$, there exists an integer $n \in \n$ with $n \leqslant n_{F}$ and $X^{k + n}_{\colour} \in \cFTile{k + n}$ satisfying $X^{k + n}_{\colour} \subseteq X^k$ (\resp $X^{k + n}_{\colour} \subseteq \inte{X^k}$).
\end{lemma}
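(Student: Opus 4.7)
The plan is to reduce to the strong irreducibility witness at level $0$ by pulling back through $F^{k}|_{X^k}$. Since $F \in \subsystem$ and $F$ is strongly irreducible, in particular $F$ is irreducible, so $F(\domF) = S^{2}$ and $\Domain{0} = \Tile{0}$. Hence $F^{k}|_{X^k}$ makes sense for $k = 0$ (as the identity) and for $k \in \n$ (by Proposition~\ref{prop:subsystem:properties}~\ref{item:subsystem:properties:homeo}); in either case it is a homeomorphism of $X^{k}$ onto an $0$-tile.

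Given $k \in \n_{0}$ and $X^{k} \in \Domain{k}$, I would first invoke Proposition~\ref{prop:subsystem:properties}~\ref{item:subsystem:properties:homeo} to obtain a color $\colour' \in \colours$ such that $F^{k}(X^{k}) = X^{0}_{\colour'}$ and $F^{k}|_{X^{k}}$ is a homeomorphism of $X^{k}$ onto $X^{0}_{\colour'}$. Next, by the definition of strong irreducibility (Definition~\ref{def:irreducibility of subsystem}), for the pair $(\colour, \colour')$ there exist an integer $n \define n_{\colour \colour'} \in \n$ with $n \leqslant n_{F}$ and an $n$-tile $Y^{n} \in \cFTile{n}$ with $Y^{n} \subseteq \inte[\big]{X^{0}_{\colour'}}$. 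I would then define
\[
    X^{k + n}_{\colour} \define \bigl( F^{k}|_{X^{k}} \bigr)^{-1} \bigl( Y^{n} \bigr).
\]

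The remaining step is to verify the three required properties. By Lemma~\ref{lem:cell mapping properties of Thurston map}~\ref{item:lem:cell mapping properties of Thurston map:i}, $X^{k+n}_{\colour}$ is a $(k+n)$-tile of $f$; since $Y^{n} \subseteq F^{-n}(F(\domF))$, one has
\[
    X^{k+n}_{\colour} \subseteq F^{-k}(Y^{n}) \subseteq F^{-(k+n)}(F(\domF)),
\]
so $X^{k+n}_{\colour} \in \Domain{k+n}$ by \eqref{eq:definition of tile of subsystem}. Applying $F^{n}$ and using $F^{n}(Y^{n}) = X^{0}_{\colour}$ gives $F^{k+n}(X^{k+n}_{\colour}) = X^{0}_{\colour}$, so $X^{k+n}_{\colour} \in \cFTile{k+n}$. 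Finally, because $F^{k}|_{X^{k}}$ is a homeomorphism of the $2$-cell $X^{k}$ onto the $2$-cell $X^{0}_{\colour'}$, it carries $\partial X^{k}$ onto $\partial X^{0}_{\colour'}$ and hence $\inte{X^{k}}$ onto $\inte[\big]{X^{0}_{\colour'}}$. Since $Y^{n} \subseteq \inte[\big]{X^{0}_{\colour'}}$, this yields $X^{k+n}_{\colour} \subseteq \inte{X^{k}}$, completing the argument.

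There is no substantive obstacle: the whole point of strong irreducibility is precisely to make the witness tile land in the \emph{open} $0$-tile $\inte[\big]{X^{0}_{\colour'}}$, which is exactly what is needed for the pullback to land inside $\inte{X^{k}}$. The only small care is the $k = 0$ case, handled by the identification $\Domain{0} = \Tile{0}$ noted above.
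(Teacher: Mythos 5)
Your proposal is correct and follows essentially the same route as the paper: pull back the strong-irreducibility witness tile $Y^{n} \subseteq \inte[\big]{X^{0}_{\colour'}}$ through the homeomorphism $F^{k}|_{X^{k}}$ (Proposition~\ref{prop:subsystem:properties}~\ref{item:subsystem:properties:homeo} and Lemma~\ref{lem:cell mapping properties of Thurston map}~\ref{item:lem:cell mapping properties of Thurston map:i}), then verify membership in $\cFTile{k+n}$ exactly as the paper does. The extra remarks about $k=0$ and interiors mapping to interiors are fine and only make explicit what the paper leaves implicit.
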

\begin{proof}
	Assume that $F$ is irreducible.
	Fix arbitrary $k \in \n_{0}$, $\colour \in \colours$, and $X^k \in \Domain{k}$. 
	By Proposition~\ref{prop:subsystem:properties}~\ref{item:subsystem:properties:homeo}, $X^k$ is mapped by $F^k$ homeomorphically to $X^0 \define F^k(X^k) \in \Tile{0}$. 
	Since $F$ is irreducible, by Definition~\ref{def:irreducibility of subsystem}, there exists $n \in \n$ with $n \leqslant n_{F}$ such that there exists $X^n_{\colour} \in \cFTile{n}$ satisfying $X^n_{\colour} \subseteq X^0$. 
	Then it follows from Lemma~\ref{lem:cell mapping properties of Thurston map}~\ref{item:lem:cell mapping properties of Thurston map:i} and Proposition~\ref{prop:subsystem:properties}~\ref{item:subsystem:properties:homeo} that $X^{k + n}_{\colour} \define (F^k|_{X^k})^{-1}(X^n_{\colour})$ is an $(k + n)$-tile satisfying $X^{k + n}_{\colour} \subseteq X^k$. 
	Since $F^{k + n}(X^{k + n}_{\colour}) = F^{n}(X^n_{\colour}) = X^0_{\colour}$ and $X^{k + n}_{\colour} \subseteq F^{-k}(X^n_{\colour}) \subseteq F^{-(k + n)}(F(\domF))$, we conclude that $X^{k + n}_{\colour} \in \cFTile{k + n}$ and the proof is complete in this case.

	For strongly irreducible $F$, by the same argument as above we can prove the corresponding results.
\end{proof}

\begin{lemma}    \label{lem:strongly primitive:tile in interior tile for high enough level}
	Let $f$, $\mathcal{C}$, $F$ satisfy the Assumptions in Section~\ref{sec:The Assumptions}. 
	We assume in addition that $F \in \subsystem$ is primitive (\resp strongly primitive). 
	Let $n_F \in \n$ be the constant from Definition~\ref{def:primitivity of subsystem}, which depends only on $F$ and $\mathcal{C}$. 
	Then for each $n \in \n$ with $n \geqslant n_{F}$, each $m \in \n_0$, each $\colour \in \colours$, and each $m$-tile $X^m \in \Domain{m}$, there exists an $(n + m)$-tile $X^{n + m}_{\colour} \in \cFTile{n + m}$ such that $X^{n + m}_{\colour} \subseteq X^m$ (\resp $X^{n + m}_{\colour} \subseteq \inte{X^m}$).
\end{lemma}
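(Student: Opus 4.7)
The plan is to follow essentially the same strategy as the proof of Lemma~\ref{lem:strongly irreducible:tile in interior tile}, with the only change being that we invoke the strong primitivity condition (which guarantees an $n$-tile of the prescribed color sitting inside the interior of a $0$-tile for every $n \geqslant n_F$) instead of strong irreducibility (which only guarantees this for some particular $n \leqslant n_F$). Because strong primitivity provides \emph{every} sufficiently large level $n$, the statement for all $n \geqslant n_F$ follows with no extra work.

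Concretely, I would fix arbitrary $n \geqslant n_F$, $m \in \n_0$, $\colour \in \colours$, and $X^m \in \Domain{m}$. First apply Proposition~\ref{prop:subsystem:properties}~\ref{item:subsystem:properties:homeo} to conclude that $F^m|_{X^m}$ is a homeomorphism of $X^m$ onto some $0$-tile $X^0 \in \Tile{0}$; say $X^0 = X^0_{\colour'}$ for the appropriate $\colour' \in \colours$. Then, since $F$ is strongly primitive with constant $n_F$ and $n \geqslant n_F$, Definition~\ref{def:primitivity of subsystem} applied to the pair $(\colour, \colour')$ yields an $n$-tile $X^n_\colour \in \cFTile{n}$ with $X^n_\colour \subseteq \inte[\big]{X^0_{\colour'}} = \inte{X^0}$.

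Next, define $X^{n+m}_\colour \define (F^m|_{X^m})^{-1}(X^n_\colour)$. Since $F^m|_{X^m}$ is a homeomorphism onto $X^0$ that carries $\inte{X^m}$ onto $\inte{X^0}$, we immediately get $X^{n+m}_\colour \subseteq \inte{X^m}$. By Lemma~\ref{lem:cell mapping properties of Thurston map}~\ref{item:lem:cell mapping properties of Thurston map:i} the set $X^{n+m}_\colour$ is an $(n+m)$-tile of $f$. Finally, to place it in $\cFTile{n+m}$, compute
\[
    F^{n+m}\bigl(X^{n+m}_\colour\bigr) = F^n\bigl(F^m\bigl(X^{n+m}_\colour\bigr)\bigr) = F^n\bigl(X^n_\colour\bigr) = X^0_\colour,
\]
which gives the correct color, and observe that
\[
    X^{n+m}_\colour \subseteq F^{-m}\bigl(X^n_\colour\bigr) \subseteq F^{-m}\bigl(F^{-n}(F(\domF))\bigr) = F^{-(n+m)}(F(\domF)),
\]
so $X^{n+m}_\colour \in \Domain{n+m}$ by \eqref{eq:definition of tile of subsystem}, and hence $X^{n+m}_\colour \in \cFTile{n+m}$.

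There is no substantive obstacle here: the argument is a direct parallel of the strongly irreducible case, with strong primitivity automatically supplying the required $n$-tile for every $n \geqslant n_F$. The only point that requires care is verifying that the pullback $(F^m|_{X^m})^{-1}(X^n_\colour)$ really is an $(n+m)$-tile \emph{of $F$} (not just of $f$) and that the color is preserved under the pullback; both points are handled by combining Proposition~\ref{prop:subsystem:properties}~\ref{item:subsystem:properties:homeo} with Lemma~\ref{lem:cell mapping properties of Thurston map}~\ref{item:lem:cell mapping properties of Thurston map:i} exactly as in the proof of Lemma~\ref{lem:strongly irreducible:tile in interior tile}.
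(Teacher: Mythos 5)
Your proof is correct and follows essentially the same approach as the paper: pull back the $n$-tile guaranteed by strong primitivity through the homeomorphism $F^m|_{X^m}$, then verify membership in $\cFTile{n+m}$ by checking the color and the inclusion $X^{n+m}_\colour \subseteq F^{-(n+m)}(F(\domF))$. The paper's argument is identical in substance, just slightly more terse.
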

\begin{proof}
	Assume that $F$ is primitive.
	Let integer $n \geqslant n_F$, $m \in \n_0$, $\colour \in \colours$, and $X^m \in \Domain{m}$ be arbitrary. 
	By Proposition~\ref{prop:subsystem:properties}~\ref{item:subsystem:properties:homeo}, $X^m$ is mapped by $F^{m}|_{X^{m}}$ homeomorphically to $X^0 \define F^m(X^m)$. 
	Since $F$ is primitive, there exists $X^n_{\colour} \in \cFTile{n}$ satisfying $X^n_{\colour} \subseteq X^0$. 
	Then it follows from Lemma~\ref{lem:cell mapping properties of Thurston map}~\ref{item:lem:cell mapping properties of Thurston map:i} and Proposition~\ref{prop:subsystem:properties}~\ref{item:subsystem:properties:homeo} that $X^{n + m}_{\colour} \define (F^m|_{X^m})^{-1}(X^n_{\colour})$ is an $(n + m)$-tile satisfying $X^{n + m}_{\colour} \subseteq X^m$. 
	Since $F^{n + m}(X^{n + m}_{\colour}) = F^{n}(X^n_{\colour}) = X^0_{\colour}$ and $X^{n + m}_{\colour} \subseteq F^{-m}(X^n_{\colour}) \subseteq F^{-(n + m)}(F(\domF))$, we conclude that $X^{n + m}_{\colour} \in \cFTile{n + m}$.

	For strongly primitive $F$, by the same argument as above we can prove the corresponding results.
\end{proof}

\subsection{Distortion lemmas}%
\label{sub:Distortion lemmas}

For the convenience of the reader, we first record the following lemma from \cite[Lemma~3.13]{li2018equilibrium}, which generalizes \cite[Lemma~15.25]{bonk2017expanding}.
\begin{lemma}[M.~Bonk \& D.~Meyer \cite{bonk2017expanding}, Z.~Li \cite{li2018equilibrium}]     \label{lem:basic_distortion}
    Let $f$, $\mathcal{C}$, $d$, $\Lambda$ satisfy the Assumptions in Section~\ref{sec:The Assumptions}.
    Then there exists a constant $C_0 > 1$, depending only on $f$, $\mathcal{C}$, and $d$, with the following property:

    If $\juxtapose{n}{k} \in \n_0$, $X^{n+k}\in \mathbf{X}^{n+k}(f,\mathcal{C})$, and $\juxtapose{x}{y} \in X^{n + k}$, then
    \begin{equation}     \label{eq:basic_distortion}
        C_0^{-1}d(x, y) \leqslant d(f^n(x), f^n(y)) / \Lambda^{n} \leqslant C_0 d(x, y).
    \end{equation}
\end{lemma}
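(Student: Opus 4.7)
The plan is to adapt the standard argument of \cite[Lemma~15.25]{bonk2017expanding}, reducing the metric estimate to a combinatorial comparison of bouquet levels via Lemma~\ref{lem:visual_metric}. The case $n = 0$ is trivial, so I would assume $n \geqslant 1$. The guiding idea is that for $\juxtapose{x}{y} \in X^{n+k}$, both $d(x, y)$ and $d(f^n(x), f^n(y))/\Lambda^n$ should be comparable, up to multiplicative constants depending only on $f, \mathcal{C}, d$, to $\Lambda^{-m}$ for an appropriate ``bouquet level'' $m$.

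More precisely, assuming $x \ne y$, I would define $m \in \n_0$ to be the largest integer with $y \in U^m(x)$; this is finite by Lemma~\ref{lem:visual_metric}~\ref{item:lem:visual_metric:bouquet bounded by ball}, and $m \geqslant n+k$ since $X^{n+k} \subseteq U^{n+k}(x)$. Then Lemma~\ref{lem:visual_metric}~\ref{item:lem:visual_metric:bouquet bounded by ball} gives
\begin{equation*}
    K^{-1} \Lambda^{-m-1} \leqslant d(x, y) \leqslant K \Lambda^{-m}.
\end{equation*}
Define $m'$ analogously for the image pair $f^n(x), f^n(y)$ viewed in the $k$-tile $Y^k \define f^n(X^{n+k})$, which is indeed a $k$-tile by Proposition~\ref{prop:properties cell decompositions}~\ref{item:prop:properties cell decompositions:cellular}; the same estimate relates $d(f^n(x), f^n(y))$ to $\Lambda^{-m'}$. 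The lemma thus reduces to showing $|m - (m' + n)| \leqslant C_1$ for some constant $C_1 = C_1(f, \mathcal{C}, d)$, from which the claimed $C_0$ is assembled by collecting the comparison constants.

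The easier direction, $m' \geqslant m - n$, runs as follows: if $y \in U^m(x)$, witnessed by $m$-tiles $X \ni x$ and $Y \ni y$ with $X \cap Y \ne \emptyset$, then by Proposition~\ref{prop:properties cell decompositions}~\ref{item:prop:properties cell decompositions:cellular} the images $f^n(X)$ and $f^n(Y)$ are $(m - n)$-tiles whose intersection contains the image of any point of $X \cap Y$, so $f^n(y) \in U^{m-n}(f^n(x))$. The reverse inequality $m \geqslant m' + n - O(1)$ is proved by lifting witnessing $(k + \ell)$-tiles of $Y^k$ along the inverse branch $(f^n|_{X^{n+k}})^{-1}$, using Lemma~\ref{lem:cell mapping properties of Thurston map}~\ref{item:lem:cell mapping properties of Thurston map:ii} applied to a suitable interior point of each witnessing tile.

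The main subtlety, and the step where I expect the most care to be required, is that without $f$-invariance of $\mathcal{C}$ the cell decompositions $\mathbf{D}^m(f, \mathcal{C})$ need not refine $\mathbf{D}^{n+k}(f, \mathcal{C})$ for $m > n + k$, so witnessing tiles above may extend outside of $X^{n+k}$. This is handled by invoking Lemma~\ref{lem:intersection of cells} to control the boundary interactions: a witnessing $m$-tile not contained in $X^{n+k}$ can be replaced by an $(m + c)$-tile that does lie in a fixed combinatorial neighborhood of $X^{n+k}$, where $c$ is a uniformly bounded integer depending only on $f, \mathcal{C}, d$. The bounded combinatorial defect is absorbed into $C_1$, and together with the bouquet-versus-metric comparison above yields the desired inequality $C_0^{-1} d(x, y) \leqslant d(f^n(x), f^n(y))/\Lambda^n \leqslant C_0 d(x, y)$.
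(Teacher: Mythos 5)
You should first note that the paper contains no proof of this lemma to compare against: it is recorded verbatim from \cite[Lemma~3.13]{li2018equilibrium}, which generalizes \cite[Lemma~15.25]{bonk2017expanding}, so your argument has to stand on its own. Your reduction to comparing the bouquet levels $m$ of $(x,y)$ and $m'$ of $(f^n(x),f^n(y))$ via Lemma~\ref{lem:visual_metric}~\ref{item:lem:visual_metric:bouquet bounded by ball}, and the easy direction $m' \geqslant m - n$ by pushing forward a witnessing pair of intersecting $m$-tiles, are correct and standard. The genuine gap is in the reverse inequality, which is the heart of the lemma. Lifting the two witnessing $m'$-tiles ``along the inverse branch $(f^n|_{X^{n+k}})^{-1}$'' is not available in general: that branch is defined only on $Y^k = f^n(X^{n+k})$, and the witnessing tiles for $f^n(x)$ and $f^n(y)$ need not be contained in $Y^k$ --- not only because $\mathcal{C}$ is not $f$-invariant, but already for an invariant curve when the relevant points lie on $\partial Y^k$, since the $k$-tile containing a deeper tile through such a point may be a neighbor of $Y^k$. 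If instead you lift each witnessing tile separately, through $x$ and through $y$, via Lemma~\ref{lem:cell mapping properties of Thurston map}~\ref{item:lem:cell mapping properties of Thurston map:ii}, nothing guarantees that the two lifted $(m'+n)$-tiles intersect: a common point $z$ of the two witnessing tiles has up to $(\deg f)^n$ preimages, and the two lifts may correspond to different inverse branches at $z$. Arranging that a single branch serves both tiles is precisely the content of the hard inequality, and your sketch assumes it rather than proves it.

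The proposed repair --- replacing a witnessing tile by an $(m+c)$-tile lying ``in a fixed combinatorial neighborhood of $X^{n+k}$'' by invoking Lemma~\ref{lem:intersection of cells} --- does not close this gap. Lemma~\ref{lem:intersection of cells} concerns cells of a single cell decomposition and yields no containment or replacement relation between tiles of different levels when $\mathcal{C}$ is not invariant (and the partial nestedness one can recover from $f^{n_{\mathcal{C}}}(\mathcal{C}) \subseteq \mathcal{C}$, at the cost of bounded level shifts, still does not produce intersecting lifts). A correct treatment needs an additional input that is absent from your sketch: for example, joining $f^n(x)$ and $f^n(y)$ by a continuum of comparable diameter inside $Y^k$ (a bounded-turning property of tiles) and pulling that continuum back by the single branch $(f^n|_{X^{n+k}})^{-1}$, combined with the facts that connected sets of small diameter lie in flowers and that preimage components of flowers are flowers (see \cite[Chapters~5 and~8]{bonk2017expanding}); or reducing to the invariant-curve statement \cite[Lemma~15.25]{bonk2017expanding} by comparing the cell decompositions for $\mathcal{C}$ with those for an invariant curve, as in \cite{li2018equilibrium}. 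As written, the lifting step can fail, so the proof is incomplete at its crucial point.
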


The next distortion lemma for expanding Thurston maps follows immediately from \cite[Lemma~5.1]{li2018equilibrium}.

\begin{lemma}    \label{lem:distortion_lemma}
	Let $f$, $\mathcal{C}$, $d$, $\Lambda$, $\phi$, $\holderexp$ satisfy the Assumptions in Section~\ref{sec:The Assumptions}.
    Then there exists a constant $C_1 \geqslant 0$ depending only on $f$, $\mathcal{C}$, $d$, $\phi$, and $\holderexp$ such that for all $n \in \n_0$, $X^n \in \Tile{n}$, and $\juxtapose{x}{y} \in X^n$,
    \begin{equation}    \label{eq:distortion_lemma}
        \left| S_n\phi(x) - S_n\phi(y) \right| \leqslant C_1 d(f^n(x), f^n(y))^{\holderexp} \leqslant \Cdistortion.
    \end{equation}
    Quantitatively, we choose
    \begin{equation}     \label{eq:const:C_1}
        C_1 \define C_0  \holderseminorm{\potential}  \big/ \bigl( 1 - \Lambda^{-\holderexp} \bigr),
    \end{equation}
    where $C_0 > 1$ is the constant depending only on $f$, $\mathcal{C}$, and $d$ from Lemma~\ref{lem:basic_distortion}.
\end{lemma}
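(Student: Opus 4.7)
The plan is to bound $|S_n\phi(x) - S_n\phi(y)|$ term by term, combining the H\"older estimate for $\phi$ with the geometric contraction provided by Lemma~\ref{lem:basic_distortion}. The $n = 0$ case is trivial since $S_0\phi \equiv 0$, so from now on assume $n \geqslant 1$.

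First I would set up the key per-iterate estimate. For each $j \in \{0, 1, \ldots, n-1\}$, Proposition~\ref{prop:properties cell decompositions}~\ref{item:prop:properties cell decompositions:cellular} shows that $f^j(X^n)$ is an $(n-j)$-tile containing both $f^j(x)$ and $f^j(y)$. Applying Lemma~\ref{lem:basic_distortion} to this $(n-j)$-tile with the iterate $f^{n-j}$ (so that the role of ``$n$'' in that lemma is played by $n-j$ and the role of ``$k$'' by $0$) yields
\[
    d(f^j(x), f^j(y)) \leqslant C_0 \Lambda^{-(n-j)} d(f^n(x), f^n(y)).
\]

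Next I would invoke H\"older continuity and the triangle inequality:
\[
    |S_n\phi(x) - S_n\phi(y)| \leqslant \sum_{j=0}^{n-1} |\phi(f^j(x)) - \phi(f^j(y))| \leqslant \holderseminorm{\phi}{S^2} \sum_{j=0}^{n-1} d(f^j(x), f^j(y))^{\holderexp}.
\]
Raising the contraction estimate to the power $\holderexp$, using $C_0^{\holderexp} \leqslant C_0$ (since $C_0 > 1$ and $\holderexp \in (0,1]$), and summing the geometric series $\sum_{j=0}^{n-1} \Lambda^{-(n-j)\holderexp} < 1/(1 - \Lambda^{-\holderexp})$, I obtain
\[
    |S_n\phi(x) - S_n\phi(y)| \leqslant \frac{C_0 \holderseminorm{\phi}{S^2}}{1 - \Lambda^{-\holderexp}} d(f^n(x), f^n(y))^{\holderexp} = C_1 d(f^n(x), f^n(y))^{\holderexp},
\]
which is the first inequality with the constant from \eqref{eq:const:C_1}. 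The second inequality follows at once because $f^n(X^n)$ is a $0$-tile (by the cellularity in Proposition~\ref{prop:properties cell decompositions}), whose diameter with respect to $d$ is bounded in terms of $f$, $\mathcal{C}$, and $d$, so $d(f^n(x), f^n(y))^{\holderexp}$ is absorbed into the constant denoted $\Cdistortion$.

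There is no substantive obstacle: the statement is a routine consequence of Lemma~\ref{lem:basic_distortion} coupled with a geometric series. The only point requiring mild care is the indexing when invoking Lemma~\ref{lem:basic_distortion}: one must apply it to the $(n-j)$-tile $f^j(X^n)$ with the iterate $f^{n-j}$ rather than to $X^n$ with $f^n$, so that the contraction factor $\Lambda^{-(n-j)}$ appears and the resulting series converges.
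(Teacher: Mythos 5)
Your argument is correct. The paper itself does not prove this lemma but merely cites \cite[Lemma~5.1]{li2018equilibrium}; your reconstruction (H\"older continuity of $\phi$ term by term, the backward contraction estimate $d(f^j(x),f^j(y)) \leqslant C_0\Lambda^{-(n-j)}d(f^n(x),f^n(y))$ obtained by applying Lemma~\ref{lem:basic_distortion} to the $(n-j)$-tile $f^j(X^n)$ with its $0$-tile image, the inequality $C_0^{\alpha}\leqslant C_0$, and a geometric series) is the standard proof and presumably matches the cited reference. The only minor slack is that the series $\sum_{i=1}^{n}\Lambda^{-i\alpha}$ is actually bounded by $\Lambda^{-\alpha}/(1-\Lambda^{-\alpha})$, which is smaller than the $1/(1-\Lambda^{-\alpha})$ you used, but this only means the stated constant $C_1$ is not sharp and your bound still yields exactly \eqref{eq:const:C_1}, so nothing is lost.
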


We establish the following distortion lemma for subsystems of expanding Thurston maps.
\begin{lemma}    \label{lem:distortion lemma for subsystem}
	Let $f$, $\mathcal{C}$, $F$, $d$, $\Lambda$, $\phi$, $\holderexp$ satisfy the Assumptions in Section~\ref{sec:The Assumptions}. 
	We assume in addition that $f(\mathcal{C}) \subseteq \mathcal{C}$.
	Then the following statements hold:
	\begin{enumerate}[label=\rm{(\roman*)}]
		\smallskip
		
		\item     \label{item:lem:distortion lemma for subsystem:holder bound on same color tile}
		For each $n \in \n_{0}$, each $\colour \in \colourset$, and each pair of points $\juxtapose{x}{y} \in X^0_{\colour}$, we have
		\begin{equation}    \label{eq:same color distortion bound for split operator}
			\frac{ \sum_{X^n \in \cFTile{n}} \myexp[\big]{ S^{F}_{n} \phi\bigl( (F^n|_{X^n})^{-1}(x) \bigr) } }{ \sum_{X^n \in \cFTile{n}} \myexp[\big]{ S^{F}_{n} \phi\bigl( (F^n|_{X^n})^{-1}(x) \bigr) } }  
			\leqslant \myexp[\big]{ C_1 d(x, y)^{\holderexp} } 
			\leqslant \myexp[\big]{ \Cdistortion },
		\end{equation}
		where $C_1 \geqslant 0$ is the constant defined in \eqref{eq:const:C_1} in Lemma~\ref{lem:distortion_lemma} and depends only on $f$, $\mathcal{C}$, $d$, $\phi$, and $\holderexp$.
			
		\smallskip

		\item     \label{item:lem:distortion lemma for subsystem:uniform bound}
		If $F$ is irreducible, then there exists a constant $\Csplratio \geqslant 1$ depending only on $F$, $\mathcal{C}$, $d$, $\phi$, and $\holderexp$ such that for each $n \in \n_{0}$, each pair of colors $\juxtapose{\colour}{\ccolour} \in \colourset$, each $x \in X^0_{\colour}$, and each $y \in X^0_{\ccolour}$, we have
		\begin{equation}    \label{eq:distinct color distortion bound for split operator}
			\frac{ \sum_{X^n_{\colour} \in \ccFTile{n}{\colour}{}} \myexp[\big]{ S^{F}_{n} \phi\bigl( (F^n|_{X^n_{\colour}})^{-1}(x) \bigr) } }{ \sum_{X^n_{\ccolour} \in \ccFTile{n}{\ccolour}{}} \myexp[\big]{ S^{F}_{n} \phi\bigl( (F^n|_{X^n_{\ccolour}})^{-1}(y) \bigr) } } \leqslant \Csplratio.
		\end{equation}
		Quantitatively, we choose 
		\begin{equation}    \label{eq:const:Csplratio}
			\Csplratio \define \CsplratioExpression,
		\end{equation}
		where $n_{F} \in \n$ is the constant in Definition~\ref{def:irreducibility of subsystem} and depends only on $F$ and $\mathcal{C}$, and $C_1 \geqslant 0$ is the constant defined in \eqref{eq:const:C_1} in Lemma~\ref{lem:distortion_lemma} and depends only on $f$, $\mathcal{C}$, $d$, $\phi$, and $\holderexp$.
	\end{enumerate}
\end{lemma}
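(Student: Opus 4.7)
The plan splits into the routine part~(i) and the more involved part~(ii).

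For part~(i), I would apply Lemma~\ref{lem:distortion_lemma} termwise. For each $X^n \in \cFTile{n}$, Proposition~\ref{prop:subsystem:properties}~\ref{item:subsystem:properties:homeo} gives that $F^n|_{X^n}$ is a homeomorphism onto $X^0_{\colour}$, so both points $(F^n|_{X^n})^{-1}(x)$ and $(F^n|_{X^n})^{-1}(y)$ lie in $X^n$. Since $F = f|_{\domF}$, $S^F_n \phi$ agrees with $S_n \phi$ on $X^n$, and Lemma~\ref{lem:distortion_lemma} gives $\bigl| S^F_n \phi((F^n|_{X^n})^{-1}(x)) - S^F_n \phi((F^n|_{X^n})^{-1}(y)) \bigr| \leq C_1 d(x,y)^{\holderexp}$. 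Exponentiating yields a termwise ratio bound of $\myexp[\big]{C_1 d(x,y)^{\holderexp}}$, which immediately lifts to the same bound on the ratio of sums in \eqref{eq:same color distortion bound for split operator}; the uniform estimate $\myexp[\big]{\Cdistortion}$ follows from $d(x,y) \leq \diam{d}{S^2}$ exactly as in Lemma~\ref{lem:distortion_lemma}.

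For part~(ii), the plan is to exploit irreducibility to build an injective pairing between tiles of the two colors at levels differing by at most $n_F$. Given $(\colour, \colour')$, Definition~\ref{def:irreducibility of subsystem} provides $k = k_{\colour' \colour} \leq n_F$ and a tile $Z \in \ccFTile{k}{\colour'}{\colour}$; for each $X^n_{\colour} \in \cFTile{n}$ with $X^n_{\colour} \subseteq X^0_{\colour}$ (in fact any $X^n_{\colour} \in \cFTile{n}$, but the argument on each is local), set $V(X^n_{\colour}) \define (F^n|_{X^n_{\colour}})^{-1}(Z)$. By Proposition~\ref{prop:subsystem:properties}~\ref{item:subsystem:properties:homeo} and Lemma~\ref{lem:cell mapping properties of Thurston map}~\ref{item:lem:cell mapping properties of Thurston map:i}, $V(X^n_{\colour})$ is an $(n+k)$-tile of $F$ of color $\colour'$ sitting inside $X^n_{\colour}$, and disjointness of interiors of same-level tiles makes $X^n_{\colour} \mapsto V(X^n_{\colour})$ injective. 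Applying Lemma~\ref{lem:distortion_lemma} inside $X^n_{\colour}$ to the pair $(F^n|_{X^n_{\colour}})^{-1}(x)$ and $(F^{n+k}|_{V(X^n_{\colour})})^{-1}(y)$---whose $F^n$-images $x$ and $(F^k|_Z)^{-1}(y)$ both lie in $X^0_{\colour}$ and hence are at distance at most $\diam{d}{S^2}$---combined with the cocycle identity $S^F_{n+k} = S^F_n + S^F_k \circ F^n$ and $|S^F_k \phi| \leq n_F \normcontinuous{\phi}{S^2}$ on $Z$, yields the termwise inequality
\[
  \myexp[\big]{S^F_n \phi((F^n|_{X^n_{\colour}})^{-1}(x))} \leq \myexp[\big]{\Cdistortion + n_F \normcontinuous{\phi}{S^2}} \myexp[\big]{S^F_{n+k} \phi((F^{n+k}|_{V(X^n_{\colour})})^{-1}(y))}.
\]
Summing over $X^n_{\colour} \in \cFTile{n}$ of color $\colour$ and invoking injectivity of $V$ bounds the numerator of \eqref{eq:distinct color distortion bound for split operator} by the displayed constant times a partial sum of $\sum_{W \in \cFTile{n+k}} \myexp[\big]{S^F_{n+k} \phi((F^{n+k}|_W)^{-1}(y))}$ taken over $(n+k)$-tiles of color $\colour'$.

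The main obstacle, and the concluding step, is to absorb the level discrepancy by comparing this level-$(n+k)$ sum with the level-$n$ denominator of \eqref{eq:distinct color distortion bound for split operator}. I would do this by grouping each $W \in \cFTile{n+k}$ of color $\colour'$ under its unique $n$-parent $\pi_n(W) \in \Domain{n}$ (available via Proposition~\ref{prop:cell decomposition: invariant Jordan curve}), observing that the size of each fiber is an entry of the $k$-th power of the tile matrix and is thus at most $(\deg f)^{n_F}$. Applying part~(i) inside each $n$-parent to switch reference points yields a bound of the level-$(n+k)$ sum by a constant multiple of $\sum_{\colour'' \in \colourset} \sum_{X^n \in \cFTile{n}} \myexp[\big]{S^F_n \phi((F^n|_{X^n})^{-1}(y^{*}_{\colour''}))}$ (over $X^n$ of color $\colour''$), with $y^{*}_{\colour''}$ arbitrary in $X^0_{\colour''}$. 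Finally, running the pairing construction in the reverse direction (with $\colour''$ playing the role of $\colour$ and $\colour'$ fixed) absorbs each color-$\colour''$ sum into a bounded multiple of the level-$n$ denominator, completing the closed loop; collecting all accumulated exponential factors produces the explicit constant $\Csplratio$ as in \eqref{eq:const:Csplratio}.
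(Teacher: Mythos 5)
Your part~(i) is correct and matches the paper. In part~(ii), the injective pairing (pulling back a fixed $Z \in \ccFTile{k}{\colour'}{\colour}$ through each $n$-tile of color $\colour$) and the resulting termwise bound are correct; this is the paper's own injection shifted up by $k$ levels, and your version even dispenses with the paper's separate treatment of small $n$.

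The concluding step, however, has a genuine gap. You compare the level-$(n+k)$ sum over color-$\colour'$ tiles with the level-$n$ denominator of \eqref{eq:distinct color distortion bound for split operator} by grouping under $n$-parents, but the parent map does \emph{not} preserve color: $\pi_n(W)$ can be of either color. After grouping you therefore obtain a constant multiple of the level-$n$ sums over \emph{both} colors, and the color-$\colour$ contribution is, after a change of reference point via~(i), essentially the numerator you set out to bound. Your ``reverse pairing'' does not break this circularity: applying the same up-by-$k$-levels injection to that color-$\colour$ term again produces a level-$(n+k')$ sum of color $\colour'$, never the level-$n$ denominator, and the accumulated constants are $\geqslant 1$, so the recursive term cannot be absorbed either. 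The fix is to replace parent-grouping by forward-grouping. For $W$ in the level-$(n+k)$ sum, $F^k(W)$ is by Proposition~\ref{prop:subsystem:properties}~\ref{item:subsystem:properties:homeo} an $n$-tile of $F$ \emph{of the same color $\colour'$}; each fiber has at most $(\deg f)^k \leqslant (\deg f)^{n_F}$ elements by Proposition~\ref{prop:properties cell decompositions}~\ref{item:prop:properties cell decompositions:union of cells}; and for $z = (F^{n+k}|_W)^{-1}(y)$ the cocycle identity $S^F_{n+k}\phi(z) = S^F_k\phi(z) + S^F_n\phi\bigl((F^n|_{F^k(W)})^{-1}(y)\bigr)$, with $|S^F_k\phi(z)| \leqslant n_F\uniformnorm{\phi}$, bounds the level-$(n+k)$ sum by $(\deg f)^{n_F}\myexp[\big]{n_F\uniformnorm{\phi}}$ times the level-$n$ denominator, with no reference-point switch needed. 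This is, up to the level shift, the paper's grouping step with $E(X) = \bigl\{X^n \in \cFTile{n} \describe F^{n_{\colour'\colour}}(X^n) = X\bigr\}$, and combined with your first step it yields the constant $\Csplratio$.
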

\begin{proof}
	\ref{item:lem:distortion lemma for subsystem:holder bound on same color tile}  
	We fix arbitrary $n \in \n_0$, $\colour \in \colourset$, and $\juxtapose{x}{y} \in X^0_{\colour}$. 
	For each $X^n \in \cFTile{n}$, it follows from Proposition~\ref{prop:subsystem:properties}~\ref{item:subsystem:properties:homeo} that $F^{n}|_{X^n}$ is a homeomorphism from $X^n$ onto $X^0_{\colour}$. Then by Lemma~\ref{lem:distortion_lemma}, we have\[
		\myexp[\big]{ S^{F}_{n} \phi\bigl( (F^n|_{X^n})^{-1}(x)\bigr) - S^{F}_{n} \phi\bigl( (F^n|_{X^n})^{-1}(y)\bigr) } \leqslant \myexp[\big]{ C_1 d(x, y)^{\holderexp} }.
	\]
	Thus\[
		\myexp[\big]{ S^{F}_{n} \phi\bigl( (F^n|_{X^n})^{-1}(x)\bigr) } 
		\leqslant \myexp[\big]{ C_1 d(x, y)^{\holderexp} } \myexp[\big]{ S^{F}_{n} \phi\bigl( (F^n|_{X^n})^{-1}(y)\bigr) }.
	\]
	By summing the last inequality over all $X^n \in \cFTile{n}$, we can conclude that \eqref{eq:same color distortion bound for split operator} holds.

	\smallskip

	\ref{item:lem:distortion lemma for subsystem:uniform bound}
	We assume that $F$ is irreducible and let $n_F \in \n$ be the constant from Definition~\ref{def:irreducibility of subsystem}, which depends only on $F$ and $\mathcal{C}$.

	For convenience, we write $I^{n}_{\colour}(z) \define \sum_{X^n \in \cFTile{n}} \myexp[\big]{ S^{F}_{n} \phi\bigl( (F^n|_{X^n})^{-1}(z) \bigr) }$ for $n \in \n_0$, $\colour \in \colourset$, and $z \in X^0_{\colour}$.
	Since $F$ is irreducible, by Definition~\ref{def:irreducibility of subsystem} and Proposition~\ref{prop:sursubsystem properties}~\ref{item:prop:sursubsystem properties:F maps n+1 tile to n tile}, we have $\colourset = \colours$ and $\cFTile{n} \ne \emptyset$ for each $n \in \n_0$ and each $\colour \in \colours$. 
	Thus $I^{n}_{\colour}(z) > 0$ for each $n \in \n_0$, each $\colour \in \colours$, and each $z \in X^0_{\colour}$.

	In the rest of the proof we fix arbitrary $\juxtapose{\colour}{\ccolour} \in \colours$, $x \in X^{0}_{\colour}$, and $y \in X^{0}_{\ccolour}$.

	We first consider an arbitrary integer $n \in \{0,\, 1,\, 2,\, \dots, \, n_{F} \}$. 
	Since $\card{\ccFTile{n}{\colour}{}} \leqslant \card{\cTile{n}{\colour}} = (\deg{f})^{n}$ and $\ccFTile{n}{\ccolour}{} \ne \emptyset$, we have
	\begin{equation}    \label{eq:temp:distortion for I for small n}
		\frac{ I_{\colour}^{n}(x) }{ I_{\ccolour}^{n}(y) } 
		\leqslant \frac{ \card{\ccFTile{n}{\colour}{}} \myexp{ n \uniformnorm{\potential}} }{ \card{\ccFTile{n}{\ccolour}{}} \myexp{ - n \uniformnorm{\potential} } } 
		\leqslant (\deg{f})^{n_{F}} \myexp{ {2 n_{F} \uniformnorm{\phi}} }
	\end{equation}
	Thus \eqref{eq:distinct color distortion bound for split operator} holds. 

	We now consider an arbitrary integer $n > n_{F}$. 
	Since $F$ is irreducible, by Definition~\ref{def:irreducibility of subsystem}, there exists $n_{\ccolour \colour} \in \n$ with $n_{\ccolour \colour} \leqslant n_{F}$ such that there exists $Y^{n_{\ccolour \colour}}_{\ccolour} \in \ccFTile{n_{\ccolour \colour}}{\ccolour}{}$ satisfying $Y^{n_{\ccolour \colour}}_{\ccolour} \subseteq X^{0}_{\colour}$.
	Then for each $X \in \cFTile{n - n_{\ccolour \colour}}$, it follows from Lemma~\ref{lem:cell mapping properties of Thurston map}~\ref{item:lem:cell mapping properties of Thurston map:i} and Proposition~\ref{prop:subsystem:properties}~\ref{item:subsystem:properties:homeo} that $Y \define ( F^{n - n_{\ccolour \colour}}|_{ X } )^{-1} \bigl( Y^{n_{\ccolour \colour}}_{\ccolour} \bigr) \in \ccFTile{n}{\ccolour}{}$.
	This defines an injective map from $\cFTile{n - n_{\ccolour \colour}}$ to $\ccFTile{n}{\ccolour}{}$ that maps each $X \in \cFTile{n - n_{\ccolour \colour}}$ to $Y = ( F^{n - n_{\ccolour \colour}}|_{ X } )^{-1} \bigl( Y^{n_{\ccolour \colour}}_{\ccolour} \bigr) \in \ccFTile{n}{\ccolour}{}$.
	Moreover, applying Lemma~\ref{lem:distortion_lemma}, we have \[
		\begin{split}
			S^{F}_{n} \phi\bigl( (F^n|_{Y})^{-1}(y) \bigr) 
			&\geqslant S_{n - n_{\ccolour \colour}}^{F} \phi\bigl( (F^n|_{Y})^{-1}(y) \bigr) - n_{\ccolour \colour} \uniformnorm{\phi}  \\
			&\geqslant S^{F}_{n - n_{\ccolour \colour}} \phi\bigl( (F^{n - n_{\ccolour \colour}}|_{X})^{-1}(x) \bigr) - \Cdistortion - n_{F} \uniformnorm{\phi}
		\end{split}
	\]
	since $(F^n|_{Y})^{-1}(y) \in Y \subseteq X \in \Tile{n - n_{\ccolour \colour}}$ and $n_{\ccolour \colour} \leqslant n_{F}$.
	Hence, 
	\begin{equation}    \label{eq:temp:lem:distortion lemma for subsystem:item:lem:distortion lemma for subsystem:uniform bound:bound for different color}
		I^{n - n_{\ccolour \colour}}_{\colour}(x) \leqslant I^{n}_{\ccolour}(y) \myexp[\big]{ n_{F} \uniformnorm{\phi} + \Cdistortion }.
	\end{equation}

	In order to establish \eqref{eq:distinct color distortion bound for split operator}, it suffices to show that\[
		I^{n}_{\colour}(x) \leqslant (\deg{f})^{n_{F}} \myexp{ n_{F} \uniformnorm{\phi} } I^{n - n_{\ccolour \colour}}_{\colour}(x).
	\] 
	For each $X \in \cFTile{n - n_{\ccolour \colour}}$, we set $E(X) \define \{ X^n \in \cFTile{n} \describe F^{n_{\ccolour \colour}}(X^n) = X \}$.
	It follows immediately from Proposition~\ref{prop:subsystem:properties}~\ref{item:subsystem:properties:homeo} that $\cFTile{n} = \bigcup_{X \in \cFTile{n - n_{\ccolour \colour}}} E(X)$. 
	By Proposition~\ref{prop:properties cell decompositions}~\ref{item:prop:properties cell decompositions:union of cells}, for each $X \in \cFTile{n - n_{\ccolour \colour}}$, we have $\card{E(X)} \leqslant (\deg{f})^{n_{\ccolour \colour}} \leqslant (\deg{f})^{n_{F}}$.
	Moreover, for each $X^{n} \in E(X)$, \[
		\begin{split}
			S^{F}_{n} \phi\bigl( \bigl(F^n|_{X^{n}}\bigr)^{-1}(x) \bigr) 
			&= S^{F}_{n_{\ccolour \colour}} \phi\bigl( (F^n|_{X^{n}})^{-1}(x) \bigr) + S^{F}_{n - n_{\ccolour \colour}} \phi\bigl( (F^{n - n_{\ccolour \colour}}|_{X})^{-1}(x) \bigr) \\
			&\leqslant n_{F} \uniformnorm{\phi} + S^{F}_{n - n_{\ccolour \colour}} \phi\bigl( (F^{n - n_{\ccolour \colour}}|_{X})^{-1}(x) \bigr).
		\end{split}
	\]
	Therefore, we get \[
		\begin{split}
			I^{n}_{\colour}(x) &= \sum_{ X \in \cFTile{n - n_{\ccolour \colour}} }  \sum\limits_{ X^n \in E(X) } \myexp[\big]{ S^{F}_{n} \phi\bigl( (F^n|_{X^n})^{-1}(z) \bigr) } \\
			&\leqslant \sum_{ X \in \cFTile{n - n_{\ccolour \colour}} } (\deg{f})^{n_{F}} \myexp{ n_{F} \uniformnorm{\phi} } \myexp[\big]{ S^{F}_{n - n_{\ccolour \colour}} \phi\bigl( (F^{n - n_{\ccolour \colour}}|_{X})^{-1}(x) \bigr) }  \\
			&= (\deg{f})^{n_{F}} \myexp{ n_{F} \uniformnorm{\phi} } I^{n - n_{\ccolour \colour}}_{\colour}(x).
		\end{split}
	\]
	Combining this with \eqref{eq:temp:lem:distortion lemma for subsystem:item:lem:distortion lemma for subsystem:uniform bound:bound for different color}, we establish \eqref{eq:distinct color distortion bound for split operator} by choosing $\Csplratio$ as in \eqref{eq:const:Csplratio}, which depends only on $F$, $\mathcal{C}$, $d$, $\phi$, and $\holderexp$.
\end{proof} 
\section{Thermodynamic formalism for subsystems}
\label{sec:Thermodynamic formalism for subsystems}

This section focuses on thermodynamic formalism for subsystems, with the main results being Theorems~\ref{thm:subsystem characterization of pressure} and \ref{thm:existence of equilibrium state for subsystem}. 
These theorems establish the Variational Principle and demonstrate the existence of equilibrium states for subsystems of expanding Thurston maps.

In Subsection~\ref{sub:Pressures for subsystems}, we define the topological pressure of a subsystem with respect to a potential via the tile structures determined by the subsystem.

In Subsection~\ref{sub:Split Ruelle operators} we define appropriate variants of the Ruelle operator called split Ruelle operators (Definition~\ref{def:split ruelle operator}). One cannot use the Ruelle operator introduced in \cite{li2018equilibrium} for a subsystem in the proofs of Theorems~\ref{thm:subsystem characterization of pressure} and \ref{thm:existence of equilibrium state for subsystem} directly. 
One fundamental problem is that the direct adaptation of the Ruelle operator $\mathcal{L}_{\phi} \colon C(S^2) \mapping C(S^2)$ may not be well-defined for subsystems. 
More precisely, it is possible that $\mathcal{L}_{\phi}(u) \notin C(S^2)$ for some $u \in C(S^2)$ due to the subtle combinatorial structures of subsystems. 
Our strategy here is to ``split" the Ruelle operator into two pieces so that in each piece, the continuity is preserved under iteration. We note that the concept of splitting the Ruelle operators was first introduced in \cite{li2018prime}. We extend this definition to split Ruelle operators for subsystems. 

We next develop the thermodynamic formalism to establish the existence of equilibrium states for subsystems. 
We follow \cite{li2018equilibrium} in several places, but we have made some notable changes to address the challenges posed by the subsystems.
We first introduce the split sphere $\widetilde{S}$ as the disjoint union of $X^0_{\black}$ and $X^0_{\white}$. 
Then the product of function spaces and the product of measure spaces can be identified naturally with the space of functions and the space of measures on $\widetilde{S}$, respectively (Remark~\ref{rem:disjoint union}). 
Since the split Ruelle operator $\splopt$ acts on the product of function spaces, its adjoint operator $\dualsplopt$ acts on the product of measure spaces. We have to deal with functions and measures on the split sphere $\widetilde{S}$ while the measures we want (for example, the equilibrium states) should be on $S^{2}$. 
Therefore, we make extra efforts to convert the measures on $\widetilde{S}$ provided by thermodynamic formalism into measures on $S^{2}$. 
Moreover, by the local degree defined in Subsection~\ref{sub:Local degree}, we establish Theorem~\ref{thm:subsystem:eigenmeasure existence and basic properties} and Proposition~\ref{prop:subsystem edge Jordan curve has measure zero} so that we show that an eigenmeasure for $\dualsplopt$ is a Gibbs measure on $S^2$ under our identifications (Proposition~\ref{prop:subsystem properties of eigenmeasure}). 
Then we construct an $f$-invariant Gibbs measure $\equstate$ with desired properties in Theorem~\ref{thm:existence of f invariant Gibbs measure}. 
Finally, in Theorem~\ref{thm:existence of equilibrium state for subsystem}, we prove that such $\equstate$ is an equilibrium state.

\subsection{Pressures for subsystems}%
\label{sub:Pressures for subsystems}

In this subsection, we define the topological pressure for subsystems.

\begin{definition}    \label{def:pressure for subsystem}
	Let $f$, $\mathcal{C}$, $F$ satisfy the Assumptions in Section~\ref{sec:The Assumptions}.
	For a real-valued function $\varphi \colon S^2 \mapping \real$, we denote \[
		Z_{n}(F, \varphi ) \define \sum_{X^n \in \Domain{n}} \myexp[\big]{ \sup\bigl\{S^F_n \varphi(x) \describe x \in X^n \bigr\} } 
	\]
	for each $n \in \n$. 
	We define the \emph{topological pressure} of the subsystem $F$ with respect to the \emph{potential} $\varphi$ by
	\begin{equation}    \label{eq:pressure of subsystem}
		\pressure[\varphi] \define \liminf_{n \mapping +\infty} \frac{1}{n} \log ( Z_{n}(F, \varphi) ).
	\end{equation}
	In particular, when $\varphi$ is the constant function $0$, the quantity $h_{\operatorname{top}}(F) \define \pressure[0]$ is called the \emph{topological entropy} of the subsystem $F$.
\end{definition}

\begin{rmk}
	We note that the definition \eqref{eq:pressure of subsystem} used here differs from the classical definition \eqref{eq:def:topological pressure} in the context of Subsection~\ref{sub:thermodynamic formalism}, which applies to $F|_{\limitset}$ and $\varphi|_{\limitset}$.
	We study the relationship between these two topological pressures in Proposition~\ref{prop:characterization of pressures of subsystems in separated sets}, and ultimately demonstrate that, for a strongly irreducible subsystem and a \holder continuous potential, they coincide (see Theorem~\ref{thm:subsystem characterization of pressure}).
\end{rmk}

\begin{lemma}    \label{lem:partition function is submultiplicative}
	Let $f$, $\mathcal{C}$, $F$ satisfy the Assumptions in Section~\ref{sec:The Assumptions}.
	We assume in addition that $f(\mathcal{C}) \subseteq \mathcal{C}$. 
	Consider a real-valued function $\varphi \colon S^2 \mapping \real$.
	Then for all $\juxtapose{k}{\ell} \in \n$, we have
	\begin{equation}    \label{eq:Z_n submultiplicative}
		Z_{k + \ell}(F, \varphi) \leqslant Z_{k}(F,\varphi) Z_{\ell}(F, \varphi).
	\end{equation}
\end{lemma}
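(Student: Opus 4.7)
The plan is to set up a combinatorial injection
\[
    \Phi \colon \Domain{k+\ell} \hookrightarrow \Domain{k} \times \Domain{\ell}, \qquad X^{k+\ell} \mapsto \bigl( X^{k}, \, F^{k}(X^{k+\ell}) \bigr),
\]
where $X^{k}$ is the unique $k$-tile of $F$ containing $X^{k+\ell}$, and then use the cocycle identity for Birkhoff sums to control the sup on the $(k+\ell)$-tile by the product of the sups on the two factors.

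First I would verify that $\Phi$ is well-defined and injective. The second coordinate lies in $\Domain{\ell}$ by Proposition~\ref{prop:subsystem:properties}~\ref{item:subsystem:properties:homeo}. For the first coordinate, the hypothesis $f(\mathcal{C}) \subseteq \mathcal{C}$ together with Proposition~\ref{prop:cell decomposition: invariant Jordan curve} gives a \emph{unique} $k$-tile $X^{k} \in \Tile{k}$ with $X^{k+\ell} \subseteq X^{k}$, and moreover $\inte{X^{k+\ell}} \subseteq \inte{X^{k}}$ since the $k$-skeleton is contained in the $(k+\ell)$-skeleton. Pick any $p \in \inte{X^{k+\ell}}$; by Proposition~\ref{prop:subsystem:properties invariant Jordan curve}~\ref{item:subsystem:properties invariant Jordan curve:decreasing relation of domains} we have $p \in \bigcup \Domain{k+\ell} \subseteq \bigcup \Domain{k}$, so $p \in Y^{k}$ for some $Y^{k} \in \Domain{k}$. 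Since $p \in \inte{X^{k}} \cap Y^{k}$, Lemma~\ref{lem:intersection of cells}~\ref{item:lem:intersection of cells:intersection with union of tiles} forces $X^{k} \subseteq Y^{k}$, hence $X^{k} = Y^{k} \in \Domain{k}$. Injectivity of $\Phi$ is immediate from Proposition~\ref{prop:subsystem:properties}~\ref{item:subsystem:properties:homeo}: if $\Phi(X^{k+\ell}_1) = \Phi(X^{k+\ell}_2) = (X^k, X^\ell)$, then both $(k+\ell)$-tiles lie in $X^k$ and have the same image under the homeomorphism $F^{k}|_{X^{k}}$, so they coincide.

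Next I would apply the Birkhoff cocycle identity. For every $x \in X^{k+\ell}$,
\[
    S^F_{k+\ell} \varphi(x) = S^F_{k} \varphi(x) + S^F_{\ell} \varphi\bigl(F^k(x)\bigr).
\]
Taking suprema on the left over $x \in X^{k+\ell}$, and noting $X^{k+\ell} \subseteq X^{k}$ and $F^{k}(X^{k+\ell}) = X^{\ell}$ where $(X^k, X^\ell) = \Phi(X^{k+\ell})$, I obtain
\[
    \sup_{X^{k+\ell}} S^F_{k+\ell} \varphi \;\leqslant\; \sup_{X^{k}} S^F_{k} \varphi \;+\; \sup_{X^{\ell}} S^F_{\ell} \varphi.
\]
Exponentiating and summing over $X^{k+\ell} \in \Domain{k+\ell}$, then enlarging the sum via the injection $\Phi$ to a sum over the full product $\Domain{k} \times \Domain{\ell}$ (all summands are positive), yields exactly
\[
    Z_{k+\ell}(F, \varphi) \;\leqslant\; \sum_{(X^k, X^\ell) \in \Domain{k} \times \Domain{\ell}} \myexp[\big]{\sup\nolimits_{X^{k}} S^F_{k} \varphi} \myexp[\big]{\sup\nolimits_{X^{\ell}} S^F_{\ell} \varphi} \;=\; Z_{k}(F, \varphi)\, Z_{\ell}(F, \varphi),
\]
which is \eqref{eq:Z_n submultiplicative}.

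There is no real obstacle here; the only subtle point is step one, namely certifying that the candidate first coordinate of $\Phi$ genuinely lies in $\Domain{k}$ rather than merely in $\Tile{k}$. This is precisely where both the invariance assumption $f(\mathcal{C}) \subseteq \mathcal{C}$ and Proposition~\ref{prop:subsystem:properties invariant Jordan curve}~\ref{item:subsystem:properties invariant Jordan curve:decreasing relation of domains} are used, via the Lemma~\ref{lem:intersection of cells} argument sketched above.
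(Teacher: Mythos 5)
Your proof is correct and follows essentially the same route as the paper's. The paper applies the same cocycle split $S^F_{k+\ell}\varphi(x) = S^F_k\varphi(x) + S^F_\ell\varphi(F^k(x))$, then regroups the sum over $\Domain{k+\ell}$ by the image $\ell$-tile $X^\ell = F^k(X^{k+\ell})$, and bounds the inner sum by $Z_k(F,\varphi)$ via the injectivity (for fixed $X^\ell$) of the map $X^{k+\ell} \mapsto X^k$ from Proposition~\ref{prop:cell decomposition: invariant Jordan curve}; you package exactly that combinatorics as a single explicit injection $\Phi$ into $\Domain{k} \times \Domain{\ell}$. One small merit of your write-up is that you spell out why the first coordinate $X^k$ lands in $\Domain{k}$ rather than merely in $\mathbf{X}^k(f,\mathcal{C})$ (using Proposition~\ref{prop:subsystem:properties invariant Jordan curve}~\ref{item:subsystem:properties invariant Jordan curve:decreasing relation of domains} and Lemma~\ref{lem:intersection of cells}~\ref{item:lem:intersection of cells:intersection with union of tiles}), a point the paper leaves implicit in citing Proposition~\ref{prop:subsystem:properties}~\ref{item:subsystem:properties:homeo}; but the underlying ideas and the role of $f(\mathcal{C}) \subseteq \mathcal{C}$ are identical.
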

\begin{proof}
	For each $\juxtapose{k}{\ell} \in \n$,
	\begin{align*}
		Z_{k + \ell}(F,\varphi) &= \sum_{X^{k + \ell} \in \Domain{k + \ell}} \myexp[\big]{ \sup \set[\big]{  S^F_{k + \ell} \varphi(x) \describe x \in X^{k + \ell} } } \\
		&\leqslant \sum_{X^{k + \ell} \in \Domain{k + \ell}} \myexp[\big]{\sup \set[\big]{ S^F_{k} \varphi(x) \describe x \in X^{k + \ell} } }   \myexp[\big]{ \sup \set[\big]{ S^F_{\ell} \varphi(F^{k}(x)) \describe x \in X^{k + \ell} } }  \\
		&\leqslant \sum_{X^{\ell} \in \Domain{\ell}} \myexp[\big]{ \sup \set[\big]{ S^F_{\ell} \varphi(y) \describe y \in X^{\ell} } }  
			\sum_{ \substack{X^{k + \ell} \in \Domain{k + \ell} \\ F^{k}(X^{k + \ell}) = X^{\ell}} } 
			\myexp[\big]{ \sup \set[\big]{ S^F_{k} \varphi(x) \describe x \in X^{k + \ell} } }  \\
		&\leqslant \sum_{X^{\ell} \in \Domain{\ell}} \myexp[\big]{ \sup \set[\big]{ S^F_{\ell} \varphi(y) \describe y \in X^{\ell} } }
			\sum_{X^{k} \in \Domain{k}} \myexp[\big]{ \sup \set[\big]{ S^F_{k} \varphi(y) \describe y \in X^{k} } }  \\
		&= Z_{k}(F,\varphi) Z_{\ell}(F,\varphi).
	\end{align*}
	Here, the first inequality uses the fact that $S^F_{k + \ell}\varphi(x) = S^F_{k}\varphi(x) + S^F_{\ell}\varphi \bigl( F^k(x) \bigr)$, the second inequality follows from Proposition~\ref{prop:subsystem:properties}~\ref{item:subsystem:properties:homeo}, and the last inequality also follows from Proposition~\ref{prop:subsystem:properties}~\ref{item:subsystem:properties:homeo}, which shows that for each $\ell$-tile $X^{\ell} \in \Domain{\ell}$ the map from $\set[\big]{ X \in \Domain{k + \ell} \describe F^{k}(X) = X^{\ell} }$ to $\Domain{k}$ induced by Proposition~\ref{prop:cell decomposition: invariant Jordan curve} is injective.
\end{proof}

We record the following well-known lemma and refer the reader to \cite[Lemma~2.11]{mauldin2003graph} for a proof.
\begin{lemma}    \label{lem:subadditive sequence}
	If a sequence $\{a_n\}_{n \in \n}$ of real number is subadditive (i.e., $a_{i + j} \leqslant a_i + a_j$ for all $i, \, j \in\n$), then $\lim_{n \to +\infty} a_n/n$ exists in $\real \cup \{-\infty\}$ and is equal to $\inf\{a_n/n \describe n \in \n\}$.
\end{lemma}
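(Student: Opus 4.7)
The plan is to run the standard proof of Fekete's subadditive lemma. Set $L \define \inf\{a_n/n \describe n \in \n\} \in [-\infty, +\infty)$. Since every term satisfies $a_n/n \geqslant L$, we immediately have $\liminf_{n \to +\infty} a_n/n \geqslant L$, so the whole content lies in proving the upper bound $\limsup_{n \to +\infty} a_n/n \leqslant L$, which together with the trivial lower bound forces the limit to exist in $\real \cup \{-\infty\}$ and to equal $L$.

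To establish the upper bound, I would fix an arbitrary $m \in \n$ and apply the division algorithm: for each $n > m$, write $n = q m + r$ with $q \in \n$ and $r \in \{0, 1, \dots, m - 1\}$. Iterating the subadditivity hypothesis $q - 1$ times on the block $a_{qm} \leqslant q a_m$, and then applying it once more to absorb the remainder when $r \geqslant 1$, yields
\[
    a_n \leqslant q \, a_m + R_r, \qquad \text{where } R_0 \define 0 \text{ and } R_r \define a_r \text{ for } r \in \{1, \dots, m-1\}.
\]
Dividing by $n$ gives $a_n/n \leqslant (q/n) a_m + R_r/n$. As $n \to +\infty$ with $m$ fixed, we have $q/n \to 1/m$, while $R_r$ takes only the finitely many values $\{0, a_1, \dots, a_{m-1}\}$, so $R_r/n \to 0$. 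Therefore $\limsup_{n \to +\infty} a_n/n \leqslant a_m/m$.

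Since $m \in \n$ was arbitrary, taking the infimum over $m$ yields $\limsup_{n \to +\infty} a_n/n \leqslant L$, completing the proof. The case $L = -\infty$ requires no separate treatment: the same chain of inequalities gives $\limsup_{n \to +\infty} a_n/n \leqslant a_m/m$ for every $m$, and the infimum of the right-hand side is $-\infty$, forcing the limit to be $-\infty$. There is no substantial obstacle in this argument; the only care needed is to keep track of the bounded remainder term $R_r$ and to observe that the sign of $a_m$ is irrelevant since $q/n \to 1/m$ converges to a fixed positive number.
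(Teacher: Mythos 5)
Your argument is correct: it is the standard proof of Fekete's subadditive lemma (division algorithm $n = qm + r$, iterate subadditivity, let $n \to +\infty$ with $m$ fixed, then take the infimum over $m$), and you handle both the bounded remainder term and the case $\inf\{a_n/n \describe n \in \n\} = -\infty$ properly. The paper itself does not supply a proof but simply cites \cite[Lemma~2.11]{mauldin2003graph}, and your write-up is exactly the argument that reference (and the literature generally) uses, so there is nothing to reconcile.
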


\begin{lemma}    \label{lem:well-defined pressure of subsystems}
	Let $f$, $\mathcal{C}$, $F$ satisfy the Assumptions in Section~\ref{sec:The Assumptions}.
	We assume in addition that $f(\mathcal{C}) \subseteq \mathcal{C}$. 
	Consider $\varphi \in C(S^2)$.
	Then \[
		\pressure[\varphi] = \lim\limits_{n \to +\infty} \frac{1}{n} \log \mathopen{}(Z_{n}(F, \varphi)) \in [-\uniformnorm{\varphi}, +\infty).
	\]
\end{lemma}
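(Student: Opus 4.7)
The plan is to combine the submultiplicativity from Lemma~\ref{lem:partition function is submultiplicative} with Fekete's subadditive lemma (Lemma~\ref{lem:subadditive sequence}), and then to bracket $\frac{1}{n} \log Z_n(F,\varphi)$ between two elementary $n$-independent bounds.

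First I would take logarithms in the submultiplicative inequality $Z_{k+\ell}(F,\varphi) \leqslant Z_k(F,\varphi) Z_\ell(F,\varphi)$ of Lemma~\ref{lem:partition function is submultiplicative} to conclude that $a_n \define \log Z_n(F,\varphi)$ is subadditive, i.e., $a_{k+\ell} \leqslant a_k + a_\ell$ for all $\juxtapose{k}{\ell} \in \n$ (with the convention $\log 0 = -\infty$). Applying Lemma~\ref{lem:subadditive sequence} then yields that $\lim_{n \to +\infty} a_n/n$ exists in $\real \cup \{-\infty\}$ and equals $\inf_{n \in \n} a_n/n$. In particular the $\liminf$ defining $\pressure[\varphi]$ in \eqref{eq:pressure of subsystem} is already a genuine limit, which establishes the first equality in the statement.

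Second, for the finite upper bound I would combine the pointwise estimate $S_n^F \varphi \leqslant n \uniformnorm{\varphi}$ with the cardinality bound $\card{\Domain{n}} \leqslant \card{\Tile{n}} = 2(\deg f)^n$ supplied by Proposition~\ref{prop:properties cell decompositions}~\ref{item:prop:properties cell decompositions:cardinality}, which gives
\begin{equation*}
    Z_n(F,\varphi) \leqslant 2(\deg f)^n e^{n \uniformnorm{\varphi}}, \qquad \text{hence} \qquad \tfrac{1}{n} \log Z_n(F,\varphi) \leqslant \log(\deg f) + \uniformnorm{\varphi} + \tfrac{\log 2}{n},
\end{equation*}
and in particular $\pressure[\varphi] < +\infty$.

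For the lower bound, singling out any single tile $X^n \in \Domain{n}$ already contributes $\myexp[\big]{\sup\{S_n^F \varphi(x) \describe x \in X^n\}} \geqslant e^{-n\uniformnorm{\varphi}}$ to $Z_n(F,\varphi)$, which immediately yields $\tfrac{1}{n} \log Z_n(F,\varphi) \geqslant -\uniformnorm{\varphi}$. The one point requiring care is verifying that $\Domain{n} \ne \emptyset$ for every $n \in \n$; this is where I expect the main obstacle to lie, since the nested inclusion in Proposition~\ref{prop:subsystem:properties invariant Jordan curve}~\ref{item:subsystem:properties invariant Jordan curve:decreasing relation of domains} is only of unions $\domain{n}$ rather than of the tile sets $\Domain{n}$. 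Starting from $\Domain{1} = \mathfrak{X} \ne \emptyset$ (which is part of the definition of a subsystem), I would construct inductively, for each $n \in \n$, an $n$-tile of $F$ inside a prescribed element of $\Domain{1}$ by pulling back a tile in $\Domain{n-1}$ under $F|_{X^1}$ via Lemma~\ref{lem:cell mapping properties of Thurston map}~\ref{item:lem:cell mapping properties of Thurston map:i} and Proposition~\ref{prop:subsystem:properties}~\ref{item:subsystem:properties:homeo}, using that $f(\mathcal{C}) \subseteq \mathcal{C}$ forces compatible nesting of the cell decompositions (Proposition~\ref{prop:cell decomposition: invariant Jordan curve}). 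Once $\Domain{n} \ne \emptyset$ for all $n$ is in hand, the three displayed inequalities combine to place the limit in $[-\uniformnorm{\varphi}, +\infty)$, completing the proof.
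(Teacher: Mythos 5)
Your first two steps (submultiplicativity plus Lemma~\ref{lem:subadditive sequence} to upgrade the $\liminf$ to a limit equal to the infimum, and the crude upper bound via $\card{\Domain{n}} \leqslant 2(\deg f)^n$) coincide with, or are harmless elaborations of, what the paper does. The genuine gap is exactly at the point you flagged: the non-emptiness of $\Domain{n}$ for every $n$, which is what the lower bound $Z_n(F,\varphi) \geqslant e^{-n\uniformnorm{\varphi}}$ needs (and what legitimizes applying Lemma~\ref{lem:subadditive sequence} to a sequence of real numbers rather than invoking a $\log 0 = -\infty$ convention). Your proposed inductive pullback does not close it. To pull back a tile $Y^{n-1} \in \Domain{n-1}$ under $F|_{X^1}$ for a fixed $X^1 \in \Domain{1}$, you need $Y^{n-1} \subseteq F(X^1)$; since $F(X^1)$ is one specific $0$-tile, this requires that $\Domain{n-1}$ contain a tile lying in that particular $0$-tile, and nothing in the hypotheses $F \in \subsystem$ and $f(\mathcal{C}) \subseteq \mathcal{C}$ guarantees this. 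Example~\ref{exam:subsystems}~(i) shows the failure concretely: if $\domF$ is a single black $1$-tile contained in the white $0$-tile, then $\Domain{1} \ne \emptyset$ but $F(\domF) = X^0_{\black}$ contains no $1$-tile of $F$, so $\Domain{2} = \emptyset$, $Z_2(F,\varphi) = 0$, and the asserted lower bound fails.

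The paper avoids this by routing the argument through the tile maximal invariant set: it invokes Proposition~\ref{prop:sursubsystem properties}~\ref{item:prop:sursubsystem properties:property of domain and limitset} to get $\limitset(F,\mathcal{C}) \ne \emptyset$, picks $x_0 \in \limitset(F,\mathcal{C})$, and notes $x_0 \in \bigcup\Domain{n}$ for every $n$, whence $Z_n(F,\varphi) \geqslant e^{S^F_n\varphi(x_0)} \geqslant e^{-n\uniformnorm{\varphi}}$. Note that this proposition carries the hypothesis $F \in \sursubsystem$, i.e., $\domF \subseteq F(\domF)$; so the substance you were trying to supply from $f(\mathcal{C}) \subseteq \mathcal{C}$ alone is in fact a surjectivity-type assumption on $F$ (satisfied, e.g., by irreducible subsystems, which is how the lemma gets used later). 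So: either add such a hypothesis and then argue as the paper does (or make your induction work by always pulling back into a tile whose image $0$-tile is known to contain tiles of every level, which again needs $\domF \subseteq F(\domF)$), or accept that under your stated hypotheses the lower bound is simply false.
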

\begin{proof}
	By Proposition~\ref{prop:sursubsystem properties}~\ref{item:prop:sursubsystem properties:property of domain and limitset}, $\limitset(F, \mathcal{C}) \ne \emptyset$. 
	Then for each $n \in \n$ we have $\bigcup \Domain{n} \ne \emptyset$ and $Z_{n}(F, \varphi) > 0$. 
	Thus by Lemmas~\ref{lem:partition function is submultiplicative} and \ref{lem:subadditive sequence}, the sequence $\bigl\{ \frac{1}{n} \log \mathopen{}(Z_{n}(F, \varphi)) \bigr\}_{n \in \n}$ is subadditive and \[
		\pressure[\varphi] = \lim_{n \mapping +\infty} \frac{1}{n} \log (Z_{n}(F, \varphi)) = \inf_{n \in \n} \biggl\{ \frac{1}{n} \log (Z_n(F, \varphi)) \biggr\} \in \real \cup \{-\infty\}.
	\]
	It suffices now to prove that $\frac{1}{n} \log (Z_n(F, \varphi))$ is bounded from below by $-\uniformnorm{\varphi}$. Let $x_0 \in \limitset$ be arbitrary. Then for each $n \in \n$, we have $x_0 \in \bigcup \Domain{n}$ and\[
		Z_n(F, \varphi) = \sum_{X^{n} \in \Domain{n}} \myexp[\big]{ \sup\bigl\{S^F_n \varphi(x) \describe x \in X^{n} \bigr\} } 
		\geqslant \myexp[\big]{ S^F_n \varphi(x_0) } 
		\geqslant e^{-n \uniformnorm{\varphi}}.
	\]
	Therefore, we get $\frac{1}{n} \log (Z_n(F, \varphi)) \geqslant - \uniformnorm{\varphi}$ for each $n \in \n$ and $\pressure[\varphi] \geqslant - \uniformnorm{\varphi}$.
\end{proof}

The topological entropy of $F$ can in fact be computed explicitly via tile matrices (defined in Subsection~\ref{sub:Tile matrix}).

\begin{proposition}    \label{prop:topological entropy of subsystem}
	Let $f$, $\mathcal{C}$, $F$ satisfy the Assumptions in Section~\ref{sec:The Assumptions}. 
	We assume in addition that $f(\mathcal{C}) \subseteq \mathcal{C}$. 
	Let $A$ be the tile matrix of $F$ with respect to $\mathcal{C}$.
	Then we have
	\begin{equation}    \label{eq:topological entropy of subsystem}
		 h_{\operatorname{top}}(F) = \log(\rho(A)),
	\end{equation}
	where $\rho(A)$ is the spectral radius of $A$.
\end{proposition}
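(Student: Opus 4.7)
The plan is to compute both sides of \eqref{eq:topological entropy of subsystem} explicitly, reducing the problem to a counting argument combined with the spectral radius formula.

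First, I would unpack the definitions. By Definition~\ref{def:pressure for subsystem}, the topological entropy is $h_{\operatorname{top}}(F) = \pressure[0]$, and since $S^F_n 0 \equiv 0$, we have
\[
    Z_n(F, 0) = \sum_{X^n \in \Domain{n}} 1 = \card{\Domain{n}}.
\]
By Lemma~\ref{lem:well-defined pressure of subsystems}, this gives $h_{\operatorname{top}}(F) = \lim_{n \to +\infty} \frac{1}{n}\log \card{\Domain{n}}$ (with the convention $\log 0 = -\infty$ if $\limitset = \emptyset$).

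Next, I would translate the counting into linear algebra. By Remark~\ref{rem:tile matrix associated with set of tiles} applied to the collection $\Domain{n}$, the total count $\card{\Domain{n}}$ equals the sum of the four entries of the tile matrix $A(\Domain{n})$. Invoking Proposition~\ref{prop:power of tile matrix}, we obtain $A(\Domain{n}) = A^{n}$, and consequently
\[
    \card{\Domain{n}} = \sumnorm{A^n} \define \sum_{\juxtapose{\colour}{\colour'} \in \colours} (A^n)_{\colour \colour'}.
\]
Since $A$ has non-negative entries, $\sumnorm{\cdot}$ coincides on powers of $A$ with the entrywise $\ell^1$-norm, which is a submultiplicative matrix norm on $2 \times 2$ real matrices.

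Finally, I would apply Gelfand's spectral radius formula, which states that for any matrix norm $\normthree{\cdot}$ on the space of $2 \times 2$ matrices, $\rho(A) = \lim_{n \to +\infty} \normthree{A^n}^{1/n}$. Applied to the $\ell^1$-norm on entries, this yields
\[
    \log \rho(A) = \lim_{n \to +\infty} \frac{1}{n} \log \sumnorm{A^n} = \lim_{n \to +\infty} \frac{1}{n} \log \card{\Domain{n}} = h_{\operatorname{top}}(F),
\]
which is exactly \eqref{eq:topological entropy of subsystem}.

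I do not foresee a serious obstacle: the argument is a short chain of identifications plus one classical fact from linear algebra. The only minor point to address is the degenerate case in which $A$ is nilpotent (so $\rho(A) = 0$). In that case $A^n$ is the zero matrix for $n \geqslant 2$ by the Cayley--Hamilton theorem, hence $\Domain{n} = \emptyset$ and $\limitset = \emptyset$, so both sides of \eqref{eq:topological entropy of subsystem} equal $-\infty$ under the natural convention. It is worth noting that the Assumptions here do not force $F \in \sursubsystem$, so this degenerate possibility must be acknowledged explicitly, or the statement must be interpreted in the extended reals $[-\infty, +\infty)$.
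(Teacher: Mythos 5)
Your proposal is correct and follows essentially the same chain as the paper: identify $Z_n(F,0) = \card{\Domain{n}}$, translate this to $\sumnorm{A^n}$ via Remark~\ref{rem:tile matrix associated with set of tiles} and Proposition~\ref{prop:power of tile matrix}, and conclude by the spectral-radius formula. One small economy worth noting: the paper works directly with the $\liminf$ in the definition \eqref{eq:pressure of subsystem} rather than invoking Lemma~\ref{lem:well-defined pressure of subsystems} (whose proof rests on Proposition~\ref{prop:sursubsystem properties} and so presupposes $F \in \sursubsystem$), which sidesteps the degenerate-case discussion you rightly flagged.
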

\begin{rmk}
	The spectral radius $\rho(A)$ can easily be computed from any matrix norm. If for an $(m \times m)$-matrix $B = (b_{ij})$ we set $\sumnorm{B} \define \sum_{i, j = 1}^{m} |b_{ij}|$	for example, then $\rho(A) = \lim_{n \to +\infty} (\sumnorm{A^n})^{1/n}$.
\end{rmk}
\begin{proof}
	By \eqref{eq:pressure of subsystem} and Proposition~\ref{prop:subsystem:properties invariant Jordan curve}~\ref{item:subsystem:properties invariant Jordan curve:relation between color and location of tile}, we have\[
		\begin{split}
			h_{\operatorname{top}}(F) 
			&= \liminf_{n \mapping +\infty} \frac{1}{n} \log(\card{\Domain{n}}) 
			= \liminf_{n \mapping +\infty} \frac{1}{n} \log \sum_{\juxtapose{\colour}{\ccolour} \in \colours} \card{\ccFTile{n}{\colour}{\ccolour}}.
		\end{split}
	\]
	Then it follows from Remark~\ref{rem:tile matrix associated with set of tiles} and Proposition~\ref{prop:power of tile matrix} that \[
		h_{\operatorname{top}}(F) = \liminf_{n \mapping +\infty} \frac{1}{n} \log \sumnorm{A^n}.
	\]
	Since $\rho(A) = \lim_{n \to +\infty} (\sumnorm{A^n})^{1/n}$, we deduce that $h_{\operatorname{top}}(F) = \log(\rho(A))$.
\end{proof}

\subsection{Split Ruelle operators for subsystems}%
\label{sub:Split Ruelle operators}
In this subsection, we define appropriate variants of the Ruelle operator for subsystems (see Definition~\ref{def:split ruelle operator}) on the suitable function spaces in our context. 

\smallskip

Let $f \colon S^2 \mapping S^2$ be an expanding Thurston map with a Jordan curve $\mathcal{C}\subseteq S^2$ satisfying $\post{f} \subseteq \mathcal{C}$. 
Let $\juxtapose{X^0_{\black}}{X^0_{\white}} \in \mathbf{X}^0(f, \mathcal{C})$ be the black $0$-tile and the white $0$-tile, respectively. 

\begin{definition}[Partial split Ruelle operators]    \label{def:partial Ruelle operator}
	Let $f$, $\mathcal{C}$, $F$ satisfy the Assumptions in Section~\ref{sec:The Assumptions}.
	Consider $\varphi \in C(S^2)$.
	We define a map $ \poperator[\varphi]{n} \colon \boufunspace{\ccolour} \mapping \boufunspace{\colour}$, for $\juxtapose{\colour}{\ccolour} \in \colours$, and $n \in \n_{0}$, by
	\begin{equation}    \label{eq:definition of partial Ruelle operators}
		\begin{split}
			\poperator[\varphi]{n}(u)(y) 
			\define & \sum_{ x \in F^{-n}(y) }  \ccndegF{\colour}{\ccolour}{n}{x} u(x) \myexp[\big]{ S_n^F \varphi(x) }   \\
			=& \sum_{ X^n \in \ccFTile{n}{\colour}{\ccolour} }  u\bigl((F^n|_{X^{n}})^{-1}(y)\bigr)  \myexp[\big]{ S_n^F \varphi\bigl((F^n|_{X^{n}})^{-1}(y)\bigr) }
		\end{split}
	\end{equation}
	for each real-valued bounded Borel function $u \in \boufunspace{\ccolour}$ and each point $y \in X^0_{\colour}$. 
\end{definition}
Note that by default, a summation over an empty set is equal to $0$. We will always use this convention in this paper.

If $X^0_{\colour} \subseteq F(\domF)$ for some $\colour \in \colours$, then 
\[
	\mathcal{L}^{(0)}_{F, \varphi, \colour, \ccolour}(u) = \begin{cases} u &\text{if } \ccolour = \colour \\ 0 &\text{if } \ccolour \ne \colour \end{cases}
\]
for each $\ccolour \in \colours$, which means that $\paroperator[\varphi]{0}{\colour}{\colour}$ is the identity map on $\boufunspace{\colour}$ for each $\colour \in \colours$ satisfying $X^0_{\colour} \subseteq F(\domF)$. 

\begin{remark}\label{rem:summation with respect to preimages for interior}
	If we assume in addition that $f(\mathcal{C}) \subseteq \mathcal{C}$, then by Propositions~\ref{prop:cell decomposition: invariant Jordan curve} and \ref{prop:subsystem:properties}~\ref{item:subsystem:properties:homeo}, for all $n \in \n$ and $y \in \inte[\big]{X^0_{\colour}}$, the summation (with respect to tiles) on the right-hand side of \eqref{eq:definition of partial Ruelle operators} becomes a summation with respect to preimages, i.e.,\[
		\mathcal{L}^{(n)}_{F, \varphi, \colour, \ccolour}(u)(y) = \sum_{ x \in F^{-k}(y) \cap \inte{X^{0}_{\ccolour}} } u(x) \myexp[\big]{ S^{F}_{n}\varphi(x) }.
	\]
\end{remark}

\begin{lemma}    \label{lem:well-defined partial Ruelle operator}
	Let $f$, $\mathcal{C}$, $F$ satisfy the Assumptions in Section~\ref{sec:The Assumptions}.
	Consider $\varphi \in C(S^2)$.
	We assume in addition that $f(\mathcal{C}) \subseteq \mathcal{C}$ and $F \in \sursubsystem$.
	Then for all $\juxtapose{n}{k} \in \n_0$, $\juxtapose{\colour}{\ccolour} \in \colours$, and $u \in \confunspace{\ccolour}$, we have
	\begin{align}
		\label{eq:well-defined continuity of partial Ruelle operator}
		\poperator[\varphi]{n} (u) &\in \confunspace{\colour} \qquad\quad \text{ and }  \\
		\label{eq:iteration of partial Ruelle operator}
		\qquad \qquad \poperator[\varphi]{n + k}(u) &= \sum_{\cccolour \in \colours} \paroperator[\varphi]{n}{\colour}{\cccolour} \bigl( \paroperator[\varphi]{k}{\cccolour}{\ccolour}(u) \bigr).  
	\end{align}
\end{lemma}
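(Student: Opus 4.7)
The plan is to prove the two statements separately. Statement \eqref{eq:well-defined continuity of partial Ruelle operator} is essentially a finiteness observation, while statement \eqref{eq:iteration of partial Ruelle operator} will follow from a bijective correspondence between $(n+k)$-tiles of $F$ and suitable pairs of tiles, together with the additive behavior of Birkhoff sums.

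For \eqref{eq:well-defined continuity of partial Ruelle operator}, fix $n \in \n_0$, $\juxtapose{\colour}{\colour'} \in \colours$, and $u \in \confunspace{\colour'}$. The sum on the right-hand side of \eqref{eq:definition of partial Ruelle operators} ranges over the finite set $\ccFTile{n}{\colour}{\colour'}$ (if this set is empty, the result is the zero function and there is nothing to prove). For each $X^n \in \ccFTile{n}{\colour}{\colour'}$, Proposition~\ref{prop:subsystem:properties}~\ref{item:subsystem:properties:homeo} ensures that $F^n|_{X^n}$ is a homeomorphism of $X^n$ onto $X^0_{\colour}$, so the inverse $(F^n|_{X^n})^{-1} \colon X^0_{\colour} \to X^n \subseteq X^0_{\colour'}$ is continuous and takes values in the domain of $u$. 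Hence each summand, being a finite composition of continuous maps together with the exponential of the Birkhoff sum $S^{F}_{n}\varphi$, is continuous on $X^0_{\colour}$; as the sum is finite, so is the total.

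For \eqref{eq:iteration of partial Ruelle operator}, I will establish, for each $\juxtapose{n}{k} \in \n_0$ and each $\juxtapose{\colour}{\colour'} \in \colours$, a bijection
\[
    \ccFTile{n+k}{\colour}{\colour'} \; \longleftrightarrow \; \bigsqcup_{\colour'' \in \colours} \bigl\{ (Y, Z) \describe Y \in \ccFTile{n}{\colour}{\colour''}, \, Z \in \ccFTile{k}{\colour''}{\colour'} \bigr\},
\]
sending $X \in \ccFTile{n+k}{\colour}{\colour'}$ to $(Y, Z) \define (F^k(X), Z(X))$, where $Z(X)$ is the unique $k$-tile of $F$ containing $X$ (unique by Propositions~\ref{prop:cell decomposition: invariant Jordan curve} and \ref{prop:subsystem:properties invariant Jordan curve}~\ref{item:subsystem:properties invariant Jordan curve:decreasing relation of domains}, with $Z(X) \in \Domain{k}$), and the color index $\colour''$ is chosen so that $Z(X) \subseteq X^0_{\colour''}$. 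Proposition~\ref{prop:subsystem:properties}~\ref{item:subsystem:properties:homeo} gives $F^k(X) \in \Domain{n}$ and $F^n(F^k(X)) = X^0_{\colour}$, so $Y \in \cFTile{n}{\colour}$; since $Y \subseteq F^k(Z(X)) = X^0_{\colour''}$, we have $Y \in \ccFTile{n}{\colour}{\colour''}$, and $Z = Z(X) \in \ccFTile{k}{\colour''}{\colour'}$ because $Z \supseteq X \subseteq X^0_{\colour'}$. Injectivity is clear from $X = (F^k|_Z)^{-1}(Y)$, and surjectivity follows from Lemma~\ref{lem:cell mapping properties of Thurston map}~\ref{item:lem:cell mapping properties of Thurston map:i} together with Proposition~\ref{prop:subsystem:properties}~\ref{item:subsystem:properties:homeo} (each pair $(Y, Z)$ yields $X \define (F^k|_Z)^{-1}(Y) \in \Domain{n+k}$ of the desired color and position).

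Using this bijection and writing $w \define (F^n|_Y)^{-1}(y) \in Y \subseteq X^0_{\colour''}$ for $y \in X^0_{\colour}$, the chain rule $S^F_{n+k}\varphi = S^F_k \varphi + (S^F_n \varphi) \circ F^k$ gives
\[
    S^F_{n+k}\varphi\bigl((F^{n+k}|_X)^{-1}(y)\bigr) = S^F_k \varphi\bigl((F^k|_Z)^{-1}(w)\bigr) + S^F_n \varphi(w),
\]
and regrouping the sum over $X$ as a double sum over $\colour''$ and over pairs $(Y, Z)$ yields
\[
    \poperator[\varphi]{n+k}(u)(y) = \sum_{\colour'' \in \colours} \sum_{Y \in \ccFTile{n}{\colour}{\colour''}} \paroperator[\varphi]{k}{\colour''}{\colour'}(u)(w) \, \myexp[\big]{ S^F_n \varphi(w) } = \sum_{\colour'' \in \colours} \paroperator[\varphi]{n}{\colour}{\colour''}\bigl( \paroperator[\varphi]{k}{\colour''}{\colour'}(u) \bigr)(y),
\]
where the inner application of $\paroperator[\varphi]{n}{\colour}{\colour''}$ is legitimate because part \eqref{eq:well-defined continuity of partial Ruelle operator} guarantees $\paroperator[\varphi]{k}{\colour''}{\colour'}(u) \in \confunspace{\colour''}$. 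The main bookkeeping obstacle is verifying the bijection above — in particular, checking that the color-position index $\colour''$ is well-defined and that the inverse $(F^k|_Z)^{-1}(Y)$ indeed belongs to $\Domain{n+k}$; these follow directly from the $f$-invariance of $\mathcal{C}$ combined with Proposition~\ref{prop:subsystem:properties}~\ref{item:subsystem:properties:homeo} and Lemma~\ref{lem:cell mapping properties of Thurston map}~\ref{item:lem:cell mapping properties of Thurston map:i}.
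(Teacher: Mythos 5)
Your proof of \eqref{eq:well-defined continuity of partial Ruelle operator} matches the paper's. For \eqref{eq:iteration of partial Ruelle operator}, you take a genuinely different route. The paper restricts to $y \in \inte[\big]{X^0_{\colour}}$, where (since $f(\mathcal{C}) \subseteq \mathcal{C}$) all $k$-th preimages of $y$ lie off $\mathcal{C}$, so the tile sum in \eqref{eq:definition of partial Ruelle operators} becomes a sum over preimages (Remark~\ref{rem:summation with respect to preimages for interior}); the identity is then a direct rearrangement of preimage sums, and the case $y \in \mathcal{C}$ is recovered by continuity via \eqref{eq:well-defined continuity of partial Ruelle operator}. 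You instead establish a purely combinatorial bijection between $(n+k)$-tiles of $F$ of a given color/position and pairs $(Y, Z)$ of an $n$-tile and a $k$-tile, which yields the identity for all $y \in X^0_{\colour}$ at once without the interior-plus-continuity step. Both approaches are correct; yours is a bit heavier on bookkeeping but avoids the detour through the interior, while the paper's is slicker once the preimage reformulation is available. Two small remarks on your write-up. First, there is a slip in the defining clause for $\colour''$: you write that $\colour''$ is chosen so that $Z(X) \subseteq X^0_{\colour''}$, which would make $\colour''$ the \emph{position} of $Z(X)$ and hence force $\colour'' = \colour'$, collapsing the disjoint union; the correct choice — and the one your subsequent line $F^k(Z(X)) = X^0_{\colour''}$ actually uses — is that $\colour''$ is the \emph{color} of $Z(X)$, i.e., the color $\colour''$ with $F^k(Z(X)) = X^0_{\colour''}$. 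Second, you should (as the paper does) dispose of the degenerate cases $n = 0$ or $k = 0$ separately at the outset, since your bijection is most naturally formulated for $\juxtapose{n}{k} \in \n$; those cases follow at once from the description of $\paroperator[\varphi]{0}{\colour}{\colour'}$ right after Definition~\ref{def:partial Ruelle operator}.
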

\begin{proof}
	The case of Lemma~\ref{lem:well-defined partial Ruelle operator} where either $n = 0$ or $k = 0$ follows immediately from Definition~\ref{def:partial Ruelle operator}. 
	Thus, we may assume without loss of generality that $\juxtapose{n}{k} \in \n$.

	The continuity of $\poperator[\varphi]{n}(u)$ follows trivially from \eqref{eq:definition of partial Ruelle operators} and Proposition~\ref{prop:subsystem:properties}~\ref{item:subsystem:properties:homeo}. 
	Note that the number of terms in the summation in \eqref{eq:definition of partial Ruelle operators} is fixed as $y$ in $X^{0}_{\colour}$ varies, and each term is continuous with respect to $y$ when $y \in X^{0}_{\colour}$ by Proposition~\ref{prop:subsystem:properties}~\ref{item:subsystem:properties:homeo}.

	We next establish \eqref{eq:iteration of partial Ruelle operator} by proving that the two sides of \eqref{eq:iteration of partial Ruelle operator} are equal at each point $y \in X^0_{\colour}$. 
	Indeed, by continuity, we only need to prove this for $y \in \inte[\big]{X^0_{\colour}}$. 
	Since $f(\mathcal{C}) \subseteq \mathcal{C}$, each preimage $x \in F^{-k}(y)$ belongs to the set $S^2 \setminus \mathcal{C} = \inte[\big]{X^0_{\black}} \cup \inte[\big]{X^0_{\white}}$ when $y \in \inte[\big]{X^0_{\colour}}$.
	Then by Remark~\ref{rem:summation with respect to preimages for interior}, we get the first two equalities and the second equality of the following:
	\begin{align*}
		\sum_{\cccolour \in \colours} \paroperator[\varphi]{n}{\colour}{\cccolour} \bigl( \paroperator[\varphi]{k}{\cccolour}{\ccolour} (u) \bigr)(y) 
		&= \sum_{\cccolour \in \colours} \sum_{ x \in F^{-n}(y) \cap \inte{X^{0}_{\cccolour}} } e^{S^F_n \varphi(x)}  \sum_{ z \in F^{-k}(x) \cap \inte{X^0_{\ccolour}} } u(z) e^{S^F_{k} \varphi(z)}  \\
		&= \sum_{x \in F^{-n}(y)} \sum_{ z \in F^{-k}(x) \cap \inte{X^0_{\ccolour}} }  u(z) e^{S^F_n\varphi(x) + S^F_{k}\varphi(z)} \\
		&= \sum_{z \in F^{-(n + k)}(y) \cap \inte{X^0_{\ccolour}} } u(z) e^{S^F_{n + k}\varphi(z)} \\
		&= \poperator[\varphi]{n + k}(u)(y),
	\end{align*}
	where the second-to-last equality holds since $x = F^{k}(z)$ and the last equality follows from Remark~\ref{rem:summation with respect to preimages for interior}.
\end{proof}

\begin{definition}[Split Ruelle operators]    \label{def:split ruelle operator}
	Let $f$, $\mathcal{C}$, $F$ satisfy the Assumptions in Section~\ref{sec:The Assumptions}.
	Consider $\varphi \in C(S^2)$.
	The \emph{split Ruelle operator} for the subsystem $F$ and the potential $\varphi$
	\[
		\splopt[\varphi] \colon C\bigl(X^0_{\black}\bigr) \times C\bigl(X^0_{\white}\bigr) \mapping C\bigl(X^0_{\black}\bigr) \times C\bigl(X^0_{\white}\bigr) 
	\]
	on the product space $\splfunspace$ is defined by
	\begin{equation}    \label{eq:def:split ruelle operator}
		\splopt[\varphi]\splfun \define \bigl( 
			\paroperator[\varphi]{1}{\black}{\black}(u_{\black}) + \paroperator[\varphi]{1}{\black}{\white}(u_{\white}), 
			\paroperator[\varphi]{1}{\white}{\black}(u_{\black}) + \paroperator[\varphi]{1}{\white}{\white}(u_{\white})
		\bigr)
	\end{equation}
	for each $u_{\black} \in C\bigl(X^0_{\black}\bigr)$ and each $u_{\white} \in C\bigl(X^0_{\white}\bigr)$.
\end{definition}

Note that by \eqref{eq:well-defined continuity of partial Ruelle operator} in Lemma~\ref{lem:well-defined partial Ruelle operator}, the operator $\splopt[\varphi]$ is well-defined, and it follows immediately from Definition~\ref{def:partial Ruelle operator} that $\splopt[\varphi]^{0}$ is the identity map on $\splfunspace$ if $F(\domF) = S^2$. 
One sees that $\splopt[\varphi] \colon \splfunspace \mapping \splfunspace$ has a natural extension to the space $\splboufunspace$ given by \eqref{eq:def:split ruelle operator} for $u_{\black} \in B\bigl(X^0_{\black}\bigr)$ and $u_{\white} \in B\bigl(X^0_{\white}\bigr)$. 
Moreover, it follows immediately from \eqref{eq:definition of partial Ruelle operators} and \eqref{eq:def:split ruelle operator} that $\splopt[\varphi]$ is a positive, continuous operator on $\splfunspace$ (\resp $\splboufunspace$).

For each color $\colour \in \colours$, we define the projection $\pi_{\colour} \colon \splboufunspace \mapping \boufunspace{\colour}$ by
\begin{equation}    \label{eq:projection on product function spaces}
	\pi_{\colour}\splfun \define u_{\colour}, \qquad \text{for } \splfun \in \splboufunspace.
\end{equation}

We show that the split Ruelle operator $\splopt[\varphi]$ is well-behaved under iteration.

\begin{lemma}    \label{lem:iteration of split-partial ruelle operator}
	Let $f$, $\mathcal{C}$, $F$ satisfy the Assumptions in Section~\ref{sec:The Assumptions}.
	Consider $\varphi \in C(S^2)$.
	We assume in addition that $f(\mathcal{C}) \subseteq \mathcal{C}$ and $F(\domF) = S^2$. 
	Then for all $n \in \n_0$, $u_{\black} \in C\bigl(X^0_{\black}\bigr)$, and $u_{\white} \in C\bigl(X^0_{\white}\bigr)$,
	\begin{equation}    \label{eq:iteration of split-partial ruelle operator}
		\splopt[\varphi]^{n} \splfun = \bigl( \paroperator[\varphi]{n}{\black}{\black}(u_{\black}) + \paroperator[\varphi]{n}{\black}{\white}(u_{\white}), \
		\paroperator[\varphi]{n}{\white}{\black}(u_{\black}) + \paroperator[\varphi]{n}{\white}{\white}(u_{\white}) \bigr).
	\end{equation}
\end{lemma}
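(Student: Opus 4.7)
The plan is to proceed by induction on $n \in \n_{0}$, using the definition of $\splopt[\varphi]$ (Definition~\ref{def:split ruelle operator}) together with the semigroup identity \eqref{eq:iteration of partial Ruelle operator} for the partial split Ruelle operators from Lemma~\ref{lem:well-defined partial Ruelle operator}. Throughout, one uses the obvious linearity of $\paroperator[\varphi]{k}{\colour}{\colour'}$ in its input, which is immediate from \eqref{eq:definition of partial Ruelle operators}.

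For the base case $n=0$, the hypothesis $F(\dom{F}) = S^{2}$ gives $X^{0}_{\colour} \subseteq F(\dom{F})$ for each $\colour \in \colours$, so by the observation immediately following Definition~\ref{def:partial Ruelle operator} we have $\paroperator[\varphi]{0}{\colour}{\colour'}(u) = u$ when $\colour' = \colour$ and $\paroperator[\varphi]{0}{\colour}{\colour'}(u) = 0$ when $\colour' \ne \colour$. Hence the right-hand side of \eqref{eq:iteration of split-partial ruelle operator} equals $(u_{\black}, u_{\white})$, which coincides with $\splopt[\varphi]^{0} (u_{\black}, u_{\white})$ since $\splopt[\varphi]^{0}$ is the identity on $\splfunspace$. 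The case $n = 1$ is exactly Definition~\ref{def:split ruelle operator}.

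For the inductive step, assume the formula holds for some $n \in \n$. Writing $\splopt[\varphi]^{n+1} = \splopt[\varphi] \circ \splopt[\varphi]^{n}$, substituting the inductive hypothesis into the first argument, applying $\splopt[\varphi]$ via \eqref{eq:def:split ruelle operator}, and distributing using linearity of each $\paroperator[\varphi]{1}{\colour}{\colour''}$, one obtains, for instance in the first component,
\[
\paroperator[\varphi]{1}{\black}{\black}\bigl(\paroperator[\varphi]{n}{\black}{\black}(u_{\black})\bigr) + \paroperator[\varphi]{1}{\black}{\white}\bigl(\paroperator[\varphi]{n}{\white}{\black}(u_{\black})\bigr) + \paroperator[\varphi]{1}{\black}{\black}\bigl(\paroperator[\varphi]{n}{\black}{\white}(u_{\white})\bigr) + \paroperator[\varphi]{1}{\black}{\white}\bigl(\paroperator[\varphi]{n}{\white}{\white}(u_{\white})\bigr).
\]
Grouping the two terms containing $u_{\black}$ and applying \eqref{eq:iteration of partial Ruelle operator} with $(n,k)$ replaced by $(1,n)$, $\colour = \black$, and $\colour' = \black$ collapses them to $\paroperator[\varphi]{n+1}{\black}{\black}(u_{\black})$; grouping the two terms containing $u_{\white}$ and applying the same identity with $\colour' = \white$ yields $\paroperator[\varphi]{n+1}{\black}{\white}(u_{\white})$. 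An identical computation handles the second component, completing the induction.

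There is no real obstacle here beyond careful bookkeeping of the four composition terms; the content is entirely carried by the semigroup relation \eqref{eq:iteration of partial Ruelle operator}, whose own proof (in Lemma~\ref{lem:well-defined partial Ruelle operator}) is where the assumptions $f(\mathcal{C}) \subseteq \mathcal{C}$ and $F \in \sursubsystem$ are actually used.
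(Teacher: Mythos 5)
Your proof is correct and takes essentially the same approach as the paper's: induction on $n$, with base cases $n=0,1$ handled by definition and the inductive step obtained by feeding the inductive hypothesis into $\splopt[\varphi]$ via \eqref{eq:def:split ruelle operator}, distributing by linearity, and collapsing the resulting double sum using the semigroup identity \eqref{eq:iteration of partial Ruelle operator}. The paper writes the same computation more compactly using the projection $\pi_{\colour}$ and a double sum over colors, whereas you write out the four terms of each component explicitly; the two are mathematically identical.
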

\begin{proof}
	We prove \eqref{eq:iteration of split-partial ruelle operator} by induction. The case where $n = 0$ and the case where $n = 1$ both hold by definition. Assume now \eqref{eq:iteration of split-partial ruelle operator} holds for $n = k$ for some $k \in \n$. Then by Definition~\ref{def:split ruelle operator} and \eqref{eq:iteration of partial Ruelle operator} in Lemma~\ref{lem:well-defined partial Ruelle operator}, for each $\colour \in \colours$, we have
	\begin{align*}
		\pi_{\colour}\bigl( \splopt[\varphi]^{k + 1}\splfun \bigr) 
		&= \pi_{\colour} \bigl( 
			\splopt[\varphi]\bigl( 
				\paroperator[\varphi]{k}{\black}{\black}(u_{\black}) + \paroperator[\varphi]{k}{\black}{\white}(u_{\white}), \
				\paroperator[\varphi]{k}{\white}{\black}(u_{\black}) + \paroperator[\varphi]{k}{\white}{\white}(u_{\white}) 
			\bigr) 
		\bigr) \\
		&= \sum_{\ccolour \in \colours} \paroperator[\varphi]{1}{\colour}{\ccolour} \bigl( 
				\paroperator[\varphi]{k}{\ccolour}{\black}(u_{\black}) + \paroperator[\varphi]{k}{\ccolour}{\white}(u_{\white}) 
			\bigr)  \\
		&= \sum_{\cccolour \in \colours} \sum_{\ccolour \in \colours} \paroperator[\varphi]{1}{\colour}{\ccolour} \bigl( \paroperator[\varphi]{k}{\ccolour}{\cccolour}(u_{\cccolour}) \bigr)  \\
		&= \sum_{\cccolour \in \colours} \paroperator[\varphi]{k + 1}{\colour}{\cccolour}(u_{\cccolour}),
	\end{align*}
	for $u_{\black} \in C\bigl(X^0_{\black}\bigr)$ and $u_{\white} \in C\bigl(X^{0}_{\white}\bigr)$. This completes the inductive step, establishing \eqref{eq:iteration of split-partial ruelle operator}.
\end{proof}

\begin{rmk}
	Similarly, one can show that \eqref{eq:iteration of split-partial ruelle operator} in Lemma~\ref{lem:iteration of split-partial ruelle operator} holds for $\splfun \in \splboufunspace$.
\end{rmk}

\subsection{Split spheres}%
\label{sub:Split spheres}

In this subsection, we introduce the notion of the split sphere (see Definition~\ref{def:split sphere}), and set up some identifications and conventions (see Remarks~\ref{rem:disjoint union} and \ref{rem:probability measure in split setting}), which will be used frequently in this paper.

\smallskip

Let $f \colon S^2 \mapping S^2$ be an expanding Thurston map with a Jordan curve $\mathcal{C}\subseteq S^2$ satisfying $\post{f} \subseteq \mathcal{C}$. 
Let $\juxtapose{X^0_{\black}}{X^0_{\white}} \in \mathbf{X}^0(f, \mathcal{C})$ be the black $0$-tile and the white $0$-tile, respectively. 

\begin{definition}    \label{def:split sphere}
	We define the \emph{split sphere} $\splitsphere$ to be the disjoint union of $X^0_{\black}$ and $X^0_{\white}$, i.e., \[
		\splitsphere \define X^0_{\black} \sqcup X^0_{\white} = \bigl\{ (x, \colour) \describe \colour \in \colours, \, x \in X^0_{\colour} \bigr\}.
	\] 
	For each $\colour \in \colours$, let
	\begin{equation}    \label{eq:natural injection into splitsphere}
		i_{\colour} \colon X^0_{\colour} \mapping \splitsphere
	\end{equation}
	be the natural injection (defined by $i_{\colour}(x) \define (x, \colour)$). 
	Recall that the topology on $\splitsphere$ is defined as the finest topology on $\splitsphere$ for which both the natural injections $i_{\black}$ and $i_{\white}$ are continuous. 
	In particular, $\splitsphere$ is compact and metrizable.
	Obviously, a subset $U$ of $\splitsphere$ is open in $\splitsphere$ if and only if its preimage $i_{\colour}^{-1}(U)$ is open in $X^0_{\colour}$ for each $\colour \in \colours$. 
\end{definition}

Let $X$ and $Y$ be normed vector spaces over $\real$. 
Recall that a bounded linear map $T$ from $X$ to $Y$ is said to be an \emph{isomorphism} if $T$ is bijective and $T^{-1}$ is bounded (in other words, $\norm{T(x)} \geqslant C\norm{x}$ for some $C > 0$), and $T$ is called an \emph{isometry} if $\norm{T(x)} = \norm{x}$ for all $x \in X$.

The following is a standard result in functional analysis (see for example, \cite[p.~160]{folland2013real}).

\begin{proposition}[Dual of the product space is isometric to the product of the dual spaces]    \label{prop:Dual of the product is isometric to the product of the dual}
	Let $X$ and $Y$ be normed vector spaces and define $T \colon X^{*} \times Y^{*} \mapping (X \times Y)^{*}$ by $T(u, v)(x, y) = u(x) + v(y)$. Then $T$ is an isomorphism which is an isometry with respect to the norm $\norm{(x, y)} = \max\{\norm{x}, \norm{y}\}$ on $X \times Y$, the corresponding operator norm on $(X \times Y)^{*}$, and the norm $\norm{(u, v)} = \norm{u} + \norm{v}$ on $X^{*} \times Y^{*}$.
\end{proposition}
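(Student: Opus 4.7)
The plan is to verify in turn that $T$ is well-defined with values in $(X\times Y)^{*}$, linear, bijective, and an isometry for the stated norms. Well-definedness is straightforward: for $(u,v) \in X^{*} \times Y^{*}$, the map $T(u,v)$ is plainly linear on $X \times Y$, and the bound
\[
    \abs{T(u,v)(x,y)} = \abs{u(x) + v(y)} \leqslant \norm{u}\norm{x} + \norm{v}\norm{y} \leqslant (\norm{u} + \norm{v}) \max\{\norm{x}, \norm{y}\}
\]
immediately gives both continuity of $T(u,v)$ and the easy half of the isometry inequality, namely $\norm{T(u,v)} \leqslant \norm{u} + \norm{v}$. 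Linearity of $T$ itself is obvious from the defining formula.

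Next I would establish bijectivity. For injectivity, if $T(u,v) = 0$, then evaluating at $(x,0)$ forces $u(x) = 0$ for every $x \in X$, and evaluating at $(0,y)$ forces $v(y) = 0$ for every $y \in Y$. For surjectivity, given $\phi \in (X\times Y)^{*}$, define $u(x) \define \phi(x,0)$ and $v(y) \define \phi(0,y)$; then $u \in X^{*}$ and $v \in Y^{*}$ with $\norm{u}, \norm{v} \leqslant \norm{\phi}$ because the natural inclusions $X \hookrightarrow X\times Y$ and $Y \hookrightarrow X\times Y$ are isometric for the max-norm, and $\phi(x,y) = \phi(x,0) + \phi(0,y) = T(u,v)(x,y)$ by linearity of $\phi$.

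The remaining task is the reverse isometry inequality $\norm{T(u,v)} \geqslant \norm{u} + \norm{v}$. For arbitrary $\varepsilon > 0$, I would select $x \in X$ and $y \in Y$ with $\norm{x}, \norm{y} \leqslant 1$ such that $u(x) > \norm{u} - \varepsilon$ and $v(y) > \norm{v} - \varepsilon$ (replacing $x$ by $-x$ or $y$ by $-y$ if needed to align signs). Then $(x,y)$ has norm at most $1$ in the max-norm, while
\[
    T(u,v)(x,y) = u(x) + v(y) > \norm{u} + \norm{v} - 2\varepsilon,
\]
so $\norm{T(u,v)} > \norm{u} + \norm{v} - 2\varepsilon$. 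Letting $\varepsilon \to 0^{+}$ completes the argument. Once the isometry is established, continuity of $T^{-1}$ is automatic.

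I do not anticipate any real obstacle: the only place requiring care is the sign-choice in the final step, since the norms on $X^{*}$ and $Y^{*}$ are expressed as suprema of the absolute values of functionals rather than the functionals themselves, so one has to ensure that $u(x)$ and $v(y)$ can simultaneously be made positive and close to $\norm{u}$ and $\norm{v}$ respectively.
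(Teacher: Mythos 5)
Your proposal is correct and follows essentially the same route as the paper's proof: the upper bound $\norm{T(u,v)} \leqslant \norm{u}+\norm{v}$ via the triangle inequality and the max-norm, and the reverse inequality by independently choosing near-optimal unit vectors $x$ and $y$ for $u$ and $v$. You are somewhat more thorough than the paper, which simply asserts bijectivity as "easy to see," whereas you spell out injectivity and surjectivity via the coordinate restrictions $u(x) = \phi(x,0)$, $v(y) = \phi(0,y)$; the paper's epsilon bookkeeping is multiplicative ($u(x') \geqslant (1-\varepsilon)\norm{u}\norm{x'}$) while yours is additive ($u(x) > \norm{u} - \varepsilon$), but this is cosmetic. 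The sign-choice caveat you flag at the end is exactly the right thing to notice — and indeed the paper glosses over it — though over the reals it poses no real difficulty since $x$ and $y$ can be negated independently.
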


By Proposition~\ref{prop:Dual of the product is isometric to the product of the dual} and the Riesz representation theorem (see \cite[Theorems~7.17 and 7.8]{folland2013real}), we can identify $\bigl( \splfunspace \bigr)^{*}$ with the product of spaces of finite signed Borel measures $\splmeaspace$, where we use the norm $\norm{\splfun} = \max\{\norm{u_{\black}}, \norm{u_{\white}}\}$ on $\splfunspace$, the corresponding operator norm on $\bigl(\splfunspace\bigl)^{*}$, and the norm $\norm{\splmea} = \norm{\mu_{\black}} + \norm{\mu_{\white}}$ on $\splmeaspace$.

From now on, we write
\begin{align}
	\label{eq:split measure on two separated sets}
	\splmea(A_{\black}, A_{\white}) &\define \mu_{\black}(A_{\black}) + \mu_{\white}(A_{\white}), \\
	\label{eq:Riesz representation}
	\functional{\splmea}{\splfun} 
	&\define \functional{\mu_{\black}}{u_{\black}} + \functional{\mu_{\white}}{u_{\white}}
	= \int_{X^0_{\black}} \! u_{\black} \,\mathrm{d}\mu_{\black} + \int_{X^0_{\white}} \! u_{\white} \,\mathrm{d}\mu_{\white}, 
\end{align}
whenever $\splmea \in \splmeaspace$, $\splfun \in \splboufunspace$, and $A_{\black}$ and $A_{\white}$ are Borel subset of $X^0_{\black}$ and $X^0_{\white}$, respectively.
In particular, for each Borel set $A \subseteq S^2$, we define
\begin{equation}    \label{eq:split measure of set}
	\splmea(A) \define \splmea \bigl(A \cap X^0_{\black}, A \cap X^0_{\white} \bigr)
		= \mu_{\black} \bigl(A \cap X^0_{\black} \bigr) + \mu_{\white} \bigl(A \cap X^0_{\white} \bigr).
\end{equation} 

\begin{remark}    \label{rem:disjoint union}
	In the natural way, the product space $\splfunspace$ (\resp $\splboufunspace$) can be identified with $C(\splitsphere)$ (\resp $B(\splitsphere)$). 
	Similarly, the product space $\splmeaspace$ can be identified with $\mathcal{M}(\splitsphere)$. 
	Under such identifications, we write\[
		\int \! \splfun \,\mathrm{d}\splmea \define \functional{\splmea}{\splfun} \quad \text{and } \quad
		\splfun\splmea \define (u_{\black}\mu_{\black}, u_{\white}\mu_{\white})
	\]
	whenever $\splmea \in \splmeaspace$ and $\splfun \in \splboufunspace$.

	Moreover, we have the following natural identification of $\probmea{\splitsphere}$:\[
		\probmea{\splitsphere} = \bigl\{ \splmea \in \splmeaspace \describe \mu_{\black} \text{ and } \mu_{\white} \text{ are positive measures, } \mu_{\black}\bigl(X^0_{\black}\bigr) + \mu_{\white}\bigl(X^0_{\white}\bigr) = 1 \bigr\}.
	\]
	Here we follow the terminology in \cite[Section~3.1]{folland2013real} that a \emph{positive measure} is a signed measure that takes values in $[0, +\infty]$. 
\end{remark}

\begin{remark}\label{rem:probability measure in split setting}
	It is easy to see that \eqref{eq:split measure of set} defines a finite signed Borel measure $\mu \define \splmea$ on $S^2$. 
	Here we use the notation $\mu$ (\resp $\splmea$) when we view the measure as a measure on $S^2$ (\resp $\splitsphere$), and we will always use these conventions in this paper. 
	In this sense, for $u \in B(S^2)$ we have
	\begin{equation}    \label{eq:split measure on S2}
		\functional{\mu}{u} = \int \! u \,\mathrm{d}\mu = \int \! \splfun \,\mathrm{d}\splmea = \int_{X^0_{\black}} \! u \,\mathrm{d}\mu_{\black} + \int_{X^0_{\white}} \! u \,\mathrm{d}\mu_{\white},
	\end{equation}
	where $u_{\black} \define u|_{X^0_{\black}}$ and $u_{\white} \define u|_{X^0_{\white}}$. Moreover, if both $\mu_{\black}$ and $\mu_{\white}$ are positive measures and $\mu_{\black}\bigl(X^0_{\black}\bigr) + \mu_{\white}\bigl(X^0_{\white}\bigr) = 1$, then $\mu = \splmea$ defined by \eqref{eq:split measure of set} is a Borel probability measure on $S^2$. In view of the identifications in Remark~\ref{rem:disjoint union}, this means that if $\splmea \in \mathcal{P}(\splitsphere)$, then $\mu \in \mathcal{P}(S^2)$.
\end{remark}

For each color $\colour \in \colours$, we define the projection $\pi_{\colour} \colon \splmeaspace \mapping \mathcal{M}\bigl(X^{0}_{\colour}\bigr)$ by
\begin{equation}    \label{eq:projection on product measure spaces}
	\pi_{\colour}\splmea \define \mu_{\colour}, \qquad \text{for } \splmea \in \splmeaspace.
\end{equation}

\subsection{Adjoint operators of split Ruelle operators}%
\label{sub:Adjoint operators of split Ruelle operators}

In this subsection, we investigate the adjoint operators of split Ruelle operators for subsystems.
We collect and prove a few properties of the adjoint operators in Proposition~\ref{prop:dual split operator}, which will be used later.

\smallskip

Let $f$, $\mathcal{C}$, $F$ satisfy the Assumptions in Section~\ref{sec:The Assumptions}. 
We assume in addition that $F(\domF) = S^2$.
Consider $\varphi \in C(S^2)$.
Note that the split Ruelle operator $\splopt[\varphi]$ (see Definition~\ref{def:split ruelle operator}) is a positive, continuous operator on $\splfunspace$. 
Thus, the adjoint operator \[
	\splopt[\varphi]^{*} \colon \bigl( \splfunspace \bigr)^{*} \mapping \bigl( \splfunspace \bigr)^{*}
\]
of $\splopt[\varphi]$ acts on the dual space $\bigl( \splfunspace \bigr)^{*}$ of the Banach space $\splfunspace$. 
As discussed in Subsection~\ref{sub:Split spheres}, we identify $\bigl( \splfunspace \bigr)^{*}$ with the product of spaces of finite signed Borel measures $\splmeaspace$, where we use the norm $\norm{\splfun} = \max\{\norm{u_{\black}}, \norm{u_{\white}}\}$ on $\splfunspace$, the corresponding operator norm on $\bigl( \splfunspace \bigr)^{*}$, and the norm $\norm{\splmea} = \norm{\mu_{\black}} + \norm{\mu_{\white}}$ on $\splmeaspace$.
Then by Remark~\ref{rem:disjoint union}, we can also view $\splopt[\varphi]$ (\resp $\dualsplopt[\varphi]$) as an operator on $C(\splitsphere)$ (\resp $\mathcal{M}(\splitsphere)$).

In the following proposition, we summarize properties of the adjoint operator $\dualsplopt[\varphi]$.
\begin{proposition}    \label{prop:dual split operator}
	Let $f$, $\mathcal{C}$, $F$ satisfy the Assumptions in Section~\ref{sec:The Assumptions}. 
	Consider $\varphi \in C(S^2)$.
	We assume in addition that $f(\mathcal{C}) \subseteq \mathcal{C}$ and $F(\domF) = S^2$. 
	Consider arbitrary $n \in \n$ and $\splmea \in \splmeaspace$. Then the following statements hold:
	\begin{enumerate}[label=\rm{(\roman*)}]
		\smallskip 

		\item     \label{item:prop:dual split operator:act on bound function}
		$\bigl\langle \dualsplopt[\varphi]\splmea, \splfun \bigr\rangle = \functional[\big]{\splmea}{\splopt[\varphi]\splfun}$ for $\splfun \in \splboufunspace$.
		
		\smallskip

		\item     \label{item:prop:dual split operator:sphere version level n}
		For each Borel set $A \subseteq \domain{n}$ on which $F^{n}$ is injective, we have that $F^{n}(A)$ is a Borel set, and
		\begin{equation}  \label{eq:dual split Ruelle operator acts on measure on set A}
			\begin{split}
				\bigl( \dualsplopt[\varphi] \bigr)^{n}\splmea(A)
				&= \sum_{\colour \in \colours} \int_{F^{n}(A) \cap X^0_{\colour} } \! 
					\bigl(\ccndegF{\colour}{}{n}{\cdot \, } \myexp[\big]{ S_{n}^{F}\varphi } \bigr) \circ (F^{n}|_{A})^{-1} 
				\,\mathrm{d} \mu_{\colour}.
			\end{split}
		\end{equation}
		Here $\bigl( \dualsplopt[\varphi] \bigr)^{n}\splmea(A)$ is defined in \eqref{eq:split measure of set}.
		
		\smallskip

		\item     \label{item:prop:dual split operator:split version}
		For each color $\colour \in \colours$ and each Borel set $A_{\colour} \subseteq \domF \cap X^0_{\colour}$ on which $F$ is injective, we have that $F(A_{\colour})$ is a Borel set, and
		\begin{equation}    \label{eq:colour component of dual split Ruelle operator of measure}
			\begin{split}
				\pi_{\colour} \bigl(\dualsplopt[\varphi]\splmea \bigr)(A_{\colour})
				&= \sum_{\ccolour \in \colours} \int_{F(A_{\colour}) \cap X^0_{\ccolour} } \! 
					(\ccdegF{\ccolour}{\colour}{\cdot \, } \myexp{\varphi} ) \circ (F|_{A_{\colour}})^{-1} 
				\,\mathrm{d} \mu_{\ccolour}.
			\end{split}
		\end{equation}

		\smallskip

		\item     \label{item:prop:dual split operator:measure concentrate}
		$\bigl( \dualsplopt[\varphi] \bigr)^n \splmea \bigl( \bigcup \splDomain{n - 1} \bigr) = \bigl( \dualsplopt[\varphi] \bigr)^n \splmea \bigl(\bigcup \splDomain{n} \bigr)$, where 
		\[
			\splDomain{k} \define  \bigcup_{\colour \in \colours} \set[\big]{ i_{\colour}\parentheses[\big]{ X^k } \describe X^k \in \Domain{k}, \, X^k \subseteq X^0_{\colour} }
		\] 
		for each $k \in \n_{0}$. 
		Here $i_{\colour}$ is defined by \eqref{eq:natural injection into splitsphere}.
	\end{enumerate}
\end{proposition}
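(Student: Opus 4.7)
The plan is to deduce \ref{item:prop:dual split operator:act on bound function} by a functional monotone class extension from the definition of the adjoint on continuous functions, reduce \ref{item:prop:dual split operator:sphere version level n} and \ref{item:prop:dual split operator:split version} to pointwise computations of $\splopt[\varphi]^n$ on indicator functions, and derive \ref{item:prop:dual split operator:measure concentrate} as a direct corollary.

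For \ref{item:prop:dual split operator:act on bound function}, the identity $\bigl\langle \dualsplopt[\varphi]\splmea, \splfun \bigr\rangle = \functional{\splmea}{\splopt[\varphi]\splfun}$ holds by definition of the adjoint for $\splfun \in \splfunspace$. Since $\splopt[\varphi]$ admits a natural extension to a positive bounded linear operator on $\splboufunspace$ via the same formula \eqref{eq:def:split ruelle operator}, the class of $\splfun \in \splboufunspace$ satisfying the identity contains $\splfunspace$, forms a vector space, and is closed under bounded monotone limits by dominated convergence (positivity of $\splopt[\varphi]$ guarantees monotone convergence of the right-hand side). A functional monotone class argument then yields \ref{item:prop:dual split operator:act on bound function}.

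For \ref{item:prop:dual split operator:sphere version level n}, the Borel measurability of $F^n(A)$ follows from the Lusin--Souslin theorem for injective continuous maps between Polish spaces, or more concretely by decomposing $A$ into the finite disjoint union of the Borel pieces $A \cap \inte{X^n}$ over $X^n \in \Domain{n}$ together with the Borel remainder $A \cap f^{-n}(\mathcal{C})$ and invoking Proposition~\ref{prop:subsystem:properties}~\ref{item:subsystem:properties:homeo} on each piece. Applying \ref{item:prop:dual split operator:act on bound function} to the split indicator $\splfun \define \bigl(\indicator{A \cap X^0_{\black}}, \indicator{A \cap X^0_{\white}}\bigr)$, Lemma~\ref{lem:iteration of split-partial ruelle operator} and the explicit formula \eqref{eq:definition of partial Ruelle operators} reduce the desired identity to the pointwise claim that, for each $\colour \in \colours$ and each $y \in X^0_{\colour}$,
\begin{equation*}
    \pi_{\colour}\bigl(\splopt[\varphi]^n \splfun\bigr)(y)
    = \indicator{F^n(A) \cap X^0_{\colour}}(y) \cdot \ccndegF{\colour}{}{n}{(F^n|_A)^{-1}(y)} \cdot \myexp[\big]{S^{F}_n\varphi\bigl((F^n|_A)^{-1}(y)\bigr)}.
\end{equation*}
To verify this, for each $X^n \in \cFTile{n}$ write $\psi_{X^n} \define (F^n|_{X^n})^{-1}$. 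The contribution of $X^n \in \ccFTile{n}{\colour}{\colour'}$ to the sum in $\pi_{\colour}(\splopt[\varphi]^n \splfun)(y)$ is nonzero precisely when $\psi_{X^n}(y) \in A$; injectivity of $F^n|_A$ identifies this condition with the conjunction $y \in F^n(A) \cap X^0_{\colour}$ and $x \define (F^n|_A)^{-1}(y) \in X^n$. The number of such $X^n$ across all $\colour' \in \colours$ equals $\ccndegF{\colour}{}{n}{x}$ by Definition~\ref{def:subsystem local degree}, while $S^F_n\varphi \circ \psi_{X^n}(y) = S^F_n\varphi(x)$ is independent of the tile. Integrating the resulting identity against $\mu_{\colour}$ and summing over $\colour \in \colours$ produces \eqref{eq:dual split Ruelle operator acts on measure on set A}. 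Statement \ref{item:prop:dual split operator:split version} is proved by the same mechanism with $n = 1$ applied to the asymmetric test function equal to $\indicator{A_{\colour}}$ at position $\colour$ and $0$ at the other position; the factor $\ccdegF{\colour'}{\colour}{\cdot}$ then arises as the number of $1$-tiles in $\ccFTile{1}{\colour'}{\colour}$ meeting the chosen preimage.

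Finally, for \ref{item:prop:dual split operator:measure concentrate}, Proposition~\ref{prop:subsystem:properties invariant Jordan curve}~\ref{item:subsystem:properties invariant Jordan curve:decreasing relation of domains} yields $\domain{n} \subseteq \domain{n - 1}$. Since for every $X^n \in \cFTile{n}$ and $y \in X^0_{\colour}$ the inverse-branch image $\psi_{X^n}(y)$ lies in $X^n \subseteq \domain{n} \subseteq \domain{n - 1}$, the split indicators of $\domain{n - 1}$ and of $\domain{n}$ both produce the value $1$ at every such $\psi_{X^n}(y)$. By Lemma~\ref{lem:iteration of split-partial ruelle operator}, $\splopt[\varphi]^n$ maps these two split indicators to the same element of $\splfunspace$, and pairing with $\splmea$ via \ref{item:prop:dual split operator:act on bound function} gives the required equality. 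The principal technical hurdle throughout is the combinatorial matching in the pointwise identity for $\pi_{\colour}(\splopt[\varphi]^n \splfun)(y)$: the sum over tile-branches must be collapsed correctly against the inverse-branch parametrization of $F^n|_A$, with the local-degree counts from Subsection~\ref{sub:Local degree} supplying the precise multiplicities.
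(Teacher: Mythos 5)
Your proposal is correct and follows essentially the same route as the paper: reduce (ii)--(iv) to pointwise computations of $\splopt[\varphi]^n$ on indicator functions via Lemma~\ref{lem:iteration of split-partial ruelle operator}, match tile-branches against the inverse-branch parametrization of $F^n|_A$ with the local-degree counts supplying multiplicities, and obtain (iv) from the fact that the inverse branches land in $\domain{n} \subseteq \domain{n-1}$. The only variation is in (i), where you invoke the functional monotone class theorem directly on $\splboufunspace$ rather than the paper's Dynkin $\pi$-$\lambda$ argument on Borel sets followed by approximation; these are equivalent, and your version is if anything marginally cleaner for getting the identity for general bounded Borel $\splfun$ in one step.
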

Recall that a collection $\mathfrak{P}$ of subsets of a set $X$ is a \emph{$\pi$-system} if it is closed under the intersection, i.e., if $\juxtapose{A}{B} \in \mathfrak{P}$ then $A \cap B \in \mathfrak{P}$. 
A collection $\mathfrak{L}$ of subsets of $X$ is a \emph{$\lambda$-system} if the following conditions are satisfied: 
(1) $X \in \mathfrak{L}$. 
(2) If $\juxtapose{B}{C} \in \mathfrak{L}$ and $B \subseteq C$, then $C \setminus B \in \mathfrak{L}$. 
(3) If $A_n \in \mathfrak{L}$, $n \in \n$, with $A_n \subseteq A_{n + 1}$, then $\bigcup_{n \in \n} A_n \in \mathfrak{L}$.
\begin{proof}
	We follow the identifications discussed in Remark~\ref{rem:disjoint union}. 
	Recall from Definition~\ref{def:split sphere} that $\splitsphere = \disjointuniontile$ is the disjoint union of $X^0_{\black}$ and $X^0_{\white}$.
	Then $\splopt[\varphi]$ is a continuous operator on $C(\splitsphere)$.

	\smallskip

	\ref{item:prop:dual split operator:act on bound function} 
	It suffices to show that for each $\mu \in \mathcal{M}(\splitsphere)$ and each Borel set $A \subseteq \splitsphere$,
	\begin{equation}    \label{eq:dual split operator:borel set}
		\functional[\big]{ \dualsplopt[\varphi](\mu) }{ \mathbbm{1}_{A} } 
		= \functional[\big]{\mu}{\splopt[\varphi](\mathbbm{1}_{A})}.
	\end{equation}

	Let $\mathfrak{L}$ be the collection of Borel sets $A \subseteq \splitsphere$ for which \eqref{eq:dual split operator:borel set} holds. Denote the collection of open subsets of $\splitsphere$ by $\mathfrak{G}$. Then $\mathfrak{G}$ is a $\pi$-system.

	We observe from \eqref{eq:def:split ruelle operator} that if $\{u_{n}\}_{n \in \n}$ is a non-decreasing sequence of real-valued continuous functions on $\splitsphere$, then so is $\bigl\{ \splopt[\varphi](u_{n}) \bigr\}_{n \in \n}$.

	By the definition of $\dualsplopt[\varphi]$, we have
	\begin{equation}    \label{eq:dual split operator:continuous function}
		\functional[\big]{\dualsplopt[\varphi](\mu)}{u} 
		= \functional[\big]{\mu}{\splopt[\varphi](u)}
	\end{equation}
	for $u \in C(\splitsphere)$. Fix an open set $U \subseteq \splitsphere$, then there exists a non-decreasing sequence $\{g_{n}\}_{n \in \n}$ of real-valued continuous functions on $\splitsphere$ supported in $U$ such that $g_n$ converges to $\mathbbm{1}_{U}$ pointwise as $n \to +\infty$. 
	Then $\bigl\{\splopt[\varphi](g_n) \bigr\}_{n \in \n}$ is also a non-decreasing sequence of continuous functions, whose pointwise limit is $\splopt[\varphi](\mathbbm{1}_{U})$. By the Lebesgue Monotone Convergence Theorem and \eqref{eq:dual split operator:continuous function}, we can conclude that \eqref{eq:dual split operator:borel set} holds for $A = U$. Thus $\mathfrak{G} \subseteq \mathfrak{L}$.

	We now prove that $\mathfrak{L}$ is a $\lambda$-system. 
	Indeed, since \eqref{eq:dual split operator:continuous function} holds for $u = \mathbbm{1}_{\splitsphere}$, we get $\splitsphere \in \mathfrak{L}$. 
	For each pair $\juxtapose{B}{C} \in \mathfrak{L}$ with $B \subseteq C$, it follows from \eqref{eq:def:split ruelle operator} that $\mathbbm{1}_{C} - \mathbbm{1}_{B} = \mathbbm{1}_{C \setminus B}$ and $\splopt[\varphi](\mathbbm{1}_{C}) - \splopt[\varphi](\mathbbm{1}_{B}) = \splopt[\varphi](\mathbbm{1}_{C} - \mathbbm{1}_{B}) = \splopt[\varphi](\mathbbm{1}_{C \setminus B})$. 
	Thus $C \setminus B \in \mathfrak{L}$. 
	Finally, given $A_n \in \mathfrak{L}$, $n \in \n$, with $A_n \subseteq A_{n + 1}$, and denote $A \define \bigcup_{n \in \n} A_n$. 
	Then $\{\mathbbm{1}_{A_{n}}\}_{n \in \n}$ and $\bigl\{ \splopt[\varphi](\mathbbm{1}_{A_{n}}) \bigr\}_{n \in \n}$ are non-decreasing sequences of real-valued Borel functions on $\splitsphere$ that converge pointwise to $\mathbbm{1}_{A}$ and $\splopt[\varphi](\mathbbm{1}_{A})$, respectively, as $n \to +\infty$. 
	Then by the the Lebesgue Monotone Convergence Theorem, we get $A \in \mathfrak{L}$. Hence $\mathfrak{L}$ is a $\lambda$-system. 

	Recall that Dynkin's $\pi$-$\lambda$ theorem (see for example, \cite[Theorem~3.2]{billingsley2008probability}) states that if $\mathfrak{P}$ is a $\pi$-system and $\mathfrak{L}$ is a $\lambda$-system that contains $\mathfrak{P}$, then the $\sigma$-algebra $\sigma(\mathfrak{P})$ generated by $\mathfrak{P}$ is a subset of $\mathfrak{L}$. Thus by Dynkin's $\pi$-$\lambda$ theorem, the Borel $\sigma$-algebra $\sigma(\mathfrak{G})$ is a subset of $\mathfrak{L}$, i.e., equality~\eqref{eq:dual split operator:borel set} holds for each Borel set $A \subseteq \splitsphere$. This finishes the proof of statement~\ref{item:prop:dual split operator:act on bound function}.

	\smallskip

	\ref{item:prop:dual split operator:sphere version level n}  
	We fix an arbitrary Borel set $A \subseteq \domain{n}$ on which $F^{n}$ is injective. Denote $A_{\black} \define A \cap X^0_{\black}$ and $A_{\white} \define A \cap X^0_{\white}$.

	For each $X^{n} \in \Domain{n}$, it follows from Proposition~\ref{prop:subsystem:properties}~\ref{item:subsystem:properties:homeo} that $F^{n}|_{X^{n}}$ is a homeomorphism from $X^{n}$ onto $F^{n}(X^{n})$, which maps Borel sets to Borel sets. Thus $F^{n}(A)$ is a Borel set since\[
		F^{n}(A) = F^{n} \Bigl( \bigcup_{X^n \in \Domain{n}} A \cap X^{n} \Bigr) = \bigcup_{X^{n} \in \Domain{n}} F^{n}\left(A \cap X^{n}\right) = \bigcup_{X^{n} \in \Domain{n}} F^{n}|_{X^{n}}(A).
	\]

	We now prove \eqref{eq:dual split Ruelle operator acts on measure on set A}. 
	By \eqref{eq:split measure of set}, statement~\ref{item:prop:dual split operator:act on bound function}, and \eqref{eq:Riesz representation}, we get 
	\[
		\bigl( \dualsplopt[\varphi] \bigr)^{n}\splmea(A) 
		= \functional{\splmea}{\splopt[\varphi]^{n}(\mathbbm{1}_{A_{\black}}, \mathbbm{1}_{A_{\white}})} \\
		= \sum_{\colour \in \colours} \functional[\big]{\mu_{\colour}} { \pi_{\colour} \bigl(\splopt[\varphi]^{n}(\mathbbm{1}_{A_{\colour}}, \mathbbm{1}_{A_{\white}}) \bigr) }.
	\]
	Then it suffices to show that for each $\colour \in \colours$ and each $x \in X^0_{\colour}$,
	\[
		\pi_{\colour} \bigl(\splopt[\varphi]^{n}(\mathbbm{1}_{A_{\black}}, \mathbbm{1}_{A_{\white}}) \bigr)(x)
		= \mathbbm{1}_{F^{n}(A)}(x) \cdot \bigl( \ccndegF{\colour}{}{n}{\cdot \, } \myexp[\big]{ S_{n}^{F}\varphi } \bigr) \circ (F^{n}|_{A})^{-1}(x). 
	\]
	Indeed, by \eqref{eq:iteration of split-partial ruelle operator}, \eqref{eq:projection on product function spaces}, and \eqref{eq:definition of partial Ruelle operators},
	\begin{align*}
		\pi_{\colour} \bigl(\splopt[\varphi]^{n} (\mathbbm{1}_{A_{\black}}, \mathbbm{1}_{A_{\white}}) \bigr)(x) 
		&= \paroperator[\varphi]{n}{\colour}{\black}(\mathbbm{1}_{A_{\black}})(x) + \paroperator[\varphi]{n}{\colour}{\white}(\mathbbm{1}_{A_{\white}})(x)     \nonumber\\ 
		&= \sum_{\ccolour \in \colours} \sum_{ X^n \in \ccFTile{n}{\colour}{\ccolour} }   \bigl(\mathbbm{1}_{A_{\ccolour}} \!\cdot \myexp[\big]{ S_{n}^{F}\varphi } \bigr) \circ (F^{n}|_{X^n})^{-1}(x) \nonumber\\
		&= \sum_{\ccolour \in \colours} \sum_{ X^n \in \ccFTile{n}{\colour}{\ccolour} }   \bigl(\mathbbm{1}_{A} \!\cdot \myexp[\big]{ S_{n}^{F}\varphi } \bigr) \circ (F^{n}|_{X^n})^{-1}(x)  \nonumber\\
		&= \sum_{ X^n \in \cFTile{n} }   \bigl(\mathbbm{1}_{A} \!\cdot \myexp[\big]{ S_{n}^{F}\varphi } \bigr) \circ (F^{n}|_{X^n})^{-1}(x) \nonumber\\
		&= \sum_{ y \in F^{-n}(x) } \ccndegF{\colour}{}{n}{y}  \mathbbm{1}_{A}(y) \myexp[\big]{ S_{n}^{F}\varphi(y) } \label{eq:projection split operator split function} \\
		&= \mathbbm{1}_{F^{n}(A)}(x) \bigl(\ccndegF{\colour}{}{n}{\cdot \, } \myexp[\big]{ S_{n}^{F}\varphi }\bigr) \circ (F^{n}|_{A})^{-1}(x), \nonumber
	\end{align*}
	where the third equality follows from the fact that for each $\ccolour \in \colours$ and each $X^n \in \ccFTile{n}{\colour}{\ccolour}$, the point $z = (F^{n}|_{X^{n}})^{-1}(x) \in A$ if and only if $z \in A_{\ccolour}$ since $z \in X^{n} \subseteq X^0_{\ccolour}$, and the last equality holds since $F$ is injective on $A$. 
	Thus, we finish the proof of statement~\ref{item:prop:dual split operator:sphere version level n}.

	\smallskip

	\ref{item:prop:dual split operator:split version}  
	We arbitrarily fix a color $\colour \in \colours$ and a Borel set $A_{\colour} \subseteq \domF \cap X^0_{\colour}$ on which $F$ is injective. Then it follows immediately from statement~\ref{item:prop:dual split operator:sphere version level n} that $F(A_{\colour})$ is a Borel set. 

	We now prove \eqref{eq:colour component of dual split Ruelle operator of measure}. Without loss of generality, we can assume that $\colour = \black$. Then, by \eqref{eq:projection on product measure spaces}, statement~\ref{item:prop:dual split operator:act on bound function}, and \eqref{eq:Riesz representation}, we get
	\begin{align*}
	\pi_{\black} \bigl(\dualsplopt[\varphi]\splmea \bigr)(A_{\black})
		&= \dualsplopt[\varphi]\splmea(A_{\black}, \emptyset) 
		 = \functional{\splmea}{\splopt[\varphi](\mathbbm{1}_{A_{\black}}, 0)} \\
		&= \functional[\big]{\mu_{\black}} { \pi_{\black} \bigl(\splopt[\varphi](\mathbbm{1}_{A_{\black}}, 0) \bigr) } + \functional[\big]{\mu_{\white}}{ \pi_{\white} \bigl(\splopt[\varphi](\mathbbm{1}_{A_{\black}}, 0) \bigr) }.
	\end{align*}
	It suffices to show that for each $\ccolour \in \colours$ and each $x \in X^0_{\ccolour}$,
	\[
		\pi_{\ccolour} \bigl(\splopt[\varphi](\mathbbm{1}_{A_{\black}}, 0) \bigr)(x)
		= \mathbbm{1}_{F(A_{\black})}(x) \cdot ( \ccdegF{\ccolour}{\black}{\cdot \, } \myexp{\varphi} ) \circ (F|_{A_{\black}})^{-1}(x). 
	\]
	Indeed, by \eqref{eq:def:split ruelle operator}, \eqref{eq:projection on product function spaces}, and \eqref{eq:definition of partial Ruelle operators},
	\begin{align*}
		\pi_{\ccolour} \bigl(\splopt[\varphi](\mathbbm{1}_{A_{\black}}, 0) \bigr)(x)
		&= \paroperator[\varphi]{1}{\ccolour}{\black}(\mathbbm{1}_{A_{\black}})(x) \\
		&= \sum_{ X^1 \in \ccFTile{1}{\ccolour}{\black} }  \bigl(\mathbbm{1}_{A_{\black}} \!\cdot \myexp{\varphi} \bigr) \circ (F|_{X^1})^{-1}(x) \\
		&= \sum_{ y \in F^{-1}(x) } \ccdegF{\ccolour}{\black}{y}  \mathbbm{1}_{A_{\black}}(y) \myexp{\varphi(y)} \\
		&= \mathbbm{1}_{F(A_{\black})}(x) (\ccdegF{\ccolour}{\black}{\cdot \, }\myexp{\varphi}) \circ (F|_{A_{\black}})^{-1}(x),
	\end{align*}
	where the last equality holds since $F$ is injective on $A_{\black}$. Thus we finish the proof of statement~\ref{item:prop:dual split operator:split version}.

	\smallskip

	\ref{item:prop:dual split operator:measure concentrate}  
	For convenience we set $\spllimitset^{k} \define \bigcup\splDomain{n} \subseteq \splitsphere$ and $\limitset^{k}_{\colour} \define i_{\colour}^{-1}\bigl( \bigcup \splDomain{k} \bigr) \subseteq X^0_{\colour}$ for each $k \in \n_0$ and each $\colour \in \colours$. 
	Note that $\spllimitset^{k} = \limitset^{k}_{\black} \sqcup \limitset^{k}_{\white}$ and $\limitset^{k}_{\colour} = \bigcup \bigl\{ X^{k} \in \Domain{k} \describe X^{k} \subseteq X^{0}_{\colour} \bigr\}$ for each $k \in \n_{0}$ and each $\colour \in \colours$. 
	In particular, we have $\spllimitset^{0} = \splitsphere$ since $F(\domF) = S^2$.
	By statement~\ref{item:prop:dual split operator:act on bound function} and \eqref{eq:Riesz representation}, we have
	\begin{align*}
		\bigl( \dualsplopt[\varphi] \bigr)^{n} \splmea\bigl( \spllimitset^{n-1} \bigr) 
		&= \bigl( \dualsplopt[\varphi] \bigr)^{n} \splmea\bigl( \limitset^{n-1}_{\black}, \limitset^{n-1}_{\white} \bigr) 
		= \bigl\langle \splmea, \splopt[\varphi]^n \bigl(\mathbbm{1}_{\limitset^{n-1}_{\black}}, \mathbbm{1}_{\limitset^{n-1}_{\white}} \bigr) \bigr\rangle \\
		&=  \bigl\langle \mu_{\black},  \pi_{\black} \bigl(\splopt[\varphi]^n \bigl(\mathbbm{1}_{\limitset^{n-1}_{\black}}, \mathbbm{1}_{\limitset^{n-1}_{\white}} \bigr) \bigr)  \bigr\rangle
		+  \bigl\langle \mu_{\white},  \pi_{\white} \bigl( \splopt[\varphi]^n \bigl(\mathbbm{1}_{\limitset^{n-1}_{\black}}, \mathbbm{1}_{\limitset^{n-1}_{\white}}\bigr) \bigr)  \bigr\rangle, \\ 
		\bigl( \dualsplopt[\varphi] \bigr)^{n} \splmea\bigl( \spllimitset^{n} \bigr) 
		&= \bigl( \dualsplopt[\varphi] \bigr)^{n} \splmea\bigl( \limitset^{n}_{\black}, \limitset^{n}_{\white} \bigr) 
		= \bigl\langle \splmea, \splopt[\varphi]^n \bigl(\mathbbm{1}_{\limitset^{n}_{\black}}, \mathbbm{1}_{\limitset^{n}_{\white}} \bigr) \bigr\rangle \\
		&=  \bigl\langle \mu_{\black},  \pi_{\black} \bigl(\splopt[\varphi]^n \bigl(\mathbbm{1}_{\limitset^{n}_{\black}}, \mathbbm{1}_{\limitset^{n}_{\white}} \bigr) \bigr)  \bigr\rangle
		+  \bigl\langle \mu_{\white},  \pi_{\white} \bigl( \splopt[\varphi]^n \bigl(\mathbbm{1}_{\limitset^{n}_{\black}}, \mathbbm{1}_{\limitset^{n}_{\white}}\bigr) \bigr)  \bigr\rangle.
	\end{align*}
	It suffices to show that for each $\colour \in \colours$,
	\[
		\pi_{\colour} \bigl( \splopt[\varphi]^n \bigl(\mathbbm{1}_{\limitset^{n-1}_{\black}}, \mathbbm{1}_{\limitset^{n-1}_{\white}} \bigr) \bigr) 
		= \pi_{\colour} \bigl( \splopt[\varphi]^n \bigl( \mathbbm{1}_{\limitset^{n}_{\black}}, \mathbbm{1}_{\limitset^{n}_{\white}} \bigr) \bigr).
	\]
	Indeed, by \eqref{eq:projection on product function spaces} in Definition~\ref{def:split ruelle operator}, \eqref{eq:iteration of split-partial ruelle operator} in Lemma~\ref{lem:iteration of split-partial ruelle operator}, and \eqref{eq:definition of partial Ruelle operators} in Definition~\ref{def:partial Ruelle operator},
	\begin{align*}
		\pi_{\colour} \bigl( \splopt[\varphi]^n \bigl(\mathbbm{1}_{\limitset^{n-1}_{\black}}, \mathbbm{1}_{\limitset^{n-1}_{\white}} \bigr) \bigr) 
		&= \paroperator[\varphi]{n}{\colour}{\black} \bigl(\mathbbm{1}_{\limitset^{n-1}_{\black}} \bigr) + \paroperator[\varphi]{n}{\colour}{\white} \bigl(\mathbbm{1}_{\limitset^{n-1}_{\white}}\bigr) \\
		&= \sum_{\ccolour \in \colours} \sum_{ X^n \in \ccFTile{n}{\colour}{\ccolour} }   \bigl( \mathbbm{1}_{ \limitset^{n-1}_{\ccolour} } \!\cdot \myexp[\big]{ S_{n}^{F}\varphi } \bigr) \circ (F|_{X^n})^{-1}   \\
		&= \sum_{\ccolour \in \colours} \sum_{ X^n \in \ccFTile{n}{\colour}{\ccolour} }   \bigl(\mathbbm{1}_{ \limitset^{n}_{\ccolour} } \!\cdot \myexp[\big]{ S_{n}^{F}\varphi } \bigr) \circ (F|_{X^n})^{-1}   \\
		&= \pi_{\colour} \bigl( \splopt[\varphi]^n \bigl( \mathbbm{1}_{\limitset^{n}_{\black}}, \mathbbm{1}_{\limitset^{n}_{\white}} \bigr) \bigr),
	\end{align*}
	where the third equality holds since $X^n \subseteq \limitset^{n}_{\ccolour} \subseteq \limitset^{n-1}_{\ccolour}$ for each $\ccolour \in \colours$ and each $X^n \in \ccFTile{n}{\colour}{\ccolour}$ by Proposition~\ref{prop:subsystem:properties invariant Jordan curve}~\ref{item:subsystem:properties invariant Jordan curve:decreasing relation of domains} and the fact $\spllimitset^{0} = \splitsphere$. 
\end{proof}

\subsection{Eigenmeasures}%
\label{sub:Eigenmeasures}

By applying the Schauder--Tikhonov Fixed Point Theorem, we establish in Theorem~\ref{thm:subsystem:eigenmeasure existence and basic properties} the existence of an eigenmeasure of the adjoint $\dualsplopt$ of the split Ruelle operator $\splopt$.
We also show in Theorem~\ref{thm:subsystem:eigenmeasure existence and basic properties}~\ref{item:thm:subsystem:eigenmeasure existence and basic properties:strongly irreducible vertex zero measure} that if $F$ is strongly irreducible (see Definition~\ref{def:irreducibility of subsystem}), then the set of vertices is of measure zero with respect to such an eigenmeasure.
\smallskip

We follow the conventions discussed in Remarks~\ref{rem:disjoint union} and \ref{rem:probability measure in split setting} in this subsection.
    
\begin{theorem}    \label{thm:subsystem:eigenmeasure existence and basic properties}
	Let $f$, $\mathcal{C}$, $F$, $d$, $\potential$ satisfy the Assumptions in Section~\ref{sec:The Assumptions}. 
	We assume in addition that $f(\mathcal{C}) \subseteq \mathcal{C}$ and $F(\domF) = S^2$. 
	Then there exists a Borel probability measure $\eigmea = (m_{\black}, m_{\white}) \in \mathcal{P}(\splitsphere)$ such that
	\begin{equation}    \label{eq:eigenmeasure for subsystem}
		\dualsplopt\spleigmea = \eigenvalue \spleigmea,
	\end{equation}
	where $\eigenvalue = \big\langle \dualsplopt\spleigmea, \indicator{\splitsphere} \big\rangle$. 
	Moreover, any $\eigmea = \spleigmea \in \mathcal{P}(\splitsphere)$ that satisfies \eqref{eq:eigenmeasure for subsystem} for some $\eigenvalue > 0$ has the following properties:
	\begin{enumerate}[label=\rm{(\roman*)}]
		\smallskip

		\item     \label{item:thm:subsystem:eigenmeasure existence and basic properties:eigenmeasure support on limitset}
		$\eigmea(\limitset(F, \mathcal{C})) = 1$.

		\smallskip

		\item 	  \label{item:thm:subsystem:eigenmeasure existence and basic properties:strongly irreducible vertex zero measure}
		If $F$ is strongly irreducible, then $\eigmea \bigl( \bigcup_{j = 0}^{+\infty}f^{-j}(\post{f}) \bigr) = 0$.
	\end{enumerate}
\end{theorem}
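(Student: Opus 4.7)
The plan is to define the renormalization map
$$T\colon \mathcal{P}(\splitsphere)\mapping \mathcal{P}(\splitsphere), \qquad T(\nu)\define \frac{\dualsplopt\nu}{\bigl\langle\dualsplopt\nu,\indicator{\splitsphere}\bigr\rangle},$$
and apply the Schauder--Tikhonov Fixed Point Theorem. The set $\mathcal{P}(\splitsphere)$ is non-empty, convex, and weak$^{*}$-compact. By Proposition~\ref{prop:dual split operator}~\ref{item:prop:dual split operator:act on bound function} the denominator equals $\bigl\langle\nu,\splopt\indicator{\splitsphere}\bigr\rangle$, and the hypothesis $F(\domF)=S^{2}$ guarantees $F^{-1}(y)\neq\emptyset$ for every $y\in\splitsphere$, so Definition~\ref{def:partial Ruelle operator} gives $\splopt\indicator{\splitsphere}(y)\geqslant e^{-\uniformnorm{\phi}}>0$ uniformly. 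Hence the denominator is bounded below by a positive constant, and continuity of $T$ in the weak$^{*}$ topology follows from continuity of the pre-adjoint $\splopt$ on $C(\splitsphere)$. The fixed point $\spleigmea$ produced by Schauder--Tikhonov satisfies $\dualsplopt\spleigmea=\eigenvalue\spleigmea$ with $\eigenvalue=\bigl\langle\dualsplopt\spleigmea,\indicator{\splitsphere}\bigr\rangle>0$.

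\textbf{Proof of (i).} Iterating the eigenvalue equation gives $(\dualsplopt)^{n}\spleigmea=\eigenvalue^{n}\spleigmea$, so by Proposition~\ref{prop:dual split operator}~\ref{item:prop:dual split operator:measure concentrate},
$$\eigenvalue^{n}\eigmea\bigl(\domain{n-1}\bigr) = (\dualsplopt)^{n}\spleigmea\bigl(\domain{n-1}\bigr) = (\dualsplopt)^{n}\spleigmea\bigl(\domain{n}\bigr) = \eigenvalue^{n}\eigmea\bigl(\domain{n}\bigr)$$
for every $n\in\n$. Since $\domain{0}=S^{2}$ and $\eigmea(S^{2})=1$, induction yields $\eigmea(\domain{n})=1$ for all $n\in\n_{0}$. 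The sets $\{\domain{n}\}_{n\in\n_{0}}$ are nested decreasing by Proposition~\ref{prop:subsystem:properties invariant Jordan curve}~\ref{item:subsystem:properties invariant Jordan curve:decreasing relation of domains} with $\limitset=\bigcap_{n}\domain{n}$, so continuity of measures from above gives $\eigmea(\limitset)=1$.

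\textbf{Proof of (ii) --- the main obstacle.} The set $E\define\bigcup_{j\geqslant 0}f^{-j}(\post f)=\bigcup_{n\geqslant 0}\mathbf{V}^{n}$ is countable, so it suffices to prove $\eigmea(\{v\})=0$ for every $v\in E$. The strategy proceeds in three steps. First, applying Proposition~\ref{prop:dual split operator}~\ref{item:prop:dual split operator:sphere version level n} to the singleton $A=\{y\}$ (on which $F^{n}$ is trivially injective) with $y\in F^{-n}(v)\cap\domain{n}$ and combining with $(\dualsplopt)^{n}\spleigmea=\eigenvalue^{n}\spleigmea$ gives
$$\eigenvalue^{n}\,\eigmea(\{y\}) = e^{S^{F}_{n}\phi(y)}\sum_{\colour\colon v\in X^{0}_{\colour}} \ccndegF{\colour}{}{n}{y}\,m_{\colour}(\{v\}).$$
Second, I would use strong irreducibility, via Lemma~\ref{lem:strongly irreducible:tile in interior tile}, to construct, for arbitrarily large $n$, a sufficiently large collection of \emph{distinct} tiles $X^{n+k}\in\Domain{n+k}$ with $k\leqslant n_{F}$ that lie in $\inte{X^{0}_{\colour}}$ and whose images under $F^{n+k}$ contain $v$. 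Each such tile supplies a distinct preimage $y\in F^{-(n+k)}(v)$ carrying the mass dictated by Step~1, and the distortion Lemma~\ref{lem:distortion lemma for subsystem} makes the accompanying Birkhoff sums $S^{F}_{n+k}\phi(y)$ comparable on each tile. Third, summing these mass estimates over all such $y$ and using $\sum_{y}\eigmea(\{y\})\leqslant\eigmea(S^{2})=1$, I would force $\sum_{\colour} m_{\colour}(\{v\})=\eigmea(\{v\})=0$. The hardest step is the third: one must verify that the exponential count of preimages produced by iterating the strong-irreducibility construction, weighted by the potentials, genuinely dominates $\eigenvalue^{n}$, so that the inequality forcing $\eigmea(\{v\})=0$ is strict. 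This amounts to a careful pressure-comparison argument using the Gibbs-type two-sided bounds for $\eigmea$ on tiles that follow from Step~1 together with Lemma~\ref{lem:distortion lemma for subsystem}~\ref{item:lem:distortion lemma for subsystem:uniform bound}.
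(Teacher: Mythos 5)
Your treatment of the existence claim and property (i) is correct and matches the paper's proof essentially verbatim. For property (ii), your overall plan (reduce to singletons, exploit the eigenvalue relation to propagate mass through preimages, and use strong irreducibility to construct many disjoint tiles carrying comparable mass) is the right shape, but you correctly identify that the third step is where you are stuck, and there are two genuine gaps there.

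First, you are missing the reduction to \emph{periodic} postcritical points. Since $\post{f}$ is finite, the forward orbit of any $y\in\bigcup_{j}f^{-j}(\post{f})$ that meets $\limitset$ eventually lands on a cycle in $\limitset\cap\post{f}$. Equation \eqref{eq:point eigenmeasure relation in matrix form under iteration} then shows that $\eigmea(\{y\})=0$ once $\eigmea(\{x\})=0$ for the periodic point $x$ in that cycle, so the problem reduces to finitely many periodic $x$. This reduction is not cosmetic: it is what makes the third step tractable, because at a fixed periodic point the eigenvalue equation becomes a self-consistent linear relation rather than a chain of relations through different points.

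Second, and more importantly, you describe the dominance of the weighted preimage count over $\eigenvalue^{n}$ as ``a careful pressure-comparison argument using the Gibbs-type two-sided bounds.'' This is both circular and not how the bound actually arises. The Gibbs property of $\eigmea$ (Proposition~\ref{prop:subsystem properties of eigenmeasure}~\ref{item:prop:subsystem properties of eigenmeasure:Gibbs property}) is proved \emph{after} and \emph{using} this theorem (it feeds through Proposition~\ref{prop:subsystem edge Jordan curve has measure zero}), so you cannot invoke it here. What actually closes the argument is a purely linear-algebraic observation: at a periodic point $x$ with $F^{\ell}(x)=x$, \eqref{eq:point eigenmeasure relation for periodic postcritical point} says precisely that the nonzero nonnegative vector $\bigl(m_{\black}(\{x\}),m_{\white}(\{x\})\bigr)^{\mathsf{T}}$ is an eigenvector of the local degree matrix $\Deg[\big]{\ell}{x}$ with eigenvalue $\lambda=\eigenvalue^{\ell}/e^{S_{\ell}\phi(x)}$. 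Then Lemma~\ref{lem:sumnorm of iteration of matrix} gives $\sumnorm[\big]{\Deg[\big]{k\ell}{x}}\geqslant\lambda^{k}$ for all $k$, i.e.\ the number of $(k\ell)$-tiles of $F$ meeting $\{x\}$ grows at least like $\lambda^{k}$. Each such tile contributes, after the strong-irreducibility construction and the distortion estimates, a mass of order at least $C\cdot\lambda^{-k}\cdot m_{\black}(\{x\})$; summing the (disjoint) contributions over $k$ then produces a divergent series, contradicting finiteness of $\eigmea$. Without the eigenvector identity and Lemma~\ref{lem:sumnorm of iteration of matrix}, your ``exponential count dominates $\eigenvalue^{n}$'' remains unjustified.
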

Note that we use the notation $\spleigmea$ (\resp $\eigmea$) to emphasize that we treat the eigenmeasure as a Borel probability measure on $\splitsphere$ (\resp $S^2$).

The proof of Theorem~\ref{thm:subsystem:eigenmeasure existence and basic properties} will be given at the end of this subsection.

Under the same assumption as in Theorem~\ref{thm:subsystem:eigenmeasure existence and basic properties}~\ref{item:thm:subsystem:eigenmeasure existence and basic properties:strongly irreducible vertex zero measure}, we will show $\eigmea\bigl( \bigcup_{j = 0}^{+\infty} f^{-j}(\mathcal{C}) \bigr) = 0$ in Proposition~\ref{prop:subsystem edge Jordan curve has measure zero} by using property~\ref{item:thm:subsystem:eigenmeasure existence and basic properties:strongly irreducible vertex zero measure}.

\smallskip

By some elementary calculations, we have the following results for a $2 \times 2$ matrix.
\begin{lemma}    \label{lem:sumnorm of iteration of matrix}
	Let $A$ be a $2 \times 2$ matrix. 
	We denote by $\sumnorm{A}$ the sum of all the absolute values of entries in $A$. 
	If $A \mathbf{x} = \lambda \mathbf{x}$ for some $\lambda \in \real$ and $0 \ne \mathbf{x} \in \real^{2}$, then $\sumnorm{A^n} \geqslant \abs{\lambda} ^n$ for each $n \in \n$.
\end{lemma}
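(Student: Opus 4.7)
The plan is to reduce everything to a direct calculation using non-negativity and the iterated eigenvalue equation. First I would observe that by induction on $n$, the eigenvalue equation $A\mathbf{x} = \lambda \mathbf{x}$ propagates to $A^{n}\mathbf{x} = \lambda^{n}\mathbf{x}$ for every $n \in \n$. Since $A$ has non-negative entries, every power $A^{n}$ also has non-negative entries, so $\sumnorm{A^{n}}$ is literally the sum of the entries of $A^{n}$ (no absolute values needed).

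Next I would contract the vector equation $A^{n}\mathbf{x} = \lambda^{n}\mathbf{x}$ against the vector $\mathbf{1} \define (1,1)^{T}$ on the left. Writing $A^{n} = \bigl( a^{(n)}_{ij} \bigr)$ and $\mathbf{x} = (x_1, x_2)^{T}$, this gives
\[
    c_{1} x_{1} + c_{2} x_{2} = \lambda^{n}(x_{1} + x_{2}),
\]
where $c_{j} \define a^{(n)}_{1j} + a^{(n)}_{2j}$ is the $j$-th column sum of $A^{n}$. Setting $M \define \max\{c_{1}, c_{2}\}$, the non-negativity of $x_{1}, x_{2}$ together with $x_{1} + x_{2} > 0$ (which holds since $\mathbf{x} \ne 0$ and $x_{1}, x_{2} \geqslant 0$) yields
\[
    M(x_{1} + x_{2}) \geqslant c_{1} x_{1} + c_{2} x_{2} = \lambda^{n}(x_{1} + x_{2}),
\]
so $M \geqslant \lambda^{n}$. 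Finally, since each $c_{j}$ is non-negative, $\sumnorm{A^{n}} = c_{1} + c_{2} \geqslant M \geqslant \lambda^{n}$, which is exactly the desired bound.

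There is no real obstacle here; the only subtlety is making sure $x_{1} + x_{2} > 0$ so that one may divide (or compare) by it, and this is immediate from $\mathbf{x} \ne 0$ plus the non-negativity of its entries. The argument uses only linear algebra over $\real$ and does not require any spectral theory (e.g.\ Perron--Frobenius), which is appropriate given that the lemma is stated for an arbitrary non-negative $2 \times 2$ matrix with a non-negative eigenvector, without any irreducibility or primitivity hypothesis.
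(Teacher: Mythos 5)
Your proof is correct and complete. The paper's own proof is only a two-sentence sketch: it asserts that the upper-triangular case is immediate and that the general case follows by computing the powers of $A$ via the real Schur decomposition and summing entries. Your argument takes a genuinely different and more elementary route: rather than passing to a normal form, it uses the hypothesized non-negative eigenvector directly, tests the iterated equation $A^{n}\mathbf{x} = \lambda^{n}\mathbf{x}$ against $(1,1)^{T}$ to produce the identity $c_{1}x_{1} + c_{2}x_{2} = \lambda^{n}(x_{1}+x_{2})$ in terms of column sums $c_{j}$ of $A^{n}$, and then uses $x_{1}+x_{2} > 0$ and non-negativity to conclude $\sumnorm{A^{n}} = c_{1}+c_{2} \geqslant \max\{c_{1},c_{2}\} \geqslant \lambda^{n}$. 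This buys two things the paper's sketch does not: it makes visible exactly where the non-negative eigenvector hypothesis is used (the Schur route obscures this, since an orthogonal change of basis does not preserve the sign structure, so it is not obvious how "summarizing the entries" closes the argument), and it generalizes verbatim to $m \times m$ non-negative matrices with a non-negative eigenvector, with no extra work.
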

\begin{proof}
	It suffices to consider the case when $n = 1$.
	Without loss of generality, we can assume that for $\mathbf{x} = (x, y)$ we have $\abs{x} \geqslant \abs{y}$ and $x \ne 0$.
	Then the inequality follows immediately by looking at the first component of $A \mathbf{x}$: $\abs{ \lambda x } = \abs{ a_{11}x + a_{12}y } \leqslant \abs{x} \sumnorm{A}$.
\end{proof}

\begin{proof}[Proof of Theorem~\ref{thm:subsystem:eigenmeasure existence and basic properties}]
	We first show the existence.
	Define $\tau \colon \probmea{\splitsphere} \mapping	\probmea{\splitsphere}$ by \[
		\tau\splmea \define \frac{\dualsplopt\splmea}{ \functional{\dualsplopt\splmea}{\indicator{\splitsphere}} }.
	\]
	Then $\tau$ is a continuous transformation on the non-empty, convex, compact (in the weak$^*$ topology, by Alaoglu's theorem) space $\probmea{\splitsphere}$ of Borel probability measures on $\splitsphere$. 
	By the Schauder--Tikhonov Fixed Point Theorem (see for example, \cite[Theorem~V.10.5]{dunford1988LinearOperators}), there exists a measure $\eigmea = \spleigmea \in \probmea{\splitsphere}$ such that $\tau\spleigmea = \spleigmea$. 
	Thus $\dualsplopt\spleigmea = \eigenvalue \spleigmea$ with $\eigenvalue \define \bigl\langle \dualsplopt\spleigmea, \indicator{\splitsphere} \bigr\rangle$. Note that $\eigenvalue > 0 $ since $F(\domF) = S^2$.

	We now show that any $\eigmea = \spleigmea \in \mathcal{P}(\splitsphere)$ that satisfies \eqref{eq:eigenmeasure for subsystem} for some $\eigenvalue > 0$ has properties~\ref{item:thm:subsystem:eigenmeasure existence and basic properties:eigenmeasure support on limitset} and~\ref{item:thm:subsystem:eigenmeasure existence and basic properties:strongly irreducible vertex zero measure}. 

	We first verify property~\ref{item:thm:subsystem:eigenmeasure existence and basic properties:eigenmeasure support on limitset}.
	For each $n \in \n_0$, we set
	\[
		\splDomain{n} \define  \bigcup_{\colour \in \colours} \bigl\{ i_{\colour}(X^n) \describe X^n \in \Domain{n}, \, X^n \subseteq X^0_{\colour} \bigr\},
	\]
	where $i_{\colour}$ is defined by \eqref{eq:natural injection into splitsphere}. 
	Noting that $F(\domF) = S^2$, we have $\bigcup\splDomain{0} = \splitsphere$.
	Since $\dualsplopt\spleigmea = \eigenvalue \spleigmea$, it follows from Proposition~\ref{prop:dual split operator}~\ref{item:prop:dual split operator:measure concentrate} and induction on $n$ that for each $n \in \n$,
	\[
		\spleigmea\parentheses[\Big]{ \bigcup \splDomain{n} } = \spleigmea \parentheses[\Big]{ \bigcup \splDomain{0} } = \spleigmea \parentheses{ \splitsphere } = 1.
	\]
	Then by Remark~\ref{rem:probability measure in split setting} and \eqref{eq:split measure of set}, for each $n \in \n$, 
	\[
		1 \geqslant \eigmea \parentheses[\Big]{ \domain{n} } = \spleigmea \parentheses[\Big]{ \domain{n} } 
		\geqslant \spleigmea\parentheses[\Big]{ \bigcup \splDomain{n} } = 1.
	\]
	Thus, by Proposition~\ref{prop:subsystem:properties invariant Jordan curve}~\ref{item:subsystem:properties invariant Jordan curve:decreasing relation of domains} and \eqref{eq:def:limitset}, we have 
	\[
		\eigmea(\limitset) = \lim\limits_{n \to +\infty}\eigmea\Bigl(\domain{n}\Bigr) = 1,
	\] 
	which proves property~\ref{item:thm:subsystem:eigenmeasure existence and basic properties:eigenmeasure support on limitset}.

	Next, we verify property~\ref{item:thm:subsystem:eigenmeasure existence and basic properties:strongly irreducible vertex zero measure}. 
	Assume that $F$ is strongly irreducible.

	We will prove that $\eigmea \bigl( \bigcup_{j = 0}^{+\infty}f^{-j}(\post{f}) \bigr) = 0$.
	Since $\bigcup_{j = 0}^{+\infty}f^{-j}(\post{f})$ is a countable set, by property~\ref{item:thm:subsystem:eigenmeasure existence and basic properties:eigenmeasure support on limitset}, the conclusion follows if we can prove that $\eigmea(\{y\}) = 0$ for each $y \in \limitset \cap \bigcup_{j = 0}^{+\infty} f^{-j}(\post{f})$. 

	We claim that it suffices to show that $\eigmea(\{x\}) = 0$ for each periodic $x \in \limitset \cap \post{f}$.
	To see this, let $y \in \limitset \cap \bigcup_{j = 0}^{+\infty} f^{-j}(\post{f})$ be arbitrary.
	We follow the convention that if $p \notin X^0_{\colour}$ for some color $\colour \in \colours$, then $m_{\colour}(\{p\}) = 0$.
	For each $\colour \in \colours$, since $\pi_{\colour} \bigl( \dualsplopt\spleigmea \bigr) = \pi_{\colour}(\eigenvalue \spleigmea) = \eigenvalue m_{\colour}$, by Proposition~\ref{prop:dual split operator}~\ref{item:prop:dual split operator:split version}, we have
	\begin{equation*}    \label{eq:point eigenmeasure relation in split form}
		\begin{split}
			\eigenvalue m_{\colour}(\{y\}) 
			&= \sum_{\ccolour \in \colours} \int_{F(\{y\}) \cap X^0_{\ccolour}} \! ( \ccdegF{\ccolour}{\colour}{\cdot \, } \myexp{\phi} ) \circ \bigl( F|_{\{y\}} \bigr)^{-1}  \,\mathrm{d} m_{\ccolour}  \\
			&= \sum_{\ccolour \in \colours} \ccdegF{\ccolour}{\colour}{y} \myexp{\phi(y)} m_{\ccolour}(\{F(y)\}).
		\end{split}
	\end{equation*}
	By using the notion of local degree matrix (see Definition~\ref{def:subsystem local degree}), we can write the equation above as
	\begin{equation}  \label{eq:point eigenmeasure relation in matrix form}
		\begin{bmatrix}
			m_{\black}(\{y\}) \\ m_{\white}(\{y\}) 
		\end{bmatrix}
		=
		\frac{\myexp{\phi(y)}}{\eigenvalue} \Deg{}{y}
		\begin{bmatrix}
			m_{\black}(\{F(y)\}) \\ m_{\white}(\{F(y)\}) 
		\end{bmatrix}.
	\end{equation}
	Since $F^{n}(y) \in \limitset \subseteq \domF$ for each $n \in \n$ by Proposition~\ref{prop:subsystem:properties invariant Jordan curve}~\ref{item:subsystem:properties invariant Jordan curve:decreasing relation of domains}, we can iterate \eqref{eq:point eigenmeasure relation in matrix form} under $F$. 
	Then it follows from Lemma~\ref{lem:iteration of local degree matrix} and induction that
	\begin{equation}    \label{eq:point eigenmeasure relation in matrix form under iteration}
	 	\begin{bmatrix}
			m_{\black}(\{y\}) \\ m_{\white}(\{y\}) 
		\end{bmatrix}
		=
		\frac{ \myexp{S_n\phi(y)} }{\eigenvalue^n} \Deg{n}{y}
		\begin{bmatrix}
			m_{\black}(\{F^n(y)\}) \\ m_{\white}(\{F^n(y)\}) 
		\end{bmatrix}
	\end{equation} 
	for each $n \in \n$. 
	Hence, since $y \in \bigcup_{j = 0}^{+\infty} f^{-j}(\post{f})$ and $\eigmea(\{y\}) = 0$ if and only if $m_{\black}(\{y\}) = m_{\white}(\{y\}) = 0$, by \eqref{eq:point eigenmeasure relation in matrix form under iteration}, it suffices to show that $\eigmea(\{x\}) = 0$ for each periodic $x \in \limitset \cap \post{f}$.

	It remains to show that $\eigmea(\{x\}) = 0$ for each periodic $x \in \limitset \cap \post{f}$.
	We argue by contradiction and assume that there exists $x \in \limitset \cap \post{f}$ such that $F^{\ell}(x) = x$ for some $\ell \in \n$ and $\eigmea(\{x\}) \ne 0$. 
	Since $\eigmea(\{x\}) = m_{\black}(\{x\}) + m_{\white}(\{x\})$, we may assume without loss of generality that $m_{\black}(\{x\}) > 0$. 

	Since $x \in \limitset \cap \post{f}$ and $F^{\ell}(x) = x$, it follows immediately from \eqref{eq:point eigenmeasure relation in matrix form under iteration} that
	\begin{equation}    \label{eq:point eigenmeasure relation for periodic postcritical point}
		\begin{bmatrix}
			m_{\black}(\{x\}) \\ m_{\white}(\{x\}) 
		\end{bmatrix}
		=
		\frac{ \myexp{S_{\ell}\phi(x)} }{\eigenvalue^{\ell}} \Deg[\big]{\ell}{x}
		\begin{bmatrix}
			m_{\black}(\{x\}) \\ m_{\white}(\{x\}) 
		\end{bmatrix}.
	\end{equation}
	Similarly, by using Proposition~\ref{prop:dual split operator}~\ref{item:prop:dual split operator:split version} repeatedly, for each $k \in \n$ and each $y \in F^{-k\ell}(x)$, we have
	\begin{equation}    \label{eq:point eigenmeasure relation for preimages of periodic postcritical point}
		\begin{bmatrix}
			m_{\black}(\{y\}) \\ m_{\white}(\{y\}) 
		\end{bmatrix}
		=
		\frac{ \myexp{ S_{k \ell}\phi(y) } }{ \eigenvalue^{k \ell} } \Deg[\big]{k\ell}{y}
		\begin{bmatrix}
			m_{\black}(\{x\}) \\ m_{\white}(\{x\}) 
		\end{bmatrix}.
	\end{equation}

	\begin{figure}[H]
		\centering
		\begin{tikzpicture}[x = 20 pt,y = 20 pt]
			\fill[line width = 0.8 pt,color=black!25!white] (-6.672,3.864) -- (-2.552,1.984) -- (-4.372,7.084) -- cycle;
			\fill[line width = 0.8 pt,color=black!25!white] (-2.092,4.184)-- (-2.652,5.524) -- (-0.792,5.524) -- cycle;
			\draw [line width = 0.8 pt] (-2.552,1.984)-- (2.628,7.144);
			\draw [line width = 0.8 pt] (2.628,7.144)-- (-4.372,7.084) node[xshift = 100 pt, yshift = -15 pt] {$X^{k\ell}$};;
			\draw [line width = 0.8 pt] (-4.372,7.084)-- (-2.552,1.984);
			\draw [line width = 0.8 pt] (-2.552,1.984)-- (-6.672,3.864);
			\draw [line width = 0.8 pt] (-6.672,3.864)-- (-4.372,7.084);
			\draw [line width = 0.8 pt] (-6.672,3.864)-- (-6.732,1.124);
			\draw [line width = 0.8 pt] (-6.732,1.124)-- (-2.552,1.984);
			\draw [line width = 0.8 pt] (-2.552,1.984)-- (-3.652,-1.156);
			\draw [line width = 0.8 pt] (-3.652,-1.156)-- (-0.872,-1.056);
			\draw [line width = 0.8 pt] (-0.872,-1.056)-- (-2.552,1.984);
			\draw [line width = 0.8 pt] (-2.092,4.184)-- (-2.652,5.524);
			\draw [line width = 0.8 pt] (-2.652,5.524)-- (-0.792,5.524) node[xshift = -20 pt,yshift = 10 pt] {$X^{k\ell + n}$};
			\draw [line width = 0.8 pt] (-0.792,5.524)-- (-2.092,4.184);
			\draw [fill=black] (-2.552,1.984) circle (2 pt);
			\draw (-2.552,1.984) node[right] {$x$};
			\draw [fill=black] (-2.092,4.184) circle (2 pt);
			\draw (-2.092,4.184) node[below] {$y$};
			\draw (1, 2.1) node {$\bigcup \neighbortile{k\ell}{}{}{x}$};
		\end{tikzpicture}
		\caption{$\bigcup \neighbortile{k\ell}{}{}{x}$, with $\card{\post{f}} = 3$.}
		\label{fig:illustration of T_k}
	\end{figure}

	Recall from Definition~\ref{def:subsystem local degree} and Remark~\ref{rem:number of tile in neighborhood for F} that for each $k \in \n$, $\neighbortile{k \ell}{}{}{x}$ is the collection of $(k\ell)$-tiles of $F$ that intersect $\{x\}$, and contains exactly $\sumnorm[\big]{\Deg[\big]{k \ell}{x}}$ distinct $(k\ell)$-tiles of $F$. 
	Since $F$ is strongly irreducible, it follows from Lemma~\ref{lem:strongly irreducible:tile in interior tile} that for each $k \in \n$ and each $X^{k\ell} \in \neighbortile{k\ell}{}{}{x}$, there exists an integer $n \in \n$ with $n \leqslant n_{F}$ and a black $(k\ell + n)$-tile $X^{k\ell + n}_{\black} \in \bFTile{k\ell + n}$ satisfying $X^{k\ell + n}_{\black} \subseteq \inte[\big]{X^{k\ell}}$. 
	Here $n_{F} \in \n$ is the constant in Definition~\ref{def:irreducibility of subsystem}, which depends only on $F$ and $\mathcal{C}$.
	Then by Proposition~\ref{prop:subsystem:properties}~\ref{item:subsystem:properties:homeo}, there exists a unique $y \in X^{k\ell + n}_{\black} \subseteq \inte[\big]{X^{k\ell}}$ such that \[
		F^{k\ell + n}(y) = x
	\]
	(see Figure~\ref{fig:illustration of T_k}).
	For each $k \in \n$, we denote by $T_k$ the set consisting of one such $y$ from each $X^{k\ell} \in \neighbortile{k\ell}{}{}{x}$, and we have
	\begin{equation}    \label{eq:cardinality of T_k}
		\card{T_{k}} = \sumnorm[\big]{\Deg[\big]{k\ell}{x}}.
	\end{equation}
	Then $\{T_k\}_{k \in \n}$ is a sequence of subsets of $\bigcup_{j = 0}^{+\infty}f^{-j}(\post{f})$. 
	Since $f$ is expanding, we can choose an increasing sequence $\{k_i\}_{i \in \n}$ of integers recursively in such a way that $\bigcup \neighbortile{k_{i + 1}\ell}{}{}{x} \cap \bigcup^{i}_{j = 1} T_{k_{j}} = \emptyset$ for each $i \in \n$. 
	Then $\{T_{k_{i}}\}_{i \in \n}$ is a sequence of mutually disjoint sets. 
	Thus, by \eqref{eq:point eigenmeasure relation for preimages of periodic postcritical point} and Lemma~\ref{lem:distortion_lemma},
	\begin{equation}    \label{eq:temp:thm:subsystem:eigenmeasure existence and basic properties:vertices have measure zero}
		\begin{split}
			&\eigmea \biggl( \bigcup_{j = 0}^{+\infty}f^{-j}(\post{f}) \biggr) \\
			&\qquad \geqslant \sum_{i=1}^{+\infty} \sum_{y \in T_{k_{i}}} \eigmea(\{y\}) \\
			&\qquad= \sum_{i=1}^{+\infty} \sum_{y \in T_{k_{i}}}  
				\begin{bmatrix} 1& 1 \\ \end{bmatrix}  
				\begin{bmatrix} m_{\black}(\{y\}) \\ m_{\white}(\{y\}) \end{bmatrix} \\
			&\qquad= \sum_{i=1}^{+\infty} \sum_{y \in T_{k_{i}}}  
				\frac{ \myexp{ S_{k_i \ell + n} \phi(y) } }{ \eigenvalue^{k_i \ell + n} } 
				\begin{bmatrix}
				 1& 1 \\ 
				\end{bmatrix}
				\Deg[\big]{k_i \ell + n}{y}
				\begin{bmatrix}
					m_{\black}(\{x\}) \\ m_{\white}(\{x\}) 
				\end{bmatrix} \\
			&\qquad= \sum_{i=1}^{+\infty} \sum_{y \in T_{k_{i}}}  
				\frac{ e^{ S_{k_i \ell + n}\phi(y) } }{ e^{S_{k_i \ell}\phi(x)} } 
				\frac{ e^{ S_{k_i \ell }\phi(x) } }{ \eigenvalue^{k_i \ell + n} }
				\begin{bmatrix}
				 1& 1 \\ 
				\end{bmatrix}
				\Deg[\big]{k_i \ell + n}{y}
				\begin{bmatrix}
					m_{\black}(\{x\}) \\ m_{\white}(\{x\}) 
				\end{bmatrix} \\
			&\qquad= \sum_{i=1}^{+\infty} \sum_{y \in T_{k_{i}}}  
				\frac{ e^{ S_{n}\phi ( f^{k_{i}\ell}(y) )} }{ e^{ S_{k_i \ell}\phi(x) - S_{k_i \ell}\phi(y)} }
				\biggl( \frac{ e^{S_{\ell}\phi(x)} }{ \eigenvalue^{\ell} } \biggr)^{k_i} \eigenvalue^{-n}
				\begin{bmatrix}
				 1& 1 \\ 
				\end{bmatrix}
				\Deg[\big]{k_i \ell + n}{y}
				\begin{bmatrix}
					m_{\black}(\{x\}) \\ m_{\white}(\{x\}) 
				\end{bmatrix}\\
			&\qquad\geqslant \sum_{i=1}^{+\infty} \sum_{y \in T_{k_{i}}}  
				\frac{ e^{ - n_F \uniformnorm{\potential}} }{ {\scriptstyle \myexp{\Cdistortion}} }
				\biggl( \frac{ e^{ S_{\ell}\phi(x)} }{ \eigenvalue^{\ell} } \biggr)^{k_i}
				\min\{ 1, \eigenvalue^{-n_{F}} \}
				\begin{bmatrix}
				 1& 1 \\ 
				\end{bmatrix}
				\Deg[\big]{k_i \ell + n}{y}
				\begin{bmatrix}
					m_{\black}(\{x\}) \\ m_{\white}(\{x\}) 
				\end{bmatrix},
		\end{split}
	\end{equation}
	where $C_{1} \geqslant 0$ is the constant defined in \eqref{eq:const:C_1} in Lemma~\ref{lem:distortion_lemma} and depends only on $f$, $\mathcal{C}$, $d$, $\phi$, and $\holderexp$.
	
	To reach a contradiction, it suffices to show that 
	$\eigmea \bigl( \bigcup_{j = 0}^{+\infty}f^{-j}(\post{f}) \bigr) = +\infty$ 
	since $\eigmea$ is a Borel probability measure. 
	For each $i \in \n$ and each $y \in T_{k_{i}}$, we have
	\begin{equation}    \label{eq:temp:thm:subsystem:eigenmeasure existence and basic properties:estimate local degree matrix}
		\begin{split}
			\begin{bmatrix}
				 1& 1 \\ 
				\end{bmatrix}
				\Deg[\big]{k_i \ell + n}{y}
				\begin{bmatrix}
					m_{\black}(\{x\}) \\ m_{\white}(\{x\}) 
				\end{bmatrix}
			&\geqslant \begin{bmatrix}
				 1& 1 \\ 
				\end{bmatrix}
				\Deg[\big]{k_i \ell + n}{y}
				\begin{bmatrix}
					1 \\ 0
				\end{bmatrix} m_{\black}(\{x\})   \\
			&= \ccndegF{\black}{}{k_i \ell + n}{y}  m_{\black}(\{x\}) 
			\geqslant  m_{\black}(\{x\}),
		\end{split}
	\end{equation}
	where the equality follows from \eqref{eq:color degree is the sum of colour-position degree} in Remark~\ref{rem:number of tile in neighborhood for F}, and the last inequality holds since $y \in X^{k_i \ell + n}_{\black}$ for some $X^{k_i \ell + n}_{\black} \in \bFTile{k_i \ell + n}$. 
	Thus, it follows from \eqref{eq:temp:thm:subsystem:eigenmeasure existence and basic properties:vertices have measure zero}, \eqref{eq:temp:thm:subsystem:eigenmeasure existence and basic properties:estimate local degree matrix}, and \eqref{eq:cardinality of T_k} that
	\begin{equation}    \label{eq:temp:inequality for measure of vertices}
		\begin{split}
			\eigmea \biggl( \bigcup_{j = 0}^{+\infty}f^{-j}(\post{f}) \biggr)
			&\geqslant C \sum_{i=1}^{+\infty} \sum_{y \in T_{k_{i}}}  \left( \frac{ \myexp{S_{\ell}\phi(x)} }{ \eigenvalue^{\ell} } \right)^{k_i}  \\
			&= C \sum_{i=1}^{+\infty} \card{T_{k_i}}  \left( \frac{ \myexp{S_{\ell}\phi(x)} }{ \eigenvalue^{\ell} } \right)^{k_i} \\
			&= C \sum_{i=1}^{+\infty} \sumnorm[\big]{\Deg[\big]{k_i \ell}{x}} \left( \frac{ \myexp{S_{\ell}\phi(x)} }{ \eigenvalue^{\ell} } \right)^{k_i},
		\end{split}
	\end{equation}
	where $C \define m_{\black}(\{x\}) \min\{ 1, \eigenvalue^{-n_{F}} \} \myexp[\big]{ - n_F \ell \uniformnorm{\potential} - \Cdistortion } > 0$. 
	It suffices to show that
	\begin{equation}    \label{eq:sumnorm greater than eigenvalue}
		\sumnorm[\big]{\Deg[\big]{k_i\ell}{x}}  \geqslant \bigl( \eigenvalue^{\ell} \big/ \myexp{ S_{\ell}\phi(x) } \bigr)^{k_i}
	\end{equation}
	for each $i \in \n$. 
	We denote by $M$ the matrix $\Deg[\big]{\ell}{x}$ and $\lambda$ the number $\eigenvalue^{\ell} \big/ \myexp{S_{\ell}\phi(x)}$. 
	For each $i \in \n$, by Lemma~\ref{lem:iteration of local degree matrix}, we have $\Deg[\big]{k_i \ell}{x} = M^{k_i}$. 
	Then \eqref{eq:sumnorm greater than eigenvalue} follows from \eqref{eq:point eigenmeasure relation for periodic postcritical point} and Lemma~\ref{lem:sumnorm of iteration of matrix}.
	Combining \eqref{eq:sumnorm greater than eigenvalue} and \eqref{eq:temp:inequality for measure of vertices}, we get\[
		\eigmea \biggl( \bigcup_{j = 0}^{+\infty}f^{-j}(\post{f}) \biggr) \geqslant C \sum_{i=1}^{+\infty} 1 = +\infty.
	\]
	This contradicts the fact that $\eigmea$ is a finite Borel measure. 
	The proof of property~\ref{item:thm:subsystem:eigenmeasure existence and basic properties:strongly irreducible vertex zero measure} is complete.
\end{proof}

\subsection{Eigenfunctions}%
\label{sub:Eigenfunctions}

In this subsection, we establish some useful estimates for split Ruelle operators and construct their eigenfunctions.

\smallskip

We follow the conventions discussed in Remarks~\ref{rem:disjoint union} and \ref{rem:probability measure in split setting} in this subsection.

\begin{proposition}    \label{prop:eigenvalue equals split operator norm constant}
	Let $f$, $\mathcal{C}$, $F$, $d$, $\phi$, $\holderexp$ satisfy the Assumptions in Section~\ref{sec:The Assumptions}. 
	We assume in addition that $f(\mathcal{C}) \subseteq \mathcal{C}$ and $F \in \subsystem$ is irreducible. 
	Let $\spleigmea \in \probmea{\splitsphere}$ be a Borel probability measure defined in Theorem~\ref{thm:subsystem:eigenmeasure existence and basic properties} which satisfies $\dualsplopt\spleigmea = \eigenvalue \spleigmea$ where $\eigenvalue =  \bigl\langle \dualsplopt\spleigmea, \indicator{\splitsphere} \bigr\rangle$. 
	Then for each $\widetilde{x} \in \splitsphere$, we have that $\frac{1}{n} \log \mathopen{}\bigl( \splopt^n\bigl(\indicator{\splitsphere}\bigr)(\widetilde{x}) \bigr)$ converges to $\log \eigenvalue$ as $n$ tends to $+\infty$.
\end{proposition}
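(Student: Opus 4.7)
The plan is to combine two ingredients: (a) the eigenmeasure identity pins down the $\spleigmea$-integral of $\splopt^n(\indicator{\splitsphere})$ exactly, and (b) the uniform distortion estimates of Lemma~\ref{lem:distortion lemma for subsystem} force $\splopt^n(\indicator{\splitsphere})$ to be pointwise comparable to this integral, with a multiplicative constant independent of $n$.

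First, iterating the eigenvalue relation $\dualsplopt\spleigmea=\eigenvalue\spleigmea$ and applying Proposition~\ref{prop:dual split operator}~\ref{item:prop:dual split operator:act on bound function} with $\splfun=\indicator{\splitsphere}$ yields
\[
    \int_{\splitsphere} \splopt^{n}(\indicator{\splitsphere}) \, \mathrm{d}\spleigmea
    = \bigl\langle (\dualsplopt)^{n}\spleigmea,\, \indicator{\splitsphere}\bigr\rangle
    = \eigenvalue^{n}\,\langle \spleigmea, \indicator{\splitsphere}\rangle
    = \eigenvalue^{n},
\]
since $\spleigmea\in\mathcal{P}(\splitsphere)$. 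Second, unpacking $\splopt^{n}(\indicator{\splitsphere})$ through Lemma~\ref{lem:iteration of split-partial ruelle operator} and Definition~\ref{def:partial Ruelle operator}, for $\widetilde{x}=(x,\colour)\in\splitsphere$ one obtains
\[
    \splopt^{n}(\indicator{\splitsphere})(\widetilde{x})
    = \sum_{\colour'\in\colours}\paroperator[\phi]{n}{\colour}{\colour'}\bigl(\indicator{X^{0}_{\colour'}}\bigr)(x)
    = \sum_{X^{n}\in\cFTile{n}} \myexp\!\bigl(S^{F}_{n}\phi((F^{n}|_{X^{n}})^{-1}(x))\bigr),
\]
which is exactly the quantity controlled by Lemma~\ref{lem:distortion lemma for subsystem}~\ref{item:lem:distortion lemma for subsystem:uniform bound}. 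Note that $\splopt^{n}(\indicator{\splitsphere})(\widetilde{x})>0$ for every $n$ and every $\widetilde{x}$, since irreducibility of $F$ forces $\colourset=\colours$ and $\cFTile{n}\neq\emptyset$ (via Definition~\ref{def:irreducibility of subsystem} and Proposition~\ref{prop:sursubsystem properties}~\ref{item:prop:sursubsystem properties:F maps n+1 tile to n tile}).

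Third, since $F$ is irreducible, Lemma~\ref{lem:distortion lemma for subsystem}~\ref{item:lem:distortion lemma for subsystem:uniform bound} supplies a constant $\Csplratio\geqslant 1$, depending only on $F$, $\mathcal{C}$, $d$, $\phi$, $\holderexp$, such that
\[
    \splopt^{n}(\indicator{\splitsphere})(\widetilde{x})
    \leqslant \Csplratio\cdot\splopt^{n}(\indicator{\splitsphere})(\widetilde{y})
    \qquad\text{for all } \widetilde{x},\widetilde{y}\in\splitsphere.
\]
Fix $\widetilde{x}\in\splitsphere$ and integrate the displayed inequality in $\widetilde{y}$ against $\spleigmea$. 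Because $\spleigmea$ is a probability measure on $\splitsphere$, this combined with the identity in the first step produces the two-sided estimate
\[
    \Csplratio^{-1}\eigenvalue^{n}
    \leqslant \splopt^{n}(\indicator{\splitsphere})(\widetilde{x})
    \leqslant \Csplratio\cdot\eigenvalue^{n}.
\]
Taking logarithms, dividing by $n$, and letting $n\to +\infty$ gives the desired conclusion $\tfrac{1}{n}\log\bigl(\splopt^{n}(\indicator{\splitsphere})(\widetilde{x})\bigr)\to\log\eigenvalue$.

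No serious obstacle is expected: the heavy lifting has already been carried out in the distortion lemma (whose proof uses irreducibility to move between colors in at most $n_{F}$ iterates) and in Theorem~\ref{thm:subsystem:eigenmeasure existence and basic properties}. The only mild point to verify is that the upper and lower bounds in Lemma~\ref{lem:distortion lemma for subsystem}~\ref{item:lem:distortion lemma for subsystem:uniform bound} apply to arbitrary pairs of colors $(\colour,\colour')$ including $\colour=\colour'$, which is immediate from the statement.
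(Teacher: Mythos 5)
Your proposal is correct and follows essentially the same route as the paper: both use the eigenmeasure relation to compute $\int \splopt^n(\indicator{\splitsphere})\,\mathrm{d}\spleigmea = \eigenvalue^n$ exactly, invoke the uniform two-sided distortion bound from Lemma~\ref{lem:distortion lemma for subsystem}~\ref{item:lem:distortion lemma for subsystem:uniform bound} together with Lemma~\ref{lem:iteration of split-partial ruelle operator} to compare the pointwise value to this integral, and then take $\frac{1}{n}\log$. Your derivation of the explicit sandwich $\Csplratio^{-1}\eigenvalue^n \leqslant \splopt^n(\indicator{\splitsphere})(\widetilde{x}) \leqslant \Csplratio\eigenvalue^n$ is just a slightly more spelled-out version of the chain of limits the paper writes.
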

\begin{proof}
	Note that by Lemmas~\ref{lem:distortion lemma for subsystem}~\ref{item:lem:distortion lemma for subsystem:uniform bound} and \ref{lem:iteration of split-partial ruelle operator}, for all $n \in \n_0$ and $\widetilde{x}, \, \widetilde{y} \in \splitsphere$, we have
	\begin{equation}    \label{eq:bound on whole split sphere for split operator}
		\Csplratio^{-1} \leqslant \frac{ \splopt^{n}\bigl(\indicator{\splitsphere}\bigr)(\widetilde{x}) }{ \splopt^{n}\bigl(\indicator{\splitsphere}\bigr)(\widetilde{y}) } \leqslant \Csplratio,
	\end{equation}
	where $\Csplratio \geqslant 1$ is the constant depending only on $F$, $\mathcal{C}$, $d$, $\phi$, and $\holderexp$ from Lemma~\ref{lem:distortion lemma for subsystem}~\ref{item:lem:distortion lemma for subsystem:uniform bound}. 
	Since $\bigl\langle \spleigmea, \splopt^{n}\bigl(\indicator{\splitsphere}\bigr) \bigr\rangle =  \bigl\langle \bigl( \dualsplopt \bigr)^{n}\spleigmea,  \indicator{\splitsphere} \bigr\rangle =  \bigl\langle  \eigenvalue^{n}\spleigmea,  \indicator{\splitsphere} \bigr\rangle = \eigenvalue^{n}$, it follows from \eqref{eq:iteration of split-partial ruelle operator} and \eqref{eq:bound on whole split sphere for split operator} that
	\begin{align*}
	\log \eigenvalue &= \lim_{n \to +\infty} \frac{1}{n} \log \int \! \splopt^{n}\bigl(\indicator{\splitsphere}\bigr)(\widetilde{y}) \,\mathrm{d} \spleigmea(\widetilde{y}) \\
		&= \lim_{n \to +\infty} \frac{1}{n} \log \int \! \splopt^{n}\bigl(\indicator{\splitsphere}\bigr)(\widetilde{x}) \,\mathrm{d} \spleigmea(\widetilde{y}) \\
		&= \lim_{n \to +\infty} \frac{1}{n} \log \mathopen{}\bigl( \splopt^{n}\bigl(\indicator{\splitsphere}\bigr)(\widetilde{x}) \bigr)
	\end{align*}
	for each arbitrarily chosen $\widetilde{x} \in \splitsphere$.
\end{proof}

\begin{corollary}    \label{coro:well-define for split operator norm constant}
	Let $f$, $\mathcal{C}$, $F$, $d$, $\phi$, $\holderexp$ satisfy the Assumptions in Section~\ref{sec:The Assumptions}. 
	We assume in addition that $f(\mathcal{C}) \subseteq \mathcal{C}$ and $F \in \subsystem$ is irreducible. 
	Then the following limit exists for each $\widetilde{x} \in \splitsphere$ and is independent of $\widetilde{x}$, which we denote by $\Cnormspl$:
	\[
		\Cnormspl \define \lim_{n \to +\infty} \frac{1}{n} \log \mathopen{}\bigl( \splopt^{n}\bigl(\indicator{\splitsphere}\bigr)(\widetilde{x}) \bigr)
	\]
\end{corollary}
\begin{proof}
	By Theorem~\ref{thm:subsystem:eigenmeasure existence and basic properties}, there exists a measure $\spleigmea \in \probmea{\splitsphere}$ such as the one in Proposition~\ref{prop:eigenvalue equals split operator norm constant}. The limit then clearly depends only on $F$, $\mathcal{C}$, $d$, $\phi$, and $\holderexp$, and in particular, does not depend on the choice of $\spleigmea$.
\end{proof}

We characterize $\Cnormspl$ in terms of iterated preimages.

\begin{proposition}    \label{prop:subsystem preimage pressure}		\def\ncolour{\colour_{n}}
	Let $f$, $\mathcal{C}$, $F$, $d$, $\phi$, $\holderexp$ satisfy the Assumptions in Section~\ref{sec:The Assumptions}. 
	We assume in addition that $f(\mathcal{C}) \subseteq \mathcal{C}$ and $F \in \subsystem$ is irreducible. 
	Then for each sequence $\sequen{x_{n}}$ of points in $S^2$ and each sequence $\sequen{\ncolour}$ of colors in $\colours$ that satisfies $x_{n} \in X^0_{\ncolour}$ for each $n \in \n$, we have
	\begin{equation}    \label{eq:prop:subsystem preimage pressure}
		\Cnormspl = \lim_{n \to +\infty} \frac{1}{n} \log \sum_{ y \in F^{-n}(x_{n}) } \ccndegF{\ncolour}{}{n}{y} \myexp[\big]{S_{n}^{F}\potential(y)}.
	\end{equation}
	Furthermore, if $F$ is strongly irreducible, then for each $x_0 \in \limitset \setminus \mathcal{C}$, we have
	\[
		\Cnormspl = \lim_{n \to +\infty} \frac{1}{n} \log \sum_{y \in (F|_{\limitset})^{-n}(x_{0})} \myexp[\big]{ S_n^{F}\phi(y) }.
	\]
\end{proposition}
Note that if $F \in \subsystem$ is strongly irreducible, then it follows from Propositions~\ref{prop:sursubsystem properties}~\ref{item:prop:sursubsystem properties:property of domain and limitset} and \ref{prop:irreducible subsystem properties}~\ref{item:prop:irreducible subsystem properties:limitset non degenerate to Jordan curve} that $F(\limitset) = \limitset$ and $\limitset \setminus \mathcal{C} \ne \emptyset$.
\begin{proof}	\def\ncolour{\colour_{n}}
	We first prove \eqref{eq:prop:subsystem preimage pressure}.
	Consider arbitrary $\sequen{x_{n}}$ and $\sequen{\ncolour}$ that satisfy the assumptions.
	Denote $\widetilde{x}_{n} \define (x_{n}, \ncolour) \in \splitsphere$ for each $n \in \n$.
	Fix arbitrary point $\widetilde{x} \in S^2$.
	By Lemmas~\ref{lem:distortion lemma for subsystem}~\ref{item:lem:distortion lemma for subsystem:uniform bound} and \ref{lem:iteration of split-partial ruelle operator}, for each $n \in \n$, we have
	\[
		\Csplratio^{-1} \leqslant \frac{ \splopt^{n}\bigl(\indicator{\splitsphere}\bigr)(\widetilde{x}_{n}) }{ \splopt^{n}\bigl(\indicator{\splitsphere}\bigr)(\widetilde{x}) } \leqslant \Csplratio,
	\]
	where $\Csplratio \geqslant 1$ is the constant depending only on $F$, $\mathcal{C}$, $d$, $\phi$, and $\holderexp$ from Lemma~\ref{lem:distortion lemma for subsystem}~\ref{item:lem:distortion lemma for subsystem:uniform bound}.
	Then it follows immediately from Corollary~\ref{coro:well-define for split operator norm constant}, \eqref{eq:definition of partial Ruelle operators} in Definition~\ref{def:partial Ruelle operator}, and \eqref{eq:color degree is the sum of colour-position degree} in Remark~\ref{rem:number of tile in neighborhood for F} that \eqref{eq:prop:subsystem preimage pressure} holds.

	Now we assume that $F$ is strongly irreducible.
	Fix arbitrary $x_{0} \in \limitset \setminus \mathcal{C}$. 
	Without loss of generality we may assume that $x_0 \in \inte[\big]{X^0_{\black}}$. 
	By Corollary~\ref{coro:well-define for split operator norm constant} and \eqref{eq:iteration of split-partial ruelle operator}, it suffices to show that \[
		(F|_{\limitset})^{-n}(x_{0}) = \bigl\{ (F^{n}|_{X^{n}})^{-1}(x_{0}) \describe X^{n} \in \bFTile{n} \bigr\}
	\]
	for each $n \in \n$. Let $n \in \n$ be arbitrary in the following two paragraphs.

	For each $y \in (F|_{\limitset})^{-n}(x_{0})$, we have $y \in \limitset \subseteq \domain{n}$ and $(F|_{\limitset})^{n}(y) = F^{n}(y) = x_{0} \in \inte[\big]{X^0_{\black}}$. 
	Thus by Lemma~\ref{lem:cell mapping properties of Thurston map}~\ref{item:lem:cell mapping properties of Thurston map:ii} and Proposition~\ref{prop:subsystem:properties}~\ref{item:subsystem:properties:homeo}, there exists a unique $n$-tile $X^{n} \in \bFTile{n}$ with $y \in \inte{X^n}$ and $F^{n}(X^n) = X^0_{\black}$. 
	Thus we deduce that $y = (F^{n}|_{X^{n}})^{-1}(x_{0})$ for some $X^n \in \bFTile{n}$.

	To deduce the reverse inclusion, it suffices to show that $(F^{n}|_{X^{n}})^{-1}(x_{0}) \in \limitset$ for each $X^n \in \bFTile{n}$.
	Let $X^{n} \in \bFTile{n}$ be arbitrary and denote $y \define (F^{n}|_{X^{n}})^{-1}(x_{0})$. 
	Then it follows from Proposition~\ref{prop:subsystem:properties invariant Jordan curve}~\ref{item:subsystem:properties invariant Jordan curve:backward invariant limitset outside invariant Jordan curve} and induction that $y \in \limitset$ since $x_{0} \in \limitset \cap \inte[\big]{X^0_{\black}}$ and $y \in F^{-n}(x_0)$.
	
	The proof is complete.
\end{proof}

Let $f$, $\mathcal{C}$, $F$, $d$, $\phi$, $\holderexp$ satisfy the Assumptions in Section~\ref{sec:The Assumptions}. 
We assume in addition that $F \in \subsystem$ is irreducible. 
We define the function
\begin{equation}    \label{eq:def:normed potential}
	\normpotential \define \phi - \Cnormspl \in C^{0,\holderexp}(S^2,d).
\end{equation}
Then 
\begin{equation}    \label{eq:def:normed split operator}
	\normsplopt = e^{-\Cnormspl} \splopt.
\end{equation}
If $\spleigmea$ is an eigenmeasure as in Theorem~\ref{thm:subsystem:eigenmeasure existence and basic properties}, then by Proposition~\ref{prop:eigenvalue equals split operator norm constant} we have
\begin{equation}    \label{eq:eigenmeasure for dual split operator}
	\dualsplopt\spleigmea = e^{\Cnormspl} \spleigmea  \qquad \text{and} \qquad 
	\normdualsplopt\spleigmea = \spleigmea.
\end{equation}
\begin{rmk}
	We will show in Theorem~\ref{thm:existence of f invariant Gibbs measure} that $\Cnormspl = \pressure$.
\end{rmk}

We summarize in the following lemma the properties of $\normsplopt$ that we will need. 
\begin{lemma}    \label{lem:distortion lemma for normed split operator}
	Let $f$, $\mathcal{C}$, $F$, $d$, $\Lambda$, $\phi$, $\holderexp$ satisfy the Assumptions in Section~\ref{sec:The Assumptions}. 
	We assume in addition that $f(\mathcal{C}) \subseteq \mathcal{C}$ and $F \in \subsystem$ is irreducible. 
	Then there exists a constant $\Cnormspllocal \geqslant 0$ depending only on $F$, $\mathcal{C}$, $d$, $\phi$, and $\holderexp$ such that for each $n \in \n$, each $\colour \in \colours$, and each pair of points $\juxtapose{x}{y} \in X^0_{\colour}$, the following inequalities holds:
	\begin{align}    
		\label{eq:first bound for normed split operator}
		\normsplopt^{n}\bigl(\indicator{\splitsphere}\bigr)(\widetilde{x}) \big/ \normsplopt^{n}\bigl(\indicator{\splitsphere}\bigr)(\widetilde{y})
		&\leqslant \myexp[\big]{ C_1 d(x, y)^{\holderexp} }  \leqslant \Csplratio,\\
		\label{eq:second bound for normed split operator}
		\Csplratio^{-1} & \leqslant \normsplopt^{n}\bigl(\indicator{\splitsphere}\bigr)(\widetilde{x}) \leqslant \Csplratio,   \\
		\label{eq:third bound for normed split operator}
		\abs[\big]{\normsplopt^{n}\bigl(\indicator{\splitsphere}\bigr)(\widetilde{x}) - \normsplopt^{n}\bigl(\indicator{\splitsphere}\bigr)(\widetilde{y})}
		&\leqslant \Csplratio \bigl( \myexp[\big]{ C_1 d(x, y)^{\holderexp} } - 1 \bigr) \leqslant \Cnormspllocal d(x, y)^{\holderexp},
	\end{align}
	where $\widetilde{x} \define i_{\colour}(x) = (x, \colour) \in \splitsphere$, $\widetilde{y} \define i_{\colour}(y) = (y, \colour) \in \splitsphere$ (recall Remark~\ref{rem:disjoint union}), $C_1 \geqslant 0$ is the constant in Lemma~\ref{lem:distortion_lemma} depending only on $f$, $\mathcal{C}$, $d$, $\phi$, and $\holderexp$, and $\Csplratio \geqslant 1$ is the constant in Lemma~\ref{lem:distortion lemma for subsystem}~\ref{item:lem:distortion lemma for subsystem:uniform bound} depending only on $F$, $\mathcal{C}$, $d$, $\phi$, and $\holderexp$.
\end{lemma}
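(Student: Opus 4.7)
The plan is to reduce all three bounds to the two uniform distortion estimates from Lemma~\ref{lem:distortion lemma for subsystem}, together with the eigenmeasure construction in Theorem~\ref{thm:subsystem:eigenmeasure existence and basic properties}. First I would unpack the definitions. By \eqref{eq:def:normed split operator} the factor $e^{-n \Cnormspl}$ in $\normsplopt^n = e^{-n \Cnormspl} \splopt^n$ cancels in any ratio, and by Lemma~\ref{lem:iteration of split-partial ruelle operator} together with Definition~\ref{def:partial Ruelle operator}, for $\widetilde{x} = i_{\colour}(x) \in \splitsphere$ one has
\[
    \splopt^n\bigl(\indicator{\splitsphere}\bigr)(\widetilde{x}) = \sum_{X^n \in \cFTile{n}} \myexp[\big]{S_n^F \phi\bigl((F^n|_{X^n})^{-1}(x)\bigr)}.
\]
This is precisely the sum appearing in Lemma~\ref{lem:distortion lemma for subsystem}~\ref{item:lem:distortion lemma for subsystem:holder bound on same color tile}, so that lemma immediately yields the first inequality of \eqref{eq:first bound for normed split operator}; the second inequality then follows from $d(x, y) \leqslant \diam_d(S^2)$, which gives $e^{C_1 d(x,y)^\holderexp} \leqslant e^{\Cdistortion}$, combined with the fact that $e^{\Cdistortion} \leqslant \Csplratio$ (visible from the explicit choice \eqref{eq:const:Csplratio}).

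For \eqref{eq:second bound for normed split operator} I would invoke the eigenmeasure. Let $\spleigmea \in \probmea{\splitsphere}$ be a measure provided by Theorem~\ref{thm:subsystem:eigenmeasure existence and basic properties}; by \eqref{eq:eigenmeasure for dual split operator} we have $\normdualsplopt^n \spleigmea = \spleigmea$, so
\[
    \int \! \normsplopt^n\bigl(\indicator{\splitsphere}\bigr) \,\mathrm{d}\spleigmea = \bigl\langle \normdualsplopt^n \spleigmea,\, \indicator{\splitsphere} \bigr\rangle = 1.
\]
Lemma~\ref{lem:distortion lemma for subsystem}~\ref{item:lem:distortion lemma for subsystem:uniform bound} (applied to $x \in X^0_{\colour}$ and $y \in X^0_{\colour'}$, allowing $\colour' \ne \colour$) gives the uniform ratio bound $\splopt^n\bigl(\indicator{\splitsphere}\bigr)(\widetilde{x}) \leqslant \Csplratio \splopt^n\bigl(\indicator{\splitsphere}\bigr)(\widetilde{y})$ for all $\widetilde{x}, \widetilde{y} \in \splitsphere$; integrating in $\widetilde{y}$ against $\spleigmea$ produces $\normsplopt^n\bigl(\indicator{\splitsphere}\bigr)(\widetilde{x}) \leqslant \Csplratio$, and swapping $\widetilde{x}$ and $\widetilde{y}$ yields the matching lower bound $\Csplratio^{-1} \leqslant \normsplopt^n\bigl(\indicator{\splitsphere}\bigr)(\widetilde{x})$.

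For \eqref{eq:third bound for normed split operator} I would combine the previous two steps. By \eqref{eq:first bound for normed split operator} and \eqref{eq:second bound for normed split operator},
\[
    \normsplopt^n\bigl(\indicator{\splitsphere}\bigr)(\widetilde{x}) - \normsplopt^n\bigl(\indicator{\splitsphere}\bigr)(\widetilde{y}) \leqslant \bigl(e^{C_1 d(x,y)^\holderexp} - 1\bigr) \normsplopt^n\bigl(\indicator{\splitsphere}\bigr)(\widetilde{y}) \leqslant \Csplratio \bigl(e^{C_1 d(x,y)^\holderexp} - 1\bigr),
\]
and symmetrizing in $\widetilde{x}, \widetilde{y}$ gives the first inequality of \eqref{eq:third bound for normed split operator}. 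The final inequality follows from the elementary estimate $e^t - 1 \leqslant t e^t \leqslant t e^{\Cdistortion}$ for $t = C_1 d(x,y)^\holderexp \in [0, \Cdistortion]$, yielding the explicit choice $\Cnormspllocal \define \Csplratio \cdot C_1 \cdot e^{\Cdistortion}$, which depends only on $F$, $\mathcal{C}$, $d$, $\phi$, and $\holderexp$ as required.

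No significant obstacle is expected: the lemma is essentially a repackaging of the distortion estimates of Subsection~\ref{sub:Distortion lemmas} together with the normalization supplied by the eigenmeasure construction of Subsection~\ref{sub:The eigenmeasure}. The only point to verify carefully is that the uniform ratio bound of Lemma~\ref{lem:distortion lemma for subsystem}~\ref{item:lem:distortion lemma for subsystem:uniform bound} is applicable to arbitrary pairs of points in $\splitsphere$, which it is, precisely because $F$ is irreducible (so $\colourset = \colours$ and every $\cFTile{n}$ is non-empty by Proposition~\ref{prop:sursubsystem properties}~\ref{item:prop:sursubsystem properties:F maps n+1 tile to n tile}).
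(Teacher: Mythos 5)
Your proposal is correct and follows essentially the same route as the paper: all three bounds are obtained from the distortion estimates of Lemma~\ref{lem:distortion lemma for subsystem} together with the normalization $\int \normsplopt^n(\indicator{\splitsphere}) \,\mathrm{d}\spleigmea = 1$ supplied by the eigenmeasure of Theorem~\ref{thm:subsystem:eigenmeasure existence and basic properties}. The only cosmetic difference is that the paper phrases the derivation of \eqref{eq:second bound for normed split operator} via $\sup$ and $\inf$ of $\normsplopt^n(\indicator{\splitsphere})$ whereas you integrate the ratio bound directly against $\spleigmea$; these are the same argument, and your explicit choice $\Cnormspllocal = \Csplratio C_1 e^{\Cdistortion}$ is one valid realization of the constant the paper leaves implicit.
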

\begin{proof}
	Inequality \eqref{eq:first bound for normed split operator} follows immediately from Lemmas~\ref{lem:distortion lemma for subsystem}, \ref{lem:iteration of split-partial ruelle operator}, and \eqref{eq:def:normed split operator}.

	Fix arbitrary $n \in \n$, $\colour \in \colours$, and $\juxtapose{x}{y} \in X^0_{\colour}$.

	By Lemma~\ref{lem:distortion lemma for subsystem}~\ref{item:lem:distortion lemma for subsystem:uniform bound}, \eqref{eq:def:normed split operator}, and \eqref{eq:iteration of split-partial ruelle operator}, we have\[
		0 < \inf_{\widetilde{z} \in \splitsphere} \normsplopt^{n}\bigl(\indicator{\splitsphere}\bigr)(\widetilde{z})  
		\leqslant   \normsplopt^{n}\bigl(\indicator{\splitsphere}\bigr)(\widetilde{x}) 
		\leqslant \sup_{\widetilde{z} \in \splitsphere} \normsplopt^{n}\bigl(\indicator{\splitsphere}\bigr)(\widetilde{z}) < +\infty
	\]
	and $\sup_{\widetilde{z} \in \splitsphere} \normsplopt^{n}\bigl(\indicator{\splitsphere}\bigr)(\widetilde{z}) \leqslant \Csplratio \inf_{\widetilde{z} \in \splitsphere} \normsplopt^{n}\bigl(\indicator{\splitsphere}\bigr)(\widetilde{z})$, where $\Csplratio$ is the constant defined in \eqref{eq:const:Csplratio} in Lemma~\ref{lem:distortion lemma for subsystem}~\ref{item:lem:distortion lemma for subsystem:uniform bound} and depends only on $F$, $\mathcal{C}$, $d$, $\phi$, $\holderexp$. 
	By Theorem~\ref{thm:subsystem:eigenmeasure existence and basic properties}, Proposition~\ref{prop:eigenvalue equals split operator norm constant}, and Corollary~\ref{coro:well-define for split operator norm constant}, we can choose a Borel probability measure $\splmea \in \probmea{\splitsphere}$ such that $\normdualsplopt\splmea = \splmea$. 
	Then we have\[
		\functional[\big]{\splmea}{\normsplopt^{n}\bigl(\indicator{\splitsphere}\bigr)} 
		= \functional[\big]{\bigl(\normdualsplopt\bigr)^{n}\splmea}{\indicator{\splitsphere}} 
		= \functional[\big]{\splmea}{\indicator{\splitsphere}} = 1.
	\]
	Thus $1 \leqslant \sup_{\widetilde{z} \in \splitsphere} \normsplopt^{n}\bigl(\indicator{\splitsphere}\bigr)(\widetilde{z}) \leqslant \Csplratio \inf_{\widetilde{z} \in \splitsphere} \normsplopt^{n}\bigl(\indicator{\splitsphere}\bigr)(\widetilde{z}) \leqslant \Csplratio$ and \eqref{eq:second bound for normed split operator} holds.

	Applying \eqref{eq:first bound for normed split operator} and \eqref{eq:second bound for normed split operator}, we get
	\begin{align*}
		\abs[\big]{\normsplopt^{n}\bigl(\indicator{\splitsphere}\bigr)(\widetilde{x}) - \normsplopt^{n}\bigl(\indicator{\splitsphere}\bigr)(\widetilde{y})}
		&= \normsplopt^{n}\bigl(\indicator{\splitsphere}\bigr)(\widetilde{y}) \abs[\bigg]{ \frac{\normsplopt^{n}\bigl(\indicator{\splitsphere}\bigr)(\widetilde{x})}{\normsplopt^{n}\bigl(\indicator{\splitsphere}\bigr)(\widetilde{y})} - 1 } \\
		&\leqslant \Csplratio \bigl( e^{ C_1 d(x, y)^{\holderexp} } - 1 \bigr) 
		\leqslant \Cnormspllocal d(x, y)^{\holderexp}
	\end{align*}
	for some constant $\Cnormspllocal$ depending only on $C_{1}$, $\Csplratio$, and $\diam{d}{S^{2}}$. 
	Therefore, we establish \eqref{eq:third bound for normed split operator} as the constant $\Cnormspllocal > 0$ depends only on $F$, $\mathcal{C}$, $d$, $\phi$, and $\holderexp$. 
\end{proof}

By Lemma~\ref{lem:distortion lemma for subsystem}, we can construct eigenfunctions of $\normsplopt$.
\begin{proposition}    \label{prop:existence of eigenfunction} 
	Let $f$, $\mathcal{C}$, $F$, $d$, $\Lambda$, $\phi$, $\holderexp$ satisfy the Assumptions in Section~\ref{sec:The Assumptions}. 
	We assume in addition that $f(\mathcal{C}) \subseteq \mathcal{C}$ and $F \in \subsystem$ is irreducible. 
	Then the sequence $\bigl\{ \frac{1}{n} \sum_{j = 0}^{n - 1} \normsplopt^j\bigl(\indicator{\splitsphere}\bigr) \bigr\}_{n \in \n}$ has a subsequential limit (with respect to the uniform norm). Moreover, if $\eigfun \in C(\splitsphere)$ is such a subsequential limit, then
	\begin{align}
		\label{eq:existence:eigenfunction of normed split operator}
		\normsplopt(\eigfun) = \eigfun 	\qquad &\text{and}  \\
		\label{eq:existence:two sides bounds for eigenfunction}
		\Csplratio^{-1} \leqslant \eigfun(\widetilde{x}) \leqslant \Csplratio \qquad &\text{for each } \widetilde{x} \in \splitsphere,  
	\end{align}
	where $\Csplratio \geqslant 1$ is the constant from Lemma~\ref{lem:distortion lemma for subsystem}~\ref{item:lem:distortion lemma for subsystem:uniform bound} depending only on $f$, $\mathcal{C}$, $d$, $\phi$, and $\holderexp$. 
	Furthermore, if we let $\spleigmea \in \probmea{\splitsphere}$ be an eigenmeasure as in Theorem~\ref{thm:subsystem:eigenmeasure existence and basic properties}, then
	\begin{equation}    \label{eq:existence:integration of eigenfunction with respect to eigenmeasure is one}
		\int_{\splitsphere} \! \eigfun \,\mathrm{d}\spleigmea = 1
	\end{equation}
	and $\splmea \define \eigfun \spleigmea \in \probmea{\splitsphere}$ is well-defined as a Borel probability measure on $\splitsphere$.
\end{proposition}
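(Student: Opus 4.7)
The plan is to construct $\eigfun$ by an Ascoli compactness argument applied to the Cesàro averages $u_n \define \frac{1}{n}\sum_{j=0}^{n-1}\normsplopt^j\bigl(\indicator{\splitsphere}\bigr)$, and then verify the four conclusions by passing to the limit along the extracted subsequence. The main input will be the uniform estimates from Lemma~\ref{lem:distortion lemma for normed split operator}; no deeper ingredient is needed.

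First I would establish that $\{u_n\}_{n\in\n}$ is uniformly bounded and equicontinuous on $\splitsphere$. The uniform bound $\Csplratio^{-1} \leqslant u_n \leqslant \Csplratio$ is immediate by averaging \eqref{eq:second bound for normed split operator} (and using $\indicator{\splitsphere} \in [\Csplratio^{-1},\Csplratio]$ since $\Csplratio\geqslant 1$). For equicontinuity, recall from Definition~\ref{def:split sphere} that $\splitsphere$ is the topological disjoint union $X^0_{\black}\sqcup X^0_{\white}$, so continuity on $\splitsphere$ amounts to continuity separately on each $X^0_{\colour}$. On each such component, \eqref{eq:third bound for normed split operator} gives $\bigl|\normsplopt^n(\indicator{\splitsphere})(\widetilde{x})-\normsplopt^n(\indicator{\splitsphere})(\widetilde{y})\bigr| \leqslant \Cnormspllocal d(x,y)^{\holderexp}$ uniformly in $n\in\n$, hence the same $\holderexp$-Hölder bound holds for $u_n$ uniformly in $n$. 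By the Arzelà--Ascoli theorem, $\{u_n\}$ admits a uniformly convergent subsequence $\{u_{n_k}\}$ with limit $\eigfun\in C(\splitsphere)$; this proves the existence statement.

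Next, the eigenfunction identity \eqref{eq:existence:eigenfunction of normed split operator} is obtained by the standard telescoping observation
\begin{equation*}
    \normsplopt(u_n) - u_n \;=\; \frac{1}{n}\bigl(\normsplopt^n\bigl(\indicator{\splitsphere}\bigr) - \indicator{\splitsphere}\bigr),
\end{equation*}
whose right-hand side is uniformly bounded by $(\Csplratio+1)/n$ via \eqref{eq:second bound for normed split operator}, hence tends to $0$ uniformly. Since $\normsplopt \colon C(\splitsphere) \to C(\splitsphere)$ is bounded and linear, applying it to $u_{n_k}\to \eigfun$ yields $\normsplopt(u_{n_k}) \to \normsplopt(\eigfun)$ uniformly; combining with $\normsplopt(u_{n_k}) - u_{n_k} \to 0$ uniformly gives $\normsplopt(\eigfun) = \eigfun$. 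The two-sided bound \eqref{eq:existence:two sides bounds for eigenfunction} then follows by passing to the uniform limit in the inequalities $\Csplratio^{-1} \leqslant u_{n_k}(\widetilde{x}) \leqslant \Csplratio$ established above.

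Finally, for the normalization I would use \eqref{eq:eigenmeasure for dual split operator}: since $\normdualsplopt\spleigmea = \spleigmea$ and $\spleigmea$ is a probability measure,
\begin{equation*}
    \bigl\langle \spleigmea,\, u_n \bigr\rangle
    \;=\; \frac{1}{n}\sum_{j=0}^{n-1} \bigl\langle (\normdualsplopt)^{j}\spleigmea,\, \indicator{\splitsphere} \bigr\rangle
    \;=\; \frac{1}{n}\sum_{j=0}^{n-1} \bigl\langle \spleigmea,\, \indicator{\splitsphere} \bigr\rangle \;=\; 1
\end{equation*}
for every $n\in\n$, and uniform convergence $u_{n_k}\to\eigfun$ together with finiteness of $\spleigmea$ yields \eqref{eq:existence:integration of eigenfunction with respect to eigenmeasure is one}. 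Since $\eigfun \geqslant \Csplratio^{-1} > 0$ is continuous and $\spleigmea$ is a Borel probability measure, $\splmea \define \eigfun\spleigmea$ is a well-defined positive Borel measure on $\splitsphere$ with total mass $\int_{\splitsphere} \eigfun\,\mathrm{d}\spleigmea = 1$, hence $\splmea \in \mathcal{P}(\splitsphere)$. No step is genuinely difficult here; this is the classical Krylov--Bogolyubov-type extraction, and the only substantive ingredient is the Hölder-type equicontinuity supplied by Lemma~\ref{lem:distortion lemma for normed split operator}, which in turn rests on the irreducibility of $F$ through the distortion estimate \eqref{eq:distinct color distortion bound for split operator}.
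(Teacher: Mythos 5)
Your proof is correct and follows essentially the same route as the paper's: Arzelà--Ascoli on the Cesàro averages via the uniform bound \eqref{eq:second bound for normed split operator} and the Hölder equicontinuity \eqref{eq:third bound for normed split operator}, the telescoping estimate $\normsplopt(\widetilde{u}_n) - \widetilde{u}_n = \frac{1}{n}\bigl(\normsplopt^n(\indicator{\splitsphere}) - \indicator{\splitsphere}\bigr)$ for the fixed-point identity, and $\normdualsplopt\spleigmea = \spleigmea$ for the normalization. The only cosmetic difference is that you invoke boundedness of $\normsplopt$ to pass $\normsplopt(u_{n_k})\to\normsplopt(\eigfun)$ and use uniform convergence for the integral, whereas the paper estimates $\uniformnorm{\normsplopt(\widetilde{u}_{n_i})-\widetilde{u}_{n_i}}$ directly and cites dominated convergence; these are equivalent.
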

We will show in Theorem~\ref{thm:existence of f invariant Gibbs measure} that a subsequential limit $\eigfun$ as defined above is unique and the sequence $\bigl\{ \frac{1}{n} \sum_{j = 0}^{n - 1} \normsplopt^j\bigl(\indicator{\splitsphere}\bigr) \bigr\}_{n \in \n}$ converges uniformly to $\eigfun \in C(\splitsphere)$.
\begin{proof}
	Define $\widetilde{u}_{n} \define \frac{1}{n} \sum_{j = 0}^{n - 1} \normsplopt^{j}\bigl(\indicator{\splitsphere}\bigr)$ for each $n \in \n$. 
	Then $\{\widetilde{u}_n\}_{n \in \n}$ is a uniformly bounded sequence of equicontinuous functions on $\splitsphere$ by \eqref{eq:second bound for normed split operator} and \eqref{eq:third bound for normed split operator} in Lemma~\ref{lem:distortion lemma for normed split operator}. 
	By the \aalem~Theorem, there exists a continuous function $\eigfun \in C(\splitsphere)$ and an increasing sequence $\{n_i\}_{i\in \n}$ in $\n$ such that $\widetilde{u}_{n_i} \to \eigfun$ uniformly on $\splitsphere$ as $i \to +\infty$.

	To prove \eqref{eq:existence:eigenfunction of normed split operator}, we note that by the definition of $\widetilde{u}_n$ and \eqref{eq:second bound for normed split operator} in Lemma~\ref{lem:distortion lemma for normed split operator}, we have that for each $i \in \n$,\[
		\uniformnorm[\big]{ \normsplopt(\widetilde{u}_{n_{i}}) - \widetilde{u}_{n_{i}} } = \frac{1}{n_{i}} \uniformnorm[\big]{\normsplopt^{n_{i}}\bigl(\indicator{\splitsphere}\bigr) - \indicator{\splitsphere}} 
		\leqslant \frac{1}{n_{i}} (\Csplratio + 1),
	\]
	where $\Csplratio \geqslant 1$ is the constant from Lemma~\ref{lem:distortion lemma for subsystem}~\ref{item:lem:distortion lemma for subsystem:uniform bound} depending only on $f$, $\mathcal{C}$, $d$, $\phi$, and $\holderexp$. By letting $i \to +\infty$, we can conclude that $\uniformnorm[\big]{ \normsplopt(\eigfun) - \eigfun } = 0$. Thus \eqref{eq:existence:eigenfunction of normed split operator} holds.

	By \eqref{eq:second bound for normed split operator}, we have that $\Csplratio^{-1} \leqslant \widetilde{u}_{n}(\widetilde{x}) \leqslant \Csplratio$ for each $n \in \n$ and each $\widetilde{x} \in \splitsphere$. Thus \eqref{eq:existence:two sides bounds for eigenfunction} follows.

	By \eqref{eq:eigenmeasure for dual split operator} and definition of $\widetilde{u}_{n}$, we have $\int \! \widetilde{u}_{n} \,\mathrm{d}\spleigmea = \int \! \indicator{\splitsphere} \,\mathrm{d}\spleigmea = 1$ for each $n \in \n$. Then by the Lebesgue Dominated Theorem, we can conclude that\[
		\int \! \eigfun \,\mathrm{d}\spleigmea = \lim_{i \to +\infty} \int \! \widetilde{u}_{n_{i}} \,\mathrm{d}\spleigmea = 1,
	\]
	establishing \eqref{eq:existence:integration of eigenfunction with respect to eigenmeasure is one}. 
	Therefore, $\eigfun \spleigmea$ is a Borel probability measure on $\splitsphere$.
\end{proof}

\subsection{Invariant Gibbs measures}%
\label{sub:Invariant Gibbs measures}

The goal of this subsection is to establish Theorem~\ref{thm:existence of f invariant Gibbs measure}, namely, the existence of invariant Gibbs measures for subsystems of expanding Thurston maps.
We also investigate the properties of the eigenmeasures (of the adjoint split Ruelle operators) from Theorem~\ref{thm:subsystem:eigenmeasure existence and basic properties}, with the main results being Propositions~\ref{prop:subsystem:f-invariant measure}, \ref{prop:subsystem edge Jordan curve has measure zero}, and \ref{prop:subsystem properties of eigenmeasure}.

\smallskip

In this subsection, we follow the conventions discussed in Remarks~\ref{rem:disjoint union} and \ref{rem:probability measure in split setting}. 
In particular, we use the notation $\eigmea$ (\resp $\spleigmea$) for emphasis when we view the eigenmeasure as a Borel probability measure on $S^2$ (\resp $\splitsphere$), and we follow the same conventions for $\equstate$ (\resp $\splmea$), where $\equstate = \splmea \define \eigfun\spleigmea$ comes from Proposition~\ref{prop:existence of eigenfunction}.

\begin{definition}[Gibbs measures for subsystems]    \label{def:subsystem gibbs measure}
    Let $f$, $\mathcal{C}$, $F$, $d$, $\phi$ satisfy the Assumptions in Section~\ref{sec:The Assumptions}.
    A Borel probability measure $\mu \in \mathcal{P}(S^2)$ is called a \emph{Gibbs measure} with respect to $F$, $\mathcal{C}$, and $\phi$ if there exist constants $P_{\mu} \in \real$ and $C_{\mu} \geqslant 1$ such that for each $n\in \n_0$, each $n$-tile $X^n \in \Domain{n}$, and each $x \in X^n$, we have
    \begin{equation}    \label{eq:def:subsystem gibbs measure}
        \frac{1}{C_{\mu}} \leqslant \frac{\mu(X^n)}{ \myexp{ S_{n}^{F}\phi(x) - nP_{\mu} } } \leqslant C_{\mu}.
    \end{equation}
\end{definition}

One observes that for each Gibbs measure $\mu$ with respect to $F$, $\mathcal{C}$, and $\phi$, the constant $P_{\mu}$ is unique.

\begin{theorem}    \label{thm:existence of f invariant Gibbs measure}
	Let $f$, $\mathcal{C}$, $F$, $d$, $\Lambda$, $\phi$, $\holderexp$ satisfy the Assumptions in Section~\ref{sec:The Assumptions}. 
	We assume in addition that $f(\mathcal{C}) \subseteq \mathcal{C}$ and $F \in \subsystem$ is strongly irreducible.
	Then the sequence $\bigl\{ \frac{1}{n} \sum_{j = 0}^{n - 1} \normsplopt^j\bigl(\indicator{\splitsphere}\bigr) \bigr\}_{n \in \n}$ converges uniformly to a function $\eigfun = \splfun \in \splholderspace$ that satisfies
	\begin{align}
		\label{eq:eigenfunction of normed split operator}
		\normsplopt(\eigfun) &= \eigfun \qquad \text{and} \\
		\label{eq:two sides bounds for eigenfunction}
		\Csplratio^{-1} \leqslant \eigfun(\widetilde{x}) &\leqslant \Csplratio \qquad \text{for } \widetilde{x} \in \splitsphere,
	\end{align}
	where $\Csplratio \geqslant 1$ is the constant from Lemma~\ref{lem:distortion lemma for subsystem}~\ref{item:lem:distortion lemma for subsystem:uniform bound} depending only on $f$, $\mathcal{C}$, $d$, $\phi$, and $\holderexp$. 
	Moreover, if we let $\eigmea = \spleigmea$ be an eigenmeasure as in Theorem~\ref{thm:subsystem:eigenmeasure existence and basic properties}, then
	\begin{equation}    \label{eq:integration of eigenfunction with respect to eigenmeasure is one}
		\int_{\splitsphere} \! \eigfun \,\mathrm{d}\spleigmea = 1,
	\end{equation}
	and $\equstate = \splmea \define \eigfun \spleigmea$ is an $f$-invariant Gibbs measure with respect to $F$, $\mathcal{C}$, and $\phi$, with $\equstate(\limitset(F, \mathcal{C})) = 1$ and
	\begin{equation}    \label{eq:equalities for characterizations of pressure}
		P_{\equstate} = P_{\eigmea} = \pressure = D_{F, \phi} = \lim_{n \to +\infty} \frac{1}{n}\log \bigl( \splopt^{n}\bigl(\indicator{\splitsphere}\bigr)(\widetilde{y}) \bigr),
	\end{equation}
	for each $\widetilde{y} \in \splitsphere$. In particular, $\equstate(U) \ne 0$ for each open set $U \subseteq S^{2}$ with $U \cap \limitset(F, \mathcal{C}) \ne \emptyset$.
\end{theorem}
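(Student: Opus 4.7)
The plan is to take Proposition~\ref{prop:existence of eigenfunction} as the starting point: it produces a continuous subsequential limit $\eigfun$ of $\widetilde{u}_n \define \frac{1}{n}\sum_{j=0}^{n-1}\normsplopt^j(\indicator{\splitsphere})$ satisfying \eqref{eq:eigenfunction of normed split operator}, \eqref{eq:two sides bounds for eigenfunction}, and \eqref{eq:integration of eigenfunction with respect to eigenmeasure is one}. The H\"older regularity $\eigfun \in \splholderspace$ is inherited from estimate \eqref{eq:third bound for normed split operator} in Lemma~\ref{lem:distortion lemma for normed split operator}, which gives a uniform H\"older seminorm bound on each $\widetilde{u}_n$, hence on the uniform limit. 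The promotion from subsequential to full convergence will be the most delicate step: I plan to show that the cone of positive H\"older $\normsplopt$-fixed functions normalized by $\spleigmea$ is a singleton, so every subsequential limit of the equicontinuous, uniformly bounded family $\{\widetilde{u}_n\}$ coincides with $\eigfun$. This one-dimensionality will be obtained by a Birkhoff--Hopf cone-contraction argument on the positive cone in $C(\splitsphere)$, using strong irreducibility together with the uniform two-sided ratio bound in Lemma~\ref{lem:distortion lemma for subsystem}~\ref{item:lem:distortion lemma for subsystem:uniform bound} to ensure that a fixed iterate of $\normsplopt$ strictly contracts the Hilbert projective metric.

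To verify $f$-invariance of $\equstate \define \eigfun \spleigmea$, I will use the algebraic identity
\[
    \splopt\bigl( \eigfun \cdot (u \circ F) \bigr) \,=\, e^{D_{F,\phi}}\, u \cdot \eigfun, \qquad u \in C(S^2),
\]
where $u$ is lifted to $\splitsphere$ via $(x,\colour)\mapsto u(x)$ and $u\circ F$ is extended arbitrarily off $\dom F$ (the extension being immaterial since $\spleigmea$ is supported on $\limitset \subseteq \dom F$ by Theorem~\ref{thm:subsystem:eigenmeasure existence and basic properties}~\ref{item:thm:subsystem:eigenmeasure existence and basic properties:eigenmeasure support on limitset}). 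The identity is a direct consequence of formula~\eqref{eq:definition of partial Ruelle operators} combined with the observation $F((F|_{X^1})^{-1}(y)) = y$ and the eigenfunction relation for $\eigfun$. Coupled with $\normdualsplopt\spleigmea = \spleigmea$ from \eqref{eq:eigenmeasure for dual split operator} and Proposition~\ref{prop:dual split operator}~\ref{item:prop:dual split operator:act on bound function}, it gives
\[
    \int u\circ F \,\mathrm{d}\equstate \,=\, \int \normsplopt\bigl((u\circ F)\eigfun\bigr) \,\mathrm{d}\spleigmea \,=\, \int u\,\eigfun \,\mathrm{d}\spleigmea \,=\, \int u \,\mathrm{d}\equstate,
\]
and $f$-invariance follows because $F = f|_{\dom F}$ and $\equstate$ is supported on $\dom F$.

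For the Gibbs property of $\equstate$, I will first obtain the Gibbs property of $\eigmea$ with constant $D_{F,\phi}$ via Proposition~\ref{prop:dual split operator}~\ref{item:prop:dual split operator:sphere version level n} applied to $\spleigmea = e^{-nD_{F,\phi}}(\dualsplopt)^n\spleigmea$: for $X^n\in\Domain{n}$ with $F^n(X^n) = X^0_\colour$, restricting to the interior (on which the local degree is $1$ and which has full $m_\colour$-measure by Proposition~\ref{prop:subsystem edge Jordan curve has measure zero}) gives
\[
    \spleigmea(X^n) \,=\, e^{-nD_{F,\phi}}\int_{X^0_\colour}\! e^{S_n^F\phi\,\circ\, (F^n|_{X^n})^{-1}}\,\mathrm{d}m_\colour.
\]
Lemma~\ref{lem:distortion_lemma} bounds the integrand between $e^{\pm \Cdistortion}\,e^{S_n^F\phi(x)}$ for any $x\in X^n$, while $\min_{\colour} m_\colour(X^0_\colour) > 0$ will be established by a short contradiction argument: if $m_\colour(X^0_\colour) = 0$ for some $\colour$, then by strong irreducibility one finds $n\le n_F$ and $X^n \in \ccFTile{n}{}{}$ of the opposite location/color such that $(\dualsplopt)^n\spleigmea(X^n) = 0$ forces the other $m_{\colour'}$ to vanish as well, violating $\spleigmea$ being a probability measure. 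Combining the resulting Gibbs bound for $\eigmea$ with the bounds $\Csplratio^{-1}\le\eigfun\le\Csplratio$ yields the Gibbs bound \eqref{eq:def:subsystem gibbs measure} for $\equstate$ with $P_\equstate = P_\eigmea = D_{F,\phi}$.

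Finally, the pressure chain \eqref{eq:equalities for characterizations of pressure} is assembled as follows. The equalities $P_\equstate = P_\eigmea = D_{F,\phi}$ are already obtained; the last equality $D_{F,\phi} = \lim \frac{1}{n}\log\bigl(\splopt^n(\indicator{\splitsphere})(\widetilde{y})\bigr)$ is the content of Corollary~\ref{coro:well-define for split operator norm constant}; and $\pressure = D_{F,\phi}$ follows by comparing $Z_n(F,\phi)$ with $\splopt^n(\indicator{\splitsphere})((y_\black,\black)) + \splopt^n(\indicator{\splitsphere})((y_\white,\white))$, which using \eqref{eq:iteration of split-partial ruelle operator} expand into sums over $\Domain{n}$ that differ from $Z_n(F,\phi)$ only by a factor bounded between $e^{-\Cdistortion}$ and $e^{\Cdistortion}$ by Lemma~\ref{lem:distortion_lemma}. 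For the positivity clause, take $x\in U\cap\limitset$; by Lemma~\ref{lem:visual_metric}~\ref{item:lem:visual_metric:diameter of cell} and $x\in\bigcup\Domain{n}$ for every $n$, for $n$ large enough there is $X^n\in\Domain{n}$ with $x\in X^n\subseteq U$, whence $\equstate(U)\ge\equstate(X^n) > 0$ by the Gibbs bound. The principal obstacle, as noted, is the full (not merely subsequential) convergence in the first step; my expectation is that the cone-contraction approach will yield this cleanly, but a fallback route is to argue that any two subsequential limits agree by showing they produce the same $f$-invariant Gibbs measure when multiplied against $\spleigmea$ and then invoking uniqueness of the Gibbs constant.
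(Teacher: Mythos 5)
Most of your proposal runs parallel to the paper (the invariance computation is exactly Proposition~\ref{prop:subsystem:f-invariant measure}, the Gibbs property of $\eigmea$ is Proposition~\ref{prop:subsystem properties of eigenmeasure}~\ref{item:prop:subsystem properties of eigenmeasure:Gibbs property}, and your direct comparison of $Z_n(F,\phi)$ with $\splopt^{n}\bigl(\indicator{\splitsphere}\bigr)$ is a legitimate, even slightly more streamlined, route to $\pressure = D_{F,\phi}$). The genuine gap is in the step you yourself flag as the delicate one: the promotion from subsequential to full convergence. A Birkhoff--Hopf contraction of $\normsplopt$ \emph{on the positive cone of} $C(\splitsphere)$, which is what you invoke, is false. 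Birkhoff's theorem requires the image of the cone under some iterate $\normsplopt^{N}$ to have finite diameter in the Hilbert projective metric, and this fails here: $\normsplopt^{N}u(\widetilde{y})$ samples $u$ only at the finitely many points $(F^{N}|_{X^{N}})^{-1}(y)$, so a nonnegative continuous $u$ that vanishes at every $N$-preimage of some $y$ but not at those of another point produces $\normsplopt^{N}u$ with a zero, hence at infinite projective distance from $\indicator{\splitsphere}$. The two-sided ratio bound of Lemma~\ref{lem:distortion lemma for subsystem}~\ref{item:lem:distortion lemma for subsystem:uniform bound} concerns only the constant test function and does not repair this. To make a cone argument work you would need a regularity-restricted (Birkhoff/H\"older) cone together with a Lasota--Yorke-type distortion estimate for $\normsplopt$ acting on general H\"older functions, plus a quantitative cross-color comparability coming from strong irreducibility; none of this is available from the lemmas you cite, so as written the key step does not go through.

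Your fallback is also insufficient: two distinct normalized fixed functions $\eigfun \ne \eigfunv$ would yield two $f$-invariant Gibbs measures $\eigfun\spleigmea$ and $\eigfunv\spleigmea$ with the \emph{same} constant $P = D_{F,\phi}$; uniqueness of the Gibbs constant says nothing about uniqueness of the measure, and uniqueness of equilibrium states/Gibbs measures for subsystems is precisely what this paper does not prove (it is deferred to the sequel). Even equality of the two measures would only give $\eigfun = \eigfunv$ $\spleigmea$-a.e., leaving the identity off $\operatorname{supp}\spleigmea$ unaddressed. The paper's actual argument is elementary and avoids cones: it first shows $\spleigmea\bigl(\spllimitset\bigr)=1$ for the split limit set, then takes $t \define \sup\{s \describe \eigfun - s\eigfunv > 0 \text{ on } \splitsphere\}$, locates a zero of $\eigfun - t\eigfunv$, propagates it through $\normsplopt(\eigfun - t\eigfunv) = \eigfun - t\eigfunv$ to all split preimages, and uses irreducibility (density of preimages in $\spllimitset$) plus the normalization \eqref{eq:integration of eigenfunction with respect to eigenmeasure is one} to get $t=1$ and $\eigfun=\eigfunv$ on $\spllimitset$; finally it extends the identity to all of $\splitsphere$ by a continuity argument that expands $\normsplopt^{n}(\eigfun-\eigfunv)$ over $n$-tiles and controls the sum with the Gibbs property of $\eigmea$. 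You should either adopt this scheme or supply the missing cone machinery (invariant H\"older cone, finite-diameter image, and the attendant distortion estimates) explicitly.
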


See \eqref{eq:pressure of subsystem} for the definition of $\pressure$. 
The proof of Theorem~\ref{thm:existence of f invariant Gibbs measure} will be presented at the end of this subsection.

We will show in Theorem~\ref{thm:existence of equilibrium state for subsystem} that a measure $\equstate$ as defined above is, in fact, an equilibrium state for the map $\limitmap = F|_{\limitset}$ and the potential $\phi|_{\limitset}$. 
We will also show in Theorem~\ref{thm:subsystem characterization of pressure} that $\pressure = \fpressure$.

\begin{proposition}    \label{prop:subsystem:f-invariant measure}
	Let $f$, $\mathcal{C}$, $F$, $d$, $\phi$ satisfy the Assumptions in Section~\ref{sec:The Assumptions}. 
	We assume in addition that $f(\mathcal{C}) \subseteq \mathcal{C}$ and $F \in \subsystem$ is irreducible.
	Let $\eigmea = \spleigmea$ be an eigenmeasure as in Theorem~\ref{thm:subsystem:eigenmeasure existence and basic properties} and $\eigfun \in C(\splitsphere)$ be an eigenfunction of $\normsplopt$ from Proposition~\ref{prop:existence of eigenfunction}. Let $\equstate = \splmea \define \eigfun\spleigmea$. Then $\equstate$ is an $f$-invariant Borel probability measure on $S^{2}$.
\end{proposition}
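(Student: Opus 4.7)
The plan is to reduce $f$-invariance to the eigenvalue/eigenvector relation between $\spleigmea$ and $\eigfun$ under the split Ruelle operator. First, since $\eigfun \geqslant \Csplratio^{-1} > 0$ on $\splitsphere$ by \eqref{eq:existence:two sides bounds for eigenfunction} and $\int \eigfun \,\mathrm{d}\spleigmea = 1$ by \eqref{eq:existence:integration of eigenfunction with respect to eigenmeasure is one}, the product $\splmea = \eigfun\spleigmea$ defines a Borel probability measure on $\splitsphere$; under the identification of Remarks~\ref{rem:disjoint union} and~\ref{rem:probability measure in split setting}, $\equstate$ is a Borel probability measure on $S^{2}$.

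It remains to verify $\int_{S^{2}}\! g\circ f \,\mathrm{d}\equstate = \int_{S^{2}}\! g \,\mathrm{d}\equstate$ for every $g \in C(S^{2})$. Since $\eigmea(\limitset) = 1$ by Theorem~\ref{thm:subsystem:eigenmeasure existence and basic properties}~\ref{item:thm:subsystem:eigenmeasure existence and basic properties:eigenmeasure support on limitset} and $\eigfun$ is bounded, also $\equstate(\limitset) = 1$. On $\limitset \subseteq \domF$ we have $f = F$, so after truncation to $\limitset$ both sides of the desired identity become integrals over $\limitset$, and it suffices to show
\begin{equation*}
    \int \! g\eigfun \,\mathrm{d}\spleigmea \,=\, \int \! (g\circ F)\eigfun \indicator{\domF} \,\mathrm{d}\spleigmea.
\end{equation*}
Here the right-hand side is well-defined because we extend $g \circ F$ by zero outside $\domF$; with the splitting, define $w_\colour \define (g\circ F)\eigfun_{\colour}\indicator{\domF \cap X^0_\colour} \in B\bigl(X^0_\colour\bigr)$ for $\colour \in \colours$, and set $w \define (w_{\black}, w_{\white}) \in \splboufunspace$.

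The key step is the identity $\splopt(w) = \eigenvalue \, g\eigfun$ in $\splboufunspace$ (where on the right, $g$ is understood via its restrictions to $X^0_{\black}$ and $X^0_{\white}$). This is a direct computation from Definitions~\ref{def:partial Ruelle operator} and~\ref{def:split ruelle operator}: for each $\colour \in \colours$ and each $y \in X^0_\colour$,
\begin{align*}
    \pi_{\colour}(\splopt(w))(y)
    &= \sum_{\colour'\in\colours}\sum_{X^1 \in \ccFTile{1}{\colour}{\colour'}} w_{\colour'}\bigl((F|_{X^1})^{-1}(y)\bigr)\, \myexp{\phi((F|_{X^1})^{-1}(y))} \\
    &= g(y)\sum_{\colour'\in\colours}\sum_{X^1 \in \ccFTile{1}{\colour}{\colour'}} \eigfun_{\colour'}\bigl((F|_{X^1})^{-1}(y)\bigr)\,\myexp{\phi((F|_{X^1})^{-1}(y))}
    = g(y)\,\eigenvalue\,\eigfun_{\colour}(y),
\end{align*}
where we used that $(F|_{X^1})^{-1}(y) \in X^1 \subseteq \domF$ (so the indicator in $w_{\colour'}$ equals $1$), that $F((F|_{X^1})^{-1}(y)) = y$, and that $\splopt(\eigfun) = \eigenvalue\eigfun$ from \eqref{eq:eigenmeasure for dual split operator}--\eqref{eq:existence:eigenfunction of normed split operator}.

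Once this identity is in hand, Proposition~\ref{prop:dual split operator}~\ref{item:prop:dual split operator:act on bound function} and \eqref{eq:eigenmeasure for dual split operator} give
\begin{equation*}
    \eigenvalue \int\! g\eigfun \,\mathrm{d}\spleigmea \,=\, \int\!\splopt(w) \,\mathrm{d}\spleigmea \,=\, \bigl\langle \dualsplopt\spleigmea, w \bigr\rangle \,=\, \eigenvalue \int\! w \,\mathrm{d}\spleigmea,
\end{equation*}
so $\int g\eigfun \,\mathrm{d}\spleigmea = \int w \,\mathrm{d}\spleigmea = \int_{\domF}(g\circ F)\eigfun \,\mathrm{d}\spleigmea$, which is what was needed. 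The main subtlety — and the only place where one must be careful — is that $F$ is defined only on $\domF \subsetneq S^{2}$, so one cannot naively write ``$g\circ F$'' as an element of $C(\splitsphere)$; the device of multiplying by $\indicator{\domF}$ (yielding a bounded Borel, hence element of $\splboufunspace$) and noting that the preimage branches $(F|_{X^1})^{-1}(y)$ automatically land in $\domF$ bypasses this obstacle cleanly.
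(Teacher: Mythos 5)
Your proof is correct and follows essentially the same route as the paper's: both rest on the eigen-relations $\dualsplopt\spleigmea = \eigenvalue\spleigmea$ and $\splopt(\eigfun) = \eigenvalue\eigfun$ together with the pull-out identity for the split Ruelle operator applied to $\eigfun$ times a function of the image point. The paper streamlines your $\indicator{\domF}$/support device by working directly with $\widetilde{g \circ f} \in C(\splitsphere)$ (with $f$ globally defined and $f = F$ at the preimage branches occurring in the operator), so the bounded-Borel duality of Proposition~\ref{prop:dual split operator}~\ref{item:prop:dual split operator:act on bound function} and the $\eigmea(\limitset(F,\mathcal{C})) = 1$ step become unnecessary.
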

\begin{proof}
	By Proposition~\ref{prop:existence of eigenfunction} and Remark~~\ref{rem:probability measure in split setting}, we get $\equstate \in \probsphere$. 
	It suffices to prove that $\functional{\equstate}{g \circ f} = \functional{\equstate}{g}$ for each $g \in C(S^2)$. 
	Indeed, it follows from \eqref{eq:definition of partial Ruelle operators} and \eqref{eq:def:split ruelle operator} that for each $\widetilde{h} \in C(\splitsphere)$,
	\[
		\normsplopt( \widetilde{h} (\widetilde{g \circ f})) = \widetilde{g} \, \normsplopt(\widetilde{h}),
	\]
	where $\widetilde{g}$ is a continuous function on $\splitsphere$ defined by $\widetilde{g}(\widetilde{x}) \define g(x)$ for each $\widetilde{x} = (x, \colour) \in \splitsphere$, and $\widetilde{g \circ f} \in C(\splitsphere)$ is defined similarly.
	Then by \eqref{eq:eigenmeasure for dual split operator} and \eqref{eq:existence:eigenfunction of normed split operator}, we deduce that for each $g \in C(S^2)$,
	\begin{align*}
		\functional{\equstate}{g \circ f} 
		&= \functional[\big]{\splmea}{\widetilde{g \circ f}} \\
		&= \functional[\big]{\spleigmea}{\eigfun (\widetilde{g \circ f}) }  \\
		&= \functional[\big]{\normdualsplopt\spleigmea}{\eigfun (\widetilde{g \circ f}) } \\
		&= \functional[\big]{\spleigmea}{ \normsplopt(\eigfun (\widetilde{g \circ f})) } \\
		&= \functional[\big]{\spleigmea}{\widetilde{g} \, \normsplopt(\eigfun)}  \\
		&= \functional{\spleigmea}{ \widetilde{g}\, \eigfun } \\
		&= \functional{\eigfun\spleigmea}{ \widetilde{g} } \\
		&= \functional{\equstate}{g}. \qedhere
	\end{align*}
\end{proof}

\begin{proposition}    \label{prop:subsystem edge Jordan curve has measure zero}
	Let $f$, $\mathcal{C}$, $F$, $d$, $\phi$ satisfy the Assumptions in Section~\ref{sec:The Assumptions}. 
	We assume in addition that $f(\mathcal{C}) \subseteq \mathcal{C}$ and $F \in \subsystem$ is strongly irreducible.
	Let $\eigmea = \spleigmea$ be an eigenmeasure as in Theorem~\ref{thm:subsystem:eigenmeasure existence and basic properties} and $\eigfun$ be an eigenfunction of $\normsplopt$ from Proposition~\ref{prop:existence of eigenfunction}. Denote $\equstate = \splmea \define \eigfun\spleigmea$. 
	Then $\eigmea \bigl( \bigcup_{j = 0}^{+\infty} f^{-j}(\mathcal{C}) \bigr) = \equstate \bigl( \bigcup_{j = 0}^{+\infty} f^{-j}(\mathcal{C}) \bigr) = 0$.
\end{proposition}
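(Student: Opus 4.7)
The plan is to reduce the proposition to showing $\eigmea(\mathcal{C}) = 0$ and then to establish this by an iterated dual operator estimate that exploits the strong irreducibility of $F$.

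First, Proposition~\ref{prop:existence of eigenfunction} gives $\Csplratio^{-1} \leqslant \eigfun \leqslant \Csplratio$ on $\splitsphere$, so $\equstate = \eigfun\spleigmea$ and $\eigmea = \spleigmea$ are mutually absolutely continuous with comparable densities and share the same collection of null sets; it therefore suffices to prove either of the two equalities. Using the $f$-invariance of $\equstate$ established in Proposition~\ref{prop:subsystem:f-invariant measure} together with the nested inclusion $\mathcal{C} \subseteq f^{-1}(\mathcal{C}) \subseteq f^{-2}(\mathcal{C}) \subseteq \cdots$ (which follows from $f(\mathcal{C}) \subseteq \mathcal{C}$), continuity of $\equstate$ from below yields
\[
\equstate\biggl(\bigcup_{j=0}^{+\infty} f^{-j}(\mathcal{C})\biggr) = \lim_{j \to +\infty} \equstate(f^{-j}(\mathcal{C})) = \equstate(\mathcal{C}).
\]
Decomposing $\mathcal{C}$ as the finite union of $0$-edges together with $\post{f}$, and disposing of $\post{f}$ via Theorem~\ref{thm:subsystem:eigenmeasure existence and basic properties}~\ref{item:thm:subsystem:eigenmeasure existence and basic properties:strongly irreducible vertex zero measure}, the entire proposition reduces to showing $\eigmea(\mathcal{C}) = 0$.

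For the core estimate I would argue by contradiction and assume $\eigmea(\mathcal{C}) > 0$. Applying Proposition~\ref{prop:dual split operator}~\ref{item:prop:dual split operator:act on bound function} to the bounded Borel functions $(\indicator{\mathcal{C}}, 0)$ and $(0, \indicator{\mathcal{C}})$ on $\splitsphere$, iterating by means of Lemma~\ref{lem:iteration of split-partial ruelle operator}, and discarding contributions from $n$-tiles of $F$ disjoint from $\mathcal{C}$, I would obtain the componentwise matrix inequality
\[
\eigenvalue^{n} \begin{pmatrix} m_\black(\mathcal{C}) \\ m_\white(\mathcal{C}) \end{pmatrix} \leqslant M_n \begin{pmatrix} m_\black(\mathcal{C}) \\ m_\white(\mathcal{C}) \end{pmatrix},
\]
where the $2 \times 2$ non-negative matrix $M_n$ is assembled from weighted boundary sums of the shape $\sum e^{\max_{X^n} S^F_n \phi}$ ranging over $n$-tiles $X^n \in \cFTile{n}$ (of each prescribed pair of color indices) satisfying $X^n \cap \mathcal{C} \neq \emptyset$. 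Since the non-negative vector on the left is non-zero by assumption, the Collatz--Wielandt form of the Perron--Frobenius theorem forces $\eigenvalue^n \leqslant \rho(M_n)$ for every $n \in \n$.

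The hard part will be producing the strict asymptotic gap
\[
\limsup_{n \to +\infty} \frac{1}{n}\log\rho(M_n) < \Cnormspl = \log\eigenvalue,
\]
which contradicts $\eigenvalue^n \leqslant \rho(M_n)$ for large $n$ and thereby forces $(m_\black(\mathcal{C}), m_\white(\mathcal{C})) = 0$. For this I would appeal to Lemma~\ref{lem:strongly irreducible:tile in interior tile}: every $n$-tile $X^n$ of $F$ meeting $\mathcal{C}$ contains in its topological interior a subtile of $F$ of level at most $n + n_F$ of each prescribed color, and any such interior subtile is automatically disjoint from $\mathcal{C}$ because $X^n \cap \mathcal{C} \subseteq \partial X^n$. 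Combining this with the distortion estimate of Lemma~\ref{lem:distortion lemma for subsystem}~\ref{item:lem:distortion lemma for subsystem:holder bound on same color tile}, which allows one to compare Birkhoff sums on subtiles with those on their containing tiles, and with the uniform two-sided bounds on $\splopt^{n_F}\indicator{\splitsphere}$ provided by Lemma~\ref{lem:distortion lemma for normed split operator}, each boundary $n$-tile must lose a definite weighted fraction of its $(n + n_F)$-subtiles to the interior. This should yield a recursion $\rho(M_{n + n_F}) \leqslant (1 - \delta)\,\eigenvalue^{n_F}\,\rho(M_n)$ with $\delta > 0$ independent of $n$, which iterates to the required exponential decay $\rho(M_n)/\eigenvalue^n \to 0$ and completes the proof.
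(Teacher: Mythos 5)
Your reduction is fine: by Proposition~\ref{prop:existence of eigenfunction} the density $\eigfun$ is bounded above and below, so $\eigmea$ and $\equstate$ share the same null sets, and $f$-invariance of $\equstate$ together with the inclusions $\mathcal{C} \subseteq f^{-1}(\mathcal{C}) \subseteq \cdots$ collapses the whole union to $\mathcal{C}$, reducing the proposition to $\eigmea(\mathcal{C}) = 0$. But you are sitting on the strongest available observation without using it to its full effect, and the core argument you propose has a genuine gap.

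The observation you already have — that $\equstate(f^{-j}(\mathcal{C})) = \equstate(\mathcal{C})$, hence $\equstate\bigl(f^{-j}(\mathcal{C}) \setminus \mathcal{C}\bigr) = 0$ for every $j$ — gives you, for free, a concrete zero-measure set that maps \emph{onto} $\mathcal{C}$. Strong irreducibility provides, for each $\colour$, an $n_\colour$ and an $n_\colour$-tile $X_\colour \in \cFTile{n_\colour}$ with $X_\colour \cap \mathcal{C} = \emptyset$. Then $\partial X_\colour \subseteq f^{-n_\colour}(\mathcal{C}) \setminus \mathcal{C}$, so $\eigmea(\partial X_\colour) = 0$, and $F^{n_\colour}|_{\partial X_\colour}$ is a homeomorphism onto $\mathcal{C}$. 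One single application of Proposition~\ref{prop:dual split operator}~\ref{item:prop:dual split operator:sphere version level n} with $A = \partial X_\colour$ and $n = n_\colour$ then gives
\[
0 = \eigenvalue^{n_\colour}\eigmea(\partial X_\colour) = \sum_{\colour'} \int_{\mathcal{C}} \bigl(\ccndegF{\colour'}{}{n_\colour}{\cdot\,}\myexp[\big]{S_{n_\colour}^F\phi}\bigr)\circ(F^{n_\colour}|_{\partial X_\colour})^{-1}\,\mathrm{d}m_{\colour'} \geqslant e^{-n_\colour\uniformnorm{\phi}}\,m_\colour(\mathcal{C}),
\]
using $\ccndegF{\colour}{}{n_\colour}{y} \geqslant 1$ for $y \in \partial X_\colour \subseteq X_\colour$. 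Doing this for each $\colour$ forces $m_\black(\mathcal{C}) = m_\white(\mathcal{C}) = 0$. This is the paper's route: no iteration, no spectral radius, no asymptotic gap. (A similar level-$k$ pullback disposes of any $k$-edge of positive measure, which is what actually handles the full union in the paper since the paper does not first run the reduction you ran.)

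Your proposed route — iterating $\dualsplopt$ on $(\indicator{\mathcal{C}}, 0)$ and $(0, \indicator{\mathcal{C}})$, assembling a family of $2 \times 2$ matrices $M_n$, and invoking Collatz--Wielandt — can indeed be set up, because for $y \in X^0_\colour$ the evaluation $\paroperator{n}{\colour}{\colour'}(\indicator{\mathcal{C}})(y)$ is supported on $y \in \mathcal{C}$ and counts only $n$-tiles of $F$ meeting $\mathcal{C}$, so $\eigenvalue^n m_\colour(\mathcal{C})$ is dominated by a boundary-tile sum times the vector $(m_\black(\mathcal{C}), m_\white(\mathcal{C}))$. But the decisive step, which you flag yourself with ``this should yield a recursion $\rho(M_{n+n_F}) \leqslant (1-\delta)\eigenvalue^{n_F}\rho(M_n)$,'' is not established. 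Lemma~\ref{lem:strongly irreducible:tile in interior tile} gives you a \emph{single} interior subtile of each boundary $n$-tile, of level $n + m$ for some $m \leqslant n_F$ that varies with the tile; turning that one tile into a uniform weighted fraction $\delta > 0$ of the \emph{total} weight $\approx \Csplratio^{\pm 1}\eigenvalue^{n_F} e^{\max_{X^n} S^F_n\phi}$ of all $(n+n_F)$-subtiles requires the two-sided bounds of Lemma~\ref{lem:distortion lemma for normed split operator} and careful bookkeeping across both the location color and the image color of the containing $n$-tile (the decomposition of $M_{n+n_F}$ over containing $n$-tiles does not produce a matrix product in the orientation you need without rearrangement). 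None of this is carried out, and there is no need to carry it out: the single pullback above replaces the entire spectral argument.
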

\begin{proof}
	By Proposition~\ref{prop:subsystem:f-invariant measure}, the measure $\equstate \in \probsphere$ is $f$-invariant. 
	Note that $\mathcal{C} \subseteq f^{-j}(\mathcal{C})$ for each $j \in \n$. 
	Thus we have $\equstate(f^{-j}(\mathcal{C}) \setminus \mathcal{C}) = 0$ for each $j \in \n$. 
	Since $F$ is strongly irreducible, by Definition~\ref{def:irreducibility of subsystem}, for each $\colour \in \colours$, there exists an integer $n_{\colour} \in \n$ and $X_{\colour} \in \cFTile{n_{\colour}}$ such that $X_{\colour} \cap \mathcal{C} = \emptyset$. 
	Then $\partial X_{\colour} \subseteq f^{-n_{\colour}}(\mathcal{C}) \setminus \mathcal{C}$ for each $\colour \in \colours$. 
	So $\equstate(\partial X_{\colour}) = 0$ for each $\colour \in \colours$. 
	Since $\equstate = \splmea = \eigfun\spleigmea$, where $\eigfun \in C(\splitsphere)$ is bounded away from $0$ by \eqref{eq:existence:two sides bounds for eigenfunction} in Proposition~\ref{prop:existence of eigenfunction}, we have $\eigmea(\partial X_{\colour}) = \spleigmea(\partial X_{\colour}) = 0$ for each $\colour \in \colours$.

	We next show that $\eigmea(\mathcal{C}) = \equstate(\mathcal{C}) = 0$. 
	For each $\colour \in \colours$, it follows from Proposition~\ref{prop:subsystem:properties}~\ref{item:subsystem:properties:homeo} that $F^{n_{\colour}}|_{\partial X_{\colour}}$ is a homeomorphism from $\partial X_{\colour}$ to $\mathcal{C}$. Then for each $\colour \in \colours$, by Proposition~\ref{prop:dual split operator}~\ref{item:prop:dual split operator:sphere version level n}, we have
	\begin{align*}
		0 = \eigmea(\partial X_{\colour}) 
		&= \sum_{\ccolour \in \colours} \int_{ \mathcal{C} }   \bigl( \ccndegF{\ccolour}{}{n_{\colour}}{\cdot \, } \myexp[\big]{ S_{n_{\colour}}^{F}\phi } \bigr) \circ (F^{n_{\colour}}|_{\partial X_{\colour}})^{-1} \,\mathrm{d} m_{\ccolour}  \\
		&\geqslant \int_{ \mathcal{C} }   \bigl( \ccndegF{\colour}{}{n_{\colour}}{\cdot \, } \myexp[\big]{ S_{n_{\colour}}^{F}\phi } \bigr) \circ (F^{n_{\colour}}|_{\partial X_{\colour}})^{-1} \,\mathrm{d} m_{\colour}  \\
		&\geqslant \int_{\mathcal{C}} \! \bigl( \myexp[\big]{ S_{n_{\colour}}^{F}\phi } \bigr) \circ (F^{n_{\colour}}|_{\partial X_{\colour}})^{-1} \,\mathrm{d} m_{\colour}   \\
		&\geqslant \myexp{ -n_{\colour}\uniformnorm{\phi} } m_{\colour}(\mathcal{C}) \\
		&\geqslant 0,
	\end{align*}
	where the second inequality holds since $\ccndegF{\colour}{}{n_{\colour}}{y} \geqslant 1$ for each $y \in \partial X_{\colour} \subseteq X_{\colour} \in \cFTile{n_{\colour}}$. 
	Thus $\eigmea(\mathcal{C}) = \spleigmea(\mathcal{C}) = m_{\black}(\mathcal{C}) + m_{\white}(\mathcal{C}) = 0$.
	This implies $\equstate(\mathcal{C}) = 0$.

	Finally, since $\equstate(f^{-j}(\mathcal{C}) \setminus \mathcal{C}) = 0$ for each $j \in \n$,
	we have that $\equstate \bigl( \bigcup_{j = 0}^{+\infty} f^{-j}(\mathcal{C}) \bigr) = 0$.
	This implies $\eigmea\bigl( \bigcup_{j = 0}^{+\infty} f^{-j}(\mathcal{C}) \bigr) = 0$ and finishes the proof.
\end{proof}

\begin{definition}[Jacobian]    \label{def:subsystem Jacobian}
	Let $f$, $\mathcal{C}$, $F$ satisfy the Assumptions in Section~\ref{sec:The Assumptions}. 
	Consider a Borel probability measure $\mu \in \probsphere$ on $S^2$. 
	A Borel function $J \colon \domF \mapping [0, +\infty)$ is a \emph{Jacobian (function)} for $F$ with respect to $\mu$ if for every Borel $A \subseteq \domF$ on which $F$ is injective, the following equation holds:
	\begin{equation}    \label{eq:def:subsystem Jacobian}
		\mu(F(A)) = \int_{A} \! J \,\mathrm{d}\mu.
	\end{equation}
\end{definition}

\begin{proposition}    \label{prop:subsystem properties of eigenmeasure}
	Let $f$, $\mathcal{C}$, $F$, $d$, $\Lambda$, $\phi$, $\holderexp$ satisfy the Assumptions in Section~\ref{sec:The Assumptions}. 
	We assume in addition that $f(\mathcal{C}) \subseteq \mathcal{C}$ and $F \in \subsystem$ is strongly irreducible. 
	Consider an arbitrary $\mu = \splmea \in \probmea{\splitsphere}$.
	If $\dualsplopt\splmea = \eigenvalue \splmea$ for some constant $\eigenvalue > 0$, then the following statements hold:
	\begin{enumerate}
		\smallskip

		\item     \label{item:prop:subsystem properties of eigenmeasure:Jacobian}
		The function $J \colon \domF \mapping [0, +\infty)$ given by $J \define \eigenvalue \myexp{-\phi}$ is a Jacobian for $F$ with respect to $\mu$.

		\smallskip 
		
		\item     \label{item:prop:subsystem properties of eigenmeasure:Gibbs property}
		The measure $\mu$ is a Gibbs measure with respect to $F$, $\mathcal{C}$, and $\phi$ with $P_{\mu} = \pressure = \log \eigenvalue$. 
		Here $\pressure$ is defined in \eqref{eq:pressure of subsystem} from Subsection~\ref{sub:Pressures for subsystems}. 
		In particular, $\sum_{X^n \in \Domain{n}} \mu(X^n) \leqslant 2$.

		\smallskip

		\item     \label{item:prop:subsystem properties of eigenmeasure:forward quasi-invariant and non-singular}
		The map $\limitmap \define F|_{\limitset(F, \mathcal{C})}$ with respect to $\mu$ is forward quasi-invariant (i.e., for each Borel set $A \subseteq \limitset(F, \mathcal{C})$, if $\mu(A) = 0$, then $\mu(\limitmap(A)) = 0$) and non-singular (i.e., for each Borel set $A \subseteq \limitset(F, \mathcal{C})$, if $\mu(A) = 0$, then $\mu(\limitmap^{-1}(A)) = 0$).
	\end{enumerate}
\end{proposition}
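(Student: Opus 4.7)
The plan is to exploit the identity in Proposition~\ref{prop:dual split operator}~\ref{item:prop:dual split operator:split version} together with the null-set information $\mu\bigl(\bigcup_{j \geqslant 0} f^{-j}(\mathcal{C})\bigr) = 0$, which applies to our $\mu$ by Proposition~\ref{prop:subsystem edge Jordan curve has measure zero} in view of the fact that $\mu$ qualifies as an eigenmeasure in the sense of Theorem~\ref{thm:subsystem:eigenmeasure existence and basic properties}.

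For \ref{item:prop:subsystem properties of eigenmeasure:Jacobian}, the null-set information allows me to discard $A \cap \bigcup_{j \geqslant 0} f^{-j}(\mathcal{C})$, so $A$ decomposes as a disjoint union $A = \bigsqcup_{X^1 \in \Domain{1}} A_{X^1}$ with each $A_{X^1} \subseteq \inte{X^1}$. Fix $X^1 \in \ccFTile{1}{\colour'}{\colour}$; on $\inte{X^1}$ each point sits in a unique $1$-tile of $F$, so the local-degree factor $\ccdegF{\colour''}{\colour}{\cdot}$ collapses to $\indicator{\{\colour'' = \colour'\}}$, and Proposition~\ref{prop:dual split operator}~\ref{item:prop:dual split operator:split version} applied to any Borel $B \subseteq A_{X^1}$ simplifies to
\[
    \eigenvalue \mu_{\colour}(B) = \int_{F(B)} \myexp[\big]{\phi \circ (F|_B)^{-1}(y)} \,\mathrm{d}\mu_{\colour'}(y).
\]
Changing variables via $y = F(x)$ on the right-hand side shows that the pullback measure $\nu \define ((F|_{A_{X^1}})^{-1})_*(\mu_{\colour'}|_{F(A_{X^1})})$ satisfies $\nu = \eigenvalue \myexp{-\phi} \mu_{\colour}$ on $A_{X^1}$, so $\mu_{\colour'}(F(A_{X^1})) = \nu(A_{X^1}) = \eigenvalue \int_{A_{X^1}} \myexp{-\phi} \,\mathrm{d}\mu_{\colour}$. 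Summing over $X^1 \in \Domain{1}$ (the images $F(A_{X^1})$ are pairwise disjoint by injectivity of $F$ on $A$, and each lies in $\inte{X^0_{\colour'}}$ for its color) yields the Jacobian identity $\mu(F(A)) = \eigenvalue \int_A \myexp{-\phi} \,\mathrm{d}\mu$.

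For \ref{item:prop:subsystem properties of eigenmeasure:Gibbs property}, iterating \ref{item:prop:subsystem properties of eigenmeasure:Jacobian} along an $n$-tile $X^n \in \Domain{n}$, on which $F^n$ is a homeomorphism onto $F^n(X^n) = X^0_{\colour}$ by Proposition~\ref{prop:subsystem:properties}~\ref{item:subsystem:properties:homeo}, gives $\mu(X^0_{\colour}) = \eigenvalue^n \int_{X^n} \myexp{-S_n^F\phi} \,\mathrm{d}\mu$. Lemma~\ref{lem:distortion_lemma} bounds the oscillation of $S_n^F\phi$ on $X^n$ by $\Cdistortion$, producing the Gibbs-type bound $\mu(X^n) \asymp \mu(X^0_{\colour}) \eigenvalue^{-n} \myexp{S_n^F\phi(x)}$ for every $x \in X^n$. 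To complete \eqref{eq:def:subsystem gibbs measure} I must show $\mu(X^0_{\colour}) > 0$ for both colors: if $\mu(X^0_{\black}) = 0$, strong irreducibility via Lemma~\ref{lem:strongly irreducible:tile in interior tile} supplies $X^k \subseteq \inte{X^0_{\black}}$ with $F^k(X^k) = X^0_{\white}$, and the iterated Jacobian forces $\mu(X^0_{\white}) \leqslant \eigenvalue^k \myexp{k\uniformnorm{\phi}} \mu(X^k) = 0$, contradicting $\mu(X^0_{\black}) + \mu(X^0_{\white}) = 1$. The bound $\sum_{X^n} \mu(X^n) \leqslant 2$ is then immediate from disjoint tile interiors and $\mu(\bigcup f^{-n}(\mathcal{C})) = 0$; combined with $\sum_{X^n} \mu(X^n) \geqslant \mu(\limitset) = 1$ (which follows from Proposition~\ref{prop:dual split operator}~\ref{item:prop:dual split operator:measure concentrate}), this forces $Z_n(F,\phi) \asymp \eigenvalue^n$, hence $\pressure = \log\eigenvalue$; the identification with $\Cnormspl$ is Proposition~\ref{prop:eigenvalue equals split operator norm constant}.

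For \ref{item:prop:subsystem properties of eigenmeasure:forward quasi-invariant and non-singular}, both halves follow from \ref{item:prop:subsystem properties of eigenmeasure:Jacobian} and the pointwise positivity $J = \eigenvalue \myexp{-\phi} \geqslant \eigenvalue \myexp{-\uniformnorm{\phi}} > 0$. Forward quasi-invariance follows by decomposing $A \subseteq \limitset$ into injectivity pieces over $\Domain{1}$ and using $\int_{A'} J \,\mathrm{d}\mu = 0$ whenever $\mu(A') = 0$; non-singularity in the forward direction is the same, and in the backward direction, writing $F^{-1}(A) \cap \domF = \bigcup_{X^1 \in \Domain{1}} B_{X^1}$ with $B_{X^1} \define (F|_{X^1})^{-1}(A \cap F(X^1))$ gives $\int_{B_{X^1}} J \,\mathrm{d}\mu = \mu(F(B_{X^1})) \leqslant \mu(A) = 0$, and the lower bound on $J$ forces $\mu(B_{X^1}) = 0$. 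The main technical obstacle I anticipate is in \ref{item:prop:subsystem properties of eigenmeasure:Jacobian}: the dual operator identity is oriented in the expansion direction while the Jacobian identity runs the other way, so inverting the Radon--Nikodym density requires the null-set information from Proposition~\ref{prop:subsystem edge Jordan curve has measure zero} to reduce to tile interiors, where the local-degree factor trivializes and the change-of-variables becomes transparent.
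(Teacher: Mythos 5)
Your proposal is correct and follows essentially the same route as the paper: use the dual-operator identity plus the null-set information from Proposition~\ref{prop:subsystem edge Jordan curve has measure zero} to trivialize the local-degree factor, invert the resulting backward formula to obtain the Jacobian, iterate with the distortion lemma and a positivity argument for the Gibbs property, and deduce non-singularity from the two-sided bound on $J$. The only notable (but cosmetic) variation is in part~\ref{item:prop:subsystem properties of eigenmeasure:Jacobian}, where you invert the density by a direct pullback change of variables rather than via the paper's explicit Radon--Nikodym step on each $1$-tile, and in part~\ref{item:prop:subsystem properties of eigenmeasure:Gibbs property}, where you obtain $\mu\bigl(X^0_{\colour}\bigr) > 0$ by a qualitative contradiction via Lemma~\ref{lem:strongly irreducible:tile in interior tile} rather than the paper's quantitative bound~\eqref{eq:temp:prop:subsystem properties of eigenmeasure:constant for gibbs measure temp form}; both deliver a Gibbs constant depending on $\min_{\colour}\mu\bigl(X^0_{\colour}\bigr)$, so there is no gap.
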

\begin{proof}
	Assume that $\dualsplopt\splmea = \eigenvalue \splmea$ for some constant $\eigenvalue > 0$.

	\smallskip

	\ref{item:prop:subsystem properties of eigenmeasure:Jacobian}  
	By Proposition~\ref{prop:dual split operator}~\ref{item:prop:dual split operator:sphere version level n}, for each Borel $A \subseteq \domF$ on which $F$ is injective, we have that $F(A)$ is Borel, and
	\[
		\mu(A) = \eigenvalue^{-1}\dualsplopt\splmea(A)
		= \eigenvalue^{-1} \sum_{\colour \in \colours}  \int_{F(A) \cap X^0_{\colour} } \! 
			(\ccndegF{\colour}{}{}{\cdot \, } \myexp{\phi} ) \circ (F|_{A})^{-1} 
		\,\mathrm{d} \mu_{\colour}.
	\]
	Since $F$ is strongly irreducible, it follows from Proposition~\ref{prop:subsystem edge Jordan curve has measure zero} that $\mu(\mathcal{C}) = 0$. Thus $\mu_{\black}(\mathcal{C}) = \mu_{\white}(\mathcal{C}) = 0$. Note that $\ccndegF{\colour}{}{}{y} = 1$ for each $\colour \in \colours$, each $x \in \inte[\big]{X^0_{\colour}}$, and each $y \in F^{-1}(x)$. Thus by \eqref{eq:split measure on S2} we have
	\begin{equation}    \label{eq:Jacobian:eigmeasure}
		\begin{split}
			\mu(A) 
			&= \eigenvalue^{-1} \sum_{\colour \in \colours}  \int_{F(A) \cap X^0_{\colour} } \! \myexp{\phi} \circ (F|_{A})^{-1} \,\mathrm{d} \mu_{\colour}  \\
			&= \eigenvalue^{-1} \int_{F(A)} \! \myexp{\phi} \circ (F|_{A})^{-1} \,\mathrm{d}\mu 
			= \int_{F(A)} \! \frac{1}{J \circ (F|_{A})^{-1}} \,\mathrm{d}\mu,
		\end{split}
	\end{equation}
	where the function $J$ is given in \ref{item:prop:subsystem properties of eigenmeasure:Jacobian}.

	Since $F$ is injective on each $1$-tile $X^1 \in \Domain{1}$, and both $X^1$ and $F(X^1)$ are closed subsets of $S^2$ by Proposition~\ref{prop:subsystem:properties}~\ref{item:subsystem:properties:homeo}, in order to verify \eqref{eq:def:subsystem Jacobian}, it suffices to assume that $A \subseteq X$ for some $1$-tile $X \in \Domain{1}$. 
	Denote the restriction of $\mu$ on $X$ by $\mu_{X}$, i.e., $\mu_{X}$ assigns $\mu(B)$ to each Borel subset $B$ of $X$.

	Let $\widetilde{\mu}$ be a function defined on the set of Borel subsets of $X$ in such a way that $\widetilde{\mu}(B) = \mu(F(B))$ for each Borel $B \subseteq X$. It is clear that $\widetilde{\mu}$ is a Borel measure on $X$. In this notation, we can write \eqref{eq:Jacobian:eigmeasure} as
	\begin{equation}    \label{eq:restriction measure and forward measure}
		\mu_{X}(A) = \int_{A} \! \frac{1}{J|_{X}} \,\mathrm{d}\widetilde{\mu},
	\end{equation}
	for each Borel $A \subseteq X$.

	By \eqref{eq:restriction measure and forward measure}, we know that $\mu_{X}$ is absolutely continuous with respect to $\widetilde{\mu}$. 
	On the other hand, since $J$ is positive and uniformly bounded away from $+\infty$ on $X$, we can conclude that $\widetilde{\mu}$ is absolutely continuous with respect to $\mu_{X}$. 
	Therefore, by the Radon--Nikodym theorem, for each Borel $A \subseteq X$, we get $\mu(F(A)) = \widetilde{\mu}(A) = \int_{A} \! J|_{X} \,\mathrm{d}\mu_{X} = \int_{A} \! J \,\mathrm{d}\mu$. 
	Thus we finish the proof of statement~\ref{item:prop:subsystem properties of eigenmeasure:Jacobian}.

	\smallskip

	\ref{item:prop:subsystem properties of eigenmeasure:Gibbs property}  
	We observe that
	\begin{equation}    \label{eq:eigenmeasure Jacobian for iteration}
		\mu(F^{m}(B)) = \int_{B} \! \frac{\eigenvalue^{m}}{ \myexp{S_{m}\phi} } \,\mathrm{d}\mu
	\end{equation}
	for all $n \in \n$, $m \in \{0, \, 1, \, \dots, \, n\}$, and Borel set $B \subseteq \domF$ on which $F^n$ is defined and injective.
	Recall from Definition~\ref{def:subsystems} that $F^{n}$ is defined at $x \in \domF$ if and only if $F^{i}(x) \in \domF$ for each $i \in \{0, \, 1, \, \dots, \, n - 1\}$.
	Indeed, by statement~\ref{item:prop:subsystem properties of eigenmeasure:Jacobian}, for a given Borel set $A \subseteq \domF$ on which $F$ is injective, we have\[
		\int_{F(A)} \! g \,\mathrm{d} \mu  
		= \int_{A}\!  \frac{ \eigenvalue (g \circ F) }{ \myexp{\phi} } \,\mathrm{d} \mu
	\]
	for each simple function $g$ on $\domF$, thus also for each integrable function $g$. 

	Fix arbitrary $n\in \n$ and Borel set $B \subseteq \domF$ on which $F^n$ is defined and injective. 
	We establish \eqref{eq:eigenmeasure Jacobian for iteration} for each $m \in \zeroton[n]$ by induction. 
	For $m = 0$, \eqref{eq:eigenmeasure Jacobian for iteration} holds trivially. 
	Assume that (\ref{eq:eigenmeasure Jacobian for iteration}) is established for some $m \in \zeroton[n - 1]$, then since $F^m$ is injective on $F(B)$, we get
	\begin{equation*}
		\mu \parentheses[\big]{ F^{m+1}(B) } = \int_{F(B)} \frac{ \eigenvalue^m } { \myexp{S_m \phi} } \,\mathrm{d} \mu 
		=  \int_{B} \frac{ \eigenvalue^{m+1} } { \myexp{S_{m+1} \phi} } \,\mathrm{d} \mu.
	\end{equation*}
	The induction is now complete. In particular, by Proposition~\ref{prop:subsystem:properties}~\ref{item:subsystem:properties:homeo},
	\begin{equation}    \label{eq:Jacobian property on tiles}
		\mu(F^n(X^n)) = \int_{X^n}\!  \frac{ \eigenvalue^n } { \myexp{S_n \phi} } \,\mathrm{d} \mu
	\end{equation}
	for all $n \in \n$ and $X^n \in \Domain{n}$.

	By \eqref{eq:Jacobian property on tiles} and Lemma~\ref{lem:distortion_lemma}, for all $n \in \n_0$, $X^n \in \Domain{n}$, and $x \in X^n$, we have
	\begin{equation}   \label{eq:gibbs measure:two sides bound for subsystem}
		e^{-C} \mu(F^n(X^n)) \leqslant \eigenvalue^n \mu(X^n) / \myexp{S_n \phi(x)}  \leqslant e^{C} \mu (F^n(X^n)),
	\end{equation}
	where $C \define \Cdistortion \geqslant 0$ and $C_1$ is the constant defined in \eqref{eq:const:C_1} in Lemma~\ref{lem:distortion_lemma} and depends only on $f$, $\mathcal{C}$, $d$, $\phi$, and $\holderexp$.
	Note that $F^n(X^n) \in \Domain{0}$ is either the black $0$-tile $X^0_{\black}$ or the white $0$-tile $X^0_{\white}$. 
	We claim that for each $\colour \in \colours$,
	\begin{equation}    \label{eq:temp:prop:subsystem properties of eigenmeasure:constant for gibbs measure temp form}
		\mu\bigl(X^{0}_{\colour}\bigr) \geqslant \bigl( 1 + e^C \eigenvalue^{n(\ccolour, \colour)} \myexp{ n(\ccolour, \colour) \uniformnorm{\potential} } \bigr)^{-1},
	\end{equation}
	where $\ccolour \in \colours \setminus \{\colour\}$ and $n(\ccolour, \colour) \in \n$ is given by Definition~\ref{def:irreducibility of subsystem}.
	Indeed, since $F$ is irreducible, by Definition~\ref{def:irreducibility of subsystem}, for $\ccolour \in \colours \setminus \{\colour\}$ and $n(\ccolour, \colour) \in \n$, there exists $X^{n(\ccolour, \colour)}_{\ccolour} \in \ccFTile{n(\ccolour, \colour)}{\ccolour}{\colour}$ such that $X^{n(\ccolour, \colour)}_{\ccolour} \subseteq X^0_{\colour}$ and $F^{n(\ccolour, \colour)}\bigl( X^{n(\ccolour, \colour)}_{\ccolour} \bigr) = X^0_{\ccolour}$. 
	Then it follows from \eqref{eq:gibbs measure:two sides bound for subsystem} that 
	\[
		e^{-C} \mu\bigl( X^{0}_{\ccolour} \bigr) 
		\leqslant \eigenvalue^{n(\ccolour, \colour)} \myexp{ n(\ccolour, \colour) \uniformnorm{\potential} } \mu\bigl( X^{n(\ccolour, \colour)}_{\ccolour} \bigr)
		\leqslant \eigenvalue^{n(\ccolour, \colour)} \myexp{ n(\ccolour, \colour) \uniformnorm{\potential} } \mu\bigl( X^{0}_{\colour} \bigr).
	\]
	This implies \eqref{eq:temp:prop:subsystem properties of eigenmeasure:constant for gibbs measure temp form} since $\mu\bigl( X^{0}_{\colour} \bigr) + \mu\bigl( X^{0}_{\ccolour} \bigr) \geqslant \mu(S^2) = 1$. 
	Hence \eqref{eq:def:subsystem gibbs measure} holds, and $\mu$ is a Gibbs measure with respect to $F$, $\mathcal{C}$, and $\phi$ with $P_{\mu} = \log \eigenvalue$.

	Finally, we show that $P_{\mu} = \pressure$. 
	By Theorem~\ref{thm:subsystem:eigenmeasure existence and basic properties}~\ref{item:thm:subsystem:eigenmeasure existence and basic properties:eigenmeasure support on limitset}, we have that for each $n \in \n$,
	\begin{equation}    \label{eq:lower bound for eigenmeasure of tiles of F}
		\sum_{X^n \in \Domain{n}} \mu(X^n) \geqslant \mu(\limitset(F, \mathcal{C})) = 1.
	\end{equation}
	Since $\mu$ is a Gibbs measure with respect to $F$, $\mathcal{C}$, and $\phi$ with $P_{\mu} = \log \eigenvalue$, it follows from \eqref{eq:def:subsystem gibbs measure} that for each $n \in \n_0$,
	\[
		\sum_{X^n \in \Domain{n}} \mu(X^n) \leqslant C_{\mu} e^{-n P_{\mu}} \sum_{X^n \in \Domain{n}} \myexp{  \sup\{ S_n\phi(x) \describe x\in X^n \} }
	\]
	for some constant $C_{\mu} \geqslant 1$. 
	Combining this with \eqref{eq:lower bound for eigenmeasure of tiles of F}, we obtain the inequality $P_{\mu} \leqslant P(F, \phi)$. 
	For the other direction, since $F$ is strongly irreducible, by Theorem~\ref{thm:subsystem:eigenmeasure existence and basic properties}~\ref{item:thm:subsystem:eigenmeasure existence and basic properties:strongly irreducible vertex zero measure}, we get $\mu \bigl( \bigcup_{j = 0}^{+\infty}f^{-j}(\post{f}) \bigr) = 0$. Then it follows from Remark~\ref{rem:intersection of two tiles} and Proposition~\ref{prop:subsystem edge Jordan curve has measure zero} that\[
		\sum_{X^n \in \Domain{n}} \mu(X^n) \leqslant \mu\Bigl( \domain{n} \Bigr) + \mu\biggl( \bigcup_{j = 0}^{+\infty}F^{-j}(\mathcal{C}) \biggr) \leqslant 1.
	\]
	By \eqref{eq:def:subsystem gibbs measure}, we have that for each $n \in \n_{0}$,
	\[
		\sum_{X^n \in \Domain{n}} \mu(X^n) \geqslant C_{\mu}^{-1} e^{-n P_{\mu}} \sum_{X^n \in \Domain{n}} \myexp{  \sup\{ S_n\phi(x) \describe x\in X^n \} }.
	\]	
	This implies that $P_{\mu} \geqslant P(F, \phi)$.
	Thus we get $P_{\mu} = \pressure$ and finish the proof of statement~\ref{item:prop:subsystem properties of eigenmeasure:Gibbs property}.

	\smallskip

	\ref{item:prop:subsystem properties of eigenmeasure:forward quasi-invariant and non-singular}
	Denote $\limitset \define \limitset(F, \mathcal{C})$.
	Fix a Borel set $A \subseteq \limitset$ with $\mu(A) = 0$.
	For each $1$-tile $X^{1} \in \Tile{1}$, the map $\limitmap$ is injective both on $A \cap X^1$ and $\limitmap^{-1}(A) \cap X^1$ by Proposition~\ref{prop:subsystem:properties}~\ref{item:subsystem:properties:homeo}.
	Then it follows from the definition of the Jacobian (recall \eqref{def:subsystem Jacobian}) and statement~\ref{item:prop:subsystem properties of eigenmeasure:Jacobian} that $\mu( \limitmap(A \cap X^1) ) = 0$ and $\mu( \limitmap^{-1}(A) \cap X^1 ) = 0$.
	Thus $\mu(\limitmap(A)) = 0$ and $\mu(\limitmap^{-1}(A)) = 0$.
	This implies that $\limitmap$ is forward quasi-invariant and non-singular with respect to $\mu$.
\end{proof}

Now we can prove the existence of an $f$-invariant Gibbs measure with respect to $F$, $\mathcal{C}$, and $\phi$. 

\begin{proof}[Proof of Theorem~\ref{thm:existence of f invariant Gibbs measure}]
	Let $\eigmea = \spleigmea$ be an eigenmeasure as in Theorem~\ref{thm:subsystem:eigenmeasure existence and basic properties}.

	Define, for each $n \in \n$, $\widetilde{u}_{n} \define \frac{1}{n} \sum_{j = 0}^{n - 1} \normsplopt^{j}\bigl(\indicator{\splitsphere}\bigr)$. 
	Then $\{\widetilde{u}_n\}_{n \in \n}$ is a uniformly bounded sequence of equicontinuous functions on $\splitsphere$ by \eqref{eq:second bound for normed split operator} and \eqref{eq:third bound for normed split operator} in Lemma~\ref{lem:distortion lemma for normed split operator}. 
	By the \aalem~Theorem, every subsequence $\{\widetilde{u}_{n_{k}}\}_{k \in \n}$, which is a uniformly bounded sequence of equicontinuous functions on $\splitsphere$, has a further subsequential limit with respect to the uniform norm. 
	By Proposition~\ref{prop:existence of eigenfunction}, the sequence $\{\widetilde{u}_n\}_{n \in \n}$ has a subsequential limit $\eigfun \in C(\splitsphere)$ satisfying \eqref{eq:eigenfunction of normed split operator}, \eqref{eq:two sides bounds for eigenfunction}, and \eqref{eq:integration of eigenfunction with respect to eigenmeasure is one}.
	In order to prove this theorem, it suffices to show that $\{\widetilde{u}_n\}_{n \in \n}$ has a unique subsequential limit with respect to the uniform norm, and finally justify \eqref{eq:equalities for characterizations of pressure}.

	Suppose that $\eigfunv$ is another subsequential limit of $\{\widetilde{u}_n\}_{n \in \n}$ with respect to the uniform norm. Then $\eigfunv$ is also a continuous function on $\splitsphere$ satisfying \eqref{eq:eigenfunction of normed split operator}, \eqref{eq:two sides bounds for eigenfunction}, and \eqref{eq:integration of eigenfunction with respect to eigenmeasure is one} by Proposition~\ref{prop:existence of eigenfunction}. 
	To prove the uniqueness, it suffices to show that $\eigfun = \eigfunv$ on $\splitsphere = X^0_{\black} \sqcup X^0_{\white}$. 
	The proof proceeds in two steps. 
	We first show that $\eigfun = \eigfunv$ on a subset $\widetilde{\limitset}$ (defined in \eqref{eq:temp:thm:existence of f invariant Gibbs measure:def:split limitset} below) of $\splitsphere$, which satisfies $\spleigmea \bigl(\widetilde{\limitset} \bigr) = 1$. Then we extend this identity $\eigfun = \eigfunv$ to $\splitsphere$. 

	For each $n \in \n_0$, we set
	\begin{equation}    \label{eq:temp:thm:existence of f invariant Gibbs measure:def:split tile of subsystem}
		\splDomain{n} \define  \bigcup_{\colour \in \colours} \bigl\{ i_{\colour}(X^n) \describe X^n \in \Domain{n}, \, X^n \subseteq X^0_{\colour} \bigr\},
	\end{equation}
	where $i_{\colour}$ is defined by \eqref{eq:natural injection into splitsphere}. 
	Let $\spllimitset$ be the subset of $\splitsphere$ defined by 
	\begin{equation}    \label{eq:temp:thm:existence of f invariant Gibbs measure:def:split limitset}
		\spllimitset \define \bigcap\limits_{n \in \n} \bigcup \splDomain{n}.
	\end{equation}

	We claim that $\spleigmea \bigl(\widetilde{\limitset} \bigr) = 1$.
	Indeed, since $\spleigmea$ is an eigenmeasure of $\dualsplopt$, it follows from Proposition~\ref{prop:dual split operator}~\ref{item:prop:dual split operator:measure concentrate} and induction on $n$ that for each $n \in \n$,
	\[
		1 \geqslant \spleigmea\parentheses[\Big]{ \bigcup \splDomain{n} } = \spleigmea \parentheses[\Big]{ \bigcup \splDomain{0} } = \spleigmea \parentheses{ \splitsphere } = 1,
	\]
	where we use the fact that $\bigcup\splDomain{0} = \splitsphere$ since $F$ is irreducible so that $F(\domF) = S^2$.
	Note that by Proposition~\ref{prop:subsystem:properties invariant Jordan curve}~\ref{item:subsystem:properties invariant Jordan curve:decreasing relation of domains} and \eqref{eq:temp:thm:existence of f invariant Gibbs measure:def:split tile of subsystem}, $\bigl\{ \bigcup\splDomain{n} \bigr\}_{n \in \n}$ is a decreasing sequence of sets.
	Thus, by \eqref{eq:temp:thm:existence of f invariant Gibbs measure:def:split limitset}, we get
	\[
		\spleigmea\bigl(\spllimitset\bigr) = \lim_{n \to +\infty} \spleigmea \parentheses[\Big]{ \bigcup\splDomain{n} } = 1.
	\]
	
	Now we show that $\eigfunv(\widetilde{x}) = \eigfun(\widetilde{x})$ for each $\widetilde{x} \in \spllimitset$.
	Let \[
		t \define \sup \bigl\{ s \in \real \describe \eigfun(\widetilde{x}) - s \eigfunv(\widetilde{x}) > 0 \text{ for all } \widetilde{x} \in \splitsphere \bigr\}.
	\]
	It follows from \eqref{eq:two sides bounds for eigenfunction} that $t \in (0, +\infty)$. 
	Then there is a point $\widetilde{y} \in \splitsphere$ such that $\eigfun(\widetilde{y}) - t \eigfunv(\widetilde{y}) = 0$. 
	Without loss of generality, we may assume that $\widetilde{y} = (y, \black)$ for some point $y \in X^0_{\black}$. By \eqref{eq:definition of partial Ruelle operators}, \eqref{eq:def:split ruelle operator}, and the equality \[
		\normsplopt(\eigfun - t \eigfunv) = \eigfun - t \eigfunv,
	\]
	which comes from \eqref{eq:eigenfunction of normed split operator}, we get that $\eigfun(\widetilde{z}) - t \eigfunv(\widetilde{z}) = 0$ for each $\widetilde{z} \in \widetilde{F}^{-1}(\widetilde{y})$, where we define $\widetilde{F}^{-n}(\widetilde{y})$ to be the set
	\begin{equation}    \label{eq:split preimage}
		\bigl\{ (z, \colour) \describe z = \bigl(F^{n}|_{X^{n}_{\black \colour}}\bigr)^{-1}(y), \, X^{n}_{\black \colour} \in \ccFTile{n}{\black}{\colour}, \, \colour \in \colours \bigr\}.
	\end{equation}
	for each $n \in \n$. Inductively, we can conclude that $\eigfun(\widetilde{z}) - t \eigfunv(\widetilde{z}) = 0$ for all $\widetilde{z} \in \bigcup_{i \in \n} \widetilde{F}^{-i}(\widetilde{y})$. 
	Noting that $F$ is irreducible and mimicking the proof of Proposition~\ref{prop:irreducible subsystem properties}~\ref{item:prop:irreducible subsystem properties:preimages is dense in limitset}, we can show that the closure of $\bigcup_{i \in \n} \widetilde{F}^{-i}(\widetilde{y})$ in $\splitsphere$ contains $\widetilde{\limitset}$. 
	Indeed, by \eqref{eq:temp:thm:existence of f invariant Gibbs measure:def:split tile of subsystem}, \eqref{eq:temp:thm:existence of f invariant Gibbs measure:def:split limitset}, and \eqref{eq:definition of expansion}, it suffices to show that for each $n \in \n$ and each $\widetilde{X}^{n} \in \splDomain{n}$, $\widetilde{X}^n \cap \bigcup_{i \in \n} \widetilde{F}^{-i}(\widetilde{y}) \ne \emptyset$. 
	Fix arbitrary $n \in \n$ and $\widetilde{X}^n \in \splDomain{n}$. 
	By \eqref{eq:temp:thm:existence of f invariant Gibbs measure:def:split tile of subsystem}, there exist $\colour \in \colours$ and $X^n \in \Domain{n}$ such that $X^n \subseteq X^0_{\colour}$ and $\widetilde{X}^n  = i_{\colour}(X^n)$. 
	By Proposition~\ref{prop:subsystem:properties}~\ref{item:subsystem:properties:homeo}, $X^n$ is mapped by $F^n$ homeomorphically to a $0$-tile $X^0_{\ccolour}$ for some $\ccolour \in \colours$. 
	Since $F$ is irreducible, by Definition~\ref{def:irreducibility of subsystem}, there exist $k \in \n$ and $Y^{k} \in \Domain{k}$ such that $Y^{k} \subseteq X^0_{\ccolour}$ and $F^{k}\bigl(Y^{k}\bigr) = X^0_{\black}$. 
	Then it follows from Lemma~\ref{lem:cell mapping properties of Thurston map}~\ref{item:lem:cell mapping properties of Thurston map:i} and Proposition~\ref{prop:subsystem:properties}~\ref{item:subsystem:properties:homeo} that $X^{k + n} \define (F^{n}|_{X^n})^{-1}\bigl(Y^{k}\bigr) \in \ccFTile{k + n}{\black}{\colour}$. 
	Denote $z \define (F^{k + n}|_{X^{k + n}})^{-1}(y) \in X^{k + n}$. Then by \eqref{eq:split preimage} we have $(z, \colour) \in \bigcup_{i \in \n} \widetilde{F}^{-i}(\widetilde{y})$.
	Since $z \in X^{k + n} \subseteq X^{n} \subseteq X^0_{\colour}$, we have $( z , \colour ) \in \widetilde{X}^n \cap \bigcup_{i \in \n} \widetilde{F}^{-i}(\widetilde{y}) \ne \emptyset$.
	Thus the closure of $\bigcup_{i \in \n} \widetilde{F}^{-i}(\widetilde{y})$ in $\splitsphere$ contains $\widetilde{\limitset}$. 
	Hence $\eigfun = t \eigfunv$ on $\widetilde{\limitset}$. Since both $\eigfun$ and $\eigfunv$ satisfy \eqref{eq:integration of eigenfunction with respect to eigenmeasure is one} and $\spleigmea(\widetilde{\limitset}) = 1$, we get $t = 1$.
	Thus $\eigfunv(\widetilde{x}) = \eigfun(\widetilde{x})$ for each $\widetilde{x} \in \spllimitset$.

	Following similar arguments to those in Proposition~\ref{prop:irreducible subsystem properties}~\ref{item:prop:irreducible subsystem properties:limitset non degenerate to Jordan curve}, we can show that $\spllimitset \cap \widetilde{X}^{n} \ne \emptyset$ for each $n \in \n$ and each $\widetilde{X}^{n} \in \splDomain{n}$. 
	The key idea is that for any tile $\widetilde{X}^n \in \splDomain{n}$, we can construct a nested sequence of tiles by leveraging the irreducibility of $F$ and the injection $i_{\colour}$ as defined in \eqref{eq:natural injection into splitsphere}.
	This nested sequence has a non-empty intersection, which implies that $\spllimitset \cap \widetilde{X}^{n} \ne \emptyset$. 
	More precisely, given $\widetilde{X}^n = i_{\colour}(X^n)$ for some $\colour \in \colours$, the irreducibility of $F$ allows us to find tiles that map onto $X^0_{\colour}$, enabling us to construct a nested sequence $\{ X^{n + km} \}_{k\in \n_0}$ whose images under $i_{\colour}$ yield a point in $\spllimitset \cap \widetilde{X}^{n}$.

	We next show that $\eigfun = \eigfunv$ on $\splitsphere$. Recall for each $\colour \in \colours$, $\eigfun$ and $\eigfunv$ are continuous on $X^0_{\colour}$ and $X^0_{\colour}$ is compact. 
	Fix an arbitrary number $\varepsilon > 0$, then we can choose $\delta > 0$ such that for each $\colour \in \colours$ and each pair of points $\juxtapose{x}{y} \in X^0_{\colour}$ with $d(x, y) < \delta$, we have $|\eigfun(\widetilde{x}) - \eigfun(\widetilde{y})| < \varepsilon$ and $|\eigfunv(\widetilde{x}) - \eigfunv(\widetilde{y})| < \varepsilon$, where $\widetilde{x} \define (x, \colour)$ and $\widetilde{y} \define (y, \colour)$. 
	By Lemma~\ref{lem:visual_metric}~\ref{item:lem:visual_metric:diameter of cell}, there exists $n \in \n$ such that for each $X^n \in \Domain{n}$, $\diam{d}{X^n} < \delta$. We fix such an integer $n$ in the rest of this paragraph. 
	We also fix an arbitrary point $\widetilde{y}_{\widetilde{X}^{n}} \in \spllimitset \cap \widetilde{X}^{n}$ for each $\widetilde{X}^{n} \in \splDomain{n}$.
	Since there is a natural bijection $X^n \mapsto \widetilde{X}^{n}$ between $\Domain{n}$ and $\splDomain{n}$, we can define $\widetilde{y}_{X^{n}} \define \widetilde{y}_{\widetilde{X}^{n}}$ for each $X^n \in \Domain{n}$. 
	Hence by \eqref{eq:definition of partial Ruelle operators}, \eqref{eq:iteration of split-partial ruelle operator}, Proposition~\ref{prop:subsystem properties of eigenmeasure}~\ref{item:prop:subsystem properties of eigenmeasure:Gibbs property}, and Definition~\ref{def:subsystem gibbs measure}, for each $\colour \in \colours$ and each $\widetilde{x} = (x, \colour) \in i_{\colour}\bigl(X^{0}_{\colour}\bigr)$,
	\begin{align*}
		|\eigfun(\widetilde{x}) - \eigfunv(\widetilde{x})| 
		&= \abs[\big]{ \normsplopt^n(\eigfun - \eigfunv)(\widetilde{x}) }  \\
		&\leqslant \sum_{X^n \in \cFTile{n}} |\eigfun(\widetilde{x}_{X^n}) - \eigfunv(\widetilde{x}_{X^n})| \myexp[\big]{ S_{n}\phi(x_{X^n}) - nP_{\eigmea} }  \\
		&\leqslant C_{\eigmea} \sum_{X^n \in \cFTile{n}} \Bigl( \abs[\big]{ \eigfun(\widetilde{x}_{X^n}) - \eigfun(\widetilde{y}_{X^n}) }  + \abs[\big]{ \eigfun(\widetilde{y}_{X^n}) - \eigfunv(\widetilde{y}_{X^n}) }   \\
		&\qquad \qquad \qquad\qquad \qquad\qquad\qquad \ \ + \abs[\big]{ \eigfunv(\widetilde{y}_{X^n}) - \eigfunv(\widetilde{x}_{X^n}) } \Bigr) \eigmea(X^n)  \\
		&\leqslant C_{\eigmea} \sum_{X^n \in \cFTile{n}} 2 \varepsilon \eigmea(X^n) \\
		&\leqslant 4 \varepsilon C_{\eigmea},
	\end{align*}
	where we write $x_{X^n} \define (F^{n}|_{X^n})^{-1}(x)$ and $\widetilde{x}_{X^n} \define (x_{X^n}, \ccolour)$ with $\ccolour \in \colours$ uniquely determined by the relation $X^n \subseteq X^0_{\ccolour}$. 
	Since $\varepsilon >0$, $\colour \in \colours$, and $\widetilde{x} \in i_{\colour}\bigl(X^{0}_{\colour}\bigr)$ are arbitrary, we conclude that $\eigfun = \eigfunv$ on $\splitsphere$. 
	
	We have proved that $\widetilde{u}_{n}$ converges to $\eigfun$ uniformly as $n \to +\infty$.

	It now follows immediately from \eqref{eq:second bound for normed split operator}, \eqref{eq:third bound for normed split operator}, and the uniform convergence of $\widetilde{u}_{n}$ that $\eigfun \in \splholderspace$.

	By Proposition~\ref{prop:subsystem:f-invariant measure}, $\equstate$ is $f$-invariant. 
	Since $\eigmea$ is a Gibbs measure (with respect to $F$, $\mathcal{C}$ and $\phi$) supported on $\limitset$ with $P_{\eigmea} = \pressure = \Cnormspl$ by Proposition~\ref{prop:subsystem properties of eigenmeasure}~\ref{item:prop:subsystem properties of eigenmeasure:Gibbs property} and \eqref{eq:eigenmeasure for dual split operator}, then by \eqref{eq:two sides bounds for eigenfunction} and Proposition~\ref{thm:subsystem:eigenmeasure existence and basic properties}~\ref{item:thm:subsystem:eigenmeasure existence and basic properties:eigenmeasure support on limitset}, $\equstate$ is also a Gibbs measure (with respect to $F$, $\mathcal{C}$ and $\phi$) supported on $\limitset$ with $P_{\equstate} = P_{\eigmea} = \pressure = D_{F, \phi} = \lim_{n \to +\infty} \frac{1}{n}\log \mathopen{}\bigl( \splopt^{n}\bigl(\indicator{\splitsphere}\bigr)(\widetilde{y}) \bigr)$ for each $\widetilde{y} \in \splitsphere$, establishing \eqref{eq:equalities for characterizations of pressure}.

	Finally, let $U \subseteq S^2$ be an open set with $U \cap \limitset \ne \emptyset$. 
	Then by \eqref{eq:def:limitset} and \eqref{eq:definition of expansion}, there exists $X^k \in \Domain{k}$ with $X^k \subseteq U$ for some $k \in \n$. 
	Since $\equstate$ is a Gibbs measure, we get $\equstate(X^k) > 0$. Thus $\equstate(U) \geqslant \equstate(X^k) > 0$.
\end{proof}
\begin{rmk}
	By an argument similar to that in the proof of the uniqueness of the subsequential limit of $\Bigl\{ \frac{1}{n} \sum_{j = 0}^{n - 1} \normsplopt^j\bigl(\indicator{\splitsphere}\bigr) \Big \}_{n \in \n}$, one can show that $\eigfun$ is the unique eigenfunction, up to a scalar multiple, of $\normsplopt$ corresponding to the eigenvalue $1$.
\end{rmk}

\subsection{Variational Principle and existence of the equilibrium states for subsystems}%
\label{sub:Variational Principle and existence of the equilibrium states for subsystems}

In this subsection, we prove the main results Theorems~\ref{thm:subsystem characterization of pressure} and \ref{thm:existence of equilibrium state for subsystem} of this subsection.  

\smallskip

In the following theorem we establish the Variational Principle for strongly irreducible subsystems with respect to \holder continuous potentials. 
Recall that $F(\limitset) \subseteq \limitset$ by Proposition~\ref{prop:subsystem:properties}~\ref{item:subsystem:properties:limitset forward invariant} and $\limitmap = F|_{\limitset} \colon \limitset \mapping \limitset$.

\begin{theorem}    \label{thm:subsystem characterization of pressure}
	Let $f$, $\mathcal{C}$, $F$, $d$, $\phi$, $\holderexp$ satisfy the Assumptions in Section~\ref{sec:The Assumptions}. 
	We assume in addition that $f(\mathcal{C}) \subseteq \mathcal{C}$ and $F \in \subsystem$ is strongly irreducible. 
	Then we have
	\begin{equation}    \label{eq:subsystem Variational Principle}
		\Cnormspl = \pressure = \fpressure[\potential|_{\limitset}] = \sup \Bigl\{ h_{\mu}(\limitmap) + \int_{\limitset} \! \phi \,\mathrm{d}\mu \describe \mu \in \mathcal{M}(\limitset, \limitmap) \Bigr\},
	\end{equation}
	where $\Cnormspl$ is defined in Corollary~\ref{coro:well-define for split operator norm constant}, $\pressure$ is defined in \eqref{eq:pressure of subsystem}, and $\fpressure[\potential|_{\limitset}]$ is defined in \eqref{eq:def:topological pressure}.
\end{theorem}

The following theorem gives the existence of the equilibrium states for strongly irreducible subsystems and \holder continuous potentials.
\begin{theorem}    \label{thm:existence of equilibrium state for subsystem}
	Let $f$, $\mathcal{C}$, $F$, $d$, $\phi$, $\holderexp$ satisfy the Assumptions in Section~\ref{sec:The Assumptions}. 
	We assume in addition that $f(\mathcal{C}) \subseteq \mathcal{C}$ and $F \in \subsystem$ is strongly irreducible.
	Then there exists an equilibrium state for $\limitmap$ and $\phi|_{\limitset}$. 
	Moreover, any measure $\equstate \in \mathcal{M}(S^{2}, f)$ defined in Theorem~\ref{thm:existence of f invariant Gibbs measure} is an equilibrium state for $\limitmap$ and $\phi|_{\limitset}$, and the map $\limitmap$ with respect to such $\equstate$ is forward quasi-invariant (i.e., for each Borel set $A \subseteq \limitset$, if $\equstate(A) = 0$, then $\equstate(\limitmap(A)) = 0$) and non-singular (i.e., for each Borel set $A \subseteq \limitset$, if $\equstate(A) = 0$, then $\equstate(\limitmap^{-1}(A)) = 0$).
\end{theorem}

The proofs of Theorems~\ref{thm:subsystem characterization of pressure} and~\ref{thm:existence of equilibrium state for subsystem} will be given at the end of this subsection. 
Note that Theorem~\ref{thm:main:thermodynamic formalism for subsystems} follows immediately from Theorems~\ref{thm:subsystem characterization of pressure} and~\ref{thm:existence of equilibrium state for subsystem}.

We use the following convention in this subsection.
\begin{remark}\label{rem:invariant measure equivalent}
	Let $X$ be a non-empty Borel subset of $S^{2}$.
    Given a Borel probability measure $\mu \in \probmea{X}$, by abuse of notation, we can view $\mu$ as a Borel probability measure on $S^{2}$ by setting $\mu(A) \define \mu(A \cap X)$ for all Borel subsets $A \subseteq S^{2}$. 
    Conversely, for each measure $\nu \in \probsphere$ supported on $X$, we can view $\nu$ as a Borel probability measure on $X$.
\end{remark}

\begin{proposition}    \label{prop:subsystem:Gibbs pressure less than topological pressure}
	Let $f$, $\mathcal{C}$, $F$, $d$, $\phi$ satisfy the Assumptions in Section~\ref{sec:The Assumptions}. 
	We assume in addition that $f(\mathcal{C}) \subseteq \mathcal{C}$. 
	Then for each $\limitmap$-invariant Gibbs measure $\mu \in \mathcal{M}(\limitset, \limitmap) \subseteq \probsphere$ with respect to $F$, $\mathcal{C}$, and $\phi$, we have
	\begin{equation}    \label{eq:subsystem:Gibbs pressure less than topological pressure}
		P_{\mu} \leqslant h_{\mu}(\limitmap) + \int_{\limitset} \! \phi \,\mathrm{d}\mu \leqslant \fpressure[\potential|_{\limitset}],
	\end{equation}
	where $\fpressure[\potential|_{\limitset}]$ is the topological pressure of the map $\limitmap = F|_{\limitset} \colon \limitset \mapping \limitset$ with respect to the potential $\phi|_{\limitset}$ (see \eqref{eq:def:topological pressure}). 
\end{proposition}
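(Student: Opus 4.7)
The plan is to split the argument into the two inequalities. The upper bound $h_{\mu}(\limitmap) + \int_{\limitset} \phi \,\mathrm{d}\mu \leqslant \fpressure$ is the standard direction of the Variational Principle (recall \eqref{eq:Variational Principle for pressure}) applied to the continuous self-map $\limitmap \colon \limitset \mapping \limitset$ on the compact metric space $(\limitset, d)$ with potential $\phi|_{\limitset}$. Here $\limitmap$ is well-defined by Proposition~\ref{prop:subsystem:properties}~\ref{item:subsystem:properties:limitset forward invariant}, and the existence of $\mu \in \mathcal{M}(\limitset, \limitmap)$ ensures $\limitset \neq \emptyset$.

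The content of the proposition is therefore the first inequality $P_{\mu} \leqslant h_{\mu}(\limitmap) + \int_{\limitset} \phi \,\mathrm{d}\mu$, which I plan to derive from the Gibbs property via a partition argument. First I would construct a finite Borel partition $\xi = \{\widehat{X}^{1} \describe X^{1} \in \Domain{1}\}$ of $\limitset$ by selecting $\widehat{X}^{1} \subseteq X^{1} \cap \limitset$, with the overlaps on edges of $1$-tiles assigned by some fixed total order on $\Domain{1}$, so that the $\widehat{X}^{1}$ are pairwise disjoint with union $\limitset$. Next I would verify that for each $n \in \n$, every non-empty element $A \in \xi^{n}_{\limitmap} := \bigvee_{j=0}^{n-1} \limitmap^{-j}(\xi)$ is contained in a unique $n$-tile $X^{n}_{A} \in \Domain{n}$, and the assignment $A \mapsto X^{n}_{A}$ is injective. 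This is the combinatorial heart of the proof: one iterates the cellular structure, using Lemma~\ref{lem:cell mapping properties of Thurston map}~\ref{item:lem:cell mapping properties of Thurston map:i} together with Proposition~\ref{prop:cell decomposition: invariant Jordan curve} to check that the sequence $(X^{1}_{0}, \dots, X^{1}_{n-1})$ of $1$-tiles indexing the factors of $A$ determines at most one $n$-tile $X^{n}$ satisfying $F^{j}(X^{n}) \subseteq X^{1}_{j}$ for each $j$, and that this $X^{n}$ necessarily contains $A$.

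With the partition in hand, Definition~\ref{def:subsystem gibbs measure} gives $\mu(A) \leqslant \mu(X^{n}_{A}) \leqslant C_{\mu} \myexp{ S^{F}_{n}\phi(x_{A}) - n P_{\mu} }$ for any choice $x_{A} \in X^{n}_{A}$. Taking $-\log$, multiplying by $\mu(A)$, summing over $A \in \xi^{n}_{\limitmap}$, and using $\sum_{A} \mu(A) = 1$ yields
\[
H_{\mu}\bigl(\xi^{n}_{\limitmap}\bigr) \geqslant n P_{\mu} - \log C_{\mu} - \sum_{A} \mu(A) \, S^{F}_{n}\phi(x_{A}).
\]
Replacing $S^{F}_{n}\phi(x_{A})$ by the average $\frac{1}{\mu(A)}\int_{A} S^{F}_{n}\phi \,\mathrm{d}\mu$ costs at most a uniform additive constant by the distortion estimate in Lemma~\ref{lem:distortion_lemma} (applied to $S^{F}_{n}\phi = S_{n}\phi$ on the $n$-tile $X^{n}_{A}$), and the $\limitmap$-invariance of $\mu$ gives $\int_{\limitset} S^{F}_{n}\phi \,\mathrm{d}\mu = n\int_{\limitset} \phi \,\mathrm{d}\mu$. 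Combining,
\[
H_{\mu}\bigl(\xi^{n}_{\limitmap}\bigr) \geqslant n \biggl( P_{\mu} - \int_{\limitset} \phi \,\mathrm{d}\mu \biggr) - C
\]
for a constant $C$ independent of $n$. Dividing by $n$, letting $n \to +\infty$, and using $h_{\mu}(\limitmap) \geqslant h_{\mu}(\limitmap, \xi) = \lim_{n \to +\infty} \tfrac{1}{n} H_{\mu}\bigl(\xi^{n}_{\limitmap}\bigr)$ (see \eqref{eq:def:measure-theoretic entropy with respect to partition}--\eqref{eq:def:measure-theoretic entropy}) gives the desired inequality.

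The main obstacle will be the partition-refinement claim, since boundary points of $n$-tiles lie in several tiles simultaneously. The key point is that we only need an \emph{assignment} $A \mapsto X^{n}_{A}$, not a set-theoretic refinement: even if set-theoretically $A$ may meet several $n$-tiles on their boundary, the inductive construction from $\xi$ singles out one $n$-tile whose Gibbs estimate dominates $\mu(A)$, and different elements of $\xi^{n}_{\limitmap}$ are indexed by distinct sequences of $1$-tiles, hence by distinct $n$-tiles, so the estimate loses no information globally.
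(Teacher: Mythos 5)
The overall architecture of your argument is the same as the paper's: partition $\limitset$, relate the iterated partition to the $n$-tiles of $F$, feed the Gibbs inequality into the entropy $H_{\mu}$, and let $n\to+\infty$. (The paper packages the entropy step through the Shannon--McMillan--Breiman theorem whereas you compute $H_{\mu}(\xi^{n}_{\limitmap})$ directly; both are fine.) The genuine gap is in your partition step, and it is exactly the one you flag at the end but then wave away: the claim that every non-empty $A\in\xi^{n}_{\limitmap}$ is contained in some $n$-tile $X^{n}_{A}\in\Domain{n}$ is not established, and it can fail.

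Here is the concrete problem. Write $A=\widehat{X}^1_0\cap\limitmap^{-1}(\widehat{X}^1_1)\cap\cdots\cap\limitmap^{-(n-1)}(\widehat{X}^1_{n-1})$ and put $\colour$ with $F(X^1_0)=X^0_{\colour}$. If $X^1_1\subseteq X^0_{\colour}$, then indeed $A\subseteq(F|_{X^1_0})^{-1}(X^1_1)$ is a single $2$-tile and the induction can proceed. But if $X^1_1\subseteq X^0_{\colour'}$ with $\colour'\neq\colour$ while $A\neq\emptyset$, then $F(A)\subseteq X^1_1\cap X^0_{\colour}\subseteq\mathcal{C}$, hence $A\subseteq(F|_{X^1_0})^{-1}(\mathcal{C})=\partial X^1_0$: $A$ lives on a $1$-edge $e^1$, which is a union of several $2$-edges and meets several $2$-tiles. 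Nothing in the tie-breaking construction of $\xi$ (a total order on $\Domain{1}$, imposed independently at each iterate) forces $A$ to fall inside a single $2$-tile; the tie-breaking at level $0$ simply does not propagate to a consistent tie-breaking at level $n$. Without $A\subseteq X^{n}_{A}$ you lose both $\mu(A)\leqslant\mu(X^{n}_{A})$ and the distortion bound you invoke to replace $S^{F}_{n}\phi(x_{A})$ by $\frac{1}{\mu(A)}\int_{A}S^{F}_{n}\phi\,\mathrm{d}\mu$, so the chain of inequalities breaks. Your remark that ``different elements of $\xi^{n}_{\limitmap}$ are indexed by distinct sequences of $1$-tiles, hence by distinct $n$-tiles'' is also not a theorem: the sequence need not determine a unique $n$-tile (that ``hence'' is precisely the gap), and in any case injectivity is not what the entropy estimate needs---containment is. Since the proposition does not assume strong irreducibility, you also cannot argue that these boundary-supported elements are $\mu$-null.

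The paper sidesteps all of this by choosing the finer partition $O_{n}=\{\inte{X^n}\}\cup\{\inte{e^n}\}\cup\overline{\mathbf{V}}^{n}$ and its restriction $\widehat{O}_{n}$ to $\limitset$. Because every element of $O_{n}$ is the interior of a single $n$-cell (or a vertex), any element $\widehat{U}\in\widehat{O}_{n}$ satisfies $\widehat{U}\subseteq X^n$ whenever $\widehat{U}\cap X^n\neq\emptyset$ for $X^n\in\Domain{n}$, so $\widehat{O}_{n}(x)\subseteq X^n(x)$ is automatic with no tie-breaking at all. The relation $\bigvee_{j=0}^{n}\limitmap^{-j}(\widehat{O}_{1})=\widehat{O}_{n+1}$ then makes the rest of the argument go through cleanly. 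If you replace your $\xi$ by $\widehat{O}_{1}$ and verify this refinement identity, your direct entropy computation (without SMB) would also work; as written, the tile-with-tie-breaking partition does not give you the containment you rely on.
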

Recall that $F(\limitset) \subseteq \limitset$ by Proposition~\ref{prop:subsystem:properties}~\ref{item:subsystem:properties:limitset forward invariant}.
\begin{proof}
	Note that the second inequality in \eqref{eq:subsystem:Gibbs pressure less than topological pressure} follows from the Variational Principle~\eqref{eq:Variational Principle for pressure} in Subsection~\ref{sub:thermodynamic formalism}.

	We now use the measurable partitions $O_{n}$, $n \in \n$, of $S^{2}$ that were defined in \eqref{eq:partition induced by cell decomposition}.
	Since $f(\mathcal{C}) \subseteq \mathcal{C}$, it is clear that $O_{k}$ is a refinement of $O_{\ell}$ for $k \geqslant \ell \geqslant 1$.
	Observe that by Proposition~\ref{prop:properties cell decompositions}~\ref{item:prop:properties cell decompositions:cellular} and induction, we can conclude that for each $n \in \n$, 
	\begin{equation}    \label{eq:measurable partition of S2}
		\bigvee_{j = 0}^{n - 1} f^{-j} (O_{1}) = O_{n}.
	\end{equation}
	For $n \in \n$, we define $\widehat{O}_{n} \define \{ U \cap \limitset \describe U \in O_{n} \}$, which is a measurable partition of $\limitset$. Noting that $\limitmap^{-1}(A) = \limitset \cap f^{-1}(A)$ for each subset $A \subseteq \limitset$, it follows from \eqref{eq:measurable partition of S2} that for each $n \in \n$,
	\begin{equation}    \label{eq:measurable partition of limitset subsystem}
		\bigvee_{j = 0}^{n - 1} \limitmap^{-j}(\widehat{O}_{1}) = \widehat{O}_{n}.
	\end{equation}

	Let $\mu \in \mathcal{M}(\limitset, \limitmap) \subseteq \probsphere$ be an $\limitmap$-invariant Gibbs measure with respect to $F$, $\mathcal{C}$, and $\phi$. 
	Then by the Shannon--McMillan--Breiman Theorem (see for example, \cite[Theorem~2.5.4]{przytycki2010conformal}), we have $h_{\mu}(\limitmap, \widehat{O}_{1}) = \int \! F_{\mathcal{I}} \,\mathrm{d}\mu$, where\[
		F_{\mathcal{I}} \define \lim_{n \to +\infty} \frac{1}{n} 
		I_{\mu} \biggl( \bigvee_{j = 0}^{n - 1} \limitmap^{-j}(\widehat{O}_{1}) \biggr) \qquad \mu\text{-a.e.\ and in } L^{1}(\mu),
	\]
	$h_{\mu}\bigl(\limitmap, \widehat{O}_{1}\bigr)$ is defined in \eqref{eq:def:measure-theoretic entropy with respect to partition}, and the information function $I_{\mu}$ is defined in \eqref{eq:def:information function}.

	Note that for all $n \in \n$, $\widehat{U} \in \widehat{O}_{n}$, and $X^n \in \Domain{n}$, either $\widehat{U} \cap X^n = \emptyset$ or $\widehat{U} \subseteq X^n$.

	For $n \in \n_{0}$ and $x \in \limitset$, we denote by $X^{n}(x) \in \Domain{n}$ any one of the $n$-tiles of $F$ containing $x$. Recall that $\widehat{O}_{n}(x)$ denotes the unique set in the measurable partition $\widehat{O}_{n}$ that contains $x$. 
	Since $\widehat{O}_{n}(x) \subseteq X^{n}(x)$, we have that
	\[
		 I_{\mu} \parentheses[\big]{ \widehat{O}_{n} } (x) 
		 = -\log \parentheses[\big]{ \mu \parentheses[\big]{ \widehat{O}_{n}(x) } } 
		 \geqslant -\log \parentheses[\big]{ \mu \parentheses[\big]{ X^{n}(x) } }.
	\]
	Then by \eqref{eq:measurable partition of limitset subsystem} and \eqref{eq:def:subsystem gibbs measure} we deduce that
	\[
		\begin{split}
			\int \! F_{\mathcal{I}} \,\mathrm{d}\mu 
			&= \int \! \lim_{n \to +\infty} \frac{1}{n} I_{\mu} \biggl( \bigvee_{j = 0}^{n - 1} \limitmap^{-j}(\widehat{O}_{1}) \biggr)(x)  \,\mathrm{d}\mu(x) \\
			&= \int \! \lim_{n \to +\infty} \frac{1}{n}  I_{\mu} \parentheses[\big]{ \widehat{O}_{n} }(x)  \,\mathrm{d}\mu(x)	\\
			&\geqslant \limsup_{n \to +\infty} \int \! \frac{1}{n} I_{\mu} \parentheses[\big]{ \widehat{O}_{n} } (x) \,\mathrm{d}\mu(x) \\
			&\geqslant \limsup_{n \to +\infty} \int \! -\frac{1}{n} \log \parentheses[\big]{ \mu \parentheses[\big]{ X^{n}(x) } } \,\mathrm{d}\mu(x)  \\
			&\geqslant \limsup_{n \to +\infty} \int \! \frac{ nP_{\mu} - S_{n}^{F} \phi(x) - \log C_{\mu} }{n}  \,\mathrm{d}\mu(x)  \\
			&= P_{\mu} - \liminf_{n \to +\infty} \frac{1}{n} \int \! S_{n}^{F}\phi(x) \,\mathrm{d}\mu(x)\\
			&= P_{\mu} - \int \! \phi \,\mathrm{d}\mu,
		\end{split}
	\]
	where the last equality follows from \eqref{eq:def:Birkhoff average} and the identity $\int \! \psi \circ \limitmap \,\mathrm{d}\mu = \int \! \psi \,\mathrm{d}\mu$ for each $\psi \in C(\limitset)$, which is equivalent to the fact that $\mu$ is $\limitmap$-invariant. 
	Since $\widehat{O}_{1}$ is a finite measurable partition, the condition than $H_{\mu}(\widehat{O}_{1}) < +\infty$ in \eqref{eq:def:measure-theoretic entropy} is fulfilled. 
	By \eqref{eq:def:measure-theoretic entropy}, we get that\[
		h_{\mu}(\limitmap) \geqslant h_{\mu}(\limitmap, \widehat{O}_{1}) \geqslant P_{\mu} - \int \! \phi \,\mathrm{d}\mu.
	\]
	Therefore, $P_{\mu} \leqslant h_{\mu}(\limitmap) + \int \! \phi \,\mathrm{d}\mu$.
\end{proof}

\begin{proposition}    \label{prop:subsystem invariant measure equivalence}
	Let $f$, $\mathcal{C}$, $F$ satisfy the Assumptions in Section~\ref{sec:The Assumptions}.

    \begin{enumerate}[label=\rm{(\roman*)}]
		\smallskip
		\item    \label{item:prop:subsystem invariant measure equivalence:restriction}  
			If $\mu \in \probsphere$ is $f$-invariant and is supported on $\limitset(F, \mathcal{C})$, then $\mu$ is $\limitmap$-invariant, i.e., $\mu(A) = \mu\bigl(\limitmap^{-1}(A)\bigr)$ for each Borel subset $A$ of $\limitset$.
		
		\smallskip
		
		\item    \label{item:prop:subsystem invariant measure equivalence:extension} 
			If measure $\mu \in \probmea{\limitset}$ is $\limitmap$-invariant, then $\mu \in \probsphere$ is $f$-invariant.
			Moreover, $\mathcal{M}(\limitset, \limitmap) \subseteq \mathcal{M}(S^{2}, f)$.
	\end{enumerate}
\end{proposition}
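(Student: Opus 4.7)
The plan is to reduce both statements to the single set-theoretic identity
\begin{equation*}
    \limitmap^{-1}(A) = \limitset \cap f^{-1}(A) \qquad \text{for each Borel subset } A \subseteq \limitset.
\end{equation*}
I would first verify this identity by combining three facts: the containment $\limitset \subseteq \domF$, which is immediate from \eqref{eq:def:limitset} since $\bigcup \Domain{1} = \domF$; the equality $F = f|_{\domF}$, so that $\limitmap(x) = f(x)$ for every $x \in \limitset$; and the forward invariance $F(\limitset) \subseteq \limitset$ furnished by Proposition~\ref{prop:subsystem:properties}~\ref{item:subsystem:properties:limitset forward invariant} (which ensures that $f(x) \in A$ is equivalent to $f(x) \in A \cap \limitset$ whenever $x \in \limitset$).

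For statement~\ref{item:prop:subsystem invariant measure equivalence:restriction}, given an $f$-invariant $\mu \in \probsphere$ with $\mu(\limitset) = 1$, the identity above together with $\mu(S^2 \setminus \limitset) = 0$ immediately yields
\begin{equation*}
    \mu(\limitmap^{-1}(A)) = \mu(\limitset \cap f^{-1}(A)) = \mu(f^{-1}(A)) = \mu(A)
\end{equation*}
for every Borel $A \subseteq \limitset$, where the last equality uses $f$-invariance.

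For statement~\ref{item:prop:subsystem invariant measure equivalence:extension}, I would extend a given $\limitmap$-invariant $\mu \in \probmea{\limitset}$ to a Borel probability measure on $S^2$ by setting $\mu(B) \define \mu(B \cap \limitset)$ for each Borel $B \subseteq S^2$, as in Remark~\ref{rem:invariant measure equivalent}. For any Borel $A \subseteq S^2$, the forward invariance $F(\limitset) \subseteq \limitset$ gives $f^{-1}(A) \cap \limitset = \limitset \cap f^{-1}(A \cap \limitset)$, which equals $\limitmap^{-1}(A \cap \limitset)$ by the identity above; applying $\limitmap$-invariance then gives
\begin{equation*}
    \mu(f^{-1}(A)) = \mu(f^{-1}(A) \cap \limitset) = \mu(\limitmap^{-1}(A \cap \limitset)) = \mu(A \cap \limitset) = \mu(A),
\end{equation*}
establishing $f$-invariance, and the inclusion $\mathcal{M}(\limitset, \limitmap) \subseteq \mathcal{M}(S^2, f)$ is an immediate reformulation. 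There is no substantive obstacle here; the whole content is the set identity together with the forward invariance of $\limitset$, which is consistent with the fact that the statement only invokes the basic Assumptions in Section~\ref{sec:The Assumptions} (no strong irreducibility of $F$ and no invariance of $\mathcal{C}$ under $f$ is needed).
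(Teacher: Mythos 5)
Your proposal is correct and follows essentially the same route as the paper: both proofs hinge on the set identity $\limitmap^{-1}(A) = \limitset \cap f^{-1}(A)$, apply it directly in (i) using $\mu(\limitset) = 1$, and in (ii) combine it with the forward invariance $F(\limitset) \subseteq \limitset$ to pass from an arbitrary Borel $A \subseteq S^2$ to $A \cap \limitset$. Your write-up is merely a bit more explicit than the paper's about where forward invariance enters; there is nothing to fix.
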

Recall that $F(\limitset) \subseteq \limitset$ by Proposition~\ref{prop:subsystem:properties}~\ref{item:subsystem:properties:limitset forward invariant}. 
We use the convention in Remark~\ref{rem:invariant measure equivalent}.
\begin{proof}
	\ref{item:prop:subsystem invariant measure equivalence:restriction} 
	Assume that $\mu \in \probsphere$ is $f$-invariant and is supported on $\limitset(F, \mathcal{C})$. Let $A \subseteq \limitset$ be an arbitrary Borel subset. 
	Noting that $(F|_{\limitset})^{-1}(A) = \limitset \cap f^{-1}(A)$, we have
	\[
		\mu\bigl(\limitmap^{-1}(A)\bigr) = \mu \bigl( \limitset \cap f^{-1}(A) \bigr) = \mu \bigl(f^{-1}(A) \bigr) = \mu(A).
	\]
	Thus $\mu$ is $\limitmap$-invariant.

	\smallskip

	\ref{item:prop:subsystem invariant measure equivalence:extension} 
	Assume that $\mu \in \probmea{\limitset}$ is $\limitmap$-invariant. Let $A \subseteq S^{2}$ be an arbitrary Borel subset. 
	Noting that $(F|_{\limitset})^{-1}(A \cap \limitset) = \limitset \cap f^{-1}(A)$, we have
	\[
		\mu \bigl( f^{-1}(A) \bigr) = \mu \bigl( \limitset \cap f^{-1}(A) \bigr) = \mu \bigl(\limitmap^{-1}(A \cap \limitset) \bigr) = \mu(A \cap \limitset) = \mu(A).
	\]
	Thus $\mu$ is $f$-invariant.
\end{proof}

\begin{proposition}    \label{prop:characterization of pressures of subsystems in separated sets}
 	Let $f$, $\mathcal{C}$, $F$, $d$, $\Lambda$ satisfy the Assumptions in Section~\ref{sec:The Assumptions}. 
 	We assume in addition that $f(\mathcal{C}) \subseteq \mathcal{C}$ and $F \in \sursubsystem$. 
 	Consider $\varphi \in C(S^2)$.  
 	Then 
	\begin{equation}  \label{eq:inequality for pressures of subsystems}
		\pressure[\varphi] \geqslant \fpressure[\varphi|_{\limitset}],
	\end{equation}
	where $\fpressure[\varphi|_{\limitset}]$ is the topological pressure of the map $\limitmap \colon \limitset \mapping \limitset$ with respect to the potential $\varphi|_{\limitset}$ (see \eqref{eq:def:topological pressure}).
\end{proposition}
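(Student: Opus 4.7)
The plan is to bound a maximal $(n,\epsilon)$-separated subset of $\limitset$ for $\limitmap$ by the partition function $Z_{n+N}(F,\varphi)$ from Definition~\ref{def:pressure for subsystem}, where the integer $N = N(\epsilon)$ is chosen so that the visual diameters of $(n+N)$-tiles are controlled by $\epsilon$ along the first $n$ iterates of $F$. Passing to the limits $n \to +\infty$ and $\epsilon \to 0^{+}$ will then yield $\fpressure[\varphi] \leqslant \pressure[\varphi]$.

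The first step is a geometric ``bounded multiplicity'' estimate. Let $C \geqslant 1$ be the diameter constant from Lemma~\ref{lem:visual_metric}~\ref{item:lem:visual_metric:diameter of cell}, and, given $\epsilon > 0$, fix $N \in \n_0$ with $C\Lambda^{-(N+1)} < \epsilon$. If two points $\juxtapose{x}{y}$ lie in a common tile $X^{n+N} \in \Domain{n+N}$, then by Proposition~\ref{prop:subsystem:properties}~\ref{item:subsystem:properties:homeo}, for each $k \in \{0,1,\dots,n-1\}$ the image $F^{k}(X^{n+N})$ is an $(n+N-k)$-tile of $F$, and hence of diameter at most $C\Lambda^{-(n+N-k)} \leqslant C\Lambda^{-(N+1)} < \epsilon$. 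Hence $d^{n}_{F}(x,y) < \epsilon$, so every $(n,\epsilon)$-separated subset of $\limitset$ contains at most one point in each tile of $\Domain{n+N}$.

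Second, I fix a maximal $(n,\epsilon)$-separated set $E_n \subseteq \limitset$ with respect to $\limitmap$; since $\limitset \subseteq \bigcup \Domain{n+N}$ by \eqref{eq:def:limitset}, I can assign each $x \in E_n$ to one $(n+N)$-tile containing it, and by the first step at most one point lands in each tile. Combined with the elementary inequality $S_{n}^{F}\varphi(y) \leqslant S_{n+N}^{F}\varphi(y) + N\uniformnorm{\varphi}$ (valid whenever $F^{n+N}$ is defined at $y$), this gives
\[
    \sum_{x \in E_n} \myexp[\big]{S_{n}^{F}\varphi(x)} \leqslant \myexp{N\uniformnorm{\varphi}}\, Z_{n+N}(F,\varphi).
\]
Taking $\tfrac{1}{n}\log$ and $\limsup_{n\to +\infty}$, and using Lemma~\ref{lem:well-defined pressure of subsystems} to identify $\lim_{n\to+\infty} \tfrac{1}{n+N}\log Z_{n+N}(F,\varphi)$ with $\pressure[\varphi]$, I obtain $\limsup_{n \to +\infty} \tfrac{1}{n} \log \sum_{x \in E_n} \myexp[\big]{S_{n}^{F}\varphi(x)} \leqslant \pressure[\varphi]$ for every $\epsilon > 0$, and sending $\epsilon \to 0^{+}$ in the definition \eqref{eq:def:topological pressure} of $\fpressure[\varphi]$ finishes the argument.

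The main obstacle is the bounded multiplicity estimate in the first step: it is the one place where the geometry of the visual metric meets the combinatorics of subsystem tiles, and it forces the choice $N = N(\epsilon)$ which then propagates through the remainder. Everything else is standard manipulation of Birkhoff sums and the fact that $\tfrac{1}{n}\log Z_{n}(F,\varphi)$ actually converges, already established in Lemmas~\ref{lem:partition function is submultiplicative} and~\ref{lem:well-defined pressure of subsystems}.
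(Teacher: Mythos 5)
Your proposal is correct and mirrors the paper's own proof almost line for line: the same choice of the diameter threshold $N(\epsilon)$ with $C\Lambda^{-(N+1)} < \epsilon$, the same injectivity argument for the assignment $x \mapsto X^{n+N}(x)$ via Lemma~\ref{lem:visual_metric}~\ref{item:lem:visual_metric:diameter of cell} and Proposition~\ref{prop:subsystem:properties}~\ref{item:subsystem:properties:homeo}, the same bound $\sum_{x \in E_n} e^{S_n^F\varphi(x)} \leqslant e^{N\uniformnorm{\varphi}} Z_{n+N}(F,\varphi)$, and the same passage to limits using Lemma~\ref{lem:well-defined pressure of subsystems}. The only cosmetic difference is that you work with a maximal separated set while the paper takes a supremum over all separated sets, which are interchangeable here.
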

Recall that $F(\limitset) \subseteq \limitset$ by Proposition~\ref{prop:subsystem:properties}~\ref{item:subsystem:properties:limitset forward invariant} and $\limitmap \define F|_{\limitset}$.
\begin{proof}
	By \eqref{eq:def:topological pressure}, it suffices to show that 
	\begin{equation}    \label{eq:temp:characterization of pressures of subsystems in separated sets}
		\pressure[\varphi] \geqslant \lim_{\varepsilon \to 0^{+}} \limsup_{n \to +\infty} \frac{1}{n}\log (N_{d}(\limitmap, \varphi|_{\limitset}, \varepsilon, n)),
	\end{equation}
	where 
	\[
		N_{d}(\limitmap, \varphi|_{\limitset}, \varepsilon, \, m) 
		\define \sup \set[\Big]{\sum_{x \in E} \myexp[\big]{S_m^{F}\varphi(x)} \describe E \subseteq \limitset \text{ is } (m, \varepsilon)\text{-separated with respect to } \limitmap}.
	\]

	For fixed $\varepsilon > 0$ and $n \in \n$, by Lemma~\ref{lem:visual_metric}~\ref{item:lem:visual_metric:diameter of cell}, we have
	\begin{equation}    \label{eq:temp:prop:characterization of pressures of subsystems in separated sets:diam bound}
		\diam{d}{X^{n + i}} \leqslant C \Lambda^{-(n + i)} \qquad \text{for all } i \in \n_0 \text{ and } X^{n + i} \in \Domain{n + i},
	\end{equation}
	where $C \geqslant 1$ is the constant from Lemma~\ref{lem:visual_metric}.
	Let $k = k(\varepsilon)$ be the smallest non-negative integer satisfying $C \Lambda^{-(k + 1)} < \varepsilon$. 
	Let $E \subseteq \limitset$ be an arbitrary $(n, \varepsilon)$-separated set (with respect to $\limitmap$). 
	For each $x \in E$, let $X^{n + k}(x)$ be an element of $\Domain{n + k}$ containing $x$. 
	Next, we prove that the map $x \mapsto X^{n + k}(x)$ is injective by contradiction. 
	Suppose this map is not injective, i.e., there exist two distinct points $\juxtapose{x}{y} \in E$ such that $X^{n + k}(x) = X^{n + k}(y)$. 
	Then by \eqref{eq:temp:prop:characterization of pressures of subsystems in separated sets:diam bound} and Proposition~\ref{prop:subsystem:properties}~\ref{item:subsystem:properties:homeo}, we have\[
		d(F^{i}(x), F^{i}(y)) \leqslant \diam[\big]{d}{F^{i}\bigl(X^{n + k}\bigr)} \leqslant C \Lambda^{-(n + k - i)} \leqslant C \Lambda^{-(k + 1)} < \varepsilon
	\]
	for each $i \in \{0,\, 1\, \dots,\, n-1\}$. 
	This contradicts the fact that $E \subseteq \limitset$ is $(n, \varepsilon)$-separated. 
	Hence the map $x \mapsto X^{n + k}(x)$ is injective. 
	Thus,
	\begin{align*}
	 	\sum_{x \in E} \myexp[\big]{ S_n^{F}\varphi(x) } 
	 	&\leqslant \sum_{X^{n + k} \in \Domain{n + k}} \myexp[\big]{ \sup \bigl\{S^F_n \varphi(x) \describe x \in X^{n + k} \bigr\} }   \\
		&\leqslant \sum_{X^{n + k} \in \Domain{n + k}} \myexp[\big]{ k \uniformnorm{\varphi} + \sup\bigl\{S^F_{n + k} \varphi(x) \describe x \in X^{n + k} \bigr\} }  \\
		&=e^{k \uniformnorm{\varphi}} Z_{n + k}(F, \varphi).
	\end{align*}
	Noting that the above inequality holds for every $(n, \varepsilon)$-separated set $E \subseteq \limitset$ since $k = k(\varepsilon)$ is independent of $E$, we can take the supremum of the left-hand side of the above inequality over all $(n, \varepsilon)$-separated set in $\limitset$. Then it follows from Lemma~\ref{lem:well-defined pressure of subsystems} that \[
		\limsup_{n \to +\infty} \frac{1}{n}\log (N_{d}(\limitmap, \varphi|_{\limitset}, \varepsilon, \, n))
	 	\leqslant \lim_{n \to +\infty}\left( \frac{k\uniformnorm{\varphi}}{n} + \frac{n+k}{n} \frac{ \log (Z_{n+k}(F, \varphi)) }{n+k} \right) 
	 	= \pressure[\varphi].
	\]
	Note that $N_{d}(\limitmap, \varphi|_{\limitset}, \varepsilon, \, n)$ increases as $\varepsilon$ decreases. 
	By letting $\varepsilon$ decrease to $0$, we establish \eqref{eq:temp:characterization of pressures of subsystems in separated sets} and finish the proof.
\end{proof}

\begin{proof}[Proof of Theorem~\ref{thm:subsystem characterization of pressure}]
	Let $\fpressure[\potential|_{\limitset}]$ be the topological pressure of the map $\limitmap \colon \limitset \mapping \limitset$ with respect to the potential $\phi|_{\limitset}$. 
	Then it follows from the Variational Principle (see \eqref{eq:Variational Principle for pressure}) that $\fpressure[\potential|_{\limitset}] = \sup \bigl\{ h_{\mu}(\limitmap) + \int_{\limitset} \! \phi \,\mathrm{d}\mu \describe \mu \in \mathcal{M}(\limitset, \limitmap) \bigr\}$. 
	By Proposition~\ref{prop:characterization of pressures of subsystems in separated sets}, we have $\pressure \geqslant \fpressure[\potential|_{\limitset}]$. 
	Thus, it suffices to show that $\Cnormspl = \pressure \leqslant \fpressure[\potential|_{\limitset}]$.
	
	Let $\equstate \in \probsphere$ be an $f$-invariant Gibbs measure with respect to $F$, $\mathcal{C}$, and $\phi$ with $\equstate(\limitset) = 1$ and $P_{\equstate} = \pressure = \Cnormspl$ from Theorem~\ref{thm:existence of f invariant Gibbs measure}. 
	By Proposition~\ref{prop:subsystem invariant measure equivalence}, $\equstate$ is $\limitmap$-invariant. 
	Thus, by abuse of notation, we have $\equstate \in \mathcal{M}(\limitset, \limitmap)$. 
	Then it follows from Proposition~\ref{prop:subsystem:Gibbs pressure less than topological pressure} that $\pressure = P_{\equstate} \leqslant \fpressure[\potential|_{\limitset}]$.
	The proof is complete.
\end{proof}

\begin{proof}[Proof of Theorem~\ref{thm:existence of equilibrium state for subsystem}]
	Consider an $f$-invariant Gibbs measure $\equstate \in \mathcal{M}(S^{2}, f)$ with respect to $F$, $\mathcal{C}$, and $\phi$ with $\equstate(\limitset) = 1$ and $P_{\equstate} = \pressure$ from Theorem~\ref{thm:existence of f invariant Gibbs measure}. 
	Then by Proposition~\ref{thm:subsystem characterization of pressure} and the Variational Principle (see \eqref{eq:Variational Principle for pressure}), we have $P_{\equstate} = \fpressure[\potential|_{\limitset}]$. 
	By Theorem~\ref{prop:subsystem invariant measure equivalence}, $\equstate$ is $\limitmap$-invariant. 
	Thus by abuse of notation we have $\equstate \in \mathcal{M}(\limitset, \limitmap)$. 
	Then it follows from Proposition~\ref{prop:subsystem:Gibbs pressure less than topological pressure} that $P_{\equstate} = h_{\equstate}(\limitmap) + \int_{\limitset} \! \phi \,\mathrm{d}\equstate = \fpressure[\potential|_{\limitset}]$. 
	Therefore, $\equstate$ is an equilibrium state for $\limitmap$ and $\potential|_{\limitset}$.
	The fact that $\limitmap$ is forward quasi-invariant and non-singular with respect to $\equstate$ follows from \eqref{eq:two sides bounds for eigenfunction} in Theorem~\ref{thm:existence of f invariant Gibbs measure} and Proposition~\ref{prop:subsystem properties of eigenmeasure}~\ref{item:prop:subsystem properties of eigenmeasure:forward quasi-invariant and non-singular}.
\end{proof}

\begin{proof}[Proof of Theorem~\ref{thm:main:thermodynamic formalism for subsystems}]
	The statement follows immediately from Theorems~\ref{thm:subsystem characterization of pressure} and~\ref{thm:existence of equilibrium state for subsystem}. 
\end{proof} 
\section{Large deviation asymptotics for expanding Thurston maps}
\label{sec:Large deviation asymptotics for expanding Thurston maps}

In this section, we establish the large deviation asymptotics (Theorem~\ref{thm:large deviation asymptotics}) for expanding Thurston maps.
In Subsection~\ref{sub:The rate function}, we investigate the rate function, presenting Proposition~\ref{prop:properties of rate function} as our main result, which outlines several properties of this function.
In Subsection~\ref{sub:Pair structures}, we discuss pair structures related to tile structures induced by expanding Thurston maps. 
These are essential for constructing suitable subsystems discussed in Subsection~\ref{sub:Key bounds}. 
Subsection~\ref{sub:Key bounds} focuses on proving key bounds outlined in Proposition~\ref{prop:LDA:key bounds}.
This proof relies on the distortion estimates and thermodynamic formalism for subsystems we established in Sections~\ref{sec:Subsystems} and~\ref{sec:Thermodynamic formalism for subsystems}.
Finally, in Subsection~\ref{sub:Proof of the large deviation asymptotics}, we apply the key bounds from Proposition~\ref{prop:LDA:key bounds} to establish Theorem~\ref{thm:large deviation asymptotics}.

\subsection{The rate function}%
\label{sub:The rate function}

In this subsection, we are going to prove the following results about the rate function $I$ as  defined in \eqref{eq:definition of rate function}.
The proof will be given at the end of this subsection. 

\begin{proposition}    \label{prop:properties of rate function}
    Let $f: S^2 \mapping S^2$ be an expanding Thurston map and $d$ be a visual metric on $S^2$ for $f$. 
    Let $\phi \in C^{0,\holderexp}(S^2,d)$ be a real-valued \holder continuous function with an exponent $\holderexp \in (0,1]$ and not co-homologous to a constant in $C(S^2)$. 
    Let $\mu_{\phi}$ be the unique equilibrium state for the map $f$ and the potential $\phi$.
    Denote $\gamma_{\phi} \define \int \! \phi \,\mathrm{d}\mu_{\phi}$, $\minenergy \define \min\limits_{\mu\in \mathcal{M}(S^2,f)} \int \! \phi \,\mathrm{d}\mu$, and $\maxenergy \define \max\limits_{\mu\in \mathcal{M}(S^2,f)} \int \! \phi \,\mathrm{d}\mu$.
    Then the following statements hold:
    \begin{enumerate}[label=\rm{(\roman*)}]
        \item    \label{item:prop:properties of rate function:non-degenerate} 
            $\gamma_\phi \in (\minenergy,\maxenergy)$.
            In particular, the closed interval $\mathcal{I}_{\phi} \define [\minenergy,\maxenergy]$ cannot degenerate into a singleton set consisting of $\gamma_\phi$.

        \smallskip
        
        \item    \label{item:prop:properties of rate function:expression of rate function} 
            For $\alpha \in (\minenergy,\maxenergy)$,
            \[
                I(\alpha) = P(f, \phi) - P(f, (p')^{-1}(\alpha) \phi ) + ( (p')^{-1}(\alpha) - 1)\alpha,
            \]
            where the function $p \colon \real \mapping \real$ is defined by $p(t) \define P(f, t\phi)$.

        \smallskip

        \item     \label{item:prop:properties of rate function:second order differentiable strictly convex boundary behavior}
            The rate function $I \colon [\minenergy,\maxenergy] \mapping [0, +\infty)$ is twice differentiable and strictly convex on $(\minenergy, \maxenergy)$. Moreover, $I(\alpha) = 0$ if and only if $\alpha = \gamma_\phi$. Furthermore, $\lim\limits_{\alpha \to {\minenergy}^{+}} I'(\alpha) = -\infty$ and $\lim\limits_{\alpha \to {\maxenergy}^{-}} I'(\alpha) = +\infty$.            
    \end{enumerate}
\end{proposition}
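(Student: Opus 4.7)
The three statements are essentially a package deal built from the analytic properties of the pressure function $p(t) \define P(f, t\phi)$, so my plan is to establish those properties first and then read off (i)--(iii) from them.

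\emph{Step 1 (non-degeneracy).} For (i), I would argue by contradiction. Suppose $\gamma_{\phi} = \maxenergy$; then $\mu_{\phi} \in \mathcal{M}_{\max}(f,\phi) \define \{\mu \in \mathcal{M}(S^{2},f) \describe \int \phi \,\mathrm{d}\mu = \maxenergy\}$. By Theorem~\ref{thm:properties of equilibrium state} the measure $\mu_{\phi}$ is a Gibbs measure for $f$ and $\phi$, so its support is all of $S^{2}$. Now invoke the Mañé lemma for expanding Thurston maps (Li--Zhang) to produce $u \in C^{0,\holderexp}(S^{2},d)$ with $\widetilde{\phi} \define \phi + u\circ f - u \leqslant \maxenergy$. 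The set $\mathcal{K} \define \bigcap_{j\geqslant 0} f^{-j}(\widetilde{\phi}^{-1}(\maxenergy))$ is closed, $f$-forward invariant, and contains $\supp(\mu_{\phi}) = S^{2}$, so $\widetilde{\phi} \equiv \maxenergy$. This contradicts the hypothesis that $\phi$ is not co-homologous to a constant. The inequality $\gamma_{\phi} > \minenergy$ is symmetric.

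\emph{Step 2 (analytic properties of $p$).} Viewing $p \colon \real \mapping \real$ via the Variational Principle, I will record three facts: (a) $p$ is convex (trivial from the Variational Principle); (b) $p$ is differentiable with $p'(t) = \int \phi \,\mathrm{d}\mu_{t\phi}$, which is a direct consequence of Theorem~\ref{thm:properties of equilibrium state}~\ref{item:thm:properties of equilibrium state:derivative} together with uniqueness of equilibrium states; (c) $p$ is strictly convex, i.e.\ $p'' > 0$. Fact (c) is the only delicate point: it is equivalent to the strict positivity of the asymptotic variance of $\phi$ under $\mu_{t\phi}$, which in turn holds precisely because $\phi$ is not co-homologous to a constant; this is standard once one has the spectral gap / Ruelle--Perron--Frobenius theorem from \cite{li2018equilibrium}. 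Combined with (i) and the endpoint behaviour $\lim_{t \to -\infty} p'(t) = \minenergy$, $\lim_{t \to +\infty} p'(t) = \maxenergy$ (a consequence of $p'(t) \in \mathcal{I}_\phi$ and convexity, using the non-degeneracy from Step~1), one obtains that $p' \colon \real \mapping (\minenergy, \maxenergy)$ is a $C^{1}$ diffeomorphism. This gives well-definedness and smoothness of $(p')^{-1}$.

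\emph{Step 3 (the Legendre transform identity).} For (ii), fix $\alpha \in (\minenergy, \maxenergy)$ and set $t_{\alpha} \define (p')^{-1}(\alpha)$. For every $\mu \in \mathcal{M}(S^{2},f)$ with $\int \phi \,\mathrm{d}\mu = \alpha$, the Variational Principle applied to $t_{\alpha}\phi$ gives
\begin{equation*}
    p(t_{\alpha}) \;\geqslant\; h_{\mu}(f) + t_{\alpha}\alpha,
\end{equation*}
so $P(f,\phi) - P_{\mu}(f,\phi) \geqslant P(f,\phi) - p(t_{\alpha}) + (t_{\alpha} - 1)\alpha$, yielding the lower bound on $I(\alpha)$. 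For the matching upper bound, plug in $\mu = \mu_{t_{\alpha}\phi}$: since $p'(t_{\alpha}) = \alpha$, the measure $\mu_{t_{\alpha}\phi}$ satisfies $\int \phi \,\mathrm{d}\mu_{t_{\alpha}\phi} = \alpha$ and $h_{\mu_{t_{\alpha}\phi}}(f) = p(t_{\alpha}) - t_{\alpha}\alpha$, giving equality. This establishes (ii).

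\emph{Step 4 (reading off (iii)).} From the formula in (ii), a direct differentiation (the envelope identity makes the $((p')^{-1})'(\alpha)$ terms cancel) yields
\begin{equation*}
    I'(\alpha) = (p')^{-1}(\alpha) - 1, \qquad I''(\alpha) = \frac{1}{p''((p')^{-1}(\alpha))} > 0,
\end{equation*}
so $I$ is $C^{2}$ and strictly convex on $(\minenergy, \maxenergy)$. Since $p'(1) = \int \phi \,\mathrm{d}\mu_{\phi} = \gamma_{\phi}$, we get $(p')^{-1}(\gamma_{\phi}) = 1$, hence $I'(\gamma_{\phi}) = 0$ and $I(\gamma_{\phi}) = 0$; strict convexity then forces $I(\alpha) = 0 \iff \alpha = \gamma_{\phi}$. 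The boundary asymptotics $I'(\alpha) \to \pm \infty$ as $\alpha \to \minenergy^{+}, \maxenergy^{-}$ follow immediately from $(p')^{-1}(\alpha) \to \mp \infty$ established in Step~2.

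The main obstacle is Step~2(c), the strict convexity of $p$. Everything else is convex-analytic bookkeeping or a short contradiction argument, but strict convexity genuinely requires the thermodynamic input that $\phi$ being non-cohomologous to a constant is equivalent to non-vanishing of the asymptotic variance, which I will quote from the equilibrium-state machinery in \cite{li2018equilibrium} rather than redevelop.
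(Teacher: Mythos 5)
Your proposal is correct, and Steps~3--4 mirror the paper's argument almost exactly: the paper also establishes the Legendre-transform identity in (ii) via $H(\alpha) = p(\xi(\alpha)) - \xi(\alpha)\alpha$ with $\xi = (p')^{-1}$, and reads off (iii) by differentiating, using $I'(\alpha) = \xi(\alpha) - 1$ and $I''(\alpha) = \xi'(\alpha) > 0$.

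Where you diverge from the paper is Step~1. The paper does not invoke the Ma\~{n}\'{e} lemma at all; instead it proves (i) as a byproduct of the same pressure-function machinery it needs for (ii)--(iii). Concretely, it observes that $p$ is $C^{2}$ and strictly convex (citing Przytycki--Urba\'{n}ski and Das--Przytycki--Tiozzo--Urba\'{n}ski--Zdunik), so $\mathcal{D} \define p'(\real)$ is an open interval containing $p'(1) = \gamma_{\phi}$, and then uses the elementary ``supporting line'' argument from the Variational Principle (for each $\mu$, the graph of $p$ lies above a line of slope $\int\phi\,\mathrm{d}\mu$, hence $\int\phi\,\mathrm{d}\mu \in \overline{\mathcal{D}}$) to get $\mathcal{D} = \interior{\mathcal{I}_{\phi}}$, whence $\gamma_{\phi} \in \interior{\mathcal{I}_{\phi}}$. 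Your Ma\~{n}\'{e}-lemma argument is also valid (it needs the full-support property of $\mu_{\phi}$, which follows from the Gibbs property, and a sub-cohomology $\widetilde{\phi} \leqslant \maxenergy$), but it imports extra machinery from ergodic optimization that the paper's final version avoids; the paper's route is leaner because it recycles the strict convexity already required downstream.

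One small imprecision worth flagging: in Step~2 you attribute $\lim_{t\to\pm\infty} p'(t) = \minenergy, \maxenergy$ to ``$p'(t) \in \mathcal{I}_{\phi}$ and convexity, using the non-degeneracy from Step~1.'' Those ingredients alone do not force $p'(\real)$ to exhaust $\interior{\mathcal{I}_{\phi}}$; the mechanism is really the supporting-line argument above (for $\nu$ with $\int\phi\,\mathrm{d}\nu = \maxenergy$, $p(t) \geqslant h_{\nu}(f) + t\maxenergy$ forces $\lim_{t\to+\infty} p'(t) \geqslant \maxenergy$), which is what the paper uses. It is a recoverable gap, but as stated the parenthetical does not yield the claim.
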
 

The graph of the rate function $I$ is shown in Figure~\ref{fig:rate function}.
\begin{figure}[H]
    \centering
    \begin{overpic}[width=7cm, tics=20]{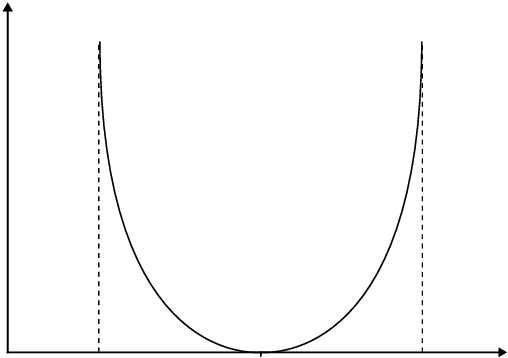}
        \put(16,-3){$\minenergy$}
        \put(49,-3){$\gamma_\phi$}
        \put(79.5,-3){$\maxenergy$}
        \put(103,-1){$\alpha$}
        \put(-12,68){$I(\alpha)$}
    \end{overpic}
    \vspace*{.2cm}
    \caption{The graph of the rate function $I$.}
    \label{fig:rate function}
\end{figure}

Let $f \colon S^2 \mapping S^2$ be an expanding Thurston map and $d$ be a visual metric on $S^2$ for $f$.
If $\phi$ is a \holder continuous function on $S^2$ with respect to the metric $d$, then there is a unique equilibrium state for $f$ and $\phi$ (recall Theorem~\ref{thm:properties of equilibrium state}~\ref{item:thm:properties of equilibrium state:existence and uniqueness}); we denote it by $\mu_{\phi}$. 

Two continuous functions $\varphi$ and $\psi$ are called \emph{co-homologous} in $C(S^2)$ (with respect to $f$) if there exists a continuous function $u \in C(S^2)$ such that $\varphi - \psi = u\circ f - u$.

For a continuous function $\varphi \in C(S^2)$, we define
\[
    \mathcal{I}_{\varphi} \define \set[\bigg]{\int \! \varphi \,\mathrm{d}\mu \describe \mu \in \invmea }.
\]
Note that $\mathcal{I}_{\varphi}$ is a connected closed subset of $\real$.
For each $\alpha \in \mathcal{I}_{\varphi}$, we define
\begin{equation}     \label{eq:H_alpha}
    H_{\varphi}(\alpha) \define \sup \set[\bigg]{ h_{\mu}(f) \describe \mu\in \mathcal{M}(S^2, f) \text{ with } \int \! \phi \,\mathrm{d}\mu = \alpha }.
\end{equation}

The following lemma is an analog of \cite[Lemma~3.3]{sharp2022statistics}. 
The proof is essentially the same, and we include it for the convenience of the reader.

\begin{lemma}    \label{lem:derivative of pressure function related to Birkhoff average}
    Let $f$, $d$, $\phi$, $\gamma_{\phi}$, $\minenergy$, $\maxenergy$ satisfy the Assumptions in Section~\ref{sec:The Assumptions}. 
    We assume in addition that $\phi$ is not co-homologous to a constant in $C(S^2)$.
    Then $\gamma_\phi \in \interior{\mathcal{I}_{\phi}} = (\minenergy,\maxenergy)$ and for each $\alpha \in (\minenergy, \maxenergy)$, there exists a unique number $\xi = \xi(\alpha) \in \real$ such that $H_{\potential}(\alpha) = h_{\mu_{\xi\phi}}(f)$ and 
    \begin{equation}    \label{eq:derivative of pressure function equals to Birkhoff average}
        \frac{\mathrm{d}P(f, t\phi)}{\mathrm{d}t} \bigg|_{t = \xi} = \int \! \phi \,\mathrm{d}\mu_{\xi \phi} = \alpha.   
    \end{equation}
    Here $\mu_{\xi\phi}$ is the equilibrium state for $f$ and $\xi\phi$. 
    Moreover, the supremum in \eqref{eq:H_alpha} is uniquely attained by $\mu_{\xi\phi}$.
\end{lemma}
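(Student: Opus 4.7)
The plan is to prove the lemma in three stages: first the non-degeneracy $\gamma_\phi \in (\minenergy, \maxenergy)$, then the existence and uniqueness of $\xi$, and finally the characterization of $H(\alpha)$.

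For the non-degeneracy, I would apply a Ma\~n\'e-type argument tailored to expanding Thurston maps. The Ma\~n\'e Lemma (available in this setting, e.g.\ in Li--Zhang) produces $u_\phi \in C^{0,\holderexp}(S^2, d)$ such that $\widetilde\phi \define \phi + u_\phi \circ f - u_\phi \leqslant \maxenergy$ pointwise on $S^2$. Setting $\mathcal{K} \define \bigcap_{j \geqslant 0} f^{-j}(\widetilde\phi^{-1}(\maxenergy))$, one checks $\mathcal{K}$ is $f$-forward invariant, and any $\mu \in \mathcal{M}(S^{2},f)$ with $\int \phi \,\mathrm{d}\mu = \maxenergy$ is supported on $\mathcal{K}$. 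Now if $\gamma_\phi = \maxenergy$, then $\mu_\phi$ itself attains the maximum. Since $\mu_\phi$ is a Gibbs measure with respect to $f$, $\mathcal{C}$, and $\phi$ (see \cite[Theorem~5.16]{li2018equilibrium}), a direct argument using Lemma~\ref{lem:visual_metric} shows $\supp{\mu_\phi} = S^2$, forcing $\mathcal{K} = S^2$ and hence $\widetilde\phi \equiv \maxenergy$. This contradicts the hypothesis that $\phi$ is not co-homologous to a constant. Replacing $\phi$ by $-\phi$ gives $\gamma_\phi > \minenergy$.

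Next, for the existence and uniqueness of $\xi$, I would study $p(t) \define P(f, t\phi)$. By known differentiability of pressure for H\"older potentials on expanding Thurston maps (Theorem~\ref{thm:properties of equilibrium state}~\ref{item:thm:properties of equilibrium state:derivative} together with the standard second-derivative formula), $p$ is $C^2$ on $\real$ with $p'(t) = \int \phi \,\mathrm{d}\mu_{t\phi}$ and $p''(t) \geqslant 0$. Since $\phi$ is not co-homologous to a constant, $p$ is strictly convex, so $p'$ is strictly increasing. To identify the range of $p'$, the Variational Principle gives the two-sided bound
\[
    t\maxenergy \leqslant p(t) - h_{\operatorname{top}}(f) \quad \text{and} \quad p(t) \leqslant h_{\operatorname{top}}(f) + t\maxenergy \qquad (t > 0),
\]
whence $\lim_{t \to +\infty} p(t)/t = \maxenergy$; the symmetric computation for $t < 0$ yields $\lim_{t \to -\infty} p(t)/t = \minenergy$. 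Convexity of $p$ then upgrades these to $\lim_{t \to +\infty} p'(t) = \maxenergy$ and $\lim_{t \to -\infty} p'(t) = \minenergy$. Strict monotonicity of $p'$ shows $p'(\real) \subseteq (\minenergy, \maxenergy)$, and combined with the limits above we conclude $p' \colon \real \mapping (\minenergy, \maxenergy)$ is a bijection. This yields for each $\alpha \in (\minenergy, \maxenergy)$ a unique $\xi = \xi(\alpha)$ satisfying \eqref{eq:derivative of pressure function equals to Birkhoff average}.

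Finally, for the characterization of $H(\alpha)$, I would apply a Lagrange-multiplier computation. For any $\mu \in \mathcal{M}(S^{2}, f)$ with $\int \phi \,\mathrm{d}\mu = \alpha$,
\[
    h_\mu(f) = h_\mu(f) + \xi \textstyle\int \phi \,\mathrm{d}\mu - \xi \alpha = P_\mu(f, \xi\phi) - \xi\alpha \leqslant P(f, \xi\phi) - \xi\alpha,
\]
by the Variational Principle \eqref{eq:Variational Principle for pressure}, with equality precisely when $\mu$ is an equilibrium state for $f$ and $\xi\phi$. By the uniqueness of equilibrium states (Theorem~\ref{thm:properties of equilibrium state}~\ref{item:thm:properties of equilibrium state:existence and uniqueness}), the only candidate is $\mu_{\xi\phi}$, and by construction $\int \phi \,\mathrm{d}\mu_{\xi\phi} = \alpha$, so $\mu_{\xi\phi}$ is itself admissible. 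Hence $H(\alpha) = h_{\mu_{\xi\phi}}(f)$ with the supremum uniquely attained by $\mu_{\xi\phi}$.

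The main obstacle will be transferring the asymptotic information $p(t)/t \to \maxenergy, \minenergy$ into pointwise limits for $p'(t)$, and showing that strict convexity prevents $p'$ from attaining the endpoints; the former is a clean consequence of convex analysis once one has monotonicity of difference quotients, but it has to be set up carefully. A conceptually distinct obstacle is the non-degeneracy step, since it genuinely uses inputs specific to expanding Thurston maps (the Gibbs and full-support properties of $\mu_\phi$ together with Ma\~n\'e's lemma) rather than soft thermodynamic formalism alone.
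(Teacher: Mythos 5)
Your proof is correct in substance, but diverges from the paper's on two of its three steps, and in one place you have essentially reinvented an argument the authors themselves drafted and then discarded. For the non-degeneracy $\gamma_\phi \in (\minenergy,\maxenergy)$, you invoke the Ma\~n\'e lemma from Li--Zhang together with the full-support Gibbs property of $\mu_\phi$; the paper instead gets this for free from the pressure function $p(t) = P(f,t\phi)$: since $p$ is strictly convex and $C^1$, its derivative range $\mathcal{D} = p'(\real)$ is a non-degenerate open interval sitting inside $\mathcal{I}_\phi$, and $\gamma_\phi = p'(1) \in \mathcal{D} \subseteq \interior{\mathcal{I}_\phi}$. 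This is shorter and needs no ergodic optimization machinery; the Ma\~n\'e route is sound but heavier. For the surjectivity of $p'$ onto $(\minenergy,\maxenergy)$, you go via the asymptotics $p(t)/t \to \maxenergy$ (resp.\ $\minenergy$) as $t\to +\infty$ (resp.\ $-\infty$), upgraded to $p'(t)\to\maxenergy,\minenergy$ by convexity. The paper uses a supporting-line argument instead: for any $\mu\in\mathcal{M}(S^2,f)$ the Variational Principle says the graph of the strictly convex $p$ lies above a line of slope $\int\phi\,\mathrm{d}\mu$, which forces $\int\phi\,\mathrm{d}\mu\in\overline{\mathcal{D}}$, hence $\mathcal{I}_\phi\subseteq\overline{\mathcal{D}}$ and so $\mathcal{D}=\interior{\mathcal{I}_\phi}$. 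Both arguments are standard convex analysis built on the same inputs (strict convexity and $C^1$-regularity of $p$); yours is more computational, the paper's more structural. The final step, characterizing $H(\alpha)$ via uniqueness of equilibrium states, is essentially identical in both, just phrased differently (you as a Lagrange-multiplier bound, the paper as a strict inequality for $\mu\neq\mu_{\xi\phi}$). Two small points to fix: the displayed lower bound should read $t\maxenergy \leqslant p(t)$ rather than $t\maxenergy \leqslant p(t) - h_{\operatorname{top}}(f)$ (the latter has the inequality backwards; your conclusion $p(t)/t\to\maxenergy$ is nonetheless correct from the corrected sandwich), and the citation to Theorem~\ref{thm:properties of equilibrium state}~\ref{item:thm:properties of equilibrium state:derivative} gives differentiability of $t \mapsto P(f,\phi+t\gamma_\phi)$, which is not literally $\frac{\mathrm{d}}{\mathrm{d}t}P(f,t\phi)$; you should cite the general twice-differentiability of the pressure function for H\"older families, e.g.\ \cite[Theorem~5.7.4]{przytycki2010conformal}, as the paper does.
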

\begin{proof}
    We write $p(t) \define P(f, t\phi)$ for $t \in \real$ for convenience.
    Then it follows from Theorem~\ref{thm:properties of equilibrium state}~\ref{item:thm:properties of equilibrium state:derivative} that $p'(t) = \int \! \potential \,\mathrm{d} \mu_{t \potential}$ for $t \in \real$.
    It is a standard fact that the function $p \colon \real \mapping \real$ is twice differentiable (the proof is verbatim the same as that of \cite[Theorem~5.7.4]{przytycki2010conformal}). 
    Moreover, since $\phi$ is not co-homologous to a constant, the function $p$ is strictly convex (see \cite[Theorem~2.11.3]{przytycki2010conformal} or \cite[Theorem~1.1]{das2021thermodynamic}).

    Now we consider the set \[
        \mathcal{D} \define \{p'(t)  \describe t \in \real \}  = \biggl\{ \int \! \phi \,\mathrm{d}\mu_{t \phi} \describe t \in \real \biggr\} \subseteq \mathcal{I}_{\phi}.
    \]
    Here $\mathcal{D}$ is an open interval since the function $p$ is twice differentiable and strictly convex. 
    Then we have $\gamma_\phi \in \interior{\mathcal{I}_{\phi}} = (\minenergy,\maxenergy)$ since $\gamma_{\phi} = p'(1) \in \mathcal{D}$.
    
    By the definition of pressure, for all $t \in \real$ and $\mu \in \mathcal{M}(S^2, f)$, $p(t) \geqslant h_{\mu}(f) + t\int \! \phi \,\mathrm{d}\mu$.
    In particular, the graph of the strictly convex function $p$ lies above a line with slope $\int \! \phi \,\mathrm{d}\mu$ (possibly touching it tangentially) so that $\int \! \phi \,\mathrm{d}\mu \in \overline{\mathcal{D}}$. 
    Since $\mu \in \mathcal{M}(S^{2}, f)$ is arbitrary, we have $\mathcal{I}_{\phi} \subseteq \overline{\mathcal{D}}$. 
    Then it follows that $\mathcal{D} = \interior{\mathcal{I}_{\phi}}$. 
    Note that the function $p' \colon \real \mapping \real$ is differentiable and strictly increasing.
    Thus, for each $\alpha\in \interior{\mathcal{I}_{\phi}} = p'(\real)$, there exists a unique number $\xi = \xi(\alpha) \in \real$ with 
    $\alpha = p'(\xi) = \int \! \phi \,\mathrm{d}\mu_{\xi \phi}$.
    Since $\mu_{\xi\phi}$ is the unique equilibrium state for $f$ and $\xi\phi$ (recall Theorem~\ref{thm:properties of equilibrium state}~\ref{item:thm:properties of equilibrium state:existence and uniqueness}), we have, for any $\mu \in \mathcal{M}(S^2, f)$ with $\mu \ne \mu_{\xi\phi}$, $h_{\mu_{\xi\phi}}(f) + \xi \int \! \phi \,\mathrm{d}\mu_{\xi\phi} > h_{\mu}(f) + \xi\int \! \phi \,\mathrm{d}\mu$.
    In particular, if $\int \! \phi \,\mathrm{d}\mu = \alpha$ then $h_{\mu_{\xi\phi}}(f) > h_{\mu}(f)$. This means that the supremum in \eqref{eq:H_alpha} is uniquely attained by $h_{\mu_{\xi\phi}}$. 
    Therefore, $\mu_{\xi\phi}$ is the unique measure with the desired properties.
\end{proof}

\begin{corollary}    \label{coro:properties of xi}
    Let $f$, $d$, $\phi$, $\gamma_{\phi}$, $\minenergy$, $\maxenergy$ satisfy the Assumptions in Section~\ref{sec:The Assumptions}. 
    We assume in addition that $\phi$ is not co-homologous to a constant in $C(S^2)$.
    Let $\xi \colon (\minenergy, \maxenergy) \mapping \real$ be the function defined via Lemma~\ref{lem:derivative of pressure function related to Birkhoff average}. 
    Then this function $\xi \colon (\minenergy, \maxenergy) \mapping \real$ is differentiable and strictly increasing. 
    Moreover, we have $\xi(\gamma_\phi) = 1$ and $\lim\limits_{\alpha \to {\minenergy}^{+}} \xi(\alpha) = -\infty$, $\lim\limits_{\alpha \to {\maxenergy}^{-}} \xi(\alpha) = +\infty$.
\end{corollary}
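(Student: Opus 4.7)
The plan is to observe that, by construction in Lemma~\ref{lem:derivative of pressure function related to Birkhoff average}, the function $\xi$ is nothing but the inverse of $p'$, where $p(t) \define P(f, t\phi)$. Indeed, $\xi(\alpha)$ is characterized as the unique real number satisfying $p'(\xi(\alpha)) = \alpha$, and the existence and uniqueness of such $\xi(\alpha)$ for each $\alpha \in (\minenergy, \maxenergy)$ were already established in the proof of Lemma~\ref{lem:derivative of pressure function related to Birkhoff average}, where it was shown that $p' \colon \real \mapping \real$ is a differentiable bijection onto $\mathcal{D} = \interior{\mathcal{I}_{\phi}} = (\minenergy, \maxenergy)$.

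First, I would record that $p$ is twice differentiable and strictly convex on $\real$ since $\phi$ is not co-homologous to a constant (this was already invoked in the proof of the previous lemma and follows from the standard thermodynamic formalism, e.g., \cite[Theorem~2.11.3]{przytycki2010conformal}). Consequently, $p'' > 0$ on $\real$, so $p'$ is strictly increasing and differentiable, and therefore the inverse $\xi = (p')^{-1}$ is strictly increasing and differentiable on $(\minenergy, \maxenergy)$ by the inverse function theorem.

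Second, I would verify $\xi(\gamma_{\phi}) = 1$. By the definition of $\gamma_{\phi}$ and Theorem~\ref{thm:properties of equilibrium state}~\ref{item:thm:properties of equilibrium state:derivative} (or equivalently the first-derivative formula for the pressure function at $t = 1$), we have
\[
    p'(1) = \int \! \phi \,\mathrm{d}\mu_{\phi} = \gamma_{\phi}.
\]
Since $\xi$ is the inverse of $p'$, this gives $\xi(\gamma_{\phi}) = 1$.

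Finally, for the boundary limits, since $p'$ is strictly increasing and maps $\real$ bijectively onto $(\minenergy, \maxenergy)$, the monotone limits $\lim_{t \to -\infty} p'(t)$ and $\lim_{t \to +\infty} p'(t)$ exist and must equal $\minenergy$ and $\maxenergy$, respectively. Transferring this to the inverse, we obtain $\lim_{\alpha \to \minenergy^{+}} \xi(\alpha) = -\infty$ and $\lim_{\alpha \to \maxenergy^{-}} \xi(\alpha) = +\infty$. There is no essential obstacle here; the only subtlety is making sure that the range of $p'$ is exactly $(\minenergy, \maxenergy)$ and not a strictly smaller open interval, but this is precisely the equality $\mathcal{D} = \interior{\mathcal{I}_{\phi}}$ that was established in the proof of Lemma~\ref{lem:derivative of pressure function related to Birkhoff average} (using that $\mathcal{D}$ is open by strict convexity and that $\mathcal{I}_{\phi} \subseteq \overline{\mathcal{D}}$ via the tangent-line argument).
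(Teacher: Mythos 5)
Your argument is correct, and for the monotonicity, differentiability, and boundary-limit parts it coincides with the paper's proof: both rest on the identity $\xi = (p')^{-1}$ (recorded explicitly in the paper's proof of the corollary) together with the fact that $p$ is twice differentiable and strictly convex on $\real$. The one place you genuinely diverge from the paper is the verification that $\xi(\gamma_\phi) = 1$. You observe directly that $p'(1) = \int \phi \,\mathrm{d}\mu_\phi = \gamma_\phi$ (the derivative identity $p'(t) = \int \phi \,\mathrm{d}\mu_{t\phi}$ is already used in the proof of Lemma~\ref{lem:derivative of pressure function related to Birkhoff average}), so inverting gives $\xi(\gamma_\phi) = 1$ immediately. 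The paper instead runs an entropy comparison: it shows $H(\gamma_\phi) \leqslant P(f,\phi) - \gamma_\phi = h_{\mu_\phi}(f) \leqslant H(\gamma_\phi)$, concludes that $\mu_\phi$ attains the supremum defining $H(\gamma_\phi)$, and then invokes the uniqueness of the maximizer (together, implicitly, with the co-homology criterion for equality of equilibrium states and the hypothesis that $\phi$ is not co-homologous to a constant) to force $\xi(\gamma_\phi) = 1$. Your route is shorter and uses less machinery; the paper's route is a valid alternative that additionally re-derives the fact that $\mu_\phi$ is the unique measure attaining $H(\gamma_\phi)$. Both are sound. (One small note: Theorem~\ref{thm:properties of equilibrium state}~\ref{item:thm:properties of equilibrium state:derivative} as printed concerns $P(f,\phi + t\gamma_\phi)$ with $\gamma_\phi$ a constant and so does not literally give $p'(1)$; your parenthetical fallback to the standard derivative formula $\frac{\mathrm{d}}{\mathrm{d}t}P(f,t\phi) = \int \phi\,\mathrm{d}\mu_{t\phi}$ is the correct reference and is what the paper itself uses inside the proof of the lemma.)
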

\begin{proof}
    We write $p(t) \define P(f, t \phi)$ for $t \in \real$. 
    Then the function $p \colon \real \mapping \real$ is twice differentiable and strictly convex.
    Note that it follows from Lemma~\ref{lem:derivative of pressure function related to Birkhoff average} that $\xi(\alpha) = (p')^{-1}(\alpha)$ for each $\alpha \in (\minenergy, \maxenergy)$.
    Therefore, the function $\xi \colon (\minenergy, \maxenergy) \mapping \real$ is differentiable and strictly increasing, and $\lim\limits_{\alpha \to {\minenergy}^{+}} \xi(\alpha) = -\infty$, $\lim\limits_{\alpha \to {\maxenergy}^{-}} \xi(\alpha) = +\infty$.

    For $\alpha = \gamma_\phi$, by \eqref{eq:H_alpha}, we have 
    \begin{align*}
        H_{\potential}(\gamma_\phi) 
        &= \sup \biggl\{ h_{\mu}(f) + \int \! \phi \,\mathrm{d}\mu \describe \mu\in \mathcal{M}(S^2,f), \, \int \! \phi \,\mathrm{d}\mu = \gamma_\phi \biggr\} - \gamma_\phi   \\
        &\leqslant P(f, \phi) - \gamma_\phi = h_{\mu_{\phi}}(f) \leqslant H_{\potential}(\gamma_\phi) = h_{\mu_{\xi(\gamma_\phi)\phi}}(f).
    \end{align*}
    Thus $h_{\mu_{\phi}}(f) = h_{\mu_{\xi(\gamma_\phi)\phi}}(f) = H_{\potential}(\gamma_\phi)$. 
    Therefore, it follows from Lemma~\ref{lem:derivative of pressure function related to Birkhoff average} that $\xi(\gamma_\phi) = 1$.
\end{proof}

Now we can prove Proposition~\ref{prop:properties of rate function}.

\begin{proof}[Proof of Proposition~\ref{prop:properties of rate function}]
    \ref{item:prop:properties of rate function:non-degenerate} 
    Statement~\ref{item:prop:properties of rate function:non-degenerate} follows immediately from Lemma~\ref{lem:derivative of pressure function related to Birkhoff average}.  

    \smallskip

    \ref{item:prop:properties of rate function:expression of rate function}
    Fix arbitrary $\alpha \in (\minenergy, \maxenergy)$. 
    By Lemma~\ref{lem:derivative of pressure function related to Birkhoff average} and \eqref{eq:H_alpha}, we have 
    \[
        H_{\potential}(\alpha) = h_{\mu_{\xi(\alpha)\phi}}(f) = P(f, \xi(\alpha)\phi) - \xi(\alpha)\alpha.
    \]
    Thus we get\[
        I(\alpha) = P(f, \phi) - \alpha -H_{\potential}(\alpha) = P(f, \phi) - P(f, \xi(\alpha)\phi) + (\xi(\alpha) - 1)\alpha
    \]
    by the definition of the rate function $I$ (see \eqref{eq:definition of rate function}).
    By Lemma~\ref{lem:derivative of pressure function related to Birkhoff average}, we have $\xi(\alpha) = (p')^{-1}(\alpha)$.
    Hence, statement~\ref{item:prop:properties of rate function:expression of rate function} holds.

    \smallskip

    \ref{item:prop:properties of rate function:second order differentiable strictly convex boundary behavior}  
    Let $\alpha \in (\minenergy, \maxenergy)$. 
    We write $p(t) \define P(f, t\phi)$ for $t \in \real$. 
    Then the function $p \colon \real \mapping \real$ is twice differentiable and strictly convex.
    By statement~\ref{item:prop:properties of rate function:expression of rate function}, we can write $I(\alpha)$ as
    \begin{equation}    \label{eq:temp:proof:prop:properties of rate function:rate function}
        I(\alpha) = p(1) - \alpha + \xi(\alpha)\alpha - p(\xi(\alpha)).
    \end{equation}
    Then it follows from Corollary~\ref{coro:properties of xi} that the rate function $I$ is differentiable on $(\minenergy, \maxenergy)$.

    Differentiating the rate function $I(\alpha)$ with respect to $\alpha$ and noting that $p'(\xi(\alpha)) = \alpha$ (Lemma~\ref{lem:derivative of pressure function related to Birkhoff average}), we obtain $I'(\alpha) = -1 + \xi'(\alpha)\alpha + \xi(\alpha) - p'(\xi(\alpha)) \xi'(\alpha) = \xi(\alpha) - 1$.
    Thus by Corollary~\ref{coro:properties of xi}, $I$ is twice differentiable and strictly convex since $I''(\alpha) = \xi'(\alpha) > 0$, and it follows from Corollary~\ref{coro:properties of xi} that $\lim\limits_{\alpha \to {\minenergy}^{+}} I'(\alpha) = -\infty$ and $\lim\limits_{\alpha \to {\maxenergy}^{-}} I'(\alpha) = +\infty$. Moreover, since $\xi(\gamma_{\phi}) = 1$ and $I'(\gamma_{\phi}) = 0$, by the strict convexity of $I$ and \eqref{eq:temp:proof:prop:properties of rate function:rate function}, $I(\alpha) = 0$ if and only if $\alpha = \gamma_\phi$.
\end{proof}

\subsection{Pair structures}%
\label{sub:Pair structures}
In this subsection, we discuss pair structures associated with the tile structures induced by an expanding Thurston map, which will be used to build appropriate subsystems in Subsection~\ref{sub:Key bounds}.

\begin{definition}[Pair structures]    \label{def:pair structures}
    Let $f$, $\mathcal{C}$, $e^0$ satisfy the Assumptions in Section~\ref{sec:The Assumptions}.
    For each $n\in \n$, we can pair a white $n$-tile $X^n_{\white} \in \mathbf{X}^n_{\white}$ and a black $n$-tile $X^n_{\black} \in \mathbf{X}^n_{\black}$ whose intersection $X^n_{\white} \cap X^n_{\black}$ contains an $n$-edge contained in $f^{-n}(e^0)$. 
    We call $X^n_{\white} \cup X^n_{\black}$ an \emph{$n$-pair} (with respect to $f$, $\mathcal{C}$, and $e^{0}$), and define the \emph{set of $n$-pairs} (with respect to $f$, $\mathcal{C}$, and $e^{0}$), denoted by $\mathbf{P}^n(f,\mathcal{C},e^0)$, to be
    \begin{equation}    \label{eq:definition of n-pairs}
        \mathbf{P}^n(f,\mathcal{C}, e^0) \define \bigl\{ X^n_{\white} \cup X^n_{\black} \describe X^n_{\white} \in \mathbf{X}^n_{\white}, \, X^n_{\black} \in \mathbf{X}^n_{\black}, \, X^n_{\white} \cap X^n_{\black} \cap f^{-n}\bigl(e^0\bigr) \in \mathbf{E}^n(f,\mathcal{C}) \bigr\}.
    \end{equation}
\end{definition}
Figure~\ref{fig:pair example} illustrates the structure of $n$-pairs. 
There are a total of $(\deg f)^n$ such pairs, and each $n$-tile is in precisely one such pair (see Lemma~\ref{lem:pairs are disjoint}).

\begin{figure}[H]
    \centering
    \vspace*{.5cm}
    \begin{overpic}[width=12cm, tics=5]{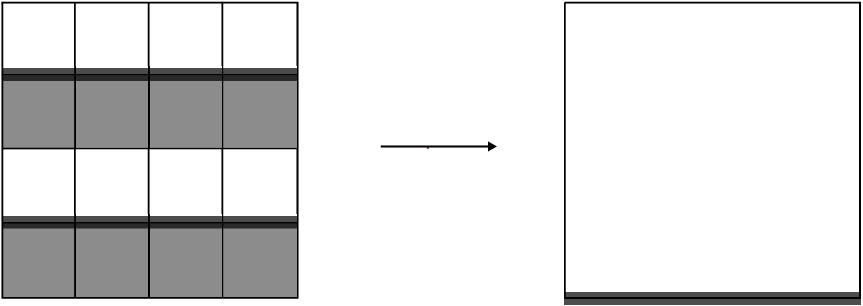}
        \put(13,38){$\mathbf{D}^n(f,\mathcal{C})$}
        \put(78,38){$\mathbf{D}^0(f,\mathcal{C})$}
        \put(49,22){$f^n$}
        \put(82,-3.5){$e^0$}
    \end{overpic}
    \vspace{.1cm}
    \caption{The graph of $n$-pairs.}
    \label{fig:pair example}
\end{figure} 

\begin{remark}\label{rem:basic property for n-pair}
    For each integer $n \in \n$, one sees that $f^n(P^n) = S^2$ for each $n$-pair $P^n \in \mathbf{P}^n(f,\mathcal{C}, e^0)$. This basic property is crucial for our constructions and proofs in this section (for example, see Proposition~\ref{prop:LDA:key bounds}). Indeed, this is the main reason why we define and use the pair structures.
\end{remark}

From now on, if the map $f$, the Jordan curve $\mathcal{C}$ and the $0$-edge $e^0$ are clear from the context, we will sometimes omit $(f,\mathcal{C}, e^0)$ in the notation above.

\begin{lemma}    \label{lem:pairs are disjoint}
    Let $f$, $\mathcal{C}$, $e^0$ satisfy the Assumptions in Section~\ref{sec:The Assumptions}. Then for each $n\in\n$ and any two distinct $n$-pairs $\juxtapose{P^n}{\widetilde{P}^n} \in \mathbf{P}^n$, their interiors are disjoint.
\end{lemma}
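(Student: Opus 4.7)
The plan is to show that each $n$-tile belongs to \emph{exactly one} $n$-pair, so distinct $n$-pairs share no $n$-tile and no shared edge, and then to identify $\inte{P^n}$ explicitly as a union of interiors of cells in $\mathbf{D}^n(f,\mathcal{C})$.

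First, I would establish the ``unique partner'' fact. Fix an $n$-tile $X^n \in \mathbf{X}^n$. Since $f^n|_{X^n}$ is a homeomorphism from $X^n$ onto the $0$-tile $f^n(X^n)$ and carries $\partial X^n$ bijectively onto $\partial(f^n(X^n))$ (Proposition~\ref{prop:properties cell decompositions}~\ref{item:prop:properties cell decompositions:cellular}), and since $e^0 \subseteq \partial X^0_{\black} \cap \partial X^0_{\white}$, there is a \emph{unique} $n$-edge $e^n \subseteq \partial X^n$ with $f^n(e^n) = e^0$, and $e^n \subseteq f^{-n}(e^0)$. By Remark~\ref{rem:intersection of two tiles} exactly two $n$-tiles contain $e^n$; call the other one $Y^n$. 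Because $f^n$ is orientation-preserving on a neighborhood of $e^n$ and the two sides of $e^0$ in $S^2$ are $X^0_\black$ and $X^0_\white$, the tiles $X^n$ and $Y^n$ must have opposite colors. Thus $X^n \cup Y^n \in \mathbf{P}^n$ is the \emph{unique} $n$-pair containing $X^n$.

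Next, given distinct pairs $P^n = X^n_\white \cup X^n_\black$ and $\widetilde{P}^n = \widetilde{X}^n_\white \cup \widetilde{X}^n_\black$, the previous paragraph forces all four tiles to be distinct, and the associated shared edges $e^n := X^n_\white \cap X^n_\black \cap f^{-n}(e^0)$ and $\widetilde{e}^n := \widetilde{X}^n_\white \cap \widetilde{X}^n_\black \cap f^{-n}(e^0)$ to be distinct as well (indeed $e^n \mapsto P^n$ sets up a bijection between $n$-edges in $f^{-n}(e^0)$ and $n$-pairs).

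Then I would compute $\inte{P^n}$ as a subset of $S^2$. The inclusion $\inte{X^n_\white} \cup \inte{X^n_\black} \cup \inte{e^n} \subseteq \inte{P^n}$ is straightforward: any point in $\inte{e^n}$ has a neighborhood in $S^2$ that meets only the three cells $X^n_\white$, $X^n_\black$, $e^n$ of $\mathbf{D}^n(f,\mathcal{C})$ (since $\inte{e^n}$ avoids the $0$-skeleton), hence lies inside $P^n$. Conversely any point of $P^n$ not in this union lies in $\partial X^n_\white \cup \partial X^n_\black$ outside $\inte{e^n}$, hence is an $n$-vertex or lies in another $n$-edge of $\partial X^n_\white$ or $\partial X^n_\black$; in either case every neighborhood meets some third $n$-tile and thus escapes $P^n$. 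Therefore
\[
\inte{P^n} = \inte{X^n_\white} \cup \inte{X^n_\black} \cup \inte{e^n}, \qquad \inte{\widetilde{P}^n} = \inte{\widetilde{X}^n_\white} \cup \inte{\widetilde{X}^n_\black} \cup \inte{\widetilde{e}^n}.
\]
Since all six cells on the right are distinct cells in the cell decomposition $\mathbf{D}^n(f,\mathcal{C})$, Definition~\ref{def:cell decomposition}~(iii) gives pairwise disjoint interiors, so $\inte{P^n} \cap \inte{\widetilde{P}^n} = \emptyset$.

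The only delicate step is the orientation argument guaranteeing that $X^n$ and its partner $Y^n$ have opposite colors; everything else reduces to the standard cell-decomposition bookkeeping of Proposition~\ref{prop:properties cell decompositions} and Remark~\ref{rem:intersection of two tiles}. I expect this color/orientation check to be the only place needing care, since all remaining disjointness statements follow automatically once the partner is shown to be unique and of the opposite color.
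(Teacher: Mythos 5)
You argue forward — each $n$-tile has at most one $n$-pair containing it, because there is a unique $n$-edge in its boundary mapping onto $e^0$ under the homeomorphism $f^n|_{X^n}$, so distinct pairs share no tile — whereas the paper argues by contradiction: if two pairs had overlapping interiors they would share an $n$-tile, and that tile would then carry two distinct $n$-edges both mapping to $e^0$, impossible since $f^n$ restricted to it is a homeomorphism onto a $0$-tile. The underlying combinatorial fact (uniqueness of the $e^0$-preimage edge per tile) is the same in both. Your route also shows, via the orientation argument, that every tile actually \emph{does} belong to a pair — something the paper only needs for Corollary~\ref{coro:union of pairs are the whole sphere} and establishes separately — so the paper's proof of this lemma is shorter precisely because it sidesteps the opposite-color fact. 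Your orientation step does work, but it needs the (unstated) observation that $\inte{e^n}$ avoids $\crit{f^n}$; this holds because $f^n(\inte{e^n}) = \inte{e^0} \subseteq \mathcal{C} \setminus \post{f}$, while any critical point of $f^n$ is mapped into $\post{f}$ after $n$ iterates.

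However, the claimed identity $\inte{P^n} = \inte{X^n_\white} \cup \inte{X^n_\black} \cup \inte{e^n}$, and with it the assertion that ``every neighborhood meets some third $n$-tile,'' are not correct in general. Nothing in Definition~\ref{def:pair structures} or the cell-decomposition axioms prevents $X^n_\white$ and $X^n_\black$ from also sharing further $n$-edges (whose $f^n$-image is a $0$-edge different from $e^0$), or an $n$-vertex around which the only two tiles are $X^n_\white$ and $X^n_\black$. Points of such cells lie in $\inte{P^n}$ but are missing from your list, and the ``third tile'' claim fails there. The disjointness conclusion is still salvageable from your unique-partner fact: if $y \in \inte{P^n} \cap \inte{\widetilde{P}^n}$, a neighborhood of $y$ sits in $P^n \cap \widetilde{P}^n$ and, since the $1$-skeleton $f^{-n}(\mathcal{C})$ has empty interior, that neighborhood meets $\inte{Z^n}$ for some $n$-tile $Z^n$, which must then belong to both pairs, contradicting uniqueness. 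So, contrary to your own closing assessment, it is the explicit interior computation, not the orientation check, that is the step actually needing repair.
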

\begin{proof}
    We argue by contradiction and assume that there exist two distinct $n$-pairs $\juxtapose{P^n}{\widetilde{P}^n} \in \mathbf{P}^n$ for some $n\in\n$ such that their interiors intersect.
    Then it follows from Lemma~\ref{lem:intersection of cells}~\ref{item:lem:intersection of cells:intersection with union of tiles} that their intersection contains an $n$-tile $X^{n} \in \tile{n}$.
    Since $P^{n} \ne \widetilde{P}^n$, Remark~\ref{rem:intersection of two tiles} implies that $X^{n}$ contains two distinct $n$-edges $e^n, \widetilde{e}^n \in \Edge{n}$ satisfying $f^{n}(e^{n}) = f^{n}(\widetilde{e}^n) = e^{0}$, which contradicts Proposition~\ref{prop:properties cell decompositions}~\ref{item:prop:properties cell decompositions:cellular}.
\end{proof}

\begin{corollary}    \label{coro:union of pairs are the whole sphere}
    Let $f$, $\mathcal{C}$, $e^0$ satisfy the Assumptions in Section~\ref{sec:The Assumptions}. Then for each $n\in \n$, we have $\bigcup \mathbf{P}^n = S^2$ and $\card{\mathbf{P}^n} = (\deg{f})^n$.
\end{corollary}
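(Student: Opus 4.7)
The plan is to establish that every $n$-tile lies in exactly one $n$-pair; both conclusions then follow by a counting argument together with Proposition~\ref{prop:properties cell decompositions}~\ref{item:prop:properties cell decompositions:cardinality}.

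First I would fix $n\in\n$ and an arbitrary $n$-tile $X^n \in \mathbf{X}^n$. By Proposition~\ref{prop:properties cell decompositions}~\ref{item:prop:properties cell decompositions:cellular}, $f^n|_{X^n}$ is a homeomorphism of $X^n$ onto $f^n(X^n) \in \{X^0_{\black}, X^0_{\white}\}$. Since $e^0 \subseteq \mathcal{C}$ lies in the boundary of both $0$-tiles, the set $e^n \define (f^n|_{X^n})^{-1}(e^0)$ is the unique $n$-edge contained in $X^n$ whose image under $f^n$ equals $e^0$; note $e^n \in \mathbf{E}^n(f,\mathcal{C})$ by Proposition~\ref{prop:properties cell decompositions}~\ref{item:prop:properties cell decompositions:cellular}. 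By Remark~\ref{rem:intersection of two tiles} there is exactly one other $n$-tile $Y^n$ with $e^n \subseteq Y^n$.

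The key step is to verify that $Y^n$ has the color opposite to that of $X^n$, so that $X^n \cup Y^n \in \mathbf{P}^n$. The interior of $e^n$ contains no $n$-vertices, and hence no critical points of $f^n$ (since $\crit{f^n} \subseteq \mathbf{V}^n$ by Proposition~\ref{prop:properties cell decompositions}~\ref{item:prop:properties cell decompositions:skeletons}); thus $f^n$ is an orientation-preserving local homeomorphism at each $p \in \inte{e^n}$. Locally at $p$, the set $X^n \cup Y^n$ covers a neighborhood of $p$ lying on both sides of $e^n$, and $f^n$ sends these two sides injectively onto the two sides of $e^0$ at $f^n(p)$. Consequently $f^n(X^n)$ and $f^n(Y^n)$ are the two distinct $0$-tiles bordering $e^0$, so one is black and the other is white, as required. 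This gives existence of an $n$-pair containing $X^n$. For uniqueness, if $X^n \cup \widetilde{Y}^n$ is any $n$-pair containing $X^n$, then by Definition~\ref{def:pair structures} there is some $\widetilde{e}^n \subseteq X^n \cap \widetilde{Y}^n$ with $f^n(\widetilde{e}^n) = e^0$; since $f^n|_{X^n}$ is a homeomorphism we get $\widetilde{e}^n = e^n$, and then Remark~\ref{rem:intersection of two tiles} forces $\widetilde{Y}^n = Y^n$.

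To conclude, consider the map $X^n \mapsto P^n(X^n)$ sending each $n$-tile to the unique $n$-pair containing it. It is surjective onto $\mathbf{P}^n$ and exactly $2$-to-$1$ (each pair consists of two tiles, one black and one white). Therefore $\card{\mathbf{P}^n} = \card{\mathbf{X}^n}/2 = (\deg f)^n$ by Proposition~\ref{prop:properties cell decompositions}~\ref{item:prop:properties cell decompositions:cardinality}. Moreover, since the $n$-tiles cover $S^2$ (Proposition~\ref{prop:properties cell decompositions}~\ref{item:prop:properties cell decompositions:edge is boundary of tile}) and every $n$-tile lies in some $n$-pair, we obtain $\bigcup \mathbf{P}^n = S^2$.

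The only substantive point is the color-opposition claim, which I expect to be the main obstacle because it relies on a local branched-cover argument rather than on formal combinatorics. However, this follows cleanly from the absence of critical points in the interior of $e^n$ combined with the orientation-preserving nature of $f^n$; alternatively one could invoke the checkerboard-style coloring compatibility recorded in the standard cell-decomposition theory of Thurston maps from \cite{bonk2017expanding}.
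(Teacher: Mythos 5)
Your proof is correct and follows the same route as the paper's: identify the unique $n$-edge $e^n = (f^n|_{X^n})^{-1}(e^0)$ in each $n$-tile, show that the neighboring $n$-tile across $e^n$ carries the opposite color so that each tile lies in exactly one $n$-pair, and then count. The only real difference is that you spell out the color-opposition step via the local-homeomorphism argument at interior points of $e^n$ (the paper leaves this implicit in ``by Proposition~\ref{prop:properties cell decompositions}~\ref{item:prop:properties cell decompositions:cellular} and the definition of $\mathbf{P}^n$'') and you verify uniqueness of the pair containing each tile directly, whereas the paper instead cites Lemma~\ref{lem:pairs are disjoint} for disjointness of pairs; both variants lead to the same $2$-to-$1$ count.
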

\begin{proof}
    For each $n$-tile $X^n \in \mathbf{X}^n$, by Proposition~\ref{prop:properties cell decompositions}~\ref{item:prop:properties cell decompositions:cellular} and  the definition of $\mathbf{P}^n$, there exists an $n$-pair $P^n \in \mathbf{P}^n$ such that $X^n \subseteq P^n$. Thus $S^2 = \bigcup \mathbf{X}^n \subseteq \bigcup \mathbf{P}^n$ and we get $\bigcup\mathbf{P}^n = S^2$. By Lemma~\ref{lem:pairs are disjoint} and Proposition~\ref{prop:properties cell decompositions}~\ref{item:prop:properties cell decompositions:cardinality}, we have $\card{\mathbf{P}^n} = \card{\mathbf{X}^n}/2 = (\deg{f})^n$.
\end{proof}

\begin{lemma}    \label{lem:pair for iterate of f}
    Let $f$, $\mathcal{C}$, $e^0$ satisfy the Assumptions in Section~\ref{sec:The Assumptions}. 
    Then $\mathbf{P}^k(f^{n}, \mathcal{C}, e^0) = \mathbf{P}^{kn}(f, \mathcal{C}, e^0)$ for each $\juxtapose{n}{k} \in \n$.
\end{lemma}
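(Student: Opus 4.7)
The plan is to unwrap both sides of the claimed equality directly from Definition~\ref{def:pair structures} and then match them, using the compatibility of cell decompositions under iteration recorded in Proposition~\ref{prop:properties cell decompositions}~\ref{item:prop:properties cell decompositions:iterate of cell decomposition}, namely $\mathbf{D}^k(f^n,\mathcal{C}) = \mathbf{D}^{kn}(f,\mathcal{C})$ for all $k,n\in\n_0$. This single identity transfers to every component: the set of $k$-tiles for $f^n$ equals the set of $kn$-tiles for $f$, the set of $k$-edges for $f^n$ equals the set of $kn$-edges for $f$, and the $k$-skeletons agree with the $kn$-skeletons.

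Next, I would argue that the color assignment is also preserved. If $X$ is a $kn$-tile for $f$, then by Proposition~\ref{prop:properties cell decompositions}~\ref{item:prop:properties cell decompositions:cellular} we have $f^{kn}(X) = X^0_{\colour}$ for a uniquely determined $\colour\in\{\black,\white\}$. Viewing $X$ as a $k$-tile for the Thurston map $f^n$, the same identity reads $(f^n)^k(X) = X^0_{\colour}$. Hence $\mathbf{X}^k_{\colour}(f^n,\mathcal{C}) = \mathbf{X}^{kn}_{\colour}(f,\mathcal{C})$ for each $\colour\in\{\black,\white\}$. Combined with the evident identity $(f^n)^{-k}(e^0) = f^{-kn}(e^0)$, coming from $(f^n)^k = f^{nk}$, this matches up all data entering the definition of an $n$-pair.

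Plugging these three identifications into \eqref{eq:definition of n-pairs}, the defining condition ``$X^k_{\white} \in \mathbf{X}^k_{\white}(f^n,\mathcal{C})$, $X^k_{\black} \in \mathbf{X}^k_{\black}(f^n,\mathcal{C})$, and $X^k_{\white} \cap X^k_{\black} \cap (f^n)^{-k}(e^0) \in \mathbf{E}^k(f^n,\mathcal{C})$'' becomes literally the defining condition ``$X^{kn}_{\white} \in \mathbf{X}^{kn}_{\white}(f,\mathcal{C})$, $X^{kn}_{\black} \in \mathbf{X}^{kn}_{\black}(f,\mathcal{C})$, and $X^{kn}_{\white} \cap X^{kn}_{\black} \cap f^{-kn}(e^0) \in \mathbf{E}^{kn}(f,\mathcal{C})$'', so the two collections of pairs are equal.

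The main ``obstacle'' here is purely bookkeeping; once Proposition~\ref{prop:properties cell decompositions}~\ref{item:prop:properties cell decompositions:iterate of cell decomposition} is invoked, no geometric or combinatorial work remains. The value of this lemma lies not in its proof but in its later use: it allows one to treat pair structures for high iterates $f^n$ of $f$ on the same footing as pair structures for $f$ itself, which will be exploited in the construction of strongly primitive subsystems in Subsection~\ref{sub:Key bounds}.
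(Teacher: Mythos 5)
Your proposal is correct and follows the same route as the paper: the paper's proof consists precisely of invoking Proposition~\ref{prop:properties cell decompositions}~\ref{item:prop:properties cell decompositions:iterate of cell decomposition}, i.e.\ $\mathbf{D}^k(f^n,\mathcal{C}) = \mathbf{D}^{kn}(f,\mathcal{C})$, and noting that the pair structure is determined by this data. Your additional bookkeeping (matching colored tiles, edges, and $(f^n)^{-k}(e^0) = f^{-kn}(e^0)$) just spells out what the paper leaves implicit.
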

\begin{proof}
    It follows immediately from Proposition~\ref{prop:properties cell decompositions}~\ref{item:prop:properties cell decompositions:iterate of cell decomposition} that $\mathbf{P}^k(f^{n}, \mathcal{C}, e^0) = \mathbf{P}^{kn}(f, \mathcal{C}, e^0)$ for $\juxtapose{n}{k} \in \n$.
\end{proof}

We formulate the next lemma to prove Lemma~\ref{lem:subsystem is strongly primitive for high level pair}. Recall $U^{n}(x)$ is the \emph{$n$-bouquet of $x$} (see \eqref{eq:Un bouquet of point}).
\begin{lemma}    \label{lem:pair neighbor disjoint with Jordan curve}
    Let $f$, $\mathcal{C}$, $d$, $e^0$ satisfy the Assumptions in Section~\ref{sec:The Assumptions}. We assume in addition that $f(\mathcal{C}) \subseteq \mathcal{C}$. 
    Then there exists an integer $M \in \n$ depending only on $f$, $\mathcal{C}$, $d$, and $e^0$ such that for each color $\colour \in \colours$, there exists an $M$-pair $P^{M}_{\colour} \in \mathbf{P}^{M}$ such that for each integer $n \geqslant M$ and each $x \in P^{M}_{\colour}$, we have $U^{n}(x) \subseteq \inte[\big]{X^0_{\colour}}$.
\end{lemma}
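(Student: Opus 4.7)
The plan is to quantitatively exploit the visual metric: each $M$-pair has diameter comparable to $\Lambda^{-M}$, so by pushing $M$ large we can place an $M$-pair (one per color) well inside $\inte[\big]{X^0_{\colour}}$ at distance from $\mathcal{C}$ much larger than the size of the bouquets $U^n$ with $n \geqslant M$; a connectedness argument then traps $U^n(x)$ in a single component of $S^2 \setminus \mathcal{C}$.

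First I would pick, for each $\colour \in \colours$, a reference point $p_{\colour} \in \inte[\big]{X^0_{\colour}}$ and set $r_{\colour} \define d(p_{\colour}, \mathcal{C}) > 0$; this is possible since $\inte[\big]{X^0_{\colour}}$ is a non-empty open subset of $S^2$ and $\mathcal{C}$ is closed. Let $C \geqslant 1$ and $K \geqslant 1$ be the constants from Lemma~\ref{lem:visual_metric}, and choose $M \in \n$ so large that $(2C + K)\Lambda^{-M} < \min\{r_{\black},\, r_{\white}\}$. Then $M$ depends only on $f$, $\mathcal{C}$, $d$, and $e^{0}$. For each $\colour$, some $M$-tile contains $p_{\colour}$; by Lemma~\ref{lem:pairs are disjoint} and Corollary~\ref{coro:union of pairs are the whole sphere}, this $M$-tile is contained in a unique $M$-pair $P^{M}_{\colour} \in \mathbf{P}^{M}$, which I would take as the desired pair.

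Next I would verify $P^{M}_{\colour} \subseteq \inte[\big]{X^0_{\colour}}$. The pair $P^{M}_{\colour}$ is connected (two $M$-tiles sharing an $M$-edge) and Lemma~\ref{lem:visual_metric}~\ref{item:lem:visual_metric:diameter of cell} gives $\diam{d}{P^{M}_{\colour}} \leqslant 2C\Lambda^{-M} < r_{\colour}$, so $P^{M}_{\colour} \cap \mathcal{C} = \emptyset$. Being connected, meeting $\inte[\big]{X^0_{\colour}}$ (via $p_{\colour}$), and avoiding $\mathcal{C}$, the pair $P^{M}_{\colour}$ lies entirely in the single component $\inte[\big]{X^0_{\colour}}$ of $S^2 \setminus \mathcal{C}$. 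For the bouquet containment, fix $n \geqslant M$ and $x \in P^{M}_{\colour}$. The triangle inequality and the diameter bound yield
\[
    d(x, \mathcal{C}) \geqslant r_{\colour} - d(x, p_{\colour}) \geqslant r_{\colour} - 2C\Lambda^{-M} > K\Lambda^{-M} \geqslant K\Lambda^{-n},
\]
so Lemma~\ref{lem:visual_metric}~\ref{item:lem:visual_metric:bouquet bounded by ball} gives $U^{n}(x) \subseteq B_{d}(x, K\Lambda^{-n})$, which is disjoint from $\mathcal{C}$. Since $U^{n}(x)$ is a union of $n$-tiles each meeting some $n$-tile through $x$, it is connected; combined with $x \in \inte[\big]{X^0_{\colour}}$ (from the previous step) and $U^{n}(x) \cap \mathcal{C} = \emptyset$, this forces $U^{n}(x) \subseteq \inte[\big]{X^0_{\colour}}$.

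The argument is essentially a quantitative packaging of the visual-metric estimates of Lemma~\ref{lem:visual_metric} with the pair-cover structure supplied by Lemma~\ref{lem:pairs are disjoint} and Corollary~\ref{coro:union of pairs are the whole sphere}, so no substantial obstacle is anticipated. The only delicate point is producing a single threshold $M$ that works simultaneously for both colors and for all $n \geqslant M$; this is handled by arranging the inequality $(2C + K)\Lambda^{-M} < \min\{r_{\black},\, r_{\white}\}$ to absorb both colors at once, after which the monotonicity $K\Lambda^{-n} \leqslant K\Lambda^{-M}$ in $n$ delivers uniformity in the level.
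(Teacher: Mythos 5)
Your proof is correct, and it takes a route that is related but genuinely different from the paper's at one key step. The paper also picks a reference point $x_{\colour} \in \inte[\big]{X^0_{\colour}}$, a small radius $r$ with $B_d(x_\colour,r) \subseteq \inte[\big]{X^0_\colour}$, and a level $m$ with $2K\Lambda^{-m} \leqslant r$; it then shows $U^m(y) \subseteq B_d(x_\colour, r)$ for every $y \in P^m_\colour$ by a double application of Lemma~\ref{lem:visual_metric}~\ref{item:lem:visual_metric:bouquet bounded by ball} (using $P^m_\colour \subseteq U^m(x_\colour)$). To pass from $U^m(y)$ to $U^n(y)$ for $n \geqslant m$, the paper invokes the monotone nesting $U^n(y) \subseteq U^m(y)$, which it derives from Proposition~\ref{prop:cell decomposition: invariant Jordan curve} and hence uses the hypothesis $f(\mathcal{C}) \subseteq \mathcal{C}$. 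You avoid the nesting entirely: you bound $d(x, \mathcal{C})$ from below by $r_\colour - 2C\Lambda^{-M} > K\Lambda^{-n}$, conclude $U^n(x) \subseteq B_d(x,K\Lambda^{-n})$ is disjoint from $\mathcal{C}$, and then use connectedness of $U^n(x)$ together with the Jordan curve theorem to trap it in $\inte[\big]{X^0_\colour}$. The payoff is that your argument does not use the invariance hypothesis $f(\mathcal{C}) \subseteq \mathcal{C}$ at all, so it proves a marginally more general statement; the cost is the extra connectedness reasoning (both for $P^M_\colour$ and for $U^n(x)$), which the paper's nesting makes unnecessary. One small cosmetic point: to make $M$ depend only on $f$, $\mathcal{C}$, $d$, $e^0$ you should either fix a canonical choice of $p_\colour$ in terms of these data or, as the paper does, take the smallest admissible $M$; your argument as written leaves the choice of $p_\colour$ free.
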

\begin{proof}
    We first show that there exists an integer $m\in \n$ satisfying the requirements, then let the integer $M$ be the smallest one among all such integers so that $M$ depends only on $f$, $\mathcal{C}$, $d$, and $e^0$. 

    For two fixed point $x_{\black} \in \inte[\big]{X^{0}_{\black}}$ and $x_{\white} \in \inte[\big]{X^{0}_{\white}}$, we can find a sufficiently small number $r > 0$ such that $B_{d}(x_{\colour}, r) \subseteq \inte[\big]{X^0_{\colour}}$ for each $\colour \in \colours$. 
    By Lemma~\ref{lem:visual_metric}~\ref{item:lem:visual_metric:bouquet bounded by ball} and~\ref{item:lem:visual_metric:ball bounded by bouquet}, there exists an integer $m \in \n$ such that $2K \Lambda^{-m} \leqslant r$ and $U^m(x_{\colour}) \subseteq B_{d}(x_{\colour}, r)$ for each $\colour \in \colours$, where $K$ is the constant from Lemma~\ref{lem:visual_metric}.
    By Corollary~\ref{coro:union of pairs are the whole sphere}, for each color $\colour \in \colours$ there exists an $m$-pair $P^m_{\colour} \in \mathbf{P}^m$ containing $x_{\colour}$. 
    Then for each $y \in P^m_{\colour} \subseteq U^m(x_{\colour})$ and each $z \in U^m(y)$, by Lemma~\ref{lem:visual_metric}~\ref{item:lem:visual_metric:bouquet bounded by ball}, we have\[
        d(z,x_{\colour}) \leqslant d(z, y) + d(y, x_{\colour}) < K\Lambda^{-m} + K\Lambda^{-m} = 2K\Lambda^{-m} \leqslant r.
    \]
    Since $f(\mathcal{C}) \subseteq \mathcal{C}$, this implies that for each integer $n \geqslant m$ and each $y \in P^{m}_{\colour}$, we have $U^n(y) \subseteq U^m(y) \subseteq B_{d}(x_{\colour}, r) \subseteq \inte[\big]{X^{0}_{\colour}}$.
\end{proof}

\subsection{Key bounds}
\label{sub:Key bounds}
The main goal in this subsection is to establish Proposition~\ref{prop:LDA:key bounds}, namely, an exponential upper bound for the measure of the set $P^n(\alpha)$ with respect to the equilibrium state $\mu_{\phi}$, where $P^n(\alpha)$ is defined in Definition~\ref{def:pair with respect to alpha}. Roughly speaking, we use pairs defined in Subsection~\ref{sub:Pair structures} to cover the set in \eqref{eq:large deviation asymptotics} and establish an upper bound for the measure of those tiles by the Gibbs property of $\mu_{\phi}$.

\begin{definition}    \label{def:pair with respect to alpha}
    Let $f$, $\mathcal{C}$, $d$, $\phi$, $e^0$, $\gamma_{\phi}$, $\minenergy$, $\maxenergy$ satisfy the Assumptions in Section~\ref{sec:The Assumptions}. 
    We assume in addition that $\phi$ is not co-homologous to a constant in $C(S^2)$.
    For each $\alpha \in (\minenergy, \maxenergy) \setminus \{\gamma_\phi\}$ and each $n\in \n$, we define
    \begin{equation}    \label{eq:definition P^n(alpha)}
        \mathbf{P}^n(\alpha) \define 
        \begin{cases}
            \set[\big]{P^n \in \mathbf{P}^n(f,\mathcal{C}, e^0) \describe \max_{x \in P^n} n^{-1} S_n \phi(x) \geqslant \alpha}   &\text{ if } \gamma_\phi < \alpha < \maxenergy;\\
            \set[\big]{P^n \in \mathbf{P}^n(f,\mathcal{C}, e^0) \describe \min_{x \in P^n} n^{-1} S_n \phi(x) \leqslant \alpha}   &\text{ if } \minenergy <  \alpha < \gamma_\phi,
        \end{cases}
    \end{equation}
    and $P^n(\alpha) \define \bigcup\mathbf{P}^n(\alpha)$.
\end{definition}

We remark that the sets $\mathbf{P}^n(\alpha)$ and $P^{n}(\alpha)$ defined above depend on the choice of $e^{0}$.

\begin{remark}\label{rem:pair with respect to alpha}
    For each $\alpha \in (\minenergy, \maxenergy) \setminus \{\gamma_\phi\}$ and each integer $n\in \n$, it is easy to see that $P^{n}(\alpha)$ is non-empty and is a union of some $n$-tiles in $\Tile{n}$. Thus by Definition~\ref{def:subsystems} and Proposition~\ref{prop:properties cell decompositions}~\ref{item:prop:properties cell decompositions:iterate of cell decomposition}, the map $f^n|_{P^n(\alpha)} \colon P^{n}(\alpha) \mapping S^{2}$ is a subsystem of $f^{n}$ with respect to $\mathcal{C}$.
    Recall that $\limitset(f^n|_{P^n(\alpha)}, \mathcal{C})$ is the tile maximal invariant set associated with $f^n|_{P^n(\alpha)}$ with respect to $\mathcal{C}$ (see Definition~\ref{def:subsystems}). 
    Denote $\limitset \define \limitset\bigl(f^n|_{P^n(\alpha)}, \mathcal{C}\bigr)$.
    Then by Proposition~\ref{prop:subsystem:properties}~\ref{item:subsystem:properties:limitset forward invariant}, we have $f^n(\limitset) \subseteq \limitset$.
\end{remark}

The next lemma shows that the subsystem $f^{n}|_{P^{n}(\alpha)}$ (with respect to $f^{n}$ and $\mathcal{C}$) is strongly primitive (see Definition~\ref{def:primitivity of subsystem}) for each $\alpha \in (\minenergy, \maxenergy) \setminus \{ \gamma_{\phi} \}$ and each sufficiently large integer $n \in \n$.

\begin{lemma}    \label{lem:subsystem is strongly primitive for high level pair}
    Let $f$, $\mathcal{C}$, $d$, $\phi$, $e^0$, $\gamma_{\phi}$, $\minenergy$, $\maxenergy$ satisfy the Assumptions in Section~\ref{sec:The Assumptions}. 
    We assume in addition that $f(\mathcal{C}) \subseteq \mathcal{C}$ and $\phi$ is not co-homologous to a constant in $C(S^2)$.
    Then for each $\alpha \in (\minenergy, \maxenergy) \setminus \{\gamma_\phi\}$, there exists an integer $N \in \n$ depending only on $f$, $\mathcal{C}$, $d$, $\phi$, $e^{0}$, and $\alpha$ such that for each integer $n \geqslant N$ and each color $\colour \in \colours$, there exists an $n$-pair $P^{n}_{\colour} \in \mathbf{P}^n(\alpha)$ such that $P^{n}_{\colour} \subseteq \inte[\big]{X^0_{\colour}}$. 
    In particular, the subsystem $f^{n}|_{P^{n}(\alpha)}$ (with respect to $f^{n}$ and $\mathcal{C}$) is strongly primitive.
\end{lemma}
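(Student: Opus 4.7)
The goal is to produce, for each color $\colour \in \colours$ and each sufficiently large integer $n$, an $n$-pair $P^n_\colour \in \mathbf{P}^n(\alpha)$ contained in $\inte{X^0_\colour}$. Assume $\alpha \in (\gamma_\phi, \maxenergy)$; the case $\alpha \in (\minenergy, \gamma_\phi)$ is symmetric with the inequalities reversed. Pick any $\mu_0 \in \mathcal{M}(S^2, f)$ attaining $\int \! \phi \,\mathrm{d}\mu_0 = \maxenergy$ and apply ergodic decomposition: every ergodic component $\nu$ is $f$-invariant, so $\int \! \phi \,\mathrm{d}\nu \leqslant \maxenergy$; since the average equals $\maxenergy$, for some ergodic $\nu \in \mathcal{M}(S^2, f)$ we have $\int \! \phi \,\mathrm{d}\nu = \maxenergy > \alpha$. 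Birkhoff's ergodic theorem then furnishes a point $y \in S^2$ with $\frac{1}{n}S_n\phi(y) \to \int \! \phi \,\mathrm{d}\nu$ as $n \to +\infty$.

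\textbf{Locating a Birkhoff-generic point inside each color.} Let $M \in \n$ and the pairs $P^M_\colour \subseteq \inte{X^0_\colour}$, $\colour \in \colours$, be those supplied by Lemma~\ref{lem:pair neighbor disjoint with Jordan curve}. Since $f$ itself is a strongly primitive subsystem of $f$ with respect to $\mathcal{C}$ (Remark~\ref{rem:expanding Thurston map is strongly primitive subsystem of itself}) and $\limitset(f, \mathcal{C}) = S^2$, Proposition~\ref{prop:irreducible subsystem properties}~\ref{item:prop:irreducible subsystem properties:preimages is dense in limitset} gives that $\bigcup_{k \in \n} f^{-k}(y)$ is dense in $S^2$. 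Hence for each $\colour$ we may pick $k_\colour \in \n$ and $y_\colour \in \inte{P^M_\colour}$ with $f^{k_\colour}(y_\colour) = y$. The identity $S_n\phi(y_\colour) = S_{k_\colour}\phi(y_\colour) + S_{n - k_\colour}\phi(y)$ valid for $n > k_\colour$ yields $\frac{1}{n}S_n\phi(y_\colour) \to \int \! \phi \,\mathrm{d}\nu > \alpha$, so there exists $N_\colour \geqslant M$ with $\frac{1}{n}S_n\phi(y_\colour) \geqslant \alpha$ for all $n \geqslant N_\colour$; set $N \define \max\{N_\white, N_\black\}$, which depends only on $f$, $\mathcal{C}$, $d$, $\phi$, $e^0$, and $\alpha$. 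For $n \geqslant N$, let $P^n_\colour \in \mathbf{P}^n$ be any $n$-pair containing $y_\colour$ (one exists by Corollary~\ref{coro:union of pairs are the whole sphere}). Since $P^n_\colour$ is the union of two $n$-tiles sharing an $n$-edge, one of which contains $y_\colour$, we have $P^n_\colour \subseteq U^n(y_\colour)$; combined with $y_\colour \in P^M_\colour$ and $n \geqslant M$, Lemma~\ref{lem:pair neighbor disjoint with Jordan curve} yields $P^n_\colour \subseteq \inte{X^0_\colour}$, and $y_\colour \in P^n_\colour$ together with the Birkhoff bound places $P^n_\colour \in \mathbf{P}^n(\alpha)$.

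\textbf{Strong primitivity of $F \define f^n|_{P^n(\alpha)}$.} The $1$-tiles of the subsystem $F$ of $f^n$ with respect to $\mathcal{C}$ are precisely the $n$-tiles of $f$ contained in $P^n(\alpha)$. Each pair $P^n_{\colour'}$ built above decomposes into a white and a black $n$-tile of $f$, both of which are $1$-tiles of $F$ of the corresponding color sitting inside $\inte{X^0_{\colour'}}$ (the $f$- and $f^n$-colorings agree). Thus for every $(\colour, \colour') \in \colours \times \colours$ there is a $1$-tile of $F$ of color $\colour$ inside $\inte{X^0_{\colour'}}$, which is strong irreducibility of $F$ in the sense of Definition~\ref{def:irreducibility of subsystem} with the constant $n_F = 1$. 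Iterating Lemma~\ref{lem:strongly irreducible:tile in interior tile} nests inside one such $1$-tile a $2$-tile, a $3$-tile, and so on, of $F$ of prescribed colors, with the last of color $\colour$ and contained in $\inte{X^0_{\colour'}}$; this holds for every $k \geqslant 1$, so $F$ is strongly primitive in the sense of Definition~\ref{def:primitivity of subsystem} with $n_F = 1$.

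\textbf{Main difficulty.} The substantive step is reconciling the geometric requirement $P^n_\colour \subseteq \inte{X^0_\colour}$ with the analytic requirement that $P^n_\colour$ meet a point satisfying the prescribed Birkhoff inequality. Lemma~\ref{lem:pair neighbor disjoint with Jordan curve} secures the geometry via a fixed $M$-pair well inside $X^0_\colour$, and density of $f$-pre-images (from the strong primitivity of $f$ as a subsystem of itself) then transports a Birkhoff-generic point into that $M$-pair; once this is done, all remaining verifications are routine.
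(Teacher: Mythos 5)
Your proof is correct. The essential geometric scaffolding — Lemma~\ref{lem:pair neighbor disjoint with Jordan curve} to anchor an $M$-pair $P^M_\colour$ deep inside $\inte\bigl(X^0_\colour\bigr)$, the containment $P^n_\colour \subseteq U^n(y_\colour)$, and the bootstrap from strong irreducibility with $n_F = 1$ via Lemma~\ref{lem:strongly irreducible:tile in interior tile} to strong primitivity — matches the paper's. Where you diverge is in how you produce, for each color, a point in $P^M_\colour$ whose length-$n$ Birkhoff average exceeds $\alpha$. You fix an ergodic $\nu$ attaining $\maxenergy$ (via ergodic decomposition), extract a single Birkhoff-generic $y$ with $\frac{1}{n}S_n\phi(y) \to \maxenergy$, then use density of backward orbits (Proposition~\ref{prop:irreducible subsystem properties}~\ref{item:prop:irreducible subsystem properties:preimages is dense in limitset} applied to $f$ as a strongly primitive subsystem of itself) to find a preimage $y_\colour \in \inte\bigl(P^M_\colour\bigr)$; shifting the Birkhoff sum shows $\frac{1}{n}S_n\phi(y_\colour) \to \maxenergy > \alpha$, and you take $N_\colour$ from the limit. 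The paper avoids both the pointwise ergodic theorem and ergodic decomposition: a weak-$*$ compactness argument (its Claim~1) shows directly that for each $m$ there is some $y$ with $\frac{1}{m}S_m\phi(y) \geqslant \maxenergy$; then, because $f^M(P^M_\colour) = S^2$ (Remark~\ref{rem:basic property for n-pair}), one can pull such a $y$ back into $P^M_\colour$ by exactly $M$ steps, losing at most $M(\maxenergy - \inf\phi)$ in the Birkhoff sum and yielding the explicit threshold $N = \lfloor M(\maxenergy - \inf\phi)/(\maxenergy - \alpha)\rfloor + 1$. Your route is shorter once the ergodic machinery is granted, and your density-of-preimages step is a workable detour, but the paper's surjectivity shortcut $f^M(P^M_\colour) = S^2$ is more direct and, combined with the soft compactness lemma, gives a quantitative $N$ with no non-canonical choices. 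Your final paragraph, spelling out why strong irreducibility at level $1$ iterates to strong primitivity, is a useful elaboration of a step the paper treats as immediate from the definitions.
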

\begin{proof}
    Let $\alpha \in (\minenergy, \maxenergy) \setminus \{\gamma_\phi\}$ be arbitrary. 
    Without loss of generality, we may assume that $\gamma_\phi < \alpha < \maxenergy$. 

    We first prove the following claim, which follows from the definition of $\maxenergy$ and the compactness of $\mathcal{M}(S^{2}, f)$ (equipped with the weak$^*$ topology).
    
    \smallskip
    
    \emph{Claim~1.} For each integer $m \in \n$, there exists $y \in S^2$ such that $\frac{1}{m}S_m \phi(y) \geqslant \maxenergy$.
    
    \smallskip

    To establish Claim~1, we argue by contradiction and assume that there exists an integer $m\in \n$ such that $\frac{1}{m}S_m\phi(y) < \maxenergy$ for all $y \in S^2$. Since $\phi$ is continuous and $S^2$ is compact, there exists a number $\delta > 0$ such that $\frac{1}{m} S_m\phi(y) < \maxenergy - \delta$ for all $y \in S^2$. Then for each $\mu \in \mathcal{M}(S^2, f)$ we have \[
        \int \! \frac{1}{m}S_m\phi \,\mathrm{d}\mu \leqslant \int \! (\maxenergy - \delta) \,\mathrm{d}\mu = \maxenergy - \delta.
    \]
    
    Note that $\mathcal{M}(S^2,f)$ is compact in the weak* topology and the continuous map $\mu \mapsto \int \! \phi \,\mathrm{d}\mu$ maps $\mathcal{M}(S^2,f)$ onto the closed interval $[\minenergy, \maxenergy]$. Hence there exists an $f$-invariant measure $\nu\in \mathcal{M}(S^2, f)$ such that $\int \! \phi \,\mathrm{d}\nu = \maxenergy$. Since
    \begin{align*}
        \int \! \frac{1}{m}S_m\phi \,\mathrm{d}\nu 
        &= \frac{1}{m}\int \! \sum_{j=0}^{m-1} \phi\circ f^j \,\mathrm{d}\nu 
        = \frac{1}{m} \sum_{j=0}^{m-1} \int \! \phi \,\mathrm{d} f_{*}^{j} \nu 
        = \frac{1}{m}\cdot m \int \! \phi \,\mathrm{d}\nu = \int \! \phi \,\mathrm{d}\nu,
    \end{align*}
    we get $\maxenergy = \int \! \phi \,\mathrm{d}\nu = \int \! \frac{1}{m}S_m\phi \,\mathrm{d}\nu \leqslant \maxenergy - \delta$, which is a contradiction, and so Claim~1 follows.

    \smallskip

    By Lemma~\ref{lem:pair neighbor disjoint with Jordan curve}, there exists an integer $M \in \n$ depending only on $f$, $\mathcal{C}$, $d$, and $e^0$ such that for each color $\colour \in \colours$, there exists an $M$-pair $P^{M}_{\colour} \in \mathbf{P}^{M}$ such that for each integer $n \geqslant M$ and each $x \in P^{M}_{\colour}$, we have $U^{n}(x) \subseteq \inte[\big]{X^0_{\colour}}$. 

    We fix such an integer $M$ and the corresponding $M$-pairs $P^{M}_{\black}$ and $P^{M}_{\white}$ in the following.

    Set \begin{equation}    \label{eq:temp definition of N}
        N \define \left\lfloor M \cdot \frac{\maxenergy - \inf_{x \in S^2} \phi(x)}{\maxenergy - \alpha} \right\rfloor + 1.
    \end{equation}
    Note that the integer $N$ depends only on $f$, $\mathcal{C}$, $d$, $\phi$, $e^0$, and $\alpha$. Moreover, we have $N \geqslant M + 1$ since $\inf_{x \in S^2}\phi(x) \leqslant \minenergy < \alpha$.

    \smallskip
    
    \emph{Claim~2.} For each integer $n \geqslant N$ and each color $\colour \in \colours$, there exists an $n$-pair $P^n_{\colour} \in \mathbf{P}^n(\alpha)$ such that $P^n_{\colour} \subseteq \inte[\big]{X^0_{\colour}}$.
    
    \smallskip

    To establish Claim~2, let integer $n \geqslant N$ and color $\colour \in \colours$ be arbitrary. Since $N \geqslant M + 1$, we have $n - M \in \n$. By Claim~1 there exists $y \in S^2$ such that $\frac{1}{n - M}S_{n - M} \phi(y) \geqslant \maxenergy$.
    Then there exists $x_{\colour} \in P^M_{\colour}$ such that $f^M(x_{\colour}) = y$ since $f^{M}(P^M_{\colour}) = S^2$. 
    Thus we have
    \begin{align*}
        \frac{1}{n}S_n\phi(x_{\colour})
        &= \frac{1}{n} S_M\phi(x_{\colour}) + \frac{1}{n} S_{n-M} \phi(y) \\
        &\geqslant \frac{M}{n} \inf_{z \in S^2}\phi(z) + \frac{n-M}{n} \maxenergy \\
        &= \maxenergy - \frac{M}{n} \bigl(\maxenergy - \inf_{z \in S^2}\phi(z) \bigr)   \\
        &\geqslant  \maxenergy - \frac{M}{N} \bigl(\maxenergy - \inf_{z \in S^2}\phi(z) \bigr) \\
        &\geqslant  \maxenergy - (\maxenergy - \alpha) \\
        &= \alpha,
    \end{align*}
    where the last inequality follows from the definition of $N$ (see \eqref{eq:temp definition of N}) and the fact that $\lfloor t\rfloor + 1 \geqslant t$ for all $t \in \real$. By Corollary~\ref{coro:union of pairs are the whole sphere}, there exists an $n$-pair $P^n_{\colour} \in \mathbf{P}^n$ containing $x_{\colour}$. Thus we have $x_{\colour} \in P^n_{\colour} \in \mathbf{P}^n(\alpha)$ since $\frac{1}{n}S_n\phi(x_{\colour}) \geqslant \alpha$. Noting that $x_{\colour} \in P^M_{\colour}$ and $n \geqslant M$, by Lemma~\ref{lem:pair neighbor disjoint with Jordan curve}, we get $U^n(x_{\colour}) \subseteq \inte[\big]{X^0_{\colour}}$. 
    Then it follows from the definition of $U^{n}(x_{\colour})$ and $P^{n}_{\colour}$ that $x_{\colour} \in P^n_{\colour} \subseteq U^{n}(x_{\colour}) \subseteq \inte[\big]{X^0_{\colour}}$, and so Claim~2 follows.

    \smallskip
    
    By Claim~2, it follows immediately from Definition~\ref{def:primitivity of subsystem} and Definition~\ref{def:pair structures} that the subsystem $f^{n}|_{P^{n}(\alpha)}$ (with respect to $f^{n}$ and $\mathcal{C}$) is strongly primitive for each integer $n \geqslant N$.
\end{proof}

We record this result below for the convenience of the reader.

\begin{proposition}[Z.~Li \cite{li2018equilibrium}]    \label{prop:equilibrium state is gibbs measure}
    Let $f$, $\mathcal{C}$, $d$, $\phi$, $\mu_{\potential}$ satisfy the Assumptions in Section~\ref{sec:The Assumptions}.
    Then $\mu_{\phi}$ is a Gibbs measure with respect to $f$, $\mathcal{C}$, and $\phi$, with the constant $P_{\mu_{\phi}} = P(f, \phi)$, i.e., there exists a constant $C_{\mu_{\phi}} \geqslant 1$ such that for each $n\in \n_0$, each $n$-tile $X^n \in \mathbf{X}^n(f,\mathcal{C})$, and each $x \in X^n$, we have
    \begin{equation}    \label{eq:equilibrium state is gibbs measure}
        \frac{1}{C_{\mu_{\phi}}} \leqslant \frac{\mu_{\phi}(X^n)}{ \myexp{ S_{n}\phi(x) - n P(f,\phi) } } \leqslant C_{\mu_{\phi}}.
    \end{equation}
\end{proposition}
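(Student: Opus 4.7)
The plan is to derive the Gibbs property from the Ruelle--Perron--Frobenius formalism for expanding Thurston maps. First I would recall the standard Ruelle operator $\mathcal{L}_{\phi} \colon C(S^{2}) \mapping C(S^{2})$ defined by
\[
    \mathcal{L}_{\phi}(u)(y) = \sum_{x \in f^{-1}(y)} \deg_{f}(x) u(x) e^{\phi(x)},
\]
and invoke (from the established theory in \cite{li2018equilibrium}) the existence of a positive continuous eigenfunction $h \in C(S^{2})$ bounded away from $0$ and $+\infty$, together with a Borel probability eigenmeasure $\nu \in \mathcal{P}(S^{2})$, such that $\mathcal{L}_{\phi} h = \lambda h$ and $\mathcal{L}_{\phi}^{*} \nu = \lambda \nu$, where $\lambda = e^{P(f,\phi)}$; the equilibrium state is then $\mu_{\phi} = h \nu$ after normalization $\int \! h \,\mathrm{d}\nu = 1$. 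In spirit this is the unsplit analog of what is carried out in Section~\ref{sec:Thermodynamic formalism for subsystems} of the present paper (with $F = f$, which is strongly primitive by Remark~\ref{rem:expanding Thurston map is strongly primitive subsystem of itself}); indeed, using $F = f$ in Theorem~\ref{thm:existence of f invariant Gibbs measure} already produces such a Gibbs measure directly, so strictly speaking one could cite that theorem, but here I sketch the classical route.

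The next step is to extract a Jacobian-type identity from the eigenmeasure relation. Iterating $\mathcal{L}_{\phi}^{*} \nu = \lambda \nu$ and testing against indicator functions (compare Proposition~\ref{prop:subsystem properties of eigenmeasure}\,\ref{item:prop:subsystem properties of eigenmeasure:Jacobian}), one shows that for every $n \in \n$ and every Borel set $A \subseteq S^{2}$ on which $f^{n}$ is injective,
\[
    \nu(f^{n}(A)) = \int_{A} \! \lambda^{n} e^{-S_{n}\phi} \,\mathrm{d}\nu.
\]
Apply this with $A = X^{n} \in \mathbf{X}^{n}(f,\mathcal{C})$, noting that $f^{n}|_{X^{n}}$ is a homeomorphism onto a $0$-tile by Proposition~\ref{prop:properties cell decompositions}\,\ref{item:prop:properties cell decompositions:cellular}. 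By the distortion estimate in Lemma~\ref{lem:distortion_lemma}, for any $x \in X^{n}$ we have $|S_{n}\phi(y) - S_{n}\phi(x)| \leqslant C_{1} (\diam_{d} S^{2})^{\holderexp}$ for every $y \in X^{n}$, so
\[
    e^{-C_{1}(\diam_{d} S^{2})^{\holderexp}} e^{-S_{n}\phi(x)} \nu(X^{n}) \leqslant \int_{X^{n}} \! e^{-S_{n}\phi} \,\mathrm{d}\nu \leqslant e^{C_{1}(\diam_{d} S^{2})^{\holderexp}} e^{-S_{n}\phi(x)} \nu(X^{n}).
\]
Since $f^{n}(X^{n}) \in \{X^{0}_{\black}, X^{0}_{\white}\}$, the left side of the Jacobian identity takes one of two fixed values $\nu(X^{0}_{\black}), \nu(X^{0}_{\white})$.

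The main remaining issue is therefore to show that both $\nu(X^{0}_{\black})$ and $\nu(X^{0}_{\white})$ are bounded away from $0$. This is the step that I expect to be the real obstacle, and it is where the expansion and the primitivity of $f$ (as in Remark~\ref{rem:expanding Thurston map is strongly primitive subsystem of itself}) enter; a clean way is to repeat the argument following \eqref{eq:temp:prop:subsystem properties of eigenmeasure:constant for gibbs measure temp form}: for each colour $\colour$, strong primitivity provides some $n_{\colour} \in \n$ and an $n_{\colour}$-tile $X^{n_{\colour}}$ of the required colour contained in the opposite $0$-tile, and the Jacobian identity applied to this tile together with $\nu(X^{0}_{\black}) + \nu(X^{0}_{\white}) \geqslant 1$ forces each of the two values to be uniformly positive. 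Combining this uniform lower bound with the two-sided distortion bound above yields
\[
    C^{-1} \leqslant \nu(X^{n}) \big/ \bigl(e^{S_{n}\phi(x) - n P(f,\phi)}\bigr) \leqslant C
\]
for a constant $C \geqslant 1$ independent of $n$, $X^{n}$, $x \in X^{n}$, which is precisely the Gibbs property for $\nu$. Finally, since $\mu_{\phi} = h \nu$ with $h$ bounded both above and away from $0$, the Gibbs property transfers to $\mu_{\phi}$ with a possibly larger constant $C_{\mu_{\phi}}$, and the constant in the exponential is $P_{\mu_{\phi}} = P(f,\phi)$, completing the proof.
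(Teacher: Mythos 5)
The paper itself does not prove this proposition; it cites \cite[Theorem~5.16, Proposition~5.17]{li2018equilibrium}, and your sketch of the classical Ruelle--Perron--Frobenius route (unsplit Ruelle operator with the local-degree weight, eigendata $h,\nu,\lambda = e^{P(f,\phi)}$, Jacobian identity $\nu(f^{n}(A)) = \int_{A} \lambda^{n} e^{-S_{n}\phi}\,\mathrm{d}\nu$, distortion via Lemma~\ref{lem:distortion_lemma}, and a uniform lower bound on $\nu\bigl(X^{0}_{\black}\bigr), \nu\bigl(X^{0}_{\white}\bigr)$ from primitivity) is precisely the approach taken there; it is also the unsplit model for the subsystem machinery of Section~\ref{sec:Thermodynamic formalism for subsystems}, with your final step mirroring the argument around \eqref{eq:temp:prop:subsystem properties of eigenmeasure:constant for gibbs measure temp form} in Proposition~\ref{prop:subsystem properties of eigenmeasure}.

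One caveat on your aside about citing Theorem~\ref{thm:existence of f invariant Gibbs measure} with $F = f$: that theorem requires $f(\mathcal{C}) \subseteq \mathcal{C}$, whereas the Assumptions in Section~\ref{sec:The Assumptions} only guarantee $f^{n_{\mathcal{C}}}(\mathcal{C}) \subseteq \mathcal{C}$ for some $n_{\mathcal{C}} \geqslant 1$. When $n_{\mathcal{C}} > 1$ one would have to apply the theorem to $F = f^{n_{\mathcal{C}}}$ with potential $S_{n_{\mathcal{C}}}\phi$ and then transfer the Gibbs estimate to the intermediate levels $n \not\equiv 0 \pmod{n_{\mathcal{C}}}$. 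Your main route does not suffer this, since the unsplit Ruelle operator, its eigendata, and the Jacobian identity make no reference to $\mathcal{C}$; only Lemma~\ref{lem:distortion_lemma} and the primitivity statement of Remark~\ref{rem:expanding Thurston map is strongly primitive subsystem of itself} enter, and both hold for any Jordan curve containing $\post{f}$, not just $f$-invariant ones.
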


The following lemma is an immediate consequence of Lemma~\ref{lem:distortion_lemma} and Proposition~\ref{prop:equilibrium state is gibbs measure}.

\begin{lemma}    \label{lem:equilibrium measure of pair and inf of Sn_phi over P^n(alpha)}
    \def\tempconst{D(\potential)}
    Let $f$, $\mathcal{C}$, $d$, $\phi$, $\holderexp$, $\mu_{\phi}$, $e^0$ satisfy the Assumptions in Section~\ref{sec:The Assumptions}.
    Denote $\tempconst \define \Cdistortion$ and $C \define 2C_{\mu_{\phi}} e^{ \tempconst }$, where $C_{\mu_{\phi}} \geqslant 1$ is from Proposition~\ref{prop:equilibrium state is gibbs measure} and $C_1 \geqslant 0$ is given by \eqref{eq:const:C_1} in Lemma~\ref{lem:distortion_lemma}.
    Then the following statements hold:
    \begin{enumerate}[label=\rm{(\roman*)}]
    \smallskip 

    \item     \label{item:lem:equilibrium measure of pair and inf of Sn_phi over P^n(alpha):upper bound for measure of pair}
        For each integer $n \in \n$ and each $n$-pair $P^n \in \mathbf{P}^n$, we have
        \[
            \mu_{\phi}(P^n) \leqslant C e^{-P(f, \phi)n} \inf_{x\in P^n} e^{S_n \phi(x)}.
        \]

    \smallskip
    
    \item     \label{item:lem:equilibrium measure of pair and inf of Sn_phi over P^n(alpha):bound for potential}
        We assume in addition that $\phi$ is not co-homologous to a constant in $C(S^2)$.
        Then for each $n \in \n$ and each $\alpha \in (\gamma_\phi, \maxenergy)$, we have\[
            \inf_{x \in P^n(\alpha)} S_n \phi(x) \geqslant  n\alpha - 2 \tempconst.
        \]
        Similarly, for each $n \in \n$ and each $\alpha \in (\minenergy, \gamma_\phi)$, we have $\sup_{x \in P^n(\alpha)} S_n \phi(x) \leqslant n\alpha + 2 \tempconst$.
    \end{enumerate}
\end{lemma}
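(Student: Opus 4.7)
Both statements reduce to combining the Gibbs property of the equilibrium state (Proposition~\ref{prop:equilibrium state is gibbs measure}) with the distortion estimate (Lemma~\ref{lem:distortion_lemma}), where the extra ingredient is that the two $n$-tiles composing an $n$-pair share an $n$-edge and so can be ``bridged'' by a common point. Let me describe the two arguments in turn.

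For statement~\ref{item:lem:equilibrium measure of pair and inf of Sn_phi over P^n(alpha):upper bound for measure of pair}, fix $n \in \n$ and an $n$-pair $P^n = X^n_{\white} \cup X^n_{\black} \in \mathbf{P}^n$, and let $e^n \subseteq X^n_{\white} \cap X^n_{\black}$ be the $n$-edge in $f^{-n}(e^0)$ witnessing that $P^n \in \mathbf{P}^n$. The plan is to show the bound first with $\inf$ replaced by any fixed $x^{*} \in P^n$. By symmetry assume $x^{*} \in X^n_{\white}$, and pick any point $z \in e^n$ so that $z$ lies in both $X^n_{\white}$ and $X^n_{\black}$. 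Proposition~\ref{prop:equilibrium state is gibbs measure} applied at $x^{*} \in X^n_{\white}$ gives $\mu_{\phi}(X^n_{\white}) \leqslant C_{\mu_{\phi}} e^{-nP(f,\phi)} e^{S_n\phi(x^{*})}$, and applied at $z \in X^n_{\black}$ gives $\mu_{\phi}(X^n_{\black}) \leqslant C_{\mu_{\phi}} e^{-nP(f,\phi)} e^{S_n\phi(z)}$. Lemma~\ref{lem:distortion_lemma} applied to $x^{*}, z \in X^n_{\white}$ then yields $S_n\phi(z) \leqslant S_n\phi(x^{*}) + \Cdistortion$. Adding the two bounds produces $\mu_{\phi}(P^n) \leqslant 2C_{\mu_{\phi}} e^{\Cdistortion} e^{-nP(f,\phi)} e^{S_n\phi(x^{*})}$. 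Taking the infimum over $x^{*} \in P^n$ finishes~\ref{item:lem:equilibrium measure of pair and inf of Sn_phi over P^n(alpha):upper bound for measure of pair}.

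For statement~\ref{item:lem:equilibrium measure of pair and inf of Sn_phi over P^n(alpha):bound for potential}, consider first $\alpha \in (\gamma_\phi, \maxenergy)$ and fix any $y \in P^n(\alpha) = \bigcup \mathbf{P}^n(\alpha)$. Then $y$ lies in some pair $P^n = X^n_{\white} \cup X^n_{\black} \in \mathbf{P}^n(\alpha)$, and by \eqref{eq:definition P^n(alpha)} there is a point $x_0 \in P^n$ with $S_n\phi(x_0) \geqslant n\alpha$. As in the previous step, select a point $z$ in the common $n$-edge of $P^n$, so that $z$ belongs to each of the two $n$-tiles forming $P^n$. Since each of the pairs $(x_0, z)$ and $(z, y)$ lies in a single $n$-tile, Lemma~\ref{lem:distortion_lemma} gives $|S_n\phi(x_0) - S_n\phi(z)| \leqslant \Cdistortion$ and $|S_n\phi(z) - S_n\phi(y)| \leqslant \Cdistortion$. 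The triangle inequality yields $S_n\phi(y) \geqslant S_n\phi(x_0) - 2\Cdistortion \geqslant n\alpha - 2\Cdistortion$. Taking the infimum over $y \in P^n(\alpha)$ gives the first inequality; the second follows from the symmetric argument with the roles of the supremum and infimum reversed, using the definition of $\mathbf{P}^n(\alpha)$ in the regime $\minenergy < \alpha < \gamma_\phi$.

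Neither part should present serious difficulties — the only subtle point is the ``bridge'' step through the shared $n$-edge, which is what makes the pair structure (as opposed to a single tile) the convenient object: it ensures that any two tiles in the pair meet along an $n$-edge so that the distortion bound can be applied twice in succession, costing only the factor $e^{\Cdistortion}$ in~\ref{item:lem:equilibrium measure of pair and inf of Sn_phi over P^n(alpha):upper bound for measure of pair} (because one of the two Gibbs evaluations can be done directly at $x^{*}$) and the additive $2\Cdistortion$ loss in~\ref{item:lem:equilibrium measure of pair and inf of Sn_phi over P^n(alpha):bound for potential}.
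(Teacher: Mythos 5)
Your argument is correct and follows essentially the same route as the paper: in both parts the key step is to bridge the two tiles of a pair through a point on their shared $n$-edge, applying the Gibbs property of $\mu_{\phi}$ and the distortion estimate of Lemma~\ref{lem:distortion_lemma}, which yields the same constants $2C_{\mu_{\phi}}e^{\Cdistortion}$ and $2\Cdistortion$. The only cosmetic difference is that the paper evaluates the Gibbs bound at the edge point for both tiles and then compares once with the infimum over the pair, whereas you evaluate it at an arbitrary point $x^{*}$ for its own tile and at the edge point for the other tile; the two bookkeeping schemes are equivalent.
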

\begin{proof}
    \def\tempconst{D(\potential)}
    For each $n\in \n$ and each $n$-pair $P^n = X^n_{\black} \cup X^n_{\white} \in \mathbf{P}^n$, denote $e^n \define X^n_{\black} \cap X^n_{\white}$ and fix an arbitrary point $x_{e} \in e^n$.
    
    \ref{item:lem:equilibrium measure of pair and inf of Sn_phi over P^n(alpha):upper bound for measure of pair}
    It follows from Lemma~\ref{lem:distortion_lemma} that
    \begin{equation}    \label{eq:temp:inf Sn_phi over Pn}
        \inf_{x \in P^n} S_n\phi(x) = \min \Bigl\{ \inf_{x \in X^n_{\black}} S_n\phi(x), \inf_{x \in X^n_{\white}} S_n\phi(x) \Bigr\} \geqslant S_n\phi(x_e) - \tempconst.
    \end{equation}
    Then by Proposition~\ref{prop:equilibrium state is gibbs measure}, we deduce that
    \[
        \mu_{\phi}(P^n) \leqslant \mu_{\phi}(X^n_{\black}) + \mu_{\phi}(X^n_{\white})  \leqslant  2 C_{\mu_{\phi}} e^{-P(f,\phi)n} e^{S_n\phi(x_e)} 
        \leqslant C e^{-P(f, \phi)n} \inf_{x\in P^n} e^{S_n \phi(x)}.
    \]

    \smallskip

    \ref{item:lem:equilibrium measure of pair and inf of Sn_phi over P^n(alpha):bound for potential} 
    Consider $\alpha \in (\gamma_\phi, \maxenergy)$.
    For each $P^n = X^n_{\black} \cup X^n_{\white} \in \mathbf{P}^n(\alpha)$, there exists $x_{0} \in P^n$ such that $S_n\phi(x_{0}) \geqslant n \alpha$ (recall \eqref{eq:definition P^n(alpha)}).
    Since $x_e \in X^n_{\black} \cap X^n_{\white}$, by Lemma~\ref{lem:distortion_lemma}, $S_n\phi(x_e) \geqslant S_n \phi(x_{0}) - \tempconst$.
    Thus by~\eqref{eq:temp:inf Sn_phi over Pn}, we obtain
    \begin{equation*}
        \inf_{x \in P^n(\alpha)} S_n\phi(x) = \min_{P^n \in \mathbf{P}^n(\alpha)} \bigl\{ \inf_{x \in P^n} S_n\phi(x) \bigr\} 
        \geqslant n\alpha -2 \tempconst.
    \end{equation*}
    For $\alpha \in (\minenergy, \gamma_\phi)$, the proof is similar.
\end{proof}

Now we are ready to prove the main results of this section.
\begin{proposition}[Key bounds]    \label{prop:LDA:key bounds}
    \def\tempconst{D(\potential)}
    Let $f \colon S^2 \mapping S^2$ be an expanding Thurston map and $\mathcal{C} \subseteq S^2$ be a Jordan curve containing $\post{f}$ with the property that $f(\mathcal{C}) \subseteq \mathcal{C}$. 
    Let $d$ be a visual metric on $S^2$ for $f$.
    Let $\phi \in C^{0,\holderexp}(S^2,d)$ be a real-valued \holder continuous function with an exponent $\holderexp \in (0,1]$ and not co-homologous to a constant in $C(S^2)$. 
    Let $\mu_{\phi}$ be the unique equilibrium state for the map $f$ and the potential $\phi$. 
    Denote $\tempconst \define \Cdistortion$ and $C \define 2C_{\mu_{\phi}} e^{ \tempconst }$, where $C_{\mu_{\phi}} \geqslant 1$ and $C_1 \geqslant 0$ are the constants from Proposition~\ref{prop:equilibrium state is gibbs measure} and \eqref{eq:const:C_1} in Lemma~\ref{lem:distortion_lemma}, respectively.
    Fix a $0$-edge $e^0 \in \mathbf{E}^0(f, \mathcal{C})$ and denote $\gamma_{\phi} \define \int \! \phi \,\mathrm{d}\mu_{\phi}$, $\minenergy \define \min\limits_{\mu\in \mathcal{M}(S^2,f)} \int \! \phi \,\mathrm{d}\mu$, and $\maxenergy \define \max\limits_{\mu\in \mathcal{M}(S^2,f)} \int \! \phi \,\mathrm{d}\mu$.
    Then the following statements hold:
    \begin{enumerate}[label=\emph{(\roman*)}]
        \smallskip

        \item     \label{item:prop:LDA:key bounds:max}
        For each $\alpha\in (\gamma_\phi,\maxenergy)$, there exists an integer $N \in \n$ depending only on $f$, $\mathcal{C}$, $d$, $\phi$, $e^0$, and $\alpha$ such that for each integer $n \geqslant N$, there exists a measure $\mu \in \mathcal{M}(S^2, f)$ such that 
        \[
            \mu_{\phi} (P^n(\alpha)) \leqslant C e^{ n(P_{\mu}(f, \phi) - P(f,\phi)) } \quad \text{ and }\quad 
            \int \! \phi \,\mathrm{d}\mu \in \biggl[ \alpha - \frac{2 \tempconst }{n}, \maxenergy \biggr].
        \] 

        \item     \label{item:prop:LDA:key bounds:min}
        For each $\alpha\in (\minenergy,\gamma_\phi)$, there exists an integer $N \in \n$ depending only on $f$, $\mathcal{C}$, $d$, $\phi$, $e^0$, and $\alpha$ such that for each integer $n \geqslant N$, there exists a measure $\mu \in \mathcal{M}(S^2, f)$ such that 
        \[
        \mu_{\phi} (P^n(\alpha)) \leqslant C e^{ n(P_{\mu}(f, \phi) - P(f,\phi)) } \quad \text{ and }\quad \int \! \phi \,\mathrm{d}\mu \in \left[\minenergy, \alpha + \frac{2 \tempconst }{n} \right].
        \] 
    \end{enumerate}
\end{proposition}
\begin{proof}
    \def\tempconst{D(\potential)}
    We first consider the case where $\alpha \in (\gamma_{\phi}, \maxenergy)$. Let $\alpha \in (\gamma_{\phi}, \maxenergy)$ be arbitrary.
    
    By Lemma~\ref{lem:subsystem is strongly primitive for high level pair}, there exists an integer $N \in \n$ depending only on $f$, $\mathcal{C}$, $d$, $\phi$, $e^0$, and $\alpha$ such that for each integer $n \geqslant N$, the subsystem $f^{n}|_{P^{n}(\alpha)}$ (with respect to $f^{n}$ and $\mathcal{C}$) is strongly primitive.
    Let integer $n \geqslant N$ be arbitrary. Denote $\limitset \define \limitset(f^n|_{P^n(\alpha)}, \mathcal{C})$. 
    Then it follows from Propositions~\ref{prop:sursubsystem properties}~\ref{item:prop:sursubsystem properties:property of domain and limitset} and~\ref{prop:irreducible subsystem properties}~\ref{item:prop:irreducible subsystem properties:limitset non degenerate to Jordan curve} that $f^{n}(\limitset) = \limitset$ and $\limitset \setminus \mathcal{C} \ne \emptyset$.

    Let $y_{0} \in \limitset \setminus \mathcal{C}$ be arbitrary. By Proposition~\ref{prop:subsystem preimage pressure} and Theorem~\ref{thm:subsystem characterization of pressure}, we have
    \begin{equation}     \label{eq:temp:prop:LDA:key bounds:Variational Principle}
    \begin{split}
        \sup_{\nu \in \mathcal{M}(\limitset, f^n|_{\limitset})}  \Bigl\{ h_{\nu}(f^n|_{\limitset}) + \int \! S_n\phi \,\mathrm{d}\nu \Bigr\} 
        = \lim_{m \to +\infty} \frac{1}{m} \log \!\!\! \sum_{x\in (f^n|_{\limitset})^{-m}(y_0)} \!\!\! e^{\sum_{k=0}^{m-1}S_{n}\phi(f^{nk}(x))}.
    \end{split}
    \end{equation}
    Here $\mathcal{M}(\limitset, f^n|_{\limitset})$ denotes the set of $f^n|_{\limitset}$-invariant Borel probability measures on $\limitset$ endowed with the weak$^*$ topology, and $h_{\nu}(f^n|_{\limitset})$ denotes the measure-theoretic entropy of $f^n|_{\limitset}$ for $\nu$. For the summand inside the logarithm in \eqref{eq:temp:prop:LDA:key bounds:Variational Principle}, we have
    \begin{equation}     \label{eq:temp:prop:LDA:key bounds:summation of preimages}
        \sum_{x\in (f^n|_{\limitset})^{-m}(y_0)} \!\!\! e^{\sum_{k = 0}^{m - 1}S_{n}\phi( f^{nk}(x) )}
                =\prod_{i=0}^{m-1} \sum_{y_{i+1}\in (f^n|_{\limitset})^{-1}(y_i)} \!\!\!\!\! e^{S_n\phi(y_{i+1})} .
    \end{equation}

    \smallskip
    
    \emph{Claim.} For each point $y \in \limitset \setminus \mathcal{C}$, we have $\card{(f^n|_{\limitset})^{-1}(y)} = \card{\mathbf{P}^n(\alpha)}$, and each $n$-pair $P^n \in \mathbf{P}^n(\alpha)$ contains exactly one preimage $x \in (f^n|_{\limitset})^{-1}(y)$, which satisfies $x \in \limitset \setminus \mathcal{C}$.
    
    \smallskip

    To establish this Claim, we consider an arbitrary point $y\in \limitset \setminus \mathcal{C}$. Without loss of generality we may assume that $y \in \inte[\big]{X^0_{\black}}$. Then by Proposition~\ref{prop:properties cell decompositions}, we have $\card{f^{-n}(y)} = (\deg f)^n = \card{\mathbf{X}^n_{\black}}$, and each black $n$-tile $X^n_{\black} \in \mathbf{X}^n_{\black}$ contains exactly one preimage $x \in f^{-n}(y)$, which satisfies $x \in \inte{X^n_{\black}}$. Thus each $n$-pair $P^n \in \mathbf{P}^n(\alpha)$ contains exactly one preimage $x \in f^{-n}(y) \cap P^n(\alpha)$, which satisfies $x \in \inte{P^n}$, and we have\[
        \card{f^{-n}(y) \cap P^n(\alpha)} = \card{\mathbf{P}^n(\alpha)}.
    \]
    Let preimage $x \in f^{-n}(y) \cap P^n(\alpha)$ be arbitrary. Noting that $f^{-n}(y) \cap P^n(\alpha) = (f^{n}|_{P^{n}(\alpha)})^{-1}(y)$ and $y \in \limitset \setminus \mathcal{C}$, by Proposition~\ref{prop:subsystem:properties invariant Jordan curve}~\ref{item:subsystem:properties invariant Jordan curve:backward invariant limitset outside invariant Jordan curve}, we have $x \in \limitset \setminus \mathcal{C}$. 
    Since $(f^n|_{\limitset})^{-1}(y) = f^{-n}(y) \cap \limitset = f^{-n}(y) \cap P^n(\alpha)$, the claim follows.

    \smallskip

    By the claim, we know that all the preimages $y_i$ in the summation in \eqref{eq:temp:prop:LDA:key bounds:summation of preimages} belong to $\limitset \setminus \mathcal{C}$. 
    Moreover, for each point $y \in \limitset \setminus \mathcal{C}$, every $n$-pair $P^n \in \mathbf{P}^n(\alpha)$ contains exactly one preimage $x \in (f^n|_{\limitset})^{-1}(y)$, and every preimage $x \in (f^n|_{\limitset})^{-1}(y)$ is contained in a unique $n$-pair $P^n \in \mathbf{P}^n(\alpha)$. 
    Thus, we get the first two inequalities of the following:
    \begin{align*}
        \sum_{x\in (f^n|_{\limitset})^{-m}(y_0)} \! e^{\sum_{k = 0}^{m - 1}S_{n}\phi( f^{nk}(x) )}
        &=\prod_{i=0}^{m-1} \sum_{y_{i+1}\in (f^n|_{\limitset})^{-1}(y_i)} \!\!\!\!\! e^{S_n\phi(y_{i+1})} \\
        &\geqslant \Bigl( \inf_{y \in \limitset \setminus \mathcal{C}} \sum_{x\in (f^n|_{\limitset})^{-1}(y)} e^{S_n\phi(x)} \Bigr)^{m} \\
        &\geqslant \Bigl( \sum_{P^n \in \mathbf{P}^{n}(\alpha)} \inf_{x \in P^n} e^{S_n\phi(x)} \Bigr)^m \\
        &\geqslant \parentheses[\Big]{  C^{-1} e^{nP(f,\phi)} \sum_{P^n \in \mathbf{P}^n(\alpha)} \mu_{\phi}(P^n) }^{m}  \\
        &= \parentheses[\big]{ C^{-1} e^{nP(f,\phi)} \mu_{\phi} (P^n(\alpha)) }^{m}.
    \end{align*}
    The last inequality follows from Lemma~\ref{lem:equilibrium measure of pair and inf of Sn_phi over P^n(alpha)}~\ref{item:lem:equilibrium measure of pair and inf of Sn_phi over P^n(alpha):upper bound for measure of pair} and the last equality follows from Lemma~\ref{lem:pairs are disjoint} and Theorem~\ref{thm:properties of equilibrium state}~\ref{item:thm:properties of equilibrium state:edge measure zero}.
    Taking logarithms of both sides, dividing by $m$, and plugging the result into the previous inequality, we get
    \begin{equation*}
        \lim_{m \to +\infty} \frac{1}{m} \log \parentheses[\bigg]{ \sum_{x\in (f^n|_{\limitset})^{-m}(y_0)}  \myexp[\Big]{ \sum_{k=0}^{m-1}S_{n}\phi\bigl( f^{nk}(x) \bigr) } }  
        \geqslant \log \parentheses[\big]{ \mu_{\phi}(P^n(\alpha)) }  + nP(f, \phi) - \log C.
    \end{equation*}
    Plugging this inequality into \eqref{eq:temp:prop:LDA:key bounds:Variational Principle} yields
    \begin{equation}    \label{eq:temp:prop:LDA:key bounds:supremum measure-theoretic pressure inequality}
        \sup_{\nu \in \mathcal{M}(\limitset, f^n|_{\limitset})} \Bigl\{ h_{\nu}(f^n|_{\limitset}) + \int \! S_n\phi \,\mathrm{d}\nu \Bigr\}  
        \geqslant \log \parentheses[\big]{ \mu_{\phi}(P^n(\alpha)) } + nP(f, \phi) - \log C.
    \end{equation}
    
    By Theorem~\ref{thm:existence of equilibrium state for subsystem} and Proposition~\ref{prop:subsystem invariant measure equivalence}~\ref{item:prop:subsystem invariant measure equivalence:extension}, there exists an equilibrium state $\widehat{\mu} \in \mathcal{M}(\limitset, f^n|_{\limitset}) \subseteq \mathcal{M}(S^{2}, f^{n})$ that attains the supremum in \eqref{eq:temp:prop:LDA:key bounds:supremum measure-theoretic pressure inequality}. 
    Denote $\mu \define \frac{1}{n} \sum_{i = 0}^{n - 1} f_{*}^{i}  \widehat{\mu}$. 
    Then $\mu \in \mathcal{M}(S^2, f)$ and we have\[
        \int \! \phi \,\mathrm{d}\mu 
        = \frac{1}{n} \int \sum_{i = 0}^{n - 1} \phi \,\mathrm{d} f_{*}^{i} \widehat{\mu}
        = \frac{1}{n} \int \sum_{i = 0}^{n - 1} \phi \circ f^{i} \,\mathrm{d}\widehat{\mu} 
        = \frac{1}{n} \int \! S_n\phi \,\mathrm{d}\widehat{\mu}.
    \]
    By Lemma~\ref{lem:equilibrium measure of pair and inf of Sn_phi over P^n(alpha)}~\ref{item:lem:equilibrium measure of pair and inf of Sn_phi over P^n(alpha):bound for potential}, we have $\inf_{x \in P^n(\alpha)} S_n \phi(x) \geqslant  n\alpha - 2 \tempconst$.
    Noting that $\supp{\widehat{\mu}} \subseteq \limitset \subseteq P^n(\alpha)$, we have\[
        \int \! \phi \,\mathrm{d}\mu = \frac{1}{n}\int \! S_n\phi \,\mathrm{d}\widehat{\mu} \geqslant \alpha - 2 \tempconst n^{-1}.
    \]
    Thus the measure $\mu$ satisfies $\int \! \phi \,\mathrm{d}\mu \in \left[\alpha - 2 \tempconst n^{-1}, \maxenergy \right]$.

    By \eqref{eq:measure-theoretic entropy well-behaved under iteration} and \eqref{eq:measure-theoretic entropy is affine}, we have
    \begin{equation}    \label{eq:temp:entropy equality}
        n h_{\mu}(f) = h_{\mu}(f^n) = \frac{1}{n} \sum_{i = 0}^{n - 1} h_{f_{*}^{i} \widehat{\mu}}(f^n).
    \end{equation}
    We now show that $h_{f_{*}^{i} \widehat{\mu}}(f^n) = h_{\widehat{\mu}}(f^{n})$ for each $i \in \{0, \, 1 \, \dots, \, n-1\}$. 
    Indeed, the measure $f_{*} \widehat{\mu}$ is $f^{n}$-invariant and the triple $(S^{2}, f^{n}, f_{*}\widehat{\mu})$ is a factor of $(S^{2}, f^{n}, \widehat{\mu})$ by the map $f$. 
    It follows that $h_{f_{*}\widehat{\mu}}(f^{n}) \leqslant h_{\widehat{\mu}}(f^{n})$ (see for example, \cite[Proposition~4.3.16]{katok1995introduction}). 
    Iterating this and noting that $f_{*}^{n} \widehat{\mu} = (f^{n})_{*}\widehat{\mu} = \widehat{\mu}$ by $f^{n}$-invariance of $\widehat{\mu}$, we obtain\[
        h_{\widehat{\mu}}(f^{n}) = h_{f_{*}^{n} \widehat{\mu}}(f^{n}) 
        \leqslant h_{f_{*}^{n - 1} \widehat{\mu}}(f^{n}) \leqslant \cdots 
        \leqslant h_{f_{*} \widehat{\mu}}(f^{n}) \leqslant h_{\widehat{\mu}}(f^{n}).
    \]
    Hence $h_{f_{*}^{i} \widehat{\mu}}(f^n) = h_{\widehat{\mu}}(f^{n})$ for each $i \in \{0, \, 1 \, \dots, \, n-1\}$. Combining this with \eqref{eq:temp:entropy equality}, we obtain \[
        n h_{\mu}(f) = h_{\widehat{\mu}}(f^{n}) =  h_{\widehat{\mu}}(f^n|_{\limitset}).
    \]
    Thus
    \begin{equation*}
        n \parentheses[\bigg]{ h_{\mu}(f) + \int \! \phi \,\mathrm{d}\mu }
        = h_{\widehat{\mu}}(f^n|_{\limitset}) + \int \! S_n\phi \,\mathrm{d}\widehat{\mu}
        \geqslant \log \mathopen{}\bigl(\mu_{\phi}(P^n(\alpha))\bigr) + nP(f, \phi) - \log C,
    \end{equation*}
    i.e., $\log (\mu_{\phi}(P^n(\alpha))) \leqslant n (P_{\mu}(f, \phi) - P(f, \phi) ) + \log C$.
    This completes the proof of Proposition~\ref{prop:LDA:key bounds}~\ref{item:prop:LDA:key bounds:max}.

    For $\alpha\in (\minenergy,\gamma_\phi)$, the proof is similar.
\end{proof}

\subsection{Proof of the large deviation asymptotics}
\label{sub:Proof of the large deviation asymptotics}
In this subsection, we establish large deviation asymptotics for expanding Thurston maps.
More precisely, we first prove the results under the assumption that there exists an $f$-invariant Jordan curve $\mathcal{C}$ with $\post{f} \subseteq \mathcal{C}$.
Then by Lemma~\ref{lem:invariant_Jordan_curve}, we remove this assumption and prove Theorem~\ref{thm:large deviation asymptotics}.

\begin{proposition}    \label{prop:large deviation asymptotics with invariant Jordan curve}
    Let $f \colon S^2 \mapping S^2$ be an expanding Thurston map and $\mathcal{C} \subseteq S^2$ be a Jordan curve containing $\post{f}$ with the property that $f(\mathcal{C}) \subseteq \mathcal{C}$. 
    Let $d$ be a visual metric on $S^2$ for $f$. 
    Let $\phi \in C^{0,\holderexp}(S^2,d)$ be a real-valued \holder continuous function with an exponent $\holderexp \in (0,1]$ and not co-homologous to a constant in $C(S^2)$. 
    Let $\mu_{\phi}$ be the unique equilibrium state for the map $f$ and the potential $\phi$. 
    Denote $\gamma_{\phi} \define \int \! \phi \,\mathrm{d}\mu_{\phi}$, $\minenergy \define \min\limits_{\mu\in \mathcal{M}(S^2,f)} \int \! \phi \,\mathrm{d}\mu$, and $\maxenergy \define \max\limits_{\mu\in \mathcal{M}(S^2,f)} \int \! \phi \,\mathrm{d}\mu$.
    Then for each $\alpha \in (\minenergy, \maxenergy)$, there exists an integer $N \in \n$ depending only on $f$, $\mathcal{C}$, $d$, $\phi$, and $\alpha$ such that for each integer $n \geqslant N$, \[
        \mu_{\phi}\Bigl(\Bigl\{ x \in S^2 \describe \sgn{\alpha - \gamma_{\phi}} \frac{1}{n}S_n\phi(x) \geqslant \sgn{\alpha - \gamma_{\phi}} \alpha \Bigr\} \Bigr)
        \leqslant C_{\alpha} e^{-I(\alpha)n},
    \]
    where $C_{\alpha} > 0$ is a constant depending only on $f$, $\mathcal{C}$, $d$, $\phi$, $\holderexp$, and $\alpha$. 
    Quantitatively, we choose\[
            C_{\alpha} \define 2C_{\mu_{\phi}} \myexp[\big]{ ( 1 + 2\abs{I'(\alpha)} ) \Cdistortion },
    \]
    where $C_{\mu_{\phi}} \geqslant 1$ is the constant from Proposition~\ref{prop:equilibrium state is gibbs measure} and $C_1 \geqslant 0$ is the constant defined in \eqref{eq:const:C_1} in Lemma~\ref{lem:distortion_lemma}.
\end{proposition}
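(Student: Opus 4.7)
The plan is to reduce the measure estimate to an application of Proposition~\ref{prop:LDA:key bounds} via the covering of the relevant level set by $n$-pairs, and then convert the resulting measure-theoretic pressure defect into the rate function $I(\alpha)$ using the convexity and differentiability of $I$ established in Proposition~\ref{prop:properties of rate function}. Without loss of generality I will treat the case $\alpha \in (\gamma_\phi, \maxenergy)$; the case $\alpha \in (\minenergy, \gamma_\phi)$ is entirely parallel and accounts for the absolute value $\abs{2I'(\alpha)}$ in the constant.

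First I would fix a $0$-edge $e^0 \in \mathbf{E}^0(f, \mathcal{C})$ (which exists since $\post{f} \subseteq \mathcal{C}$ forces $\mathbf{E}^0 \neq \emptyset$) and observe that by Corollary~\ref{coro:union of pairs are the whole sphere} every $x \in S^2$ lies in some $n$-pair $P^n \in \mathbf{P}^n(f, \mathcal{C}, e^0)$. Hence if $\frac{1}{n} S_n\phi(x) \geqslant \alpha$, then the pair containing $x$ lies in $\mathbf{P}^n(\alpha)$ by definition \eqref{eq:definition P^n(alpha)}, which gives the inclusion
\[
    \Bigl\{ x \in S^2 \describe \tfrac{1}{n} S_n \phi(x) \geqslant \alpha \Bigr\} \subseteq P^n(\alpha),
\]
so it suffices to bound $\mu_\phi(P^n(\alpha))$.

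Next I would apply Proposition~\ref{prop:LDA:key bounds}~(i): there is an integer $N_0 = N_0(f, \mathcal{C}, d, \phi, e^0, \alpha)$ such that for every $n \geqslant N_0$ there exists $\mu_n \in \mathcal{M}(S^2, f)$ satisfying
\[
    \mu_\phi(P^n(\alpha)) \leqslant 2 C_{\mu_\phi}\, e^{\Cdistortion}\, e^{n(P_{\mu_n}(f,\phi) - P(f,\phi))}, \qquad \beta_n \define \int\!\phi\,\mathrm{d}\mu_n \in \Bigl[\alpha - \tfrac{2\Cdistortion}{n},\, \maxenergy\Bigr].
\]
By the definition \eqref{eq:definition of rate function} of $I$, $P_{\mu_n}(f,\phi) - P(f,\phi) \leqslant -I(\beta_n)$, so the bound becomes $\mu_\phi(P^n(\alpha)) \leqslant 2 C_{\mu_\phi}\, e^{\Cdistortion}\, e^{-n I(\beta_n)}$.

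The final and most delicate step is comparing $I(\beta_n)$ with $I(\alpha)$. By Proposition~\ref{prop:properties of rate function}~\ref{item:prop:properties of rate function:second order differentiable strictly convex boundary behavior}, $I$ is convex and differentiable on $(\minenergy, \maxenergy)$ with $I'(\alpha) > 0$ since $\alpha > \gamma_\phi$. Convexity yields the tangent-line inequality $I(\beta_n) \geqslant I(\alpha) + I'(\alpha)(\beta_n - \alpha)$, and using $\beta_n \geqslant \alpha - 2\Cdistortion/n$ (possibly enlarging $N$ to ensure $\beta_n$ lies in a compact sub-interval of $(\minenergy, \maxenergy)$ where the linear lower bound is meaningful) we obtain
\[
    I(\beta_n) \geqslant I(\alpha) - \frac{2\Cdistortion\, I'(\alpha)}{n}.
\]
Plugging this into the previous inequality and choosing $N \geqslant N_0$ so the displacement $2\Cdistortion/n$ is small enough for the tangent bound to apply uniformly, we conclude
\[
    \mu_\phi(P^n(\alpha)) \leqslant 2 C_{\mu_\phi}\, e^{\Cdistortion(1 + 2 I'(\alpha))}\, e^{-n I(\alpha)},
\]
which matches the claimed constant (with $\abs{2I'(\alpha)}$ absorbing the sign difference in the symmetric case $\alpha < \gamma_\phi$, where one uses $\beta_n \leqslant \alpha + 2\Cdistortion/n$ and $I'(\alpha) < 0$). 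The main obstacle is purely bookkeeping: ensuring that the tangent-line bound for $I$ is applied over an interval on which $I$ is smooth and $I'$ is uniformly bounded, which is immediate from Proposition~\ref{prop:properties of rate function} once $N$ is chosen large enough that $\beta_n$ stays in a fixed compact neighborhood of $\alpha$ in $(\minenergy, \maxenergy)$.
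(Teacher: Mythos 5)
Your proposal is correct and follows essentially the same route as the paper: cover the level set by $P^n(\alpha)$, invoke Proposition~\ref{prop:LDA:key bounds}, and use the convexity and differentiability of $I$ from Proposition~\ref{prop:properties of rate function} to trade $I(\beta_n)$ for $I(\alpha) - 2\Cdistortion I'(\alpha)/n$, yielding the stated constant (the paper phrases this as a Taylor bound for $I$ at the shifted point $\alpha - 2\Cdistortion/n$ after taking an infimum of $I$ over $[\alpha - 2\Cdistortion/n, \maxenergy]$, and it also disposes of the trivial case $\alpha = \gamma_{\phi}$ and minimizes $N$ over the finitely many $0$-edges). One small remark: your tangent-line inequality at $\alpha$ is valid for every $\beta_n$ in the interval of convexity, so the parenthetical about forcing $\beta_n$ into a compact neighborhood of $\alpha$ is unnecessary (and in fact cannot be arranged, since $\beta_n$ may lie anywhere up to $\maxenergy$); this does not affect correctness.
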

\begin{proof}
    \def\tempconst{D(\potential)}
    We denote $\tempconst \define \Cdistortion$ and $C = 2C_{\mu_{\phi}}e^{\tempconst }$ in this proof.

    We first consider the case where $\alpha \in (\gamma_\phi, \maxenergy)$. Let $\alpha \in (\gamma_\phi, \maxenergy)$ be arbitrary.

    For each $0$-edge $e^0 \in \mathbf{E}^0(f,\mathcal{C})$, by Proposition~\ref{prop:LDA:key bounds}, there exists an integer $N_{e^0} \in \n$ depending only on $f$, $\mathcal{C}$, $d$, $\phi$, $\alpha$, and $e^0$ such that for each integer $n \geqslant N_{e^0}$, there exists a measure $\mu \in \mathcal{M}(S^2, f)$ satisfying \[
        \mu_{\phi} (P^n(\alpha)) \leqslant C e^{(P_{\mu}(f, \phi) - P(f,\phi))n} \quad \text{ and }\quad
        \int \! \phi \,\mathrm{d}\mu \in \bigl[ \alpha - 2 \tempconst n^{-1}, \maxenergy \bigr].
    \] 
    Since $\mathbf{E}^0(f,\mathcal{C})$ is a finite set, there exists an $0$-edge $\widetilde{e}^0 \in \mathbf{E}^0(f,\mathcal{C})$ such that \[
        N_{\widetilde{e}^0} = \min \bigl\{ N_{e^0} : e^0 \in \mathbf{E}^0(f,\mathcal{C}) \bigr\}.
    \]
    We fix such $0$-edge $\widetilde{e}^0$ and let \[
        N \define \max \{ N_{\widetilde{e}^0}, \lceil 2 \tempconst / (\alpha - \gamma_{\phi}) \rceil \},
    \]
    where $\lceil x \rceil$ denotes the smallest integer no less than $x$. Note that this integer $N$ satisfies $\gamma_{\phi} + \frac{2\tempconst }{N} \leqslant \alpha$ and depends only on $f$, $\mathcal{C}$, $d$, $\phi$, and $\alpha$.

    For each integer $n \geqslant N$, by the definition and properties of the rate function $I$ (see \eqref{eq:definition of rate function} and Proposition~\ref{prop:properties of rate function}), we have that
    \begin{equation*}
        \begin{split}
            \mu_{\phi} \parentheses[\big]{ \{x \in S^2 \describe n^{-1} S_n\phi(x) \geqslant \alpha \} } 
            &\leqslant \mu_{\phi}(P^n(\alpha))  \\
            &\leqslant C e^{(P_{\mu}(f, \phi) - P(f,\phi))n}  \\
            &\leqslant C \myexp[\big]{ \sup \set{ P_{\nu}(f,\phi) - P(f,\phi) \describe \nu \in \mathcal{A}_{n} } n }    \\
            &= C \myexp[\big]{  -\inf \set{ I(\eta) \describe \eta \in [ \alpha - 2 \tempconst n^{-1}, \maxenergy ] } n }  \\
            &= C \myexp[\big]{-I \parentheses[\big]{ \alpha - 2 \tempconst n^{-1} } n} \\
            &\leqslant C e^{2 \tempconst I'(\alpha)} e^{-I(\alpha)n},
        \end{split}
    \end{equation*}
    where $\mathcal{A}_{n} \define \set{\nu \in \invmea \describe \int \! \phi \,\mathrm{d}\nu \in [ \alpha - 2 \tempconst n^{-1}, \maxenergy ] }$.
    The last inequality is shown as follows. 
    By Taylor's formula, there exists $\theta \in \big[- 2\tempconst n^{-1}, 0\big]$ such that 
    \[
        I \bigl( \alpha - 2\tempconst n^{-1} \bigr) 
        = I(\alpha) - 2\tempconst n^{-1} I'(\alpha + \theta).
    \]
    Since $I$ is convex and $C^1$, $I'$ is increasing on $(\gamma_\phi, \maxenergy)$ and $I'(\alpha + \theta) \leqslant I'(\alpha)$. 
    Hence,
    \[
        I \bigl(\alpha - 2\tempconst n^{-1} \bigr) 
        \geqslant I(\alpha) - 2\tempconst n^{-1} I'(\alpha).
    \]
    By choosing $C_{\alpha} \define 2C_{\mu_{\phi}} \myexp[\big]{ ( 1 + 2I'(\alpha) ) \tempconst }$, we deduce that
    \begin{equation*}
        \mu_{\phi} \parentheses[\big]{ \set{ x \in S^2 \describe n^{-1} S_n\phi(x) \geqslant \alpha } }
        \leqslant C_{\alpha} e^{-I(\alpha)n}.
    \end{equation*}

    \smallskip

    For $\alpha\in (\minenergy,\gamma_\phi)$, the proof is similar. In this case, we choose $C_{\alpha} = 2C_{\mu_{\phi}} \myexp[\big]{ ( 1 - 2I'(\alpha) ) \tempconst }$.  
    
    \smallskip

    For $\alpha = \gamma_{\phi}$, the conclusion holds trivially since $C_{\gamma_{\phi}} \geqslant 1$ and $I(\gamma_{\phi}) = 0$. 
    This completes the proof.
\end{proof}

Now we can prove the Theorem~\ref{thm:large deviation asymptotics}. The key point of the proof is to iterate the map and apply Proposition~\ref{prop:large deviation asymptotics with invariant Jordan curve}.

\begin{proof}[Proof of Theorem~\ref{thm:large deviation asymptotics}]
    We first consider the case where $\alpha \in (\gamma_\phi,\maxenergy)$. 

    By Lemma~\ref{lem:invariant_Jordan_curve} we can find a sufficiently high iterate $F \define f^{K}$ of $f$ such that there exists an $F$-invariant Jordan curve $\mathcal{C} \subseteq S^{2}$ with $\post{F} = \post{f} \subseteq \mathcal{C}$. 
    Then $F$ is also an expanding Thurston map (recall Remark~\ref{rem:Expansion_is_independent}). 
    We fix such an integer $K$, a map $F$, and a Jordan curve $\mathcal{C}$.

    Let $\phi \in \holderspacesphere$ for some $\holderexp \in (0,1]$.
    Denote $\Phi \define S_{K}^{f} \phi$.
    Then $\Phi \in \holderspacesphere$ since $f$ is Lipschitz continuous with respect to $d$ (see \cite[Lemma~3.12]{li2018equilibrium}).
    Let $\mu_{F, \Phi}$ and $I_{F, \Phi}$ denote the unique equilibrium state and the rate function, respectively, for the map $F$ and the potential $\Phi$. 
    Recall from Subsection~\ref{sub:thermodynamic formalism} that $P(F, \Phi) = K P(f, \phi)$. 
    Then it follows from $P_{\mu_{\phi}}(F, \Phi) = KP_{\mu_{\phi}}(f, \phi)$ and the uniqueness of the equilibrium state that $\mu_{F, \phi} = \mu_{\phi}$. 
    Since $\potential$ is not co-homologous to a constant in $C(S^2)$ (with respect to $f$), Theorem~\ref{thm:properties of equilibrium state}~\ref{item:thm:properties of equilibrium state:co-homologous} implies that $\mu_{\potential} \ne \mu_{0}$, where $\mu_{0}$ denotes the measure of maximal entropy of $f$.
    Note that $\mu_{0}$ is also the measure of maximal entropy of $F$.
    Hence, it follows from Theorem~\ref{thm:properties of equilibrium state}~\ref{item:thm:properties of equilibrium state:co-homologous} that $\Phi$ is not co-homologous to a constant in $C(S^2)$ (with respect to $F$).

    We next show that $I_{F, \Phi}(K \widetilde{\alpha}) = KI(\widetilde{\alpha})$ for each $\widetilde{\alpha} \in (\minenergy, \maxenergy)$. 
    By Proposition~\ref{prop:properties of rate function}~\ref{item:prop:properties of rate function:expression of rate function} and Lemma~\ref{lem:derivative of pressure function related to Birkhoff average}, we have\[
        I(\widetilde{\alpha}) = P(f, \phi) - P(f, \xi(\widetilde{\alpha})\phi) + (\xi(\widetilde{\alpha}) - 1)\widetilde{\alpha}
    \]
    and
    \[
        I_{F, \Phi}(K \widetilde{\alpha}) = P(F, \Phi) - P(F, \xi_{F}(K \widetilde{\alpha})\Phi) + (\xi_{F}(K \widetilde{\alpha}) - 1)K \widetilde{\alpha},
    \]
    where $\xi(\widetilde{\alpha})$ and $\xi_{F}(K \widetilde{\alpha})$ are defined by\[
        \widetilde{\alpha} = \frac{\mathrm{d}P(f, t\phi)}{\mathrm{d}t} \bigg|_{t = \xi(\widetilde{\alpha})} \quad \text{ and } \quad 
        K\widetilde{\alpha} = \frac{\mathrm{d}P(F, t\Phi)}{\mathrm{d}t} \bigg|_{t = \xi_{F}(K\widetilde{\alpha})},
    \]
    respectively. Since $P(F, t\Phi) = K P(f, t\phi)$ for each $t \in \real$, we get\[
        \widetilde{\alpha} = \frac{1}{K}\frac{\mathrm{d}P(F, t\Phi)}{\mathrm{d}t} \bigg|_{t = \xi_{F}(K\widetilde{\alpha})}
        = \frac{\mathrm{d}P(f, t\phi)}{\mathrm{d}t} \bigg|_{t = \xi_{F}(K\widetilde{\alpha})}.
    \]
    Thus $\xi_{F}(K \widetilde{\alpha}) = \xi(\widetilde{\alpha})$ by the uniqueness of $\xi(\widetilde{\alpha})$ (see Lemma~\ref{lem:derivative of pressure function related to Birkhoff average}). Then it follows immediately from the expressions of $I(\widetilde{\alpha})$ and $I_{F, \Phi}(K \widetilde{\alpha})$ that $I_{F, \Phi}(K \widetilde{\alpha}) = KI(\widetilde{\alpha})$ and $I_{F, \Phi}'(K \widetilde{\alpha}) = I'(\widetilde{\alpha})$.

    Applying Proposition~\ref{prop:large deviation asymptotics with invariant Jordan curve}, we obtain the large deviation asymptotics for the map $F$ and the potential $\Phi$. 
    Thus for each $\widetilde{\alpha} \in [\gamma_\phi,\maxenergy)$ there exists an integer $M \in \n$ such that for each integer $m \geqslant M$, 
    \begin{equation}    \label{eq:large deviation asymptotics for iterate map}
        \mu_{\phi} \bigl( \bigl\{ x \in S^2 \describe m^{-1} S_m^{F}\Phi(x) \geqslant K \widetilde{\alpha} \bigr\} \bigr)
        \leqslant C_{K\widetilde{\alpha}} e^{-I_{F, \Phi}(K \widetilde{\alpha})m}
        = C_{K\widetilde{\alpha}} e^{- K I(\widetilde{\alpha})m},
    \end{equation}
    where $C_{K\widetilde{\alpha}} = 2C_{\mu_{\potential}} \myexp[\big]{ D(\Phi)( 1 + 2I_{F, \Phi}'(K \widetilde{\alpha}) ) } = 2C_{\mu_{\phi}} \myexp[\big]{ D(\Phi) ( 1 + 2I'(\widetilde{\alpha}) )}$.
    Here $C_{\mu_{\potential}}$ is the constant from Proposition~\ref{prop:equilibrium state is gibbs measure} and the constant $D(\Phi)$ dependents only on $F$, $\mathcal{C}$, $d$, $\Phi$, and $\holderexp$.
    We will derive the large deviation asymptotics for $f$ and $\phi$ from \eqref{eq:large deviation asymptotics for iterate map}. 
    Indeed, for each integer $m \geqslant M$ and each $k \in \{0,\, 1\, \dots, \, K- 1\}$, we have
    \begin{equation}    \label{eq:set inequality from iterate map to original map}
        \begin{split}
            &\bigl\{ x \in S^2 \describe S_{mK + k}^{f}\phi(x) \geqslant (mK + k) \alpha \bigr\} \\
            &\qquad \subseteq \bigl\{ x \in S^2 \describe S_{mK}^{f}\phi(x) \geqslant (mK + k) \alpha - k \uniformnorm{\phi} \bigr\} \\
            &\qquad \subseteq \bigl\{ x \in S^2 \describe S_{mK}^{f}\phi(x) \geqslant mK \alpha - 2K\uniformnorm{\phi} \bigr\} \\
            &\qquad = \bigl\{ x \in S^2 \describe m^{-1} S_m^{F}\Phi(x) \geqslant K \bigl( \alpha - 2 m^{-1} \uniformnorm{\phi} \bigr) \bigr\}.
        \end{split}
    \end{equation}
    Put
    \begin{equation}    \label{eq:sufficiently large constant N}
        N \define K \max \{ M, \left\lceil 2 \uniformnorm{\phi} / ( \alpha - \gamma_{\phi} ) \right\rceil \}.
    \end{equation}
    Note that the integer $N$ satisfies $\alpha - \frac{2\uniformnorm{\phi}}{N/K} \geqslant \gamma_{\phi}$. 

    Recall that $\alpha \in (\gamma_\phi,\maxenergy)$.
    For each integer $n \geqslant N$, we can write $n = mK + k$ for some integers $k \in \zeroton[K - 1]$ and $m \geqslant M$ satisfying $\alpha - \frac{2\uniformnorm{\phi}}{m} \geqslant \gamma_{\phi}$. Then by \eqref{eq:set inequality from iterate map to original map} and \eqref{eq:large deviation asymptotics for iterate map}, we get the first two inequalities of the following:
    \begin{equation}     \label{eq:estimate of deviation set from iterate}
        \begin{split}
            &\mu_{\phi}\bigl( \bigl\{ x \in S^2 \describe n^{-1} S_n^{f}\phi(x) \geqslant \alpha  \bigr\} \bigr) \\
            &\qquad = \mu_{\phi} \bigl( \bigl\{ x \in S^2 \describe S_{mK + k}^{f}\phi(x) \geqslant (mK + k) \alpha \bigr\}  \bigr)  \\
            &\qquad \leqslant \mu_{\phi} \bigl( \bigl\{ x \in S^2 \describe m^{-1} S_m^{F}\Phi(x) \geqslant K \bigl( \alpha - 2 m^{-1} \uniformnorm{\phi} \bigr) \bigr\} \bigr)  \\
            &\qquad \leqslant 2C_{\mu_{\phi}} \myexp[\big]{ D(\Phi) ( 1 + 2 I'( \alpha - 2 m^{-1} \uniformnorm{\phi} ) )} \,  e^{- m K I \parentheses[\big]{ \alpha - 2 m^{-1} \uniformnorm{\phi} } }  \\
            &\qquad \leqslant C_{\alpha} e^{-nI(\alpha)},
        \end{split}
    \end{equation}
    where $C_{\alpha} \define 2C_{\mu_{\phi}} \myexp[\big]{ D(\Phi) ( 1 + 2I'(\alpha) ) + 2K\uniformnorm{\phi}I'(\alpha) + KI(\alpha)}$.
    The last inequality in \eqref{eq:estimate of deviation set from iterate} is shown as follows. By Taylor's formula, there exists $\theta \in \bigl[ -\frac{2\uniformnorm{\phi}}{m}, 0 \bigr]$ such that\[
        I \bigl( \alpha - 2 m^{-1} \uniformnorm{\phi} \bigr) = I(\alpha) - 2 m^{-1} \uniformnorm{\phi} I'(\alpha + \theta).
    \]
    Since $I$ is convex and $C^1$, $I'$ is increasing on $(\gamma_\phi, \maxenergy)$ and $I'(\alpha + \theta) \leqslant I'(\alpha)$. 
    Hence,
    \[
        I \bigl( \alpha - 2 m^{-1} \uniformnorm{\phi} \bigr) 
        \geqslant I(\alpha) - 2 m^{-1} \uniformnorm{\phi} I'(\alpha).
    \]
    Then \[
        e^{- mK I \bigl( \alpha - 2 m^{-1} \uniformnorm{\phi} \bigr)}
        \leqslant e^{-mKI(\alpha) + 2K\uniformnorm{\phi}I'(\alpha) } 
        = e^{2K\uniformnorm{\phi}I'(\alpha) + kI(\alpha)} e^{-nI(\alpha)}  
        \leqslant e^{2K\uniformnorm{\phi}I'(\alpha) + KI(\alpha)} e^{-nI(\alpha)}.
    \]
    Since $I' \bigl(\alpha - 2 m^{-1} \uniformnorm{\phi} \bigr) \leqslant I'(\alpha)$, we obtain the last inequality in \eqref{eq:estimate of deviation set from iterate}. 
    Therefore, we conclude that for each $\alpha \in (\gamma_{\phi}, \maxenergy)$, there exist an integer $N \in \n$ (given by \eqref{eq:sufficiently large constant N}) and a constant $C_{\alpha} > 0$ such that for each integer $n \geqslant N$, \[
        \mu_{\phi}\bigl( \bigl\{ x \in S^2 \describe n^{-1} S_n^{f}\phi(x) \geqslant \alpha  \bigr\} \bigr) \leqslant C_{\alpha} e^{-I(\alpha)n}.
    \]

    \smallskip

    For $\alpha\in (\minenergy,\gamma_\phi)$, the proof is similar.

    \smallskip

    For $\alpha = \gamma_{\phi}$, the conclusion holds trivially since $I(\gamma_{\phi}) = 0$. 
    This completes the proof.
\end{proof} 


\section*{Acknowledgements}%
\label{sec:Acknowledgements}

The authors are grateful to the anonymous referees for their meticulous reading of the manuscript and the valuable comments.
Z.~Li and X.~Shi were partially supported by NSFC Nos.~12101017, 12090010, 12090015, 12471083, and BJNSF No.~1214021. \
Y.~Zhang was partially supported by NSFC Nos.~12161141002 and~12271432. 

\printbibliography

\end{document}